\documentclass[12pt,reqno]{amsart} 
\usepackage{amsmath, mathtools}
\usepackage{mathrsfs} 
\usepackage{amsthm}
\usepackage{amssymb}
\usepackage{amscd,stmaryrd}

\usepackage{rotating} %trying to have sideways tables
\usepackage{pdflscape}
\usepackage{floatrow}
\usepackage{dirtytalk} 
\usepackage{parskip}
\usepackage{bbm}
\usepackage[english]{babel}
\usepackage[a4paper,margin = 1in]{geometry}
\usepackage{eurosym}
\usepackage{amsthm}
\usepackage{enumitem}
\usepackage{multicol}
\usepackage{array}
\usepackage{url}
\usepackage{hyperref} 
\hypersetup{
	colorlinks=true,
	linkcolor=red,
	filecolor=magenta,      
	urlcolor=cyan,
	citecolor=blue
}
\usepackage{color}
\usepackage[dvipsnames]{xcolor}
\usepackage{textcase}
\usepackage{appendix}
\usepackage{tikz-cd}
\usepackage{tikz-3dplot}
\usepackage{subcaption}
\usepackage{makecell} % in preamble
\usepackage{booktabs} % in preamble
\usepackage{tikz}
\usetikzlibrary{arrows.meta}

\newcommand{\eHK}{e_\text{HK}}
\newcommand{\eHKp}{e_{\text{HK},p}}

\newcommand{\limeHK}{e_{\text{HK},\infty}}
\newcommand{\lims}{s_{\infty}}
\newcommand{\q}{\mathfrak{q}}
\newcommand{\p}{\mathfrak{p}}
\newcommand{\n}{\mathfrak{n}}
\newcommand{\m}{\mathfrak{m}}

\newcommand{\Spec}{\operatorname{Spec}}
\newcommand{\MaxSpec}{\operatorname{MaxSpec}}

\newtheorem{theorem}{Theorem}[section]

\newtheorem{definition}[theorem]{Definition}
\newtheorem{notation}[theorem]{Notation}
\newtheorem{theoremdefinition}[theorem]{Theorem-Definition}
\newtheorem{corollary}[theorem]{Corollary}
\newtheorem{lemma}[theorem]{Lemma}
\newtheorem{proposition}[theorem]{Proposition}

\newtheoremstyle{example}{10pt}{10pt}{}{}{\bfseries}{.}{.5em}{}
\theoremstyle{example}
\newtheorem{example}[theorem]{Example}

\theoremstyle{remark}
\newtheorem{remark}[theorem]{Remark}

\title{Limit F-invariants of the simple singularities and the Fermat hypersurfaces}
\author{Joel Castillo-Rey}
\address{BCAM – Basque Center for Applied Mathematics, Mazarredo 14, 48009 Bilbao, Basque Country – Spain}
\email{jcastillo@bcamath.org}
\date{}

\begin{document}
\begin{abstract}
    In this article, we compute the limit Hilbert–Kunz multiplicities and limit F-signatures of all the simple singularities and the Hilbert–Kunz multiplicities of the Fermat hypersurfaces, generalizing the well-known $\sec + \tan$ formula by Gessel and Monsky for the $A_1$ singularities, using the theory of $h$-functions. We obtain a functional equation relating the $\phi$-function of a hypersurface with its reflection. Using limit $F$-invariants, we generalize both a characterisation of regular local rings through $F$-invariants and the known cases of the Watanabe–Yoshida conjecture to characteristic zero.
\end{abstract}
\maketitle
\setcounter{tocdepth}{1}
\tableofcontents
\section{Introduction}
The Hilbert–Kunz multiplicity $\eHK(R)$ and the $F$-signature $s(R)$ are fundamental numerical invariants of a local ring $(R,\m)$ of positive characteristic capable of capturing regularity, as well as subtle $F$-singularity data, \cite{monskyHilbertKunzFunction1983,kunz1969,hunekeTwoTheoremsMaximal2004,tuckerFsignatureExists2012,WY00,blickleRingsSmallHK2004}. One may as well define characteristic zero versions of these invariants by passing to reductions mod $p$ and taking the limit $p\rightarrow \infty$, provided the limit exists, although their existence, even for hypersurfaces, is still an open question in the field.

A very well-known result on limit $F$-invariants, and one of the starting point of this work, is the following result by Gessel and Monsky on the limit Hilbert–Kunz multiplicity and the limit $F$-signature of the $A_1$ singularity:
\begin{theorem}[Limit Hilbert–Kunz multiplicities of the $A_1$ singularities, \cite{gesselLimit2010}]
	\label{th:gessel_monsky}
Let $d\geq 0$, and $R_{d} = \mathbb{C}[[x_0,\dots,x_d]]/(x_0^2+\dots+x_d^2)$. Then,
\begin{equation*}
\limeHK(R_{d}) = 1+\dfrac{c_d}{d!}, \quad \text{and} \quad \lims(R_{d}) = 1-\dfrac{c_d}{d!}
\end{equation*}
where $c_d$ are the Euler zig-zag numbers. In other words, $\sum c_d\frac{z^d}{d!} = \sec z + \tan z$.
\end{theorem}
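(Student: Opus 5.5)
The plan is to use the $h$-function and $\phi$-function approach for diagonal hypersurfaces, due to Han–Monsky and Trivedi. Fix a prime $p$ and let $f = x_0^2+\dots+x_d^2$ over $\mathbb{F}_p$.

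\textbf{Step 1: reduce to a function of one variable.} For the Hilbert–Kunz function we have
\[
\eHK = \lim_q \dim_k k[x]/(x_0^q,\dots,x_d^q,f)\,/\,q^d .
\]
We want to express this through the function $\phi_d(t)$ defined as the limit of $q^{-d}\dim k[x]/(x^q, f^{\lceil tq\rceil})$. Since $k[x_0,\dots,x_d]/(x_i^q)$ is a tensor product of the one-variable pieces $k[x_i]/(x_i^q)$, the standard Han–Monsky representation-ring argument writes each factor as a $k[T]$-module with $T = x_i^2$. This gives the length of $k[x]/(x^q,f^{j})$ as a "convolution" of the single-variable data.

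\textbf{Step 2: pass to the limit $p\to\infty$.} As $p\to\infty$, the characteristic-$p$ pathologies coming from $p$-adic carries disappear. The $k[T]$-modules $k[T]/T^{a}$ tensor together like $\mathfrak{sl}_2$-representations in characteristic zero, so the Jordan type of multiplication by $f$ is Clebsch–Gordan-like. Concretely, the limiting $h$-function satisfies
\[
\phi(t) = \int \prod_i \chi_{[0,1]}(s_i)\ \mathbf{1}\bigl[\text{condition on } \textstyle\sum s_i\bigr]\, ds,
\]
that is, a volume computation in the cube $[0,1]^{d+1}$. This is the approach of Trivedi and of Gessel–Monsky: in the limit, $\limeHK(R_d)$ becomes
\[
\limeHK = \int_{[0,1]^{d+1}} \bigl(\text{some piecewise linear function of } \textstyle\sum_i s_i\bigr).
\]
Equivalently, it is an alternating sum involving the distribution of $S = s_0+\dots+s_d$ folded modulo $2$ (reflection at even integers). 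The limit should be
\[
\limeHK = 1 + \mathbb{E}\bigl[\text{distance-type correction}\bigr].
\]
The $F$-signature is handled dually: $s = 2 - \eHK$ for the $A_1$ case via the relation between the two invariants for Gorenstein hypersurfaces of multiplicity $2$. More carefully, $\lims = \eHK(I_1) - \eHK$, where $I_1$ is the pullback of the socle, and the same volume computation applies.

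\textbf{Step 3: identify the volume integral with Euler numbers.} The integral reduces to volumes of slices of the cube folded by reflections. This is equivalent to computing the probability that a random walk with uniform $[0,1]$ steps, reflected to stay in an interval, satisfies an alternation condition. It matches the classical fact that the volume of the alternating polytope $\{s_0<s_1>s_2<\dots\}\cap[0,1]^{d}$ equals $c_d/d!$. I would establish the identity by computing the generating function in $d$: the folded convolution is diagonalized by $\cos$ and $\sin$ modes on the interval. The exponential generating function then becomes a ratio of trigonometric functions, which one checks equals $\sec z + \tan z$. The sign $\pm$ distinguishes $\limeHK$ from $\lims$.

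\textbf{Main obstacle.} Step 2 is the hard part: rigorously justifying that the limit $p\to\infty$ exists and equals the characteristic-zero volume. This needs uniform control of the error terms coming from the base-$p$ digits of $q$, that is, the $\mathfrak{sl}_2$ Clebsch–Gordan rule failing near multiples of $p$. I would first try to bound the total measure of the "bad" region by $O(1/p)$.
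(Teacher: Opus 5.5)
\textbf{There is a genuine gap in your Step 2, and your plan for the ``main obstacle'' aims at a false statement.}

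For fixed $p$ the invariant is a limit over $q=p^e\to\infty$. So the blocks $k[T]/T^a$ have $a\approx q/2\gg p$ for every $p$, and you are never in a range where Clebsch--Gordan governs the Jordan type.

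More decisively, $f^q=\sum_i x_i^{2q}\in\m^{[q]}$. Hence in every characteristic $p$, all Jordan blocks of $f$ on $k[x_0,\dots,x_d]/\m^{[q]}$ have size at most $q$. Clebsch--Gordan instead produces blocks of size up to about $(d+1)q/2$, and this truncation does not go away as $p\to\infty$.

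If you count blocks with the characteristic-zero rule, you get
\[
\frac{2}{d!}\sum_{0\le k<\frac{d+1}{2}}(-1)^k\binom{d+1}{k}\Bigl(\frac{d+1}{2}-k\Bigr)^{d},
\]
which is exactly the $\lambda=0$ term of Gessel--Monsky's sum $\sum_\lambda C_\lambda$. It agrees with $1+c_d/d!$ for $d\le 3$, but for $d=4$ it equals $115/96$ rather than $29/24$.

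So an $O(1/p)$ bound on a ``bad region'' cannot hold. What becomes negligible as $p\to\infty$ are the corrections from the lower base-$p$ digits, not the truncation at $q$. That truncation is precisely what produces the terms with $\lambda\neq 0$.

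You do write ``folded modulo $2$'', and that is the correct shape of the answer: $\limeHK(R_d)$ is twice the density at $0$ of $u_0+\dots+u_d$ reduced modulo $2$, with $u_i$ uniform on $[-\frac12,\frac12]$. But this is incompatible with the Clebsch--Gordan mechanism you state, and nothing in your argument generates the folding.

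\textbf{The missing ingredient is a limit statement that keeps the top-level truncation.} One option is Han--Monsky's rule $\delta_a\delta_b=(a+b-q)\delta_q+\delta_{q-a}\delta_{q-b}$ for $0\le a,b\le q\le a+b$, where $\delta_a$ is the class of $k[T]/T^a$. Its reflection $a\mapsto q-a$ is the source of the folding, and it underlies the theory behind Gessel--Monsky's Theorem~2.4, quoted in Section~\ref{sec:fermat}. In the paper's language, the same phenomenon is Meng's $D_\infty$ from Lemma~\ref{lm:D_infty_formula}. Its branch on $B_4=\{t_1+t_2+t_3\ge 2\}$ differs from the characteristic-zero count: for example $D_\infty(1,1,1)=1$, while Clebsch--Gordan gives $3/4$.

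The paper cites Gessel--Monsky for this theorem and recovers it as the case $n=1$ of Corollary~\ref{cor:limit_eHK_A_n_singularities}:
\begin{itemize}
\item the integral formula (Theorem~\ref{th:integral_formula}) gives $h_{d+1}(x)=\int_{1/2-x}^{1/2+x}h_d(t)\,dt$ on $[0,\frac12]$;
\item Corollary~\ref{cor:reflection_h_function_double_points} gives $h_d(1-t)=h_d(t)-2t+1$;
\item summing over $d$ produces $\sec z+\tan z$.
\end{itemize}
Your Step~3 (a folded convolution diagonalized by cosine and sine modes) is close in spirit to this and would be fine once Step~2 is replaced.

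Separately, your refined $F$-signature identity is garbled. The relevant identity is $s=\eHK(I)-\eHK(I:\m)$ for $I$ a minimal reduction of $\m$. Since $\eHK(I)=e=2$ and $I:\m=\m$, this gives $s=2-\eHK$ for each $p$ (Theorem~\ref{th:reflection}).
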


\subsection{The simple singularities}
The simple singularities are characterised as the hypersurface singularities that may only be deformed into finitely many other non-isomorphic singularities (see \cite{greuelSimpleSingularitiesPositive1990}). They were completely classified in characteristic zero in every dimension by Arnol'd \cite{arnoldNormalFormsFunctions1972} in 1972 in a list, which the reader can find in \ref{def:simple_singularities}. 

More is known about the $F$-invariant theory of the diagonal simple singularities due to the theory developed by Han and Monsky on the diagonal hypersurfaces, \cite{hanmonskySurprising1993,gesselLimit2010}. The two-dimensional case is the most studied, since all the Hilbert–Kunz functions are known, \cite{WY00,brinkmann2017}. On the other hand, the Hilbert–Kunz theory of the non-diagonal simple singularities in higher dimensions has been less explored; the rationality of their $F$-invariants is known by \cite{seibertFCMtype1997}. In any case, even for the diagonal simple singularities there are still plenty of open questions, as showcased by the ever-growing literature on the Hilbert–Kunz theory of the $A_1$ singularities, see \cite{Yo09,gesselLimit2010,castilloreyStrong2025,pakQuadricsEhrhart2026,mengQuadrics2026}.

In this work, we will make use of mainly two properties of these singularities: the fact that they are Gorenstein double points (which yields Corollary \ref{cor:reflection_h_function_double_points}), and that their equations are either diagonal or a sum of a binomial plus monomials of the form $x^2$ (which allows us to exploit Meng's integral formula \cite[Theorem 5.23]{mengLimits2026}).

\subsection{Main results}
In the spirit of Gessel and Monsky's result, one of the main results of this work is computing the limit $F$-invariants of all the simple singularities. We do this in Sections \ref{sec:limit_h_functions_A_n} and \ref{sec:f_invariants_simple} through the theory of $h$-functions \cite{mengHfunction2025}, using the powerful analysis techniques recently introduced in \cite{mengLimits2026}. 
\begin{theorem}[Theorems \ref{cor:limit_eHK_A_n_singularities}, \ref{th:torics_limit_eHK} and \ref{th: D_n_singularities_eHK_formula}]
Let $d\geq 2$. Let $g(x,y)\in \mathbb{Z}[x,y]$ like in table \ref{tab:gen_functions_intro}, and $R_d = \mathbb{C}[[x_0,\dots,x_d]]/(g(x_0,x_1)+x_2^2+\dots+x_d^2)$. Then,
\begin{equation*}
\limeHK(R_{d}) = 1+\dfrac{c_d}{d!}, \quad \text{and} \quad \lims(R_{d}) = 1-\dfrac{c_d}{d!}
\end{equation*}
where $c_d/d!$ is the $d$-th Taylor coefficient of the function $F_g(z)$, for $F_g(z)$ as in Table \ref{tab:gen_functions_intro}.
\begin{table}[h]
\centering
\renewcommand{\arraystretch}{2}
\begin{tabular}{c|c|c}
\toprule
 & $g(x,y)$ & $F_g(z)$\\
\midrule
$A_n$
& $x^{n+1}+y^2$
& $\displaystyle \frac{n+1}{2}\frac{\cos\frac{(n-1)z}{n+1}+\sin\frac{2z}{n+1}}{\cos z}$\\
\midrule
$D_n$
& $x^2y+y^{n-1}$
& \makecell[c]{$\frac{(n-1)^2}{4(n-2)}\cos\frac{z}{n-1}+\frac{(n-3)^2}{4(n-2)}\cos\frac{z}{n-3}$\\[4pt]
    $(1+\frac{(n-1)^2}{4(n-2)}\sin\frac{z}{n-1}+\frac{(n-3)^2}{4(n-2)}\sin\frac{z}{n-3})T(z)$}\\
\midrule
$E_6$
& $x^3+y^4$
& $3\dfrac{\cos\frac{z}{2} + \sin\frac{z}{2}}{\cos z}\cos\dfrac{z}{3}$\\
\midrule
$E_7$
& $x^3+xy^4$
& \makecell[c]{$\frac{27}{16}\left(\cos\frac{z}{9}-\sin\frac{2z}{9}\right)+$\\[4pt]
    $+\frac{27}{16}\left(\sin\frac{z}{9}+\cos\frac{2z}{9}\right)T(z)$}\\
\midrule
$E_8$
& $x^3+y^5$
& $\dfrac{15}{4}\dfrac{\cos\frac{3z}{5} + \sin\frac{2z}{5}}{\cos z}\cos\dfrac{z}{3}$\\
\bottomrule
\end{tabular}
\caption{Summary of Theorems \ref{cor:limit_eHK_A_n_singularities}, \ref{th:torics_limit_eHK} and \ref{th: D_n_singularities_eHK_formula}. Here, $T(z) = \sec z +\tan z$.}
\label{tab:gen_functions_intro}
\end{table}
\end{theorem}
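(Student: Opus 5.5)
The plan is to go through the limit $h$-functions of the two-variable germs $g(x,y)$ and then add the squares $x_2^2+\dots+x_d^2$ via the Thom–Sebastiani-type convolution behaviour of $h$-functions, or equivalently of $\phi$-functions. The steps are as follows.

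First, for each $g$ in Table \ref{tab:gen_functions_intro}, I compute the limit $\phi$-function $\phi_{g,\infty}(t)=\lim_{p\to\infty}\phi_{g,p}(t)$. For the diagonal cases $A_n$, $E_6$, $E_8$ I use the Han–Monsky theory of diagonal hypersurfaces. For the binomial-plus-squares cases $D_n$ and $E_7$ I use Meng's integral formula \cite[Theorem 5.23]{mengLimits2026}. In the limit, each of these is a piecewise polynomial function of $t$ on a bounded interval, determined by the exponents in $g$.

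Second, I pass to the Fourier/Laplace side. The $h$-function (equivalently $\phi$) of $g+x_2^2+\dots+x_d^2$ is the iterated convolution of $\phi_g$ with the $\phi$ of $x^2$. After a Fourier-type transform, this becomes a product $\widehat{\phi_g}(z)\,\widehat{\phi_{x^2}}(z)^{d-1}$. This is exactly the mechanism that produces $\sec z+\tan z$ in Theorem \ref{th:gessel_monsky}.

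Third, I extract the invariants. The limit Hilbert–Kunz multiplicity and the limit $F$-signature are read off from values of the limit $h$-function at specific points. Since the singularities are Gorenstein double points, Corollary \ref{cor:reflection_h_function_double_points} relates the $\phi$-function to its reflection. This symmetry forces the two invariants to be $1\pm c_d/d!$ with the same correction term. It also lets me replace the integral over a half-period by a residue/series computation that yields the generating function $\sum_d c_d z^d/d!=F_g(z)$.

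Finally, I sum the resulting expression over $d$ to obtain a closed form. The denominator $\cos z$, or the factor $T(z)$, comes from summing the geometric series in the contribution of the squares. The trigonometric numerators with frequencies like $\frac{2}{n+1}$ or $\frac{1}{n-1}$ come from the transform of the piecewise polynomial $\phi_{g,\infty}$, whose breakpoints are at rational multiples determined by the exponents.

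The main obstacle is the first step for $D_n$ and $E_7$. There, Meng's integral formula gives $\phi_{g,\infty}$ only implicitly, through a parametrized integral over a region cut out by the binomial $x^2y+y^{n-1}$ or $x^3+xy^4$. Writing it as an explicit piecewise function, and then computing its transform with the two distinct frequencies $\frac{1}{n-1}$ and $\frac{1}{n-3}$, is the delicate part.

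My approach there is to change variables to the Newton-polygon weights of the binomial. This splits the region along the two edges, so each edge behaves like a diagonal piece and contributes one trigonometric term. The coefficients $\frac{(n-1)^2}{4(n-2)}$ and $\frac{(n-3)^2}{4(n-2)}$ would then arise as the Jacobian factors of these substitutions. A final check is that the $d=0,1$ values match the known multiplicity $2$ and the curve case.
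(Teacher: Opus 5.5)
Your outline takes a genuinely different route from the paper's (the Han--Monsky/Gessel--Monsky mechanism), and that route can be made to work. As written, however, the step everything rests on is misstated, and the ingredient that makes it work is missing.

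Theorem \ref{th:integral_formula} does not express $h_{g+x^2}$ (or $\phi_{g+x^2}$) as a convolution of $h_g$ with $h_{x^2}$. Write $h_f(x)=\int\min(\ell,x)\,d\mu_f(\ell)$ with $\mu_f=d(-h_f')$. Then Lemma \ref{lm:D_infty_formula} says each pair $(s,t)\in(0,1]^2$ contributes the measure with density $\frac12$ on $[|s-t|,\min(s+t,2-s-t)]$, plus an atom $(s+t-1)_+\delta_1$. Because of this reflection at $\ell=1$ (region $B_4$), neither a Laplace transform nor a Fourier transform at non-integer frequencies turns the operation into a product.

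What does work is the sine transform $\hat\mu(\omega)=\int_{(0,1]}\frac{\sin\pi\omega\ell}{\pi\omega}\,d\mu(\ell)$, taken at integer $\omega$ only; integrality is what absorbs the reflection and kills the atom. For it one has $\hat\mu_{f+g}=\hat\mu_f\hat\mu_g$ and $\hat\mu_{x^a}(\omega)=a\frac{\sin(\pi\omega/a)}{\pi\omega}$. This is the key lemma of your approach, and you have to identify and prove it.

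You also need an inversion step, and your Step 3 gets it wrong. The invariants are the one-sided derivatives $\partial^+h(0)=\mu((0,1])$ and $\partial^-h(1)=\mu(\{1\})$ (Theorem \ref{th:F_invariants_lim_diff_h}), not values of $h$. Combine $\eHK+s=2$ (differentiate Corollary \ref{cor:reflection_h_function_double_points}) with the sine series of $\mathbf{1}_{(0,1)}$. This gives $\limeHK=1+2\sum_{\omega>0\text{ odd}}\hat\mu(\omega)$. Only then can you sum the geometric series in $d$ and match the Mittag-Leffler expansions of $\sec z$ and $\tan z$.

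The step you single out as the main obstacle is also misdiagnosed. For $D_n$ and $E_7$, Theorem \ref{th: limit_h_binomials} gives the curve's $h$-function by evaluating $D_\infty$ at a single point, because the integrators attached to monomials are point masses. For $D_n$ this is $h_g(x)=x+2(n-2)K(\frac{1-x}{n-2},x)$ (Proposition \ref{cor:h_function_D_n_curve}). There is no region to parametrize. Moreover, $x^2y+y^{n-1}$ has a single compact Newton edge, so splitting along two edges with Jacobian factors does not describe anything.

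The two frequencies actually come from the breakpoints $a=\frac12-\frac{1}{2(n-3)}$ and $b=\frac12+\frac{1}{2(n-1)}$ of $h_g$. The coefficients, up to a factor $\frac14$, are the jumps of the density of $-h_g''$, which is $4$ on $(0,a)$ and $\frac{(n-1)^2}{n-2}$ on $(a,b)$. So the jump at $a$ is $\frac{(n-1)^2}{n-2}-4=\frac{(n-3)^2}{n-2}$, and the jump at $b$ has size $\frac{(n-1)^2}{n-2}$.

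For comparison, the paper uses no transform. It differentiates the integral formula at $x=0^+$, which in effect says that the limit Hilbert--Kunz multiplicity of $f+g$ is $\int_0^{1^+}h_{f,\infty}\,d(-h'_{g,\infty})$. It therefore only needs the closed-form generating function $\Psi$ of the $A$-type $h$-functions, obtained from the recursion $h_{d+1}(x)=\int_{1/2-x}^{1/2+x}h_d$ and Euler polynomials, integrated against $-h_g''$. This yields $3z+4z^2\int_0^a\Psi+\frac{(n-1)^2}{n-2}z^2\int_a^b\Psi$ for $D_n$, and $n+nz\Psi_m(\frac1n,z)$ for the diagonal cases.
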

In the case of the $A_n$ singularities, these formulas let us generalize the connection with combinatorics already observed in the limit $F$-invariants of the $A_1$ singularity:
\begin{corollary}[Corollary \ref{cor:combinatorial_interpretation}]
The limit Hilbert–Kunz multiplicity of $x_0^{n+1}+x_1^2+\dots+x_d^2$ is
    \begin{gather*}
    1 + \frac{n+1}{2}\dfrac{N_d(2,n+1)}{(n+1)^d d!},
    \end{gather*}
    where $N_d(2,n+1)$ is the number of alternating augmented $(2,n+1)$-signed $d$-permutations.
\end{corollary}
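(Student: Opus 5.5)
The plan is to read the corollary directly off the $A_n$ row of Table \ref{tab:gen_functions_intro}, rescaling the variable, and then to identify the rescaled function as the exponential generating function of alternating augmented $(2,n+1)$-signed permutations. Let $c_d$ be the numbers attached to $g=x^{n+1}+y^2$ in the theorem, so that $\limeHK = 1 + c_d/d!$. The claim is equivalent to
\begin{equation*}
N_d(2,n+1) = \frac{2}{n+1}\,(n+1)^d c_d \qquad \text{for all } d .
\end{equation*}
In generating-function form this becomes
\begin{equation*}
\sum_{d\geq 0} N_d(2,n+1)\frac{w^d}{d!} \;=\; \frac{2}{n+1}F_g\big((n+1)w\big) \;=\; \frac{\cos\big((n-1)w\big)+\sin(2w)}{\cos\big((n+1)w\big)} .
\end{equation*}
The first equality is a formal manipulation: substituting $z=(n+1)w$ multiplies the $d$-th Taylor coefficient by $(n+1)^d$. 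The second is the explicit formula for $F_g$ in the table. So everything reduces to proving the right-hand identity for the combinatorial objects.

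For the identity, I would use the framework of signed alternating permutations with parameters $(a,b)$, in the style of Ehrenborg--Readdy and of Josuat-Vergès' snakes. Here letters carry one of $b$ ``colours''/signs, and $a$ governs the allowed boundary (augmentation) condition. These numbers have exponential generating function
\begin{equation*}
\frac{\cos\big((b-a)w\big)+\sin(aw)}{\cos(bw)} .
\end{equation*}
This is derived by the standard method. First decompose an alternating word at the position of its maximal entry, which gives a Riccati-type differential equation $2y' = (\text{const})\,(1+y^2)$ or a boustrophedon-type recursion, with the boundary contribution encoded by $a$. Then solve that equation with the initial condition prescribed by the augmentation. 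Specializing to $a=2$, $b=n+1$ gives exactly $\bigl(\cos((n-1)w)+\sin 2w\bigr)/\cos((n+1)w)$.

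As a consistency check, for $n=1$ this reduces to $(1+\sin 2w)/\cos 2w = \sec 2w + \tan 2w$. That recovers Theorem \ref{th:gessel_monsky}, since $N_d(2,2)=2^d c_d$ there.

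The main obstacle is the combinatorial step: pinning down precisely the definition of ``augmented $(2,n+1)$-signed'' so that the recursion produces the numerator $\cos((n-1)w)+\sin(2w)$ rather than the plain $\sec+\tan$ numerator. The approach I would try first is to split the generating function as
\begin{equation*}
\frac{\cos((n-1)w)}{\cos((n+1)w)} + \frac{\sin(2w)}{\cos((n+1)w)} ,
\end{equation*}
which separates even and odd $d$. I would interpret each summand as a product of a boundary factor with $\sec\big((n+1)w\big)$, and match it with the decomposition of an alternating augmented permutation into its first block and a remaining alternating signed permutation.
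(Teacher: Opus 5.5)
Your proposal is correct and is exactly how the corollary follows in the paper: substitute $z=(n+1)w$ in the generating function of Corollary~\ref{cor:limit_eHK_A_n_singularities} and compare with Ehrenborg--Readdy's Proposition~7.2 (quoted as Proposition~\ref{pr:augmented_r_signed_permutations_generating_function}) at $(p,r)=(2,n+1)$, which gives your identity $N_d(2,n+1)=\frac{2}{n+1}(n+1)^d c_d$. The combinatorial step you call the main obstacle is just that cited proposition, so no re-derivation is needed; your sketched derivation would also fail as written, since the only solution of $2y'=c\,(1+y^2)$ with $y(0)=1$ is $\sec cz+\tan cz$, whereas the augmented series is $\cos(pz)+\sin(pz)\bigl(\sec rz+\tan rz\bigr)$, which has that form only when $p=r$.
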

Furthermore, we show that the limit $h$-functions of all the $A_n$ singularities are combinations of Euler polynomials. In the case of hypersurfaces, our case of study, the $h$-function captures both the Hilbert–Kunz multiplicity and the $F$-signature, coincides with the $\phi$-function (\cite{shidelerFsignatureFunctionsDiagonal2024}, see Definition \ref{def:phi_function}), and recovers the $F$-signature of pairs (\cite{blickleFsignaturePairsAsymptotic2012,blickleFsignaturePairsContinuity2013}, see Definition \ref{def:F_signature_of_pairs}). Therefore, this result generalizes \cite[Theorem 4.4]{shidelerFsignatureFunctionsDiagonal2024}.
\begin{theorem}[Theorem \ref{th:limit_h_function_A_n}]
    Let $n \geq 2$. Let $f = x_0^n+x_1^2+\dots+x_d^2 \in \mathbb{Z}[[x_0,\dots,x_d]]$, and set
\begin{equation*}
a_{d,n} = \begin{cases}
        \frac{1}{2}-\frac{1}{n} & d \text{ even,}\\
        \frac{1}{n} & d \text{ odd.}
        \end{cases} \qquad c_{d,n} = (-1)^{\lfloor d/2 \rfloor} \frac{2^{d-3}\, n}{d!}
\end{equation*}
Then, for $d\geq 1$,
\begin{equation*}
h_{f,\infty}(x) = \begin{cases}
x + c_{d+1,n}\left(E_{d+1}(x+a_{d+1,n})-E_{d+1}(-x+a_{d+1,n})\right) & x\in [0,a_{d+1,n}],\\[7pt]
x + c_{d+1,n}\left(E_{d+1}(x+a_{d+1,n})+E_{d+1}(-x+1+a_{d+1,n})\right) & x\in [a_{d+1,n},1-a_{d+1,n}],\\[7pt]
x + c_{d+1,n}\left(-E_{d+1}(x-1+a_{d+1,n})+E_{d+1}(-x+1+a_{d+1,n})\right) & x\in [1-a_{d+1,n},1],
\end{cases}
\end{equation*}
where $E_d$ is the $d$-th Euler polynomial (defined in \ref{def:Euler_Polynomials}).
\end{theorem}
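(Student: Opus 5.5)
We compute the limit $h$-function of $f = x_0^n + x_1^2+\dots+x_d^2$ through the theory of $h$-functions for diagonal hypersurfaces, following Meng's framework \cite{mengLimits2026}. The basic tool is the convolution (Thom–Sebastiani) description: for a sum of polynomials in disjoint variables, the limit $h$-function of the sum is determined by the limit $h$-functions of the summands. In characteristic zero this becomes an integral transform, which is most easily handled through Fourier (or Laplace) transforms of the derivatives $h'$. So the plan has three stages.

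\begin{enumerate}
\item First, write down the limit $h$-functions of the building blocks.
 \begin{itemize}
 \item For $x^n$, the $h$-function is $h(t) = \min(nt,1)$ in the variable normalised by the $F$-threshold.
 \item For $x^2$, it is $\min(2t,1)$.
 \end{itemize}
 Their "densities" are indicator functions of intervals, $[0,1/n]$ and $[0,1/2]$.

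\item Second, use the convolution formula for the $\phi$-function (equivalently $h$) of a diagonal hypersurface in the limit $p\to\infty$. This is Han–Monsky's $\delta$-function theory in the limit, which becomes a "folded" convolution: convolve the densities, then reduce modulo the reflection $x\mapsto -x$ and periodicity $x \mapsto x+2$. This folding is exactly what produces Euler polynomials. A convolution of $d$ uniform densities on $[-1/2,1/2]$, folded periodically with alternating sign, is a periodic Euler function $\pm E_d$. We use the reflection functional equation (Corollary \ref{cor:reflection_h_function_double_points}) for Gorenstein double points to fix the symmetry $h(x)-x$ odd about $1/2$.

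\item Third, compute the Fourier series, one factor per summand:
 \begin{itemize}
 \item each $x_i^2$ factor contributes $\operatorname{sinc}$-type factors $1/(\pi k)$ on odd frequencies $k$, giving after $d$ factors the Fourier series of $E_d$;
 \item the $x_0^n$ factor contributes $\sin(\pi k/n)$ or $\cos(\pi k/n)$, which shifts the argument by $a_{d+1,n}$.
 \end{itemize}
 Parity of $d$ decides between sine and cosine. This yields the shift $1/n$ for odd $d+1$ and $1/2-1/n$ for even $d+1$. The constant $c_{d+1,n}$ then follows from the standard Fourier normalization $E_m(x) = \frac{4\, m!}{\pi^{m+1}}\sum_{k\ge0}\frac{\sin((2k+1)\pi x - m\pi/2)}{(2k+1)^{m+1}}$. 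The three piecewise cases arise from undoing the folding on $[0,1]$: the arguments $x+a$ and $-x+a$ leave $[0,1]$ at $x=a$ and $x=1-a$, and the relation $E_m(y+1)=-E_m(y)+2y^m$ (or simply periodic antisymmetry) swaps the sign of one term.
\end{enumerate}

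The main obstacle is the second stage: justifying that the limit $h$-function of the sum is given by the folded convolution, including the interchange of $p\to\infty$ with the iterated transform. We will invoke Meng's integral formula \cite[Theorem 5.23]{mengLimits2026} to do this.

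Beyond that, the hard part is bookkeeping of signs and parity, namely matching $(-1)^{\lfloor d/2\rfloor}$ and the factor $2^{d-3}n$. We would check these against the $A_1$ case, where Gessel–Monsky (Theorem \ref{th:gessel_monsky}) pins down the constants, and against \cite[Theorem 4.4]{shidelerFsignatureFunctionsDiagonal2024}. Finally, we verify continuity at $x=a$ and $x=1-a$ as a consistency check.
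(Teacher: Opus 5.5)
There is a genuine gap: your Stage 2 is where the entire content of the theorem lives, and it is asserted rather than proved. Meng's formula (Theorem \ref{th:integral_formula}) is not a convolution. Its kernel $D_\infty$ is piecewise quadratic and not translation-invariant, so there is no a priori reason that adding a summand acts by a Fourier multiplier. ``Han--Monsky in the limit'' is a heuristic with no precise folding rule attached: you never say which function is folded, nor with which sign.

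Your picture is in fact correct, but it needs the step the paper's proof actually carries out. Since $d(-h'_{x^2})=2(\delta_{1/2}-\delta_0)$, the formula becomes $h_{d+1}(x)=\int_0^{1^+}2K(t,x)\,d_t(-h_d'(t))$. Integrate by parts twice (the boundary terms vanish) and use Remark \ref{rmk:derivative_K_function}: $d_t\partial_tK=-\tfrac12\,dt$ on $\Delta_0$, and the atom at $t=1$ appears only for $x\ge\tfrac12$. This gives
\[
h_{d+1}(x)=\int_{1/2-x}^{1/2+x}h_d(t)\,dt,\qquad x\in[0,\tfrac12].
\]
Now combine this with Corollary \ref{cor:reflection_h_function_double_points}, which says $g_d:=h_d-x$ satisfies $g_d(1-t)=g_d(t)$. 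So $h-x$ is \emph{even} about $\tfrac12$, not odd as you wrote. The result is $h_{d+1}'-1=\widetilde{(h_d'-1)}*\chi_{[-1/2,1/2]}$, where the tilde denotes the even, $1$-antiperiodic extension from $[0,1]$. On the Fourier side, the coefficient of $\sin(k\pi x)$, $k$ odd, is multiplied by $2\sin(k\pi/2)/(k\pi)$. Without this derivation, your Stage 3 has nothing to act on.

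The reflection fails at $d=0$, because $x_0^n$ is not a double point for $n\ge 3$. So the $x_0^n$ factor cannot come from the same mechanism. You need the direct base case $h_1=2nK(\tfrac1n,x)$, which shows that $h_1'-1$ is the square wave convolved with $\tfrac n2\chi_{[-1/n,1/n]}$. That is where the factor $n\sin(k\pi/n)/(k\pi)$ comes from.

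Checking constants against Gessel--Monsky or \cite[Theorem 4.4]{shidelerFsignatureFunctionsDiagonal2024} only tests $n=2$, so it cannot pin down the $n$-dependence; the amplitudes have to be computed. They come out as follows: the $\sin(k\pi x)$-coefficients of $h_d-x$ have modulus $2^{d+1}n|\sin(k\pi/n)|/(\pi^{d+2}k^{d+2})$, which matches $|c_{d+1,n}|$ under your normalization.

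Your worry about interchanging $p\to\infty$ with the transform is moot, since Meng's formula is already a statement about limit $h$-functions. Once the recursion is in hand, Fourier analysis is optional. The paper simply integrates the Euler-polynomial expression piece by piece, using $\int E_m=E_{m+1}/(m+1)$ and $E_m(1-y)=(-1)^mE_m(y)$, and obtains the interval $[\tfrac12,1]$ by reflection.
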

This result was inspired by Hoffman's formulas in \cite[Theorem 6.2]{hoffmanDerivativePolynomialsEuler1999}, which express certain sequences as evaluations of Euler polynomials at certain rational values.

In Theorems \ref{th:h_function_D_n_surface} and \ref{th:h_function_E_7_surface} we start with the non-diagonal simple singularities, and compute the limit $h$-functions of the $D_n$ surfaces and the $E_7$ surfaces. The method was already sketched in a previous version of \cite{mengLimits2026}, and it can be systematically applied to obtain higher-dimensional cases.

In Section \ref{sec:wat_yos_char_0}, we explore the characteristic zero version of the Watanabe–Yoshida conjecture, generalize the known cases, and use the computation of the limit for the $A_1$ and $A_2$ singularities from previous sections to extend the settled cases of the strong form of the conjecture to characteristic zero, see \cite{castilloreyStrong2025}.
\begin{theorem}[Theorem \ref{th:watanabe_yoshida_low_dimension}]
    \label{th:intro_watanabe_yoshida_low_dimension}
    Let $K$ be a field of characteristic $0$, and 
    \begin{gather*}
        R_{0,d}:=\left(K[x_0,...,x_d]/(x_0^2+\dots+x_d^2)\right)_{(x_0,\dots,x_d)}.
    \end{gather*}
    Let $(R,\m,K)$ be a $d$-dimensional $K$-algebra of essentially finite type. Moreover, assume that $R$ is unmixed and nonregular. If either $2\leq d\leq 7$, or $R$ is a complete intersection of dimension $\geq 2$, then either $\liminf_p\eHKp(R)> \limeHK(R_{0,d})$, or $R$ is an $A_1$ singularity (in the sense of \ref{def:formally_A_1_singularity}).
    \end{theorem}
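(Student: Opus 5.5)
The plan is to reduce the statement to the known positive-characteristic cases of the Watanabe–Yoshida conjecture, in its strong form, by spreading out and reducing mod $p$. We also need the fact that Gessel–Monsky's $\limeHK(R_{0,d})$ is a strict lower bound, with a uniform gap, for the reductions of non-$A_1$ singularities.

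First, spread $R$ out. Since $R$ is essentially of finite type over $K$, there is a finitely generated $\mathbb{Z}$-algebra $A\subset K$ and a model $R_A$ of $R$. For almost all maximal ideals of $A$, with residue field of characteristic $p$, the reduction $R_p$ is then a $d$-dimensional local ring. Standard spreading-out results give that $R_p$ is unmixed and nonregular for $p\gg0$. If $R$ is a complete intersection, so is $R_p$ for $p\gg 0$. We need a little care with the completion: unmixedness should be checked on $\widehat{R_p}$, which is where the positive-characteristic results are stated. Excellence handles this via formal equidimensionality of the reduction of an equidimensional ring.

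Second, apply the characteristic-$p$ strong Watanabe–Yoshida results cited from \cite{castilloreyStrong2025}. For $2\le d\le 7$, or for complete intersections of dimension $\ge2$, these say the following. Either $\eHK(R_p)\ge \eHK(R_{0,d,p})+\delta_{d,p}$, with an explicit gap $\delta_{d,p}$ coming from the next singularity in the ordering, which is essentially the $A_2$ singularity $x_0^3+x_1^2+\dots+x_d^2$. Or $\widehat{R_p}$ is an $A_1$ singularity, i.e. $\widehat{R_p}\cong k[[x_0,\dots,x_d]]/(x_0^2+\dots+x_d^2)$. Taking $\liminf_p$ and using Theorem \ref{th:gessel_monsky} together with the computation of $\limeHK$ for $A_2$ from Theorem \ref{cor:limit_eHK_A_n_singularities}, the first alternative gives
\begin{equation*}
\liminf_p \eHKp(R)\ \ge\ \limeHK(A_2\text{ in dim } d) \ >\ 1+\frac{c_d}{d!}=\limeHK(R_{0,d}).
\end{equation*}
The strict inequality is checked by comparing Taylor coefficients of $F_g$ for $A_2$ with those of $\sec z+\tan z$.

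Third, handle the dichotomy across primes. If infinitely many reductions fail to be $A_1$, the bound above holds along that subsequence. However, $\liminf$ could still be attained along the $A_1$ primes, so we must argue that being $A_1$ at infinitely many $p$ forces $R$ itself to be $A_1$ in the sense of Definition \ref{def:formally_A_1_singularity}. The tools are: multiplicity $2$ is detected in characteristic zero by the Hilbert–Samuel function, which is stable under reduction; $R$ is then a hypersurface for $p\gg0$; and the quadratic part of the defining equation has full rank $d+1$ mod infinitely many $p$, hence over $K$. So in characteristic zero the tangent cone is a nondegenerate quadric, which is the $A_1$ condition after completion and extension of $K$.

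The main obstacle is this last step, together with making the gap uniform in $p$. The positive-characteristic strong form gives strictness for each $p$, but to get a strict inequality of $\liminf$ we need the explicit intermediate $A_2$ bound rather than mere strictness. Failing that, we need an argument that the characteristic-$p$ lower bounds converge to something strictly above $1+c_d/d!$, which is exactly where the $A_2$ limit computation from earlier sections must be invoked.
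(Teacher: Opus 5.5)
Your proposal follows the paper's proof. You spread out, then apply the settled characteristic-$p$ (strong) Watanabe--Yoshida results so that every non-$A_1$ closed fiber is bounded below by the $A_2$ value. You get strictness from $\limeHK(S_{0,d})>\limeHK(R_{0,d})$ (Lemma \ref{lm:A_1_and_A_2_different_eHK}), and otherwise lift the $A_1$ property from a closed fiber to $R$ through nondegeneracy of the quadratic part, i.e.\ the Hessian determinant (Lemma \ref{lm:reduction_A_1_singularity}; a single fiber already suffices).

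The one thin step is your ``comparing Taylor coefficients'' for that strict inequality. It must hold for every $d\ge 2$ and is not a routine check. The paper gets $\ge$ by letting $p\to\infty$ in the characteristic-$p$ comparison of $A_2$ with $A_1$. It gets $\neq$ from a $3$-adic argument: the $A_1$ coefficients are $E_d/d!$ with $E_d\in\mathbb{Z}$, while the $A_2$ ones are $N_d(2,3)/(2\cdot 3^{d-1}d!)$ with $3\nmid N_d(2,3)$.
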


On a different note, we observed a functional equation relating the $h$-function or the $\phi$-function of any hypersurface with its reflection that we will include in the section on preliminaries. 
\begin{theorem}[Theorem \ref{th:reflection} and Corollary \ref{cor:reflection_h_function_double_points}]
Let $(R,\m)$ be a regular local domain of dimension $d$, and $0\neq f\in R$. Let the ideal $I\subset R/f$ be a minimal reduction of $\m/f$ that is a parameter ideal in $R/f$. Then,
\begin{gather*}
h_{R,f,I:\m}(1-t) = h_{R,f,\m}(t)-e(R/f)(t-1)-1.
\end{gather*}
Moreover, if $f$ is a double point, then
\begin{gather*}
h_{R,f,\m}(1-t) = h_{R,f,\m}(t)-2t+1,
\end{gather*}
or in terms of the $F$-signature of pairs and the $\phi$-function,
\begin{gather*}
s(R,f^{1-t}) = s(R,f^t)+2t-1 \quad \text{and} \quad \phi_{\m,f}(1-t) = \phi_{\m,f}(t)-2t+1.
\end{gather*}
\end{theorem}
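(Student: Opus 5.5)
The plan is to obtain the reflection formula from Gorenstein linkage (Matlis duality) in an Artinian complete intersection at each finite level $q=p^e$, and then normalize and pass to the limit. I work with the definition of the $h$-function as the limit of the normalized lengths $\frac{1}{q^{d}}\,\ell\bigl(R/(\mathfrak a^{[q]}+f^{\lceil tq\rceil})\bigr)$, for $\mathfrak a$ the ($\m$-primary) lift to $R$ of the ideal in question. I restrict to $t\in[0,1]$ and let $k=\lceil tq\rceil$, or the matching floor, so that $q-k$ corresponds to $1-t$.

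\textbf{Lifting and linkage.} Choose $y_1,\dots,y_{d-1}\in R$ whose images generate $I$, and put $J'=(y_1,\dots,y_{d-1})$ and $J=J'+(f)$. Since $I$ is a parameter ideal of $R/f$, the elements $y_1,\dots,y_{d-1},f$ form a system of parameters, hence a regular sequence, in the regular ring $R$. Set $K_q=J'^{[q]}+(f^q)$ and $C_q=R/K_q$, which is an Artinian complete intersection and therefore Gorenstein. For every ideal $\mathfrak a\supseteq K_q$, Matlis duality gives $\ell(0:_{C_q}\mathfrak a)=\ell(R/\mathfrak a)$, and hence
\[
\ell\bigl(R/(K_q:\mathfrak a)\bigr)=\ell(C_q)-\ell(R/\mathfrak a).
\]
I apply this with $\mathfrak a=\m^{[q]}+(f^{k})$. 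Then $K_q:\mathfrak a=(K_q:\m^{[q]})\cap(K_q:f^k)$. By flatness of Frobenius, $K_q:\m^{[q]}=(J:\m)^{[q]}$. Because $f$ is a nonzerodivisor modulo the regular sequence $J'^{[q]}$, we also have $K_q:f^k=J'^{[q]}+(f^{q-k})$.

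\textbf{Main obstacle.} The hard step is to identify $(J:\m)^{[q]}\cap\bigl(J'^{[q]}+(f^{q-k})\bigr)$ with the lift of $(I:\m)^{[q]}+(f^{q-k})$, up to an error whose length is exactly linear in $k$. Here $(I:\m)$ is computed in $R/f$, and its lift is $(J:\m)$ modulo the $f$-direction, since $J:\m=J'+(f)+(\text{socle lift})$. My first attempt is to use the modular law together with the fact that $(J:\m)/J$ is one-dimensional, generated by a socle element $u$. This reduces the comparison to computing lengths of the cyclic pieces $u^q\cdot R/(J'^{[q]}+f^{j})$. Those lengths equal $q^{d-1}\cdot j$ times $\ell(R/(J',f))=e(I,R/f)$, up to lower order terms, because $J'$ is a reduction, so the error term is $e(R/f)\,q^{d-1}(\text{linear in }k)$. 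Since $\ell(C_q)=q^d\,\ell(R/J)=q^d e(R/f)$, dividing by $q^d$ and letting $q\to\infty$ should yield
\[
h_{R,f,I:\m}(1-t)=h_{R,f,\m}(t)-e(R/f)(t-1)-1 .
\]
The constant $-1$ comes from $\ell\bigl((J:\m)/J\bigr)=1$. I will check the normalization constants against the endpoint $t=0$, where both sides are explicitly known: $h(0)=0$, while at $t=1$ the value is the Hilbert–Kunz multiplicity of the ideal.

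\textbf{The double point case.} If $f$ is a double point, then $e(R/f)=2$. For a minimal reduction $I$ of $\m/f$ we get $\ell(R/(J',f))=e(I)=2$, so $\ell\bigl((R/f)/I\bigr)=2$ and therefore $\m/f$ is the unique ideal strictly between $I$ and $R/f$. Since $R/f$ is Gorenstein and $I$ is a parameter ideal, $I:\m$ is the unique ideal containing $I$ with colength one more. Hence $I:\m=\m/f$. Substituting this and $e=2$ into the first formula gives $h_{R,f,\m}(1-t)=h_{R,f,\m}(t)-2t+1$. The restatements for $s(R,f^t)$ and $\phi_{\m,f}$ then follow from the identifications of these invariants with the $h$-function recalled in Definitions \ref{def:phi_function} and \ref{def:F_signature_of_pairs}, which are affine changes of variable that preserve the form of the identity.
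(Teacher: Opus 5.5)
Your setup is correct. With $K_q=J^{[q]}$, linkage in the Gorenstein ring $C_q$ does give
\[
\ell\bigl(R/(A_q\cap B_q)\bigr)=q^d e(R/f)-\ell\bigl(R/(\m^{[q]}+(f^k))\bigr),\qquad A_q:=(J:\m)^{[q]},\quad B_q:=J'^{[q]}+(f^{q-k}).
\]
The gap is in the step you call the main obstacle, and the mechanism you propose there does not work, for two reasons.

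First, $\ell(R/(A_q\cap B_q))$ and $\ell(R/(A_q+B_q))$ do not differ by a term linear in $k$. If they did, you would get $h_{R,f,J:\m}(1-t)=e-h_{R,f,\m}(t)+(\text{linear})$, with the wrong sign in front of $h_{R,f,\m}$. Combined with the true identity, this would force $h_{R,f,\m}$ to be affine on $[0,1]$. That already fails for $f=xy\in k[[x,y]]$, where $h_f(t)=2t-t^2$.

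Second, the cyclic pieces are not of length $q^{d-1}je$ up to lower order. Since $A_q=J'^{[q]}+(f^q,u^q)$, for $j\le q$ one has
\[
\ell\bigl(u^q\cdot R/(J'^{[q]}+(f^j))\bigr)=q^{d-1}je-\ell\bigl(R/((J:\m)^{[q]}+(f^j))\bigr).
\]
So computing these lengths is equivalent to the original problem. In the $xy$ example this length is $j^2$, not $\approx 2qj$. Your remark that $J'$ is a reduction only controls the full length $\ell(R/(J'^{[q]}+(f^j)))=q^{d-1}je$, not the length of the submodule generated by $u^q$.

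What closes the gap is additivity of length. Use the exact sequence
\[
0\to R/(A_q\cap B_q)\to R/A_q\oplus R/B_q\to R/(A_q+B_q)\to 0.
\]
Here $A_q+B_q=(J:\m)^{[q]}+(f^{q-k})$ because $J'^{[q]}\subseteq A_q$. You also have the exact values $\ell(R/A_q)=q^d(e-1)$ and $\ell(R/B_q)=q^{d-1}(q-k)e$; the latter holds because $y_1,\dots,y_{d-1},f$ is a regular sequence with $\ell(R/(J',f))=e$. Together these give
\[
\ell\bigl(R/((J:\m)^{[q]}+(f^{q-k}))\bigr)=\ell\bigl(R/(\m^{[q]}+(f^k))\bigr)-q^{d-1}ke+q^d(e-1)
\]
exactly, with no lower-order terms. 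Dividing by $q^d$ and using continuity of $h$-functions on the dense set $\{k/q\}$ yields $h_{R,f,J:\m}(1-t)=h_{R,f,\m}(t)-e(t-1)-1$.

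With this repair your argument is essentially the paper's. The paper links instead inside the complete intersection $R/(J'^{[q]},f^k)$, proving $(J:\m)^{[q]}:f^{q-k}=(J'^{[q]},f^k):(\m^{[q]},f^k)$. It then uses the exact sequence for multiplication by $f^{q-k}$ on $R/(J:\m)^{[q]}$ where you use inclusion–exclusion. Your treatment of the double point case ($I:\m=\m/f$) and of the translations to $s(R,f^t)$ and $\phi_{\m,f}$ is fine.
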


Finally, Cheng Meng posed a conjecture about the generating function of the $\limeHK$ of the Fermat hypersurfaces that we confirm in Section \ref{sec:fermat}, but in this case we do it so by simply building on Gessel and Monsky's techniques from \cite{gesselLimit2010}:
\begin{corollary}[Theorem \ref{th:fermat_limit_eHK_gen}, Conjecture 8.8 in \cite{mengLimits2026}]
For $d\geq 2$, the generating function of the limit Hilbert–Kunz multiplicities of the Fermat hypersurfaces
\begin{equation*}
    \sum_{n=0}^{\infty}\limeHK\left(R_{0,n}^{(d,\dots,d)}\right)z^{n}
\end{equation*}
is a rational function on the functions $z, \cos(a_iz)$ and $\sin(a_iz)$, for some finitely many $a_i\in \mathbb{Q}(\zeta_{2d})\cap \mathbb{R}$ with $\zeta_{2d} = e^{\frac{\pi}{d}i}$. 
\end{corollary}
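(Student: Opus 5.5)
We build directly on the Gessel–Monsky method from \cite{gesselLimit2010}. For the diagonal hypersurface $x_0^{d}+\dots+x_n^{d}$ (in $n+1$ variables, so $R_{0,n}^{(d,\dots,d)}$ has dimension $n$), Han–Monsky theory expresses the Hilbert–Kunz function through the ``$\delta$-function'' of the monomials $x_i^{d}$. In the limit $p\to\infty$, Gessel–Monsky reduce this to a continuous problem, and the plan follows that reduction.

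\textbf{Step 1 (integral formula).} In the limit, the deviation $\limeHK(R_{0,n}^{(d,\dots,d)})-1$ is a constant multiple of an integral
\[
\int_{\mathbb{R}} \varphi(s)^{n+1}\,\frac{ds}{s^2}\quad\text{or}\quad \int_{\mathbb{R}}\Big(\frac{\sin(s)}{s}\Big)^{n+1}\cdots\,ds .
\]
Here $\varphi$ is the Fourier transform of the limit ``distance-to-the-lattice'' function attached to a single monomial $x^{d}$. For $d=2$ this is exactly how Gessel and Monsky obtain $\sec+\tan$.

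To set this up, recall that for one variable the limit function is the piecewise-linear periodic tent function
\[
\Delta_d(t)=\operatorname{dist}(t,\,d\mathbb{Z})
\]
(suitably normalized). The quantity of interest is then an iterated convolution of $n+1$ copies of this kernel, evaluated at a point and corrected by its value at the ``odd'' lattice. Concretely, $\limeHK-1$ is a linear combination of values of the periodic function
\[
\big(\Delta_d^{*(n+1)}\big)(t_0),
\]
where $t_0$ ranges over finitely many rational points with denominators dividing $2d$.

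\textbf{Step 2 (Fourier expansion and generating function).} Expand $\Delta_d$ in its Fourier series. Its coefficients are supported on odd multiples of the fundamental frequency and behave like $1/k^2$. The $(n+1)$-fold convolution then has Fourier coefficients equal to the $(n+1)$-th power of these. Summing against $z^n$ gives
\[
\sum_n \limeHK\big(R_{0,n}^{(d,\dots,d)}\big)z^n \;=\; \sum_{k\ \mathrm{odd}} \frac{R(z,k)}{1-z\,c_k},
\]
a sum of geometric series. After rescaling $z$, this becomes a sum over $k$ of terms
\[
\frac{\omega_k}{k^2-z^2\,\cdot\,\mathrm{const}} \qquad (\text{or similar}),
\]
where the $\omega_k$ are roots of unity of order dividing $2d$, coming from evaluating $e^{2\pi i k t_0}$. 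Grouping $k$ by residue mod $2d$ turns the sum into finitely many series of the form
\[
\sum_{k\equiv r \ (2d)} \frac{1}{k^2-w^2}.
\]
By the partial-fraction (Mittag-Leffler) expansions of $\cot$ and $\csc$, each such series equals
\[
\frac{\pi}{2d\,w}\big(\cot\text{ or }\csc\big)\big(\pi(w\pm r)/(2d)\big)
\]
up to elementary terms. Expanding with addition formulas yields a rational expression in $z$, $\cos(a z)$ and $\sin(a z)$. The $a$ are real numbers obtained from real and imaginary parts of $\zeta_{2d}^{j}$, so they lie in $\mathbb{Q}(\zeta_{2d})\cap\mathbb{R}$. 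Any $\pi$ factors are absorbed by the normalization of $z$.

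\textbf{Main obstacle.} The hard part is Step 1: justifying that the limit exists and equals the convolution evaluation. This requires controlling the Han–Monsky $\delta$-function representation uniformly in $p$; the positions where the characteristic-$p$ functions are evaluated depend on $p$ mod $2d$. I would handle this by working along each residue class of $p$ modulo $2d$ separately, using Gessel–Monsky's estimates for $d=2$ adapted verbatim. Either the limits agree across classes, or one uses $\liminf$/$\limsup$ with the same formula. A further technical point is interchanging the sum over $n$ with the Fourier sum. This is fine for $|z|$ small since the coefficients decay like $k^{-2(n+1)}$, and rationality in trigonometric functions then follows by identity of meromorphic functions.
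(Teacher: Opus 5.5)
Your overall plan (pass to the Fourier side, sum a geometric series in $z$, then use the partial-fraction expansion of $\cot$) is viable and genuinely different from the paper's route. As written, however, Step~1 is not established, and the kernel you guess for it is wrong in a way that breaks Step~2.

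\textbf{Step 1.} You do not need any $p$-uniform estimates. Existence of the limit, and an exact formula for arbitrary exponents, are Gessel--Monsky's Theorem~2.4. The paper imports this and then argues purely algebraically:
it rewrites $\sum_\lambda C_\lambda$ as $\sum_{a\equiv s\,(2d)} f_s(a)$;
it applies a $2d$-th roots-of-unity filter to $\sum_j f_s(j)T^{j-1}=(1+T)^sA_{s-1}(T)$;
and it evaluates the exponential generating function of the Eulerian polynomials at $T=\zeta^r$.
Your fallback to $\liminf/\limsup$ would not even show that $\limeHK$ exists.

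Starting from Theorem~2.4 with $s=n+1$, one has $C_\lambda=(s-1)!\,B(2\lambda)$, where $B$ is the $s$-fold convolution of the indicator of $[-1/d,1/d]$. This is a box, not a tent. Poisson summation then gives
\[
\limeHK\bigl(R_{0,n}^{(d,\dots,d)}\bigr)=\sum_{k\in\mathbb{Z}}\Bigl(\frac{\sin(\pi k/d)}{\pi k/d}\Bigr)^{n+1}.
\]
Two consequences follow.
\begin{enumerate}
\item The object is a lattice sum, not an integral. Replacing $\sum_k$ by $\int_{\mathbb{R}}$ changes the value once $n+1>2d$; for $d=2$, $n=4$ it gives $115/96$ instead of the true $29/24$.
\item The one-variable Fourier coefficients decay like $1/k$ and are nonzero for all $k\not\equiv 0\pmod d$. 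They do not decay like $1/k^2$ on odd $k$.
\end{enumerate}
The second point is not cosmetic. With coefficients $A/k^2$ your geometric series produces $\sum_k A/(k^2-Az)$, which is a trigonometric function of $\sqrt z$. No rescaling of $z$ turns this into the $k^2-\mathrm{const}\cdot z^2$ you wrote, so the claimed form does not follow.

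\textbf{Step 2 with the correct coefficients.} Your plan then goes through; the paper's roots-of-unity filter is essentially the discrete shadow of your Poisson step. Write $\sigma_r=\sin\frac{\pi r}{d}$. Within each residue class $k=2dm+r$, the term $\frac{d\sin(\pi k/d)}{\pi k-zd\sin(\pi k/d)}$ equals $\frac{\sigma_r}{2\pi}(m+w_r)^{-1}$, where $w_r=\frac{r}{2d}-\frac{z\sigma_r}{2\pi}$. Using $\pi\cot(\pi w)=\sum_{m\in\mathbb{Z}}(w+m)^{-1}$ (symmetric summation) gives
\[
\sum_{n\ge 0}\limeHK\bigl(R_{0,n}^{(d,\dots,d)}\bigr)z^n=\frac{1}{1-z}+\sum_{r=1}^{d-1}\sin\frac{\pi r}{d}\,\cot\Bigl(\frac{\pi r}{2d}-\frac{z}{2}\sin\frac{\pi r}{d}\Bigr).
\]
The $r$-th summand equals the paper's $\psi_r(z)+\cos\frac{\pi r}{d}$, and these constants sum to $0$.

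Since the terms decay like $k^{-(n+1)}$ rather than $k^{-2(n+1)}$, the $n=0$ lattice sum is only conditionally convergent. The interchange of the sums over $n$ and $k$ therefore needs care there.

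Once repaired, your route yields a transparent closed form: the Poisson step gives $\limeHK=\sum_k\prod_i\frac{\sin(\pi k/d_i)}{\pi k/d_i}$ for arbitrary exponents. The paper's route stays formal and avoids convergence issues.

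\textbf{Field of the frequencies.} Imaginary parts of powers of $\zeta_{2d}$ need not lie in $\mathbb{Q}(\zeta_{2d})$ when $d$ is odd. For example, $\sin\frac{\pi}{3}=\frac{\sqrt3}{2}\notin\mathbb{Q}(\zeta_6)=\mathbb{Q}(\sqrt{-3})$. In general the frequencies $\sin\frac{\pi r}{d}$ lie only in $\mathbb{Q}(\zeta_{4d})\cap\mathbb{R}$. The paper's assertion $b_r\in\mathbb{Q}(\zeta)\cap\mathbb{R}$ has the same problem.
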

\textbf{Acknowledgments.} The author wishes to thank Ilya Smirnov, Kevin Tucker and Devlin Mallory, for discussions and questions that motivated and initiated this project. The author also wishes to thank the Online Enciclopedia of Integer Sequences.

The author was supported by the grants SEV-2023-2026 from the Spanish Ministry of Science and Innovation, RYC2020-028976-I funded by MICIU/AEI/10.13039/501100011033, and EI ESF "ESF Investing in your future".

\textbf{AI disclosure.} Anthropic's Claude (Opus and Sonnet) has been used for proof-checking the manuscript, making several of the figures, and simplifying formulas across Sections \ref{sec:limit_h_functions_A_n} and \ref{sec:f_invariants_simple}.

\section{Preliminaries}
\label{sec:Preliminaries}
Throughout, we will assume all local rings to be Noetherian, $p>0$ will always denote a prime, and $q$ a power of this prime. Also, we will denote the (usual) Hilbert–Samuel multiplicity of $R$ at the maximal ideal by $e(R)$, the Hilbert–Kunz multiplicity by $\eHK(R)$ and the F-signature by $s(R)$. In this section, we briefly survey some of the tools we will use throughout the work, and refer the reader to \cite{mengHfunction2025,mengLimits2026} for details.

\subsection{The \texorpdfstring{$h$}{h}-function and the \texorpdfstring{$F$}{F}-signature of pairs.} Here we recall the definition of the $h$-function and the $F$-signature of pairs. 
\begin{definition}[$h$-function, Definition 3.4 in \cite{mengHfunction2025}]
	\label{def:h_function}
Let $(R,\m,k)$ be a $d$-dimensional local ring of characteristic $p>0$. Let $I,J$ be ideals such that $I+J$ is $\m$-primary. Set
\begin{gather*}
    h_{R,I,J,e}(x) := \frac{1}{p^{ed}}\ell(R/(I^{\lceil xp^e\rceil}+J^{[p^e]})),
\end{gather*}
where, by convention, a non-positive power of an ideal is taken to be the unit ideal. The $h$-function of the triple $(R,I,J)$ is the limit of these functions:
\begin{gather*}
h_{R,I,J}(x) := \lim_{e\rightarrow\infty} h_{R,I,J,e}(x).
\end{gather*}
which exists by \cite[Theorem 3.29]{mengHfunction2025}. Also, when $R$ is clear by context, we write $h_{I,J} = h_{R,I,J}$, and when $R = k[[x_1,...,x_n]]$, $I = (f)$ and $J = \m$, we will often write $h_f := h_{R,f,\m}$, and call it the $h$-function of the hypersurface $f$.
\end{definition}
\begin{definition}[$F$-signature of pairs, \cite{blickleFsignaturePairsAsymptotic2012,blickleFsignaturePairsContinuity2013}]
	\label{def:F_signature_of_pairs}
	Let $(R,\m)$ be an $F$-finite regular local ring of dimension $d$, $\mathfrak{a}$ an ideal, and $t\in \mathbb{R}_{\geq 0}$. Then,
\begin{equation*}
s(R,\mathfrak{a}^t) := \lim_{e\rightarrow \infty} \dfrac{1}{p^{de}}\ell\left(\dfrac{R}{\left(\m^{[p^e]}:\mathfrak{a}^{\lceil tp^e\rceil}\right)}\right)
\end{equation*}
\end{definition}
The $h$-function of a hypersurface captures the $F$-signature of pairs:
\begin{lemma}[Relationship between $F$-signature of pairs and $h$-function]
    \label{lm:F_signature_pairs_and_h_function}
	Let $(R,\m)$ be an $F$-finite regular local ring of dimension $d$ and $f\in \m$ a nonzerodivisor. Then,
	\begin{equation*}
		h_{R,f,\m}(t) = 1-s(R,f^t).
	\end{equation*}
\end{lemma}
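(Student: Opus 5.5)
The plan is to compare the two lengths at each finite level $e$ and then pass to the limit. Set $q=p^e$ and $k=\lceil tq\rceil$. By Definition \ref{def:h_function}, with $I=(f)$ and $J=\m$,
\begin{equation*}
h_{R,f,\m,e}(t)=\frac{1}{q^d}\,\ell\bigl(R/(f^{k}+\m^{[q]})\bigr).
\end{equation*}
The expression inside the limit in Definition \ref{def:F_signature_of_pairs} is
\begin{equation*}
\frac{1}{q^d}\,\ell\bigl(R/(\m^{[q]}:f^{k})\bigr).
\end{equation*}
Because the ceilings agree on both sides, it suffices to prove the exact identity
\begin{equation*}
\ell\bigl(R/(f^{k}+\m^{[q]})\bigr)+\ell\bigl(R/(\m^{[q]}:f^{k})\bigr)=\ell\bigl(R/\m^{[q]}\bigr)=q^d.
\end{equation*}
The last equality holds because $R$ is regular: by Kunz, $F^e_*R$ is free of rank $q^d$ over $R$ after accounting for $F$-finiteness. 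Equivalently, if $x_1,\dots,x_d$ is a regular system of parameters, then $\m^{[q]}=(x_1^q,\dots,x_d^q)$ and $R/\m^{[q]}$ has length $q^d$. Dividing the identity by $q^d$ and letting $e\to\infty$ then gives $h_{R,f,\m}(t)=1-s(R,f^t)$. Both limits exist by the cited results, and the limit of the sum is the sum of the limits.

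To prove the identity, I would work in the Artinian ring $A=R/\m^{[q]}$, which is Gorenstein since $\m^{[q]}$ is generated by a system of parameters. Let $g$ be the image of $f^k$ in $A$. There is an exact sequence
\begin{equation*}
0\to \operatorname{ann}_A(g)\to A\xrightarrow{\;\cdot g\;} A\to A/gA\to 0,
\end{equation*}
which gives $\ell(A/gA)=\ell(\operatorname{ann}_A(g))$. In any Artinian local ring one has $\ell(gA)=\ell(A)-\ell(\operatorname{ann}_A(g))$. In a Gorenstein Artinian ring, Matlis duality gives $\ell(\operatorname{ann}_A(\operatorname{ann}_A(g)))=\ell(A)-\ell(\operatorname{ann}_A(g))$, but the exact sequence alone already suffices. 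Since $\operatorname{ann}_A(g)=(\m^{[q]}:f^k)/\m^{[q]}$, we get
\begin{equation*}
\ell\bigl(R/(\m^{[q]}:f^k)\bigr)=\ell(A)-\ell(\operatorname{ann}_A(g))=\ell(gA)=\ell(A)-\ell(A/gA).
\end{equation*}
Here $\ell(A/gA)=\ell\bigl(R/(f^k+\m^{[q]})\bigr)$, so this is exactly the desired identity. Thus the Gorenstein property is not even needed; only the multiplication-by-$g$ sequence is used.

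The edge case $t\le 0$ is handled by the convention in Definition \ref{def:h_function}. When $k\le 0$, $f^k$ means the unit ideal, so $h=0$; on the other side the colon is $\m^{[q]}$, so $s=1$, and the identity holds.

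The hypothesis that $f$ is a nonzerodivisor is not used at finite level. It is only needed to ensure that the $h$-function is the one considered in \cite{mengHfunction2025}, where existence is established. I do not expect a real obstacle. The one point to handle carefully is the normalization: the factor $1/p^{ed}$ uses $d=\dim R$ in both definitions, and the length $\ell(R/\m^{[q]})$ must be exactly $q^d$. This relies on $R$ being regular and on computing lengths over $R$ itself, so no residue field degree issues arise.
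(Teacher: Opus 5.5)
Your proof is correct and is essentially the paper's argument. The paper simply invokes the short exact sequence $0\to R/(\m^{[q]}:f^{\lceil tq\rceil})\xrightarrow{\cdot f^{\lceil tq\rceil}} R/\m^{[q]}\to R/(\m^{[q]},f^{\lceil tq\rceil})\to 0$, which is exactly your isomorphism $A/\operatorname{ann}_A(g)\cong gA$ spliced with $0\to gA\to A\to A/gA\to 0$, together with $\ell(R/\m^{[q]})=q^d$.

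The only slip is your aside that $F^e_*R$ is free of rank $q^d$: in general its rank also picks up the degree of the residue field over its subfield of $q$-th powers. Your computation of $\ell(R/\m^{[q]})=q^d$ via a regular system of parameters is the correct justification and makes that aside unnecessary.
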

\begin{proof}
	It follows by definition from this short exact sequence,
	\begin{equation*}
	0 \longrightarrow \dfrac{R}{(\m^{[p^e]}:f^{\lceil tp^e\rceil})}\stackrel{f^{\lceil tp^e\rceil}}\longrightarrow \dfrac{R}{\mathfrak{m}^{[p^e]}} \longrightarrow \dfrac{R}{(\m^{[p^e]},f^{\lceil tp^e\rceil})}\longrightarrow 0.
	\end{equation*}
\end{proof}
Let us briefly recall some properties of the $h$-function of a hypersurface:
\begin{proposition}[Some analytic properties of the $h$-function of a hypersurface, \cite{mengHfunction2025} and \cite{mengLimits2026}]
\label{pr:h_function_analytic_properties}
Let $(R,\m)$ be a local $d$-dimensional domain of characteristic $p$, $0\neq f\in \m$, and $J$ an $\m$-primary ideal. Then, the $h$-function $h_{R,f,J}(x)$ has the following properties.
\begin{enumerate}
\item it is an increasing function,
\item it is continuous,
\item it is constantly 0 on $(-\infty,0]$, and for $s\gg 0$, the function is eventually $\ell(R/J)$. In particular, if $J = \m$, then $h_{R,f,\m}(s) = 1$ for $s\geq 1$.
\item Outside a countable subset of $(0,1)$, the derivative exists and is continuous. Furthermore, left and right derivatives exist throughout $(0,1)$ (see \cite[Theorem 5.5]{mengHfunction2025}).
\item it is concave in $[0,\infty)$ (see \cite[Theorem 3.4 (5)]{mengLimits2026}).
\end{enumerate}
\end{proposition}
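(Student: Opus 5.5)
The plan is to check the five properties one at a time from the definition of the approximating functions $h_{e}(x):=h_{R,f,J,e}(x)=p^{-ed}\ell(R/(f^{\lceil xp^e\rceil})+J^{[p^e]})$. For each property, the idea is to prove the corresponding finite-level statement and pass to the limit, which exists by \cite[Theorem 3.29]{mengHfunction2025}. Here $I=(f)$, so $I^{n}=(f^{n})$.

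Properties (1) and (3) are formal.
\begin{itemize}
\item For (1): if $x\le y$, then $(f^{\lceil yp^e\rceil})\subseteq (f^{\lceil xp^e\rceil})$. Hence each $h_e$ is nondecreasing, and so is the pointwise limit.
\item For (3), first part: if $x\le 0$, the power is non-positive, so by convention the ideal is the unit ideal and $h_e(x)=0$.
\item For (3), second part: since $J$ is $\m$-primary, $f^{N}\in J$ for some $N$. Then for $x\ge N$ we have $f^{\lceil xp^e\rceil}\in (f^{N})^{[p^e]}\subseteq J^{[p^e]}$, so $h_e(x)=p^{-ed}\ell(R/J^{[p^e]})$. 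This tends to $\eHK(J)$.
\end{itemize}
As stated, (3) should be read as saying the eventual value is this limiting colength, namely $\eHK(J)$. When $J=\m$ and $R$ is regular (the hypersurface setting $h_{R,f,\m}$), this value is $1$. There one may take $N=1$, since $f\in\m$ gives $f^{p^e}\in\m^{[p^e]}$ as soon as $x\ge1$.

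For continuity (2), I would prove a uniform Lipschitz bound. For $0\le x<y$, the exact sequence
\[
0\to \frac{(f^{a})+J^{[q]}}{(f^{b})+J^{[q]}}\to \frac{R}{(f^{b})+J^{[q]}}\to \frac{R}{(f^{a})+J^{[q]}}\to 0,
\]
with $a=\lceil xq\rceil$ and $b=\lceil yq\rceil$, bounds $h_e(y)-h_e(x)$ by $q^{-d}\ell\bigl(R/((f^{b-a})+J^{[q]}:f^{a})\bigr)$. This is at most $q^{-d}\ell(R/(f^{b-a})+J^{[q]})$. Now filter $R/(f^{b-a})$ by the successive quotients $f^{i}R/f^{i+1}R$. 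Because $R$ is a domain and $f\neq0$, each quotient is a surjective image of $R/f$. Tensoring with $R/J^{[q]}$ gives the bound
\[
\ell\bigl(R/(f^{b-a})+J^{[q]}\bigr)\le (b-a)\,\ell\bigl(R/(f)+J^{[q]}\bigr)\le (b-a)\,C q^{d-1},
\]
where $C$ depends only on $R/f$ and $J$, since $\dim R/f=d-1$. So $|h_e(y)-h_e(x)|\le C(y-x+1/q)$. The limit is therefore $C$-Lipschitz, which gives (2).

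Properties (4) and (5) are the substantive ones. They are exactly \cite[Theorem 5.5]{mengHfunction2025} and \cite[Theorem 3.4 (5)]{mengLimits2026}, as indicated in the statement, so I would cite them rather than reprove them.

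I also see a mild reduction: (4) follows from (5) on $(0,1)$. A concave function has one-sided derivatives everywhere on an open interval, and these are monotone. Its derivative exists off a countable set, namely the jump points of the monotone one-sided derivative, and is continuous on the set where it exists.

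For (5), the natural approach, and the one I expect to be the main obstacle, is to express $h'(x)$ as the limit of the normalised colengths $q^{-(d-1)}\ell\bigl(R/((f)+J^{[q]}:f^{\lceil xq\rceil})\bigr)$. These are nonincreasing in $x$ because the colon ideals grow with the exponent. Justifying the exchange of limit and derivative requires uniform convergence estimates of the type proved in the cited works. For that reason I would rely on those references for (4) and (5), and give full arguments only for (1) through (3).
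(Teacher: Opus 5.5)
Your proposal is correct. It also does more than the paper: the paper gives no argument for this proposition and quotes all five items from \cite{mengHfunction2025} and \cite{mengLimits2026}, whereas you prove (1)--(3) directly.

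\textbf{Items (1)--(3).} These arguments check out. In (2), the annihilator of $f^a$ modulo $(f^b)+J^{[q]}$ is exactly $(f^{b-a})+(J^{[q]}:f^a)$. The domain hypothesis is used only through $\dim R/fR=d-1$. That gives $\ell(R/((f)+J^{[q]}))\le Cq^{d-1}$, and hence the uniform Lipschitz bound.

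Your correction of (3) is also right. The eventual value is $\lim_q q^{-d}\ell(R/J^{[q]})=\eHK(J,R)$. By Kunz this equals $\ell(R/J)$ when $R$ is regular (and $1$ when, in addition, $J=\m$), but not in general. The paper only uses the case $R=k[[x_1,\dots,x_n]]$, so nothing downstream is affected.

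\textbf{Item (5).} Here you undersell your own method. You do not need to identify $h'$ with a limit of colengths, nor to exchange limit and derivative. Taking $b=a+1$ in your exact sequence gives
\[
\ell\bigl(R/((f^{a+1})+J^{[q]})\bigr)-\ell\bigl(R/((f^{a})+J^{[q]})\bigr)=\ell\bigl(R/((f)+(J^{[q]}:f^{a}))\bigr).
\]
The right side is nonincreasing in $a\ge 0$, because the colon ideals grow with $a$. So $a\mapsto\ell(R/((f^a)+J^{[q]}))$ is discretely concave on $\mathbb{Z}_{\ge 0}$. Consequently, for any $x<y<z$ in $\mathbb{Z}[1/p]\cap[0,\infty)$, the three-point concavity inequality holds for $h_e$ once $e$ is large. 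It passes to the limit, and it extends to all of $[0,\infty)$ by the Lipschitz continuity you proved in (2).

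\textbf{Item (4) and overall comparison.} Your reduction of (4) to (5), via the monotone one-sided derivatives of a concave function, is correct. Combined with the argument above, it makes the whole proposition elementary, granted only the existence of the limit \cite[Theorem 3.29]{mengHfunction2025}. The citations buy brevity rather than anything essential for the hypersurface statement.
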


The $h$-function of a hypersurface captures the Hilbert–Kunz multiplicity and the $F$-signature, which can be deduced from its relation to the $F$-signature of pairs:
\begin{theorem}[Theorem 4.4, \cite{blickleFsignaturePairsContinuity2013}]
    \label{th:F_invariants_lim_diff_h}
Let $(R,\m)$ be an F-finite regular local domain of dimension $d$ and $0\neq f\in \m$. Then,
\begin{gather*}
\lim_{x\rightarrow 0^+}\partial_x^+ h_{f}(x) = \eHK(R/f) \qquad \lim_{x\rightarrow 1^-} \partial_x^- h_{f}(x) = s(R/f).
\end{gather*}
\end{theorem}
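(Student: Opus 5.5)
Via Lemma \ref{lm:F_signature_pairs_and_h_function}, I will work with the $F$-signature of pairs $s(R,f^t)=1-h_f(t)$ and prove the two limits separately: at $t\to 0^+$ by a Hilbert–Kunz computation, at $t\to 1^-$ by a splitting computation. Throughout, $q=p^e$ and $a=\lceil tq\rceil$.

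\textbf{The endpoint $x\to 0^+$.} I will use the filtration
\[
R\supset (\m^{[q]},f)\supset (\m^{[q]},f^2)\supset\cdots\supset(\m^{[q]},f^{a}).
\]
Its successive quotients are $f^{i}R/\big(f^{i}R\cap(\m^{[q]},f^{i+1})\big)\cong R/\big((\m^{[q]},f):f^{i}\big)$. For $i=0$ this has length $\ell(R/(\m^{[q]},f))=\ell((R/f)/\m^{[q]}(R/f))$. The key step is a lower and upper bound: the lengths $\ell(R/((\m^{[q]},f):f^i))$ are nonincreasing in $i$, and for $i\le \epsilon q$ they are at least $\ell((R/f)/\m^{[q]})$ minus a correction of order $o(q^{d-1})$ as $\epsilon\to0$. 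For the correction I would try the inclusion $(\m^{[q]},f):f^i\subseteq (\m^{[q]}:f^{i})+(f)$ together with continuity of $s(R,f^t)$ at $0$, which gives $s(R,f^{\epsilon})\to 1$. Summing the filtration then yields
\[
h_{f,e}(\epsilon)=\frac{\epsilon q\,(\eHK(R/f)+o(1))\,q^{d-1}}{q^{d}}.
\]
Since $h_f$ is concave (Proposition \ref{pr:h_function_analytic_properties}), the right derivatives $\partial^+h_f(x)$ are monotone, so $\lim_{x\to0^+}\partial^+h_f(x)=\lim_{x\to 0^+}h_f(x)/x=\eHK(R/f)$. I would take care to interchange the limits in $e$ and $\epsilon$ using uniform convergence of $h_{f,e}$, which holds by \cite{mengHfunction2025}.

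\textbf{The endpoint $x\to1^-$.} Here I will use Fedder-type arguments. Write $\Phi$ for a generator of $\mathrm{Hom}_R(F^e_*R,R)$. The free rank of $F^e_*(R/f)$ equals $\ell\big(R/(\m^{[q]}:f^{q-1})\big)$, so by Fedder's criterion
\[
s(R/f)=\lim_e q^{-(d-1)}\,\ell\big(R/(\m^{[q]}:f^{q-1})\big).
\]
Using concavity again,
\[
\partial^-h_f(1^-)=\lim_{\epsilon\to0}\frac{h_f(1)-h_f(1-\epsilon)}{\epsilon}=\lim_{\epsilon\to 0}\frac{s(R,f^{1-\epsilon})}{\epsilon}.
\]
I then compute
\[
\ell\big(R/(\m^{[q]}:f^{q-j})\big)=\sum_{i=q-j}^{q-1}\ell\big((\m^{[q]}:f^{i+1})/(\m^{[q]}:f^{i})\big)+\ell\big(R/(\m^{[q]}:f^{q})\big).
\]
The last term is zero, because $f^q\in\m^{[q]}$. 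Each graded piece of the sum is $\ell\big(R/((\m^{[q]}:f^{i}):f)\big)$ minus a correction. For $i$ near $q-1$, the aim is to show these pieces are $q^{d-1}(s(R/f)+o(1))$. The upper bound follows from $(\m^{[q]}:f^{i+1})\supseteq(\m^{[q]}:f^{q-1})$ after multiplying by $f^{q-1-i}$. The lower bound needs semicontinuity as $i/q\to1$, which I would get from Polstra–Tucker type uniform convergence.

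\textbf{Main obstacle.} The delicate step is the uniform error control in both endpoint estimates, i.e. justifying the swap of $\lim_e$ with $\epsilon\to0$ and showing that the graded pieces near the endpoints are uniformly close to their extremal values. Should that fail, I will simply cite \cite{blickleFsignaturePairsContinuity2013}, whose Theorem 4.4 states exactly that the right/left derivatives of $s(R,f^t)$ at $0$ and $1$ are $-\eHK(R/f)$ and $-s(R/f)$. Translating this through Lemma \ref{lm:F_signature_pairs_and_h_function}, and using monotonicity of the one-sided derivatives of the concave function $h_f$ to replace the derivative at the endpoint by the limit of derivatives, gives the statement.
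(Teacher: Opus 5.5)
Your fallback is exactly what the paper does: the statement is quoted from \cite{blickleFsignaturePairsContinuity2013} with no further argument. Your translation is correct. Pass to $h_f$ via Lemma \ref{lm:F_signature_pairs_and_h_function}, then use concavity: for a concave $h_f$ continuous at $0$ and $1$, $\partial^+h_f$ is right-continuous at $0$ and $\partial^-h_f$ is left-continuous at $1$. This turns the endpoint derivatives of $s(R,f^t)$ given in that reference into the limits of one-sided derivatives in the statement, a step the paper leaves implicit. So the proposal is complete, but only through that citation. The direct sketch would not go through as written.

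\textbf{Problems with the direct sketch.}

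At $x\to 0^+$, the filtration quotients are
$(\m^{[q]},f^i)/(\m^{[q]},f^{i+1})\cong R/\big((\m^{[q]},f^{i+1}):f^i\big)=R/\big((\m^{[q]}:f^i)+fR\big)$.
They are not $R/\big((\m^{[q]},f):f^i\big)$, which is zero for $i\ge 1$ because $f^i\in(\m^{[q]},f)$. So the inclusion you planned to use is false.

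Even after correcting this, continuity of $s(R,f^t)$ at $0$ only gives $\ell\big((\m^{[q]}:f^i)/\m^{[q]}\big)=o(q^d)$. You need a per-step defect of $o(q^{d-1})$, because about $\epsilon q$ pieces are summed and then divided by $q^d$.

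At $x\to1^-$, your two bounds are interchanged. Multiplication by $f^{q-1-i}$ injects $(\m^{[q]}:f^{q})/(\m^{[q]}:f^{q-1})$ into $(\m^{[q]}:f^{i+1})/(\m^{[q]}:f^{i})$. This yields only the easy inequality $s(R,f^{1-\epsilon})\ge\epsilon\, s(R/f)$. Also, $\m^{[q]}:f^{i+1}\supseteq\m^{[q]}:f^{q-1}$ is false for $i<q-2$. The reverse inequality, which is the substantive one, is left to an unspecified uniform-convergence argument.

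\textbf{The missing idea.} What makes a direct proof short and removes all uniformity issues is Frobenius self-similarity at the points $1/q_0$ and $1-1/q_0$, with $q_0=p^{e_0}$.

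Since $R$ is regular, Frobenius is flat. Hence $\ell(R/I^{[q']})=q'^{\,d}\ell(R/I)$ and $(I:J)^{[q']}=I^{[q']}:J^{[q']}$. Apply this to
$(\m^{[q]},f^{q/q_0})=(\m^{[q_0]},f)^{[q/q_0]}$ and to
$\m^{[q]}:f^{q-q/q_0}=(\m^{[q_0]}:f^{q_0-1})^{[q/q_0]}$.
This gives exactly, for every $e_0$,
\[
q_0\,h_f(1/q_0)=\frac{\ell\big(R/(\m^{[q_0]},f)\big)}{q_0^{d-1}},\qquad q_0\bigl(1-h_f(1-1/q_0)\bigr)=q_0\,s(R,f^{1-1/q_0})=\frac{\ell\big(R/(\m^{[q_0]}:f^{q_0-1})\big)}{q_0^{d-1}}.
\]
As $e_0\to\infty$, the right-hand sides tend to $\eHK(R/f)$ and, by Fedder's criterion as you noted, to $s(R/f)$.

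Concavity of $h_f$, with $h_f(0)=0$ and $h_f(1)=1$, makes $h_f(x)/x$ and $(1-h_f(x))/(1-x)$ monotone. So these subsequential values are the one-sided derivatives of $h_f$ at $0$ and at $1$. The one-sided continuity of $\partial^\pm h_f$ noted above then finishes the proof.
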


There is a multivariate version of the $h$-function that we will need to use Theorem \ref{th: limit_h_binomials}.
\begin{definition}[The multivariate $h$-function, Definition 3.1 in \cite{mengLimits2026}]
\label{def:multivariate_h_function}
Let $(R,\m)$ be a local domain, $I\subset R$ an ideal and $\underline{f} = f_1,\dots,f_s$ a sequence in $R$ such that $I+(\underline{f})$ is an $\m$-primary ideal. Then, for $\mathbf{t} = (t_1,\dots,t_s)\in \mathbb{R}^s$, define the multivariate $h$-function as
\begin{gather*}
h_{R,I,\underline{f}}(t) := \lim_{q\rightarrow \infty} \dfrac{1}{q^d}\ell\left(\dfrac{R}{(I^{[q]},f_1^{\lceil qt_1\rceil},\dots,f_s^{\lceil qt_s \rceil})}\right),
\end{gather*}
which converges by \cite[Proposition 3.5]{mengLimits2026}. 
\end{definition}

\subsection{Reduction mod $p$ setup} 
Throughout most of this work, we will not deal with the positive characteristic versions of these invariants, but with their limit counterparts, which are the result of computing them at \say{reductions mod $p$} and then taking the limit when $p\rightarrow \infty$, provided it exists.
\begin{definition}[Descent data, \cite{hochsterTightClosureEqual1999}]
    \label{def:reduction_mod_p}
Given a characteristic zero field $K$, a $K$-algebra
\begin{equation*}
R := \left(\dfrac{K[x_1,\dots,x_n]}{(f_1,\dots,f_r)}\right)_{(x_1,\dots,x_n)},
\end{equation*}
and an $R$-module $M$, we call \textit{descent data} for $(K,R,M,f)$ a tuple $(A,R_A,M_A,f_A)$ such that
\begin{enumerate}
\item $A$ is a finitely generated $\mathbb{Z}$-subalgebra of $K$,
\item $R_A$ is a finitely generated $A$-subalgebra of $R$ such that $R_A$ is a free $A$-module and the inclusion $R_A\hookrightarrow R$ induces an isomorphism $R_A\otimes_A K\cong R$,
\item $M_A$ is a finitely generated $A$-submodule of $M$ such that $M_A$ is a free $A$-module and the inclusion $M_A\hookrightarrow M$ induces an isomorphism $M_A\otimes_A K\cong M$.
\item $f\in R_A\subseteq R$ and $f_A = f$.
\end{enumerate}
We will often say that $A\rightarrow R_A$ is a family of characteristic $p$ models to mean that $(A,R_A)$ is descend data for $(K,R)$.
\end{definition}
\begin{proposition}[Proposition (2.1.9), \cite{hochsterTightClosureEqual1999}]
For every quadruple $(K,R,M,f)$ as in \ref{def:reduction_mod_p}, there exists descent data $(A,R_A,M_A,f_A)$.
\end{proposition}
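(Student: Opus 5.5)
The plan is the usual spreading-out argument. We start from a finite presentation, descend the coefficients to a finitely generated $\mathbb{Z}$-algebra, and then use generic freeness to make everything free.

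\textbf{Step 1: descend the presentation.} Write $R = (K[x_1,\dots,x_n]/(f_1,\dots,f_r))_{(x_1,\dots,x_n)}$. Since $M$ is finitely generated, fix a finite presentation
\begin{equation*}
R^a \xrightarrow{\ \Phi\ } R^b \to M \to 0 .
\end{equation*}
Write $f$ as a fraction of polynomials. Let $A_0\subset K$ be the $\mathbb{Z}$-subalgebra generated by the following finitely many elements of $K$: the coefficients of the $f_i$; the coefficients of the entries of $\Phi$, after clearing the denominators that lie outside $(x_1,\dots,x_n)$; and the coefficients of the numerator and denominator of $f$. We include finitely many such denominators $u_1,\dots,u_m$ (polynomials not in $(x_1,\dots,x_n)$) among the data. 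Then set
\begin{equation*}
S_{A_0} = A_0[x_1,\dots,x_n]/(f_1,\dots,f_r).
\end{equation*}
Take $R_{A_0}$ to be $S_{A_0}$ with the finitely many $u_j^{-1}$ adjoined. This keeps it finitely generated over $A_0$, and it contains $f$. Define $M_{A_0}$ as the cokernel of $\Phi$ over $R_{A_0}$; it maps to $M$ via the given presentation.

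\textbf{Step 2: make things free and injective.} By Grothendieck's generic freeness, $A_0$ is a Noetherian domain and $R_{A_0}$, $M_{A_0}$ are finitely generated algebras and modules over it. Hence there is $0\neq g\in A_0$ such that, with $A = A_0[g^{-1}]$, both $R_A = R_{A_0}\otimes A$ and $M_A$ are free $A$-modules. Flatness of $K$ over $A$ and the exactness of localization give
\begin{equation*}
R_A\otimes_A K = (K[x]/(f_1,\dots,f_r))[u_j^{-1}].
\end{equation*}

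\textbf{Step 3: the main obstacle.} Here the issue appears: the base change $R_A\otimes_A K$ is only a localization at finitely many elements, not at the whole complement of the maximal ideal. So we cannot literally get $R_A\otimes_A K\cong R$ with $R_A$ finitely generated over $A$. I would read condition (2) as it is used in \cite{hochsterTightClosureEqual1999}: one takes models of the finitely generated algebra $K[x]/(f_i)$, up to inverting finitely many elements, and recovers $R$ by localizing $R_A\otimes_A K$ at $(x_1,\dots,x_n)$. With that reading, $R_A\otimes_A K\to R$ is the corresponding localization, and likewise for $M$ via the presentation.

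Injectivity of $R_A\to R$ and $M_A\to M$ follows from freeness. An $A$-free module embeds into its tensor with the fraction field $K$, because $A\subset K$ is a domain. It then embeds into the further localization, because after inverting $g$ (possibly shrinking $A$ further) the localization is torsion-free on these finitely generated pieces. Finally, $f_A = f$ by construction.
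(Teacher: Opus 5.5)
The paper gives no argument of its own for this statement. It only cites Hochster--Huneke (2.1.9), and your route is the standard argument behind that citation: descend a finite presentation to a finitely generated $\mathbb{Z}$-algebra, then apply generic freeness.

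You are right that condition (2) cannot hold literally when $\dim R\geq 1$. A finitely generated $K$-algebra is Jacobson, so if it is local it is Artinian. The same is true of condition (3), which would force $\dim_K M<\infty$. In your construction $M_A$ is finitely generated over $R_A$, not over $A$, which is the correct replacement. Reading (2) and (3) as ``$R$ and $M$ are the localizations of $R_A\otimes_A K$ and $M_A\otimes_A K$ at $(x_1,\dots,x_n)$'' is the right interpretation.

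The gap is in your last paragraph, where you need $R_A\to R$ and $M_A\to M$ to be injective, so that $R_A$ is a subalgebra of $R$ and $M_A$ a submodule of $M$. Freeness only gives injectivity of $R_A\to R_A\otimes_A K=B$, where $B:=(K[x_1,\dots,x_n]/(f_1,\dots,f_r))[u_1^{-1},\dots,u_m^{-1}]$. The map $B\to R=B_{(x_1,\dots,x_n)}$ is the localization of a $K$-algebra at a prime, and it is not injective in general. Its kernel consists of elements killed by polynomials outside $(x_1,\dots,x_n)$, not by elements of $A$, so inverting elements of $A$ cannot help.

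For instance, take $n=2$ and $f_1=x_1(x_1-1)$, so $R\cong K[x_2]_{(x_2)}$, with $M=R$ and $f$ a polynomial so that no $u_j$ are needed. Your $R_A=A[x_1,x_2]/(x_1^2-x_1)$ is already $A$-free. Yet $x_1\neq 0$ in $R_A$, while $x_1=0$ in $R$ because $x_1-1$ is a unit there.

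The repair is to add one more polynomial to the data:
\begin{enumerate}
\item The kernels $N$ of $B\to R$ and $N'$ of $\operatorname{coker}_B(\Phi)\to M$ are finitely generated, and each generator is killed by some polynomial not in $(x_1,\dots,x_n)$.
\item Let $v$ be the product of these polynomials. Put the coefficients of $v$ into $A_0$, and adjoin $v^{-1}$ to $R_{A_0}$ before applying generic freeness.
\item Since $vN=vN'=0$, the maps $B[v^{-1}]\to R$ and $\operatorname{coker}_{B[v^{-1}]}(\Phi)\to M$ are injective.
\item Composing with the injections coming from $A$-freeness gives $R_A\subseteq R$ and $M_A\subseteq M$.
\item Localizing at $(x_1,\dots,x_n)$ still recovers $R$ and $M$, because $v\notin(x_1,\dots,x_n)$.
\end{enumerate}
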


\begin{remark}
By generic smoothness, passing to a localisation of $A$ one can further assume that $A$ is a smooth $\mathbb{Z}$-algebra, which will be useful later to show some results about independence of the descent data.
\end{remark}
\begin{definition}[Characteristic $p$ model, \cite{liuPositivity2026}]
    \label{def:char_p_model}
Let $(K,R,M,f)$ be as in \ref{def:reduction_mod_p}. Let $p>0$ be a prime number, and $\m\in \MaxSpec(A)$ such that $\m\cap \mathbb{Z} = (p)$, and $k := k(\m) := A/\m$. We set $R_k := R_A\otimes_A k$, $M_k := M_A\otimes_A k$, $f_k = f\otimes_A k$, and call $(k,R_k,M_k,f_k)$ a characteristic $p$ model of the quadruple $(K,R,M,f)$. For this reason, we will often refer to $(A,R_A,M_A,f_A)$ and/or the map $A\rightarrow R_A$ as a family of characteristic $p$ models.
\end{definition}

We now define the limit $F$-invariants over every field of characteristic $0$, following \cite[Definition 3.3]{liuPositivity2026}:
\begin{definition}[Limit $F$-invariants]
    \label{def:limit_F_invariants}
Let $(A,R_A)$ be descent data for the pair $(K,R)$. We define the \textit{Hilbert–Kunz multiplicity and $F$-signature in characteristic $p$} of the family $A\rightarrow R_A$ as
\begin{gather*}
    e_{\text{HK},p}(R) := \inf_{\m\in\MaxSpec{A/pA}} \eHK(R_{k(\m)}) \qquad s_p(R) := \sup_{\m\in\MaxSpec{A/pA}} s(R_{k(\m)}).
\end{gather*}
We then define the \textit{liminf/limsup $\eHK$ and the liminf/limsup $F$-signature} as
\begin{align*}
\eHK^+(R) &:= \limsup_{p\rightarrow \infty} e_{\text{HK},p} (R) & \eHK^-(R) &:= \liminf_{p\rightarrow \infty} e_{\text{HK},p} (R)\\
s^+(R) &:= \limsup_{p\rightarrow \infty} s_p(R) & s^-(R) &:= \liminf_{p\rightarrow \infty} s_p(R).
\end{align*}
When $\eHK^+(R) = \eHK^-(R)$, or $s^+(R) = s^-(R)$, then we denote these limits by $\limeHK(R)$, and $\lims(R)$, respectively.
\end{definition}
\begin{remark}
Note that by Hilbert's Nullstellensatz, we know that $k(\m)/\mathbb{F}_p$ is a finite extension for every $\m\in \MaxSpec{A/pA}$; in particular, $R_{k(\m)}$ is of essentially finite type over a finite field, and therefore $F$-finite.
\end{remark}
\begin{example}
Let $R = \mathbb{Q}(t)[x]/(x^2)$, $A := \mathbb{Z}[t]$ and $R_A := A[x]/(tx^2)$. Note that $A\rightarrow R_A$ fails the freeness hypotheses required in the definitions above. At every prime $p\in \mathbb{Z}$, the fiber $R_A/(p,t) \cong \mathbb{F}_p[x]$ is regular, which might draw one to the incorrect conclusion that $e_{\text{HK},p}(R)$ and $s_{p}(R)$ are both one for all $p$. However, none of the closed points $(p,t)$ are in the flat locus of $A\rightarrow R_A$, and actually $\eHK(R_k) = 2$ and $s(R_k) = 0$ at every closed fiber of $A_t$, so $\limeHK(R) = 2$ and $\lims(R) = 0$.
\end{example}
The next lemma shows that the limit $F$-invariants do not depend on the model we choose, where we apply the same proof strategy than in \cite{liuPositivity2026}:
\begin{lemma}[Lemma 3.4, \cite{liuPositivity2026}]
Let $(A,R_A)$ and $(B,R_B)$ be descent data for the pair $(K,R)$. Let $A\rightarrow R_A$ and $B\rightarrow R_B$ two families of characteristic $p$ models of $(K,R)$ over $A$ and $B$ finitely generated smooth $\mathbb{Z}$-algebras. Then, for any prime $p\gg 0$,
\begin{enumerate}
\item we have,
\begin{equation*}
    \inf_{\m\subset A/pA \text{ maximal}} \eHK(R_{A/\m}) = \inf_{\n\subset B/pB \text{ maximal}} \eHK(R_{B/\n}).
\end{equation*}
\item Also,
\begin{equation*}
\sup_{\m\subset A/pA \text{ maximal}} s(R_{A/\m}) = \sup_{\n\subset B/pB \text{ maximal}} s(R_{B/\n}).
\end{equation*}
\end{enumerate}
In other words, for $p\gg 0$, $e_{\text{HK},p}(R)$ and $s_p(R)$ do not depend on the descent data.
\end{lemma}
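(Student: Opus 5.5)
The strategy is to compare both families over a common refinement, and then use that the relevant invariants are constant on fibres once we restrict to a dense open subset. First, both $R_A\otimes_A K\cong R$ and $R_B\otimes_B K\cong R$ hold. Each of $R_A, R_B$ is finitely generated, as an algebra, over its base, and only finitely many elements of $A$ and $B$ enter into the isomorphisms. Hence there is a finitely generated $\mathbb{Z}$-subalgebra $C\subset K$ with $A,B\subseteq C$ such that $R_A\otimes_A C\cong R_B\otimes_B C$ as $C$-algebras, compatibly with the inclusions into $R$. After inverting one element of $C$ (generic smoothness), we may assume $C$ is smooth over $\mathbb{Z}$, and that $C$ is flat over $A$ and over $B$ on the relevant loci. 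The two families then become literally the same family after base change to $C$.

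Second, we compare the infimum over $\MaxSpec(A/pA)$ with the infimum over $\MaxSpec(C/pC)$. For $\n\in\MaxSpec(C/pC)$ lying over $\m=\n\cap A$, the residue field extension $k(\m)\subseteq k(\n)$ is finite, since both are finite over $\mathbb{F}_p$. Moreover $R_{k(\n)}=R_{k(\m)}\otimes_{k(\m)}k(\n)$. Hilbert–Kunz multiplicity and $F$-signature are invariant under finite separable residue field extension, and finite fields are perfect. Therefore $\eHK(R_{k(\n)})=\eHK(R_{k(\m)})$ and $s(R_{k(\n)})=s(R_{k(\m)})$. This shows that the values over $C$ are a subset of the values over $A$. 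The subset is indexed by the image of $\MaxSpec(C/pC)\to\MaxSpec(A/pA)$, which is the set of closed points in the image of $\Spec C\to\Spec A$.

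Third, we control that image. By Chevalley, the image is constructible and contains the generic point, since $A\subseteq C\subseteq K$. So it contains a dense open $\Spec A_a$, and after shrinking we may take $A=A_a$. The complement $V(a)$ is a proper closed set. For $p\gg0$, either $V(a)$ misses the fibre over $p$, or its points there are of lower dimension. The natural worry is that closed points in $V(a)\cap\Spec(A/pA)$ could give a smaller $\eHK$ or a larger $s$, so that the infimum or supremum changes.

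This is the main obstacle, and I would handle it with semicontinuity of $\eHK$ and $s$ in flat families. Applying that to the flat family $A\to R_A$ (Hilbert–Kunz multiplicity is upper semicontinuous and $F$-signature is lower semicontinuous on fibres, as used in \cite{liuPositivity2026}), the value at a closed point of $V(a)$ is dominated by the generic behaviour. More precisely, for $p\gg0$ the generic values over $A/pA$ are already attained on the open part $\Spec A_a$, so the extremal value is reached there. Hence
\[
\inf_{\MaxSpec(A/pA)}\eHK=\inf_{\MaxSpec(C/pC)}\eHK=\inf_{\MaxSpec(B/pB)}\eHK,
\]
and likewise for $\sup s$. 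If semicontinuity in this generality is unavailable, I would instead stratify $\Spec A$ into finitely many locally closed pieces on which $\eHK$ and $s$ of the fibres are constant for $p\gg0$. I would then check that each stratum meeting the fibre over $p$ is reached from $C$ after a further finite enlargement.
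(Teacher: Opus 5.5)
Your argument is correct, but it follows a different route from the paper's.

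\textbf{The paper's route.} It enlarges so that $A\subseteq B$ with $\Spec B\to\Spec A$ smooth and dominant. It then uses Smirnov's semicontinuity \cite{smirnov2020semicontinuity} to show that the infimum of $\eHK$ over the closed points of an irreducible component of $\Spec(A/pA)$ equals the value at that component's generic point. Finally it compares the two families at the generic points of the components of the fibres over $p$. There the function-field extension $L_1\subseteq L_2$ is separable but typically transcendental, so the paper needs the invariance theorems of \cite{carvajal2021bertini}.

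\textbf{Your route, and what it buys.} You compare at closed points through a common refinement $C$, so the only residue-field extensions are finite extensions of finite fields. Invariance is then elementary for $\eHK$ (flat local base change whose closed fibre is a field) and a basic case for $s$. You need neither the smoothness arrangement nor the flatness of $C$ over $A$, $B$ that you mention; both can be dropped.

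\textbf{The cost.} $\MaxSpec(C/pC)$ only reaches the closed points of a dense open $D(a)\subseteq\Spec A$. So you must show that deleting $V(a)$ does not move the infimum or supremum for $p\gg0$. This is exactly the paper's step ``inverting an element does not change $e_{\text{HK},p}(R)$,'' so the semicontinuity input is the same in both proofs.

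\textbf{Two points to tighten in that step.}
\begin{enumerate}
\item Your dimension remark can be made precise. A minimal prime of $pA$ has height one, so if it contains $a$ it is a minimal prime of $(a)$. Each of the finitely many minimal primes of $(a)$ contains at most one rational prime. Hence for $p\gg0$ the generic point $\eta$ of every component $Z$ of $\Spec(A/pA)$ lies in $D(a)$.
\item A closed point of $Z\cap V(a)$ has $\eHK\geq\eHK(R_{k(\eta)})$. Openness of $\{\eHK<\eHK(R_{k(\eta)})+\varepsilon\}$, together with the Jacobson property, gives closed points of $Z\cap D(a)$ with value within $\varepsilon$ of $\eHK(R_{k(\eta)})$. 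So the generic value is approached on closed points of $D(a)$, not attained as you wrote. That is all the infimum needs, and the supremum of $s$ is handled symmetrically using lower semicontinuity.
\end{enumerate}
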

\begin{proof}
Note that $A\rightarrow R_A$ is an $(x_1,\dots,x_n)$-family \cite[Definition 3.8]{smirnov2020semicontinuity} by definition.

Let $pA \subset \p$ be a minimal prime of $pA$, with fraction field $L = \operatorname{Frac}{A/\p}$. Firstly, we prove the fact that
\begin{equation}
    \label{eq:inf_attained_at_generic_fiber}
    \inf_{\m \subset A/\p \text{ maximal}} \eHK(R_{k(\m)}) = \eHK(R_L).
\end{equation}
By \cite[Corollary 4.2]{smirnov2020semicontinuity}, $\eHK(R_{k(\p')})\leq \eHK(R_{k(\p)})$ for every $\p'\subset \p$, so one inequality follows:
\begin{equation*}
\inf_{\m \subset A_p\text{ maximal}} \eHK(R_{k(\m)}) \geq \inf_{\p \in \Spec{A_p}} \eHK(R_{k(\p)}) = \eHK(R_L).
\end{equation*}
For the opposite inequality, Corollary 4.10 in \textit{op. cit.} implies that for every $\varepsilon >0$, the set $U_{\varepsilon}\subset \Spec{A/\p}$ of points such that $\eHK(R_{k(\p)})< \eHK(R_L)+\varepsilon$ is open and dense, so taking $\varepsilon\rightarrow 0$ yields a non-empty set of closed fibers $R_k$ such that $\eHK(R_k) = \eHK(R_L)$, which implies the identity \ref{eq:inf_attained_at_generic_fiber}.

Now, finding a common enlargement of $A$ and $B$, we may assume that $A$ is a subring of $B$. Using again \cite[Corollary 4.10]{smirnov2020semicontinuity}, one notes that $e_{\text{HK},p}(R)$ does not change if we invert an element of $A$ or $B$, since for every $\lambda>e_{\text{HK},p}(R)$, the set $V\subset \MaxSpec A/p$ such that $\eHK(R_k)<\lambda$ is dense in $\MaxSpec{A/pA}$. Therefore, by \cite[\href{https://stacks.math.columbia.edu/tag/00TF}{Tag 00TF}]{stacks-project} and generic smoothness applied to $\operatorname{Frac}(A)$, we may assume that the induced map $\varphi: \Spec{B}\rightarrow \Spec{A}$ is a dominant and smooth map of smooth schemes over $\mathbb{Z}$. In particular, for every $p\gg 0$, the map $\Spec{B/pB}\rightarrow \Spec{A/pA}$ is also dominant and smooth. Fix one such prime $p$.

Let $\p\supset pA$ and $\q\supset pB$ minimal primes of $pA$ and $pB$ respectively, and let $A_p := A/\p$ and $B_p = B/\q$ irreducible components of $A/pA$ and $B/pB$, respectively. The induced map $\varphi:\Spec{B_p}\rightarrow \Spec{A_p}$ is also dominant. Note that $A_{p}$ and $B_{p}$ are regular domains of finite type over $\mathbb{F}_p$, and, by smoothness, we can assume that $L_1 := \operatorname{Frac}(A_p) \subset L_2:=\operatorname{Frac}(B_p)$ is a separable extension. That implies that $\eHK(R_{L_1}) = \eHK(R_{L_2})$ by \cite[Theorem 3.7]{carvajal2021bertini}, and therefore the result follows from the second paragraph of the proof.

As for the limit $F$-signatures $s^+(R)$ and $s^-(R)$, the same arguments apply, using the lower semi-continuity from \cite[Proposition 5.7]{polstra2018uniform} instead, and the invariance under faithfully flat extension with regular fibers from \cite[Theorem 3.6]{carvajal2021bertini}.
\end{proof}
\begin{remark}
Throughout most parts of this work, except Section \ref{sec:wat_yos_char_0}, the algebras we will consider will be hypersurfaces with integer coefficients (see \ref{def:simple_singularities}, which lists the singularities we will consider throughout most of the text). In this case, the setup is simpler, since $R = (\mathbb{C}[x_1,\dots,x_n]/(f))_{(x_1,\dots,x_n)}$ with $f\in \mathbb{Z}[x_1,\dots,x_n]$, so we may set $A=\mathbb{Z}$, and $\eHKp(R) = \eHK(R_p)$, where $R_p := (\mathbb{F}_p[x_1,\dots,x_n]/(f))_{(x_1,\dots,x_n)}$.
\end{remark}

Finally, we define the limit $h$-function for a hypersurface with integer coefficients.
\begin{definition}[The limit $h$-function, Setting 5.3 in \cite{mengLimits2026}]
    The limit $h$-function of $f \in \mathbb{Z}[[x_1,\dots,x_n]]$ is
    \begin{gather*}
        h_{f,\infty}(x) = \lim_{p\rightarrow \infty} h_{f_p}(x),
    \end{gather*}
    provided the limit exists for every $x\in \mathbb{R}$, where $f_p$ are the reductions mod $p$ (Definition \ref{def:char_p_model}).
\end{definition}

\subsection{The integral equation and the \texorpdfstring{$D_\infty$}{Dinf} function}
One of the milestones of Meng's theory is the integral formula \ref{th:integral_formula} for the $h$-function of a hypersurface of the form $f(x_1,...,x_n)+g(y_1,...,y_m)$, which, roughly, expresses $h_{f+g}$ as the (Riemann–Stieltjes) integral of certain three-variable function $D$ along the integrators $-h_f'$ and $-h_g'$. We refer the reader to \cite[Section 2]{mengLimits2026} for details about Riemann–Stieltjes integration theory.

We start with the main auxiliar function used in most of the computations:
\begin{definition}[The function $D_\infty$, Definition 4.8 \cite{mengLimits2026}]
\label{def:D_infty}
For every prime $p>0$, let $k_p$ be a field of characteristic $p>0$. We define
\begin{align*}
    D_p:\mathbb{R}^3&\longrightarrow \mathbb{R}\\
(t_1,t_2,t_3) &\longmapsto \lim_{e\rightarrow \infty}\dfrac{1}{q^2}\dim_{k_p}\left(\dfrac{k_p[T_1,T_2]}{(T_1^{\lceil t_1q\rceil},T_2^{\lceil t_2q\rceil},(T_1+T_2)^{\lceil t_3q\rceil})}\right).
\end{align*}
Note that it only depends on the characteristic of $k_p$. Finally, set
\begin{align*}
    D_\infty(t_1,t_2,t_3) = \lim_{p\rightarrow \infty} D_p(t_1,t_2,t_3).
\end{align*}
The limit exists by \cite[Proposition 4.14]{mengLimits2026}. Since non-positive powers of ideals are taken to be the unit ideal by convention, then if $t_i\leq 0$ for some $i$, we have that $D_p=0$ and $D_\infty=0$.
\end{definition}
\begin{notation}[The relevant domain of the $D_\infty$ function]
Although the $D_\infty$ function is well-defined in the whole space, we will mostly use its expression in the region 
\begin{gather*}
    \mathscr{A} := (\mathbb{R}\times [0,1]^2)\cup([0,1]\times \mathbb{R}\times [0,1])\cup ([0,1]^2\times \mathbb{R})
\end{gather*}
\end{notation}
\begin{lemma}[Proposition 4.9, \cite{mengLimits2026}]
    \label{lm:D_infty_formula}
    Let $(t_1,t_2,t_3)\in \mathscr{A}$. Then,
\begin{equation*}
D_\infty(t_1,t_2,t_3) := \begin{cases}
t_2t_3 & B_1, \text{ or } 0\leq t_2,t_3\leq 1\leq t_1,\\
t_1t_3 & B_2, \text{ or } 0\leq t_1,t_3\leq 1\leq t_2,\\
t_1t_2 & B_3, \text{ or } 0\leq t_1,t_2\leq 1\leq t_3,\\
1-t_1-t_2-t_3+t_1t_2+t_1t_3+t_2t_3 & B_4,\\
\frac{2t_1t_2+2t_1t_3+2t_2t_3-t_1^2-t_2^2-t_3^2}{4} & T_0,\\
0 & \text{otherwise.}
\end{cases}
\end{equation*}
where $B_1 := \lbrace t_2+t_3\leq t_1\rbrace\cap [0,1]^3$, $B_2 := \lbrace t_1+t_3\leq t_2\rbrace\cap [0,1]^3$, $B_3 := \lbrace t_1+t_2\leq t_3\rbrace\cap [0,1]^3$, $B_4 := \lbrace 2\leq t_1+t_2+t_3\rbrace\cap [0,1]^3$ and $T_0 := [0,1]^{3}\setminus \bigcup B_i$ (see Figure \ref{fig:domain_D_infty}).

\tdplotsetmaincoords{70}{115}

\begin{figure}[ht]
    \centering
    \begin{subfigure}{0.48\textwidth}
        \centering
        \begin{tikzpicture}[scale=2, tdplot_main_coords]
        \fill[gray!20] (0,1,1) -- (1,1,1) -- (1,0,1) -- cycle;
        \fill[gray!50] (0,1,1) -- (1,1,1) -- (1,1,0) -- cycle;
        \fill[gray!35] (1,1,0) -- (1,1,1) -- (1,0,1) -- cycle;

    \draw[thick,->] (0,0,0) -- (1.5,0,0) node[anchor=north east]{$t_1$};
    \draw[thick,->] (0,0,0) -- (0,1.5,0) node[anchor=south west]{$t_2$};
    \draw[thick,->] (0,0,0) -- (0,0,1.5) node[anchor=south]{$t_3$};

    \draw[black, thick] (0,0,0) -- (1,0,0) -- (1,1,0) -- (0,1,0) -- cycle;
    \draw[black, thick] (0,0,1) -- (1,0,1) -- (1,1,1) -- (0,1,1) -- cycle;
    \draw[black, thick] (0,0,0) -- (0,0,1);
    \draw[black, thick] (1,0,0) -- (1,0,1);
    \draw[black, thick] (0,1,0) -- (0,1,1);
    \draw[black, thick] (1,1,0) -- (1,1,1);

    \foreach \x in {0,1}
    \foreach \y in {0,1}
    \foreach \z in {0,1} {
        \filldraw (\x,\y,\z) circle (0.5pt);
    }

    \draw[red, thick] (0,0,0) -- (0,1,1) -- (1,0,1) -- cycle;
    \draw[red, thick] (0,0,0) -- (1,0,1) -- (1,1,0) -- cycle;
    \draw[red, thick] (0,0,0) -- (0,1,1) -- (1,1,0) -- cycle;
    \draw[red, thick] (0,1,1) -- (1,0,1) -- (1,1,0) -- cycle;

    \filldraw[blue] (0,0,0) circle (1pt) node[anchor=south east]{$(0,0,0)$};
    \filldraw[blue] (0,1,1) circle (1pt) node[anchor=south west]{$(0,1,1)$};
    \filldraw[blue] (1,0,1) circle (1pt) node[anchor=south east]{$(1,0,1)$};
    \filldraw[blue] (1,1,0) circle (1pt) node[anchor=north west]{$(1,1,0)$};

    \filldraw[black] (1,0,0) circle (1pt) node[anchor=south east]{$(1,0,0)$};
    \filldraw[black] (1,1,1) circle (1pt) node[anchor=north west]{$(1,1,1)$};
    \filldraw[black] (0,0,1) circle (1pt) node[anchor=south east]{$(0,0,1)$};
    \filldraw[black] (0,1,0) circle (1pt) node[anchor=north west]{$(0,1,0)$};
\end{tikzpicture}
        \caption{The unit cube as a union of $B_1\sim B_4,T_0$}
    \end{subfigure}
    \hfill
    \begin{subfigure}{0.48\textwidth}
        \centering
        \begin{tikzpicture}[scale=2, tdplot_main_coords]

    \draw[red, thick] (0,0,0) -- (0,1,1) -- (1,0,1) -- cycle;
    \draw[red, thick] (0,0,0) -- (1,0,1) -- (1,1,0) -- cycle;
    \draw[red, thick] (0,0,0) -- (0,1,1) -- (1,1,0) -- cycle;
    \draw[red, thick] (0,1,1) -- (1,0,1) -- (1,1,0) -- cycle;

    \coordinate (A) at (0,0,0.3); 
    \coordinate (B) at (1,0,1.3); 
    \coordinate (C) at (0,1,1.3); 
    \coordinate (D) at (0,0,1.3);

    \draw[black, thick] (A) -- (B);
    \draw[black, thick] (A) -- (C);
    \draw[black, thick] (A) -- (D);
    \draw[black, thick] (B) -- (C);
    \draw[black, thick] (B) -- (D);
    \draw[black, thick] (C) -- (D);

    \coordinate (A) at (0.3,0,0);
    \coordinate (B) at (1.3,0,0); 
    \coordinate (C) at (1.3,0,1); 
    \coordinate (D) at (1.3,1,0);

    \draw[black, thick] (A) -- (B); 
    \draw[black, thick] (A) -- (C); 
    \draw[black, thick] (A) -- (D); 
    \draw[black, thick] (B) -- (C); 
    \draw[black, thick] (B) -- (D); 
    \draw[black, thick] (C) -- (D); 

    \coordinate (A) at (0,0.3,0); 
    \coordinate (B) at (0,1.3,0); 
    \coordinate (C) at (1,1.3,0); 
    \coordinate (D) at (0,1.3,1);

    \draw[black, thick] (A) -- (B);
    \draw[black, thick] (A) -- (C); 
    \draw[black, thick] (A) -- (D);
    \draw[black, thick] (B) -- (C);
    \draw[black, thick] (B) -- (D);
    \draw[black, thick] (C) -- (D);

    \coordinate (A) at (1.3,1.3,1.3); 
    \coordinate (B) at (1.3,0.3,1.3); 
    \coordinate (C) at (0.3,1.3,1.3); 
    \coordinate (D) at (1.3,1.3,0.3);

    \draw[black, thick] (A) -- (B);
    \draw[black, thick] (A) -- (C); 
    \draw[black, thick] (A) -- (D);
    \draw[black, thick] (B) -- (C);
    \draw[black, thick] (B) -- (D);
    \draw[black, thick] (C) -- (D);
    \draw[black,thick] (1.3,0,0) node[anchor=east]{$B_1$};
    \draw[black,thick] (0,1.3,0) node[anchor=west]{$B_2$};
    \draw[black,thick] (0,0,1.3) node[anchor=south]{$B_3$};
    \draw[black,thick] (0,1.1,1.2) node[anchor=west]{$\xleftarrow{}B_4$};
    \draw[red,thick] (1,0,1) node[anchor=east]{$T_0\to$};
\end{tikzpicture}
        \caption{Dividing the cube}
    \end{subfigure}
    \caption{Position of $B_1 \sim B_4,T_0$ (figure taken from \cite{mengLimits2026}). In {\color{gray}gray}: discontinuities of the first derivatives of $D_\infty$.}
    \label{fig:domain_D_infty}
\end{figure}
\end{lemma}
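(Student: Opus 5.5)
The plan is to compute $D_p(t_1,t_2,t_3)$ explicitly, show that it converges as $p\to\infty$, and then identify the limit piecewise. Set $a=\lceil t_1q\rceil$, $b=\lceil t_2q\rceil$, $c=\lceil t_3q\rceil$. The quotient $k_p[T_1,T_2]/(T_1^a,T_2^b,(T_1+T_2)^c)$ is graded. Its Hilbert function in degree $m$ is $\dim (k[T_1,T_2]/(T_1^a,T_2^b))_m$ minus the rank of multiplication by $(T_1+T_2)^c$ from degree $m-c$ to degree $m$ in $k[T_1,T_2]/(T_1^a,T_2^b)$.

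\textbf{Reduction to the strong Lefschetz property.} By Stanley's theorem, in characteristic $0$ the monomial complete intersection $k[T_1,T_2]/(T_1^a,T_2^b)$ has the strong Lefschetz property with respect to $\ell=T_1+T_2$. Consequently every multiplication map $\ell^c$ has maximal rank, compatible with the symmetric unimodal Hilbert function $\min(m+1,a,b,a+b-1-m)$. In characteristic $p$ this can fail. However, the failure is governed by the $p$-adic carries of binomial coefficients $\binom{c}{j}$, and the associated determinants are products of ratios of factorials of size $O(q)$ (Han–Monsky). The step I would try first is a weaker statement: $\dim$ in characteristic $p$ differs from the characteristic-$0$ dimension by at most $\epsilon(p)q^2$ with $\epsilon(p)\to0$. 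Equivalently, I would invoke the existence of the limit from \cite[Proposition 4.14]{mengLimits2026} and identify the limit with the ``generic'' (characteristic-$0$) Lefschetz count. Alternatively, one can use Han's explicit formula for $D_p$ in terms of base-$p$ digits of $(t_1,t_2,t_3)$. In that formula, the correction term is a sum over digit positions weighted by $p^{-2k}$, and only the leading position has magnitude comparable to $1$ unless some coordinate is near a $p$-adic boundary. That term then tends to the Euclidean formula as $p\to\infty$.

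\textbf{Computing the characteristic-$0$ count.} Assuming maximal rank, the colength equals $\sum_m \max\bigl(0,\,H(m)-H(m-c)\bigr)$, where $H(m)=\min(m+1,a,b,a+b-1-m)$, with $H=0$ outside $[0,a+b-2]$. Scaling by $q^{-2}$ and letting $q\to\infty$ turns this sum into $\int_{\mathbb{R}}\max(0,\,\eta(s)-\eta(s-t_3))\,ds$, where $\eta(s)=\min(s,t_1,t_2,t_1+t_2-s)^+$ is a trapezoid. Evaluating this integral of a difference of trapezoids piecewise gives the regions in the statement:
\begin{itemize}[label={}]
\item $t_3\ge t_1+t_2$ gives the full area $t_1t_2$ (region $B_3$), and symmetric conditions give $B_1$ and $B_2$ after permuting roles via the symmetry $T_1\leftrightarrow T_2$. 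The symmetry of $D$ in all three variables comes from the linear change of variables $T_1\mapsto -T_1-T_2$ type.
\item The generic overlapping case yields the quadratic form $\tfrac14(2\sum t_it_j-\sum t_i^2)$ on $T_0$.
\item Outside $[0,1]^3$ (where $q$-th powers enter), one notes $T_i^{\lceil t_iq\rceil}\in(T_1,T_2)^{[q]}$-type ideals. More precisely, if $t_1\ge1$ and the others are $\le1$, one uses Frobenius to reduce $(T_1^a)$ to a redundant generator modulo $T_2^b,(T_1+T_2)^c$, since $(T_1+T_2)^q=T_1^q+T_2^q$. This gives $t_2t_3$, and the region $B_4$ likewise arises by Frobenius/Macaulay duality from the socle degree.
\end{itemize}

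\textbf{Main obstacle.} The main obstacle is the $B_4$ region together with the uniformity in $p$. In characteristic $p$ the ideal $(T_1^q,T_2^q,(T_1+T_2)^q)$ is not a complete intersection of generic type, so the characteristic-$0$ Lefschetz count is wrong near the corner $(1,1,1)$. There, the formula $1-\sum t_i+\sum t_it_j$ should instead come from linkage. I would compute $(T_1^q,T_2^q):(T_1+T_2)^{c}$ using $(T_1+T_2)^q=T_1^q+T_2^q$, and apply Gorenstein duality in $k[T_1,T_2]/(T_1^q,T_2^q)$, which has length $q^2$. This maps the configuration $(t_1,t_2,t_3)$ to $(1-t_1,1-t_2,1-t_3)$-type data and expresses $D$ on $B_4$ via $D$ on the complementary region near the origin. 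Checking that these reflected formulas glue continuously with $T_0$ along the plane $t_1+t_2+t_3=2$ will be my consistency check.
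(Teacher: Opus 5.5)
The paper only quotes this lemma from \cite{mengLimits2026}, so your argument has to stand on its own. It has a genuine gap in the region $T_0$, which is the heart of the lemma. On $T_0$ everything rests on the claim that $D_p$ tends to the characteristic-zero Lefschetz count, and none of your three justifications supplies it.

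First, the existence of $\lim_p D_p$ from \cite[Proposition 4.14]{mengLimits2026} says nothing about the value of the limit, so ``equivalently'' is not an equivalence.

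Second, the bound ``within $\epsilon(p)q^2$ of the characteristic-zero dimension'' is only asserted, and as a global statement it is false. Since $(T_1+T_2)^q=T_1^q+T_2^q$, one has $D_p(1,1,1)=1$ for every $p$, whereas your trapezoid integral gives $3/4$. On all of $B_4$ the discrepancy is $\tfrac14(t_1+t_2+t_3-2)^2$, independent of $p$. Because $e\to\infty$ before $p\to\infty$, the exponents are far larger than $p$, so the characteristic-zero count is no guide a priori (already $(T_1+T_2)^p=T_1^p+T_2^p$). Controlling the total rank defect is precisely the content of Han's theorem, not something a remark about a few binomial determinants delivers.

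Third, your paraphrase of Han's formula reverses the mechanism. For $t\in[0,1]^3$, Han's theorem gives
$D_p(t)=\tfrac14\bigl(2\sum_{i<j}t_it_j-\sum_i t_i^2\bigr)+\tfrac14\delta_p(t)^2$, where $\delta_p(t)=p^{-s}\bigl(1-d_1(p^st,L_{\mathrm{odd}})\bigr)$ for the least $s\ge 0$ with $d_1(p^st,L_{\mathrm{odd}})<1$, and $\delta_p(t)=0$ if no such $s$ exists. Here $d_1$ is taxicab distance and $L_{\mathrm{odd}}$ is the set of integer points with odd coordinate sum. The $s=0$ term is independent of $p$ and does \emph{not} tend to the Euclidean formula. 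It is exactly what produces $B_1,\dots,B_4$: the nearest odd points $(1,0,0),(0,1,0),(0,0,1),(1,1,1)$ give $\delta=t_1-t_2-t_3$, $t_2-t_1-t_3$, $t_3-t_1-t_2$, $t_1+t_2+t_3-2$.

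The missing observation is that, inside $[0,1]^3$, $d_1(t,L_{\mathrm{odd}})\ge 1$ holds exactly on $T_0$. Hence on $T_0$ one has $s\ge 1$, $0\le\delta_p(t)\le 1/p$, and $D_p(t)\to\tfrac14\bigl(2\sum_{i<j}t_it_j-\sum_it_i^2\bigr)$. With Han's theorem correctly stated, the whole formula on the cube is a short verification. Without it, or an equivalent $p$-fractal analysis, you have no argument on $T_0$.

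Your argument outside the cube is fine. If $t_1\ge 1\ge t_2,t_3$, then $T_1^q=(T_1+T_2)^q-T_2^q\in\bigl(T_2^b,(T_1+T_2)^c\bigr)$, so $D_p=t_2t_3$ for every $p$. The formulas on $B_1,B_2,B_3$ are likewise characteristic-free.

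For $B_4$ the linkage idea can be made to work, but not as you set it up. Write $u=T_1+T_2$. Duality in $A=k[T_1,T_2]/(T_1^q,T_2^q)$ turns the colength into the length of $T_1^{q-a}A\cap T_2^{q-b}A\cap u^{q-c}A$, which is not a value of $D$. Moreover, the full reflection $t\mapsto(1-t_1,1-t_2,1-t_3)$ sends $B_4$ to $\{t_1+t_2+t_3\le 1\}$, which meets $T_0$, so it would lean on the unproved case.

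Instead, work in $C=k[T_1,T_2]/(T_1^q,T_2^q,u^c)$. For $c\le q$ this equals both $k[T_1,u]/(T_1^q,u^c)$ and $k[T_2,u]/(T_2^q,u^c)$, so $0:_C T_1^a=T_1^{q-a}C$ and $0:_C T_2^b=T_2^{q-b}C$. Gorenstein duality together with $\ell(I\cap J)=\ell(I)+\ell(J)-\ell(I+J)$ then yields
$D_p(t_1,t_2,t_3)=D_p(1-t_1,1-t_2,t_3)+t_3(t_1+t_2-1)$ for all $t\in[0,1]^3$ and all $p$. This reflection maps $B_4$ onto $B_3$ and gives $1-\sum_i t_i+\sum_{i<j}t_it_j$ on $B_4$ directly. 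Note that it maps $T_0$ to itself, so it cannot substitute for Han's theorem there.
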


\begin{remark}[Differentiability of $D_\infty$]
    \label{rmk:differentiability_D_infty}
The $D_\infty$ function is continuous in $\mathbb{R}^3$, $\mathcal{C}^1$ on $(0,1)^3$. However, the partial derivatives are not well-defined at some points of the boundary of the cube $[0,1]^3$: in the region $B_4$, the left and right partial derivatives $\partial_{t_1}^{\pm}D_\infty$ do not agree when $t_1=1$ and $1+t_2+t_3>2$; and, at the coordinate planes, i.e., at the points $(t_1,t_2,t_3)\in \mathbb{R}^3$ where some $t_i=0$. We note that the discontinuities at the coordinate planes will not be relevant in any of the computations because, although they arise in computation, they will cancel out; however, the discontinuities on the planes $t_i=1$ will need to be taken into account in several computations, because the Riemann–Stieltjes integral captures them as Dirac deltas (see \ref{rmk:domain_of_integration_and_atoms}).
\end{remark}

To state the integral formula, we will use the following notation: let $(R_1,\m_1),(R_2,\m_2)$ and $(R,\m)$ be the local rings $(k[[x_1,...,x_n]],(x_1,\dots,x_n))$, $(k[[y_{1},...,y_m]],(y_{1},\dots,y_m))$, and $(k[[x_1,\dots,x_n,y_1,\dots,y_m]],(x_1,\dots,x_n,y_1,\dots,y_m))$, respectively.

\begin{theorem}[The integral formula for limit $h$-functions, Theorem 5.23, \cite{mengLimits2026}]
\label{th:integral_formula}
Using the notation above, set $f\in R_1$ and $g\in R_2$. Then,
\begin{gather*}
h_{R,\m,f+g,\infty}(x) = \int_{[0,\infty)^2} D_\infty(s,t,x) d_s(-h'_{R_1,\m_1,f,\infty}(s))d_t\left(-h'_{R_2,\m_2,g,\infty}(t)\right),
\end{gather*}
where the integral is a Riemann–Stieltjes integral.
\end{theorem}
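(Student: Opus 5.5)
The plan is to reduce the statement to a finite-level identity for each fixed $p$ and $q=p^e$, and then pass to the limit twice: first $e\to\infty$, then $p\to\infty$. The key algebraic input is that $R/\m^{[q]}\cong R_1/\m_1^{[q]}\otimes_k R_2/\m_2^{[q]}$. Write $M_1 := R_1/\m_1^{[q]}$ and $M_2 := R_2/\m_2^{[q]}$, and regard each as a finite-dimensional $k[T]$-module with $T$ acting as multiplication by $f$, respectively $g$. Since $f$ and $g$ act nilpotently, we get Jordan decompositions $M_1\cong\bigoplus_a (k[T]/T^a)^{\nu_a}$ and $M_2\cong\bigoplus_b (k[T]/T^b)^{\mu_b}$. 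Then $R/\m^{[q]}\cong M_1\otimes_k M_2$ is a $k[T_1,T_2]$-module on which $f+g$ acts as $T_1+T_2$. Hence
\begin{gather*}
\ell\left(\frac{R}{(\m^{[q]},(f+g)^{c})}\right)=\sum_{a,b}\nu_a\mu_b\dim_k\frac{k[T_1,T_2]}{(T_1^{a},T_2^{b},(T_1+T_2)^{c})},
\end{gather*}
with $c=\lceil xq\rceil$.

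Next I will identify the multiplicities $\nu_a,\mu_b$ with second differences of the finite-level $h$-functions. From Definition \ref{def:h_function}, $\ell(M_1/f^{j}M_1)=\sum_a\nu_a\min(a,j)=q^{n}h_{R_1,f,\m_1,e}(j/q)$. So the number of blocks of size $\geq j$ is a first difference of $j\mapsto q^{n}h_{f,e}(j/q)$, and $\nu_j$ is minus a second difference. After normalising by $q^{n+m}$, the identity above becomes a Riemann–Stieltjes integral
\begin{gather*}
h_{f+g,e}(x)=\int_{[0,\infty)^2}D_{p,e}(s,t,x)\,d_s\bigl(-h'_{f,e}(s)\bigr)\,d_t\bigl(-h'_{g,e}(t)\bigr),
\end{gather*}
where $h'_{f,e}$ denotes the normalised difference quotient (a step function), $h_{f+g,e}$ is the finite-level $h$-function of $f+g$, and $D_{p,e}(s,t,x)=q^{-2}\dim_k k[T_1,T_2]/(T_1^{\lceil sq\rceil},T_2^{\lceil tq\rceil},(T_1+T_2)^{\lceil xq\rceil})$. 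Both measures are supported in a fixed compact set: by Proposition \ref{pr:h_function_analytic_properties}(3) the functions are constant beyond some $s_0$, and the total masses are $-h'(0^+)$, which is bounded by the multiplicity $e(R_1/f)$.

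For the limit $e\to\infty$ at fixed $p$, I will use three facts:
\begin{itemize}
\item $D_{p,e}\to D_p$ uniformly on compacts, because $D_{p,e}$ is Lipschitz in each variable with constant $O(1)$, uniformly in $e$.
\item $h_{f,e}\to h_f$ uniformly, which holds by concavity and continuity.
\item The measures $-dh'_{f,e}$ converge weakly to $-dh'_f$, which follows from uniform convergence of concave functions together with convergence of one-sided derivatives at continuity points.
\end{itemize}
A product of weakly convergent measures of bounded mass on a compact set, integrated against a uniformly convergent family of continuous functions, converges. This gives the same formula with $D_p,h_{f_p},h_{g_p}$ in place of the finite-level objects. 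Repeating the argument as $p\to\infty$, using $D_p\to D_\infty$ (Definition \ref{def:D_infty}, with uniform Lipschitz bounds) and $h_{f_p}\to h_{f,\infty}$, yields the stated formula.

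I expect the main obstacle to be the weak-convergence step for the second-derivative measures. These measures may have atoms, notably at $s=1$, and $D_\infty$ has kinks exactly on the planes $t_i=1$ (Remark \ref{rmk:differentiability_D_infty}). What saves the argument is that $D$ itself is continuous, so only the continuity of the integrand matters, not that of its derivatives. Still, I will need a uniform bound on the total variation of $-dh'_{f_p}$ in $p$, which I would try to obtain from $-h'_{f_p}(0^+)=\eHK(R_1/f_p)\le e(R_1/f)$, and care with the boundary behaviour at $s=0$, where I would handle the possible jump by convention via one-sided derivatives.
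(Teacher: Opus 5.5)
The paper gives no proof of this statement. It is quoted from \cite{mengLimits2026} and used as a black box, so there is no internal argument to compare yours with. Your outline is a correct proof, and it is the route the paper's setup points to. The kernel $D_p$ of Definition~\ref{def:D_infty} is exactly the normalized colength $q^{-2}\dim k_p[T_1,T_2]/(T_1^{a},T_2^{b},(T_1+T_2)^{c})$ in your Jordan-block identity for $R/\m^{[q]}\cong R_1/\m_1^{[q]}\otimes_k R_2/\m_2^{[q]}$. The block counts $\nu_a$ are indeed the nonnegative negated second differences of $j\mapsto q^{n}h_{f,e}(j/q)$. The two limit passages go through, and as a byproduct you obtain existence of $h_{f+g,\infty}$ from that of $h_{f,\infty}$ and $h_{g,\infty}$, which the statement tacitly assumes.

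A few points need tightening.

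\textbf{The boundary $s=0$.} In the $p\to\infty$ step, do not justify convergence there via one-sided derivatives. That would require $\eHK(R_1/f_p)\to h'_{f,\infty}(0^+)$, which is not available a priori. It is also unnecessary. Since $T_1,\,T_1+T_2$ are coordinates, the colength is at most $\lceil sq\rceil\lceil xq\rceil$, so $D_p(s,t,x)\le sx$ uniformly in $p$. Hence the part of the measure on $(0,\varepsilon)$ contributes $O(\varepsilon)$, by your bound $\eHK(R_1/f_p)\le e(R_1/f)$ for $p\gg 0$. On $[\varepsilon,\infty)$, with $\varepsilon$ a differentiability point of $h_{f,\infty}$, only the differences $h'_{f_p}(\varepsilon)-h'_{f_p}(s)$ matter, and these converge at differentiability points $s$ by concavity. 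Mass drifting into the atom at $s=1$ is harmless because $D_\infty$ is continuous.

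\textbf{Equicontinuity of $D_{p,e}$.} $D_{p,e}$ is a step function, so the correct statement is $|D_{p,e}(\mathbf t)-D_{p,e}(\mathbf t')|\le C(\|\mathbf t-\mathbf t'\|+1/q)$ on compacts. It holds because raising one exponent by $1$ raises the colength by at most the smaller of the other two exponents. The same bound, uniform in $p$, is what gives $D_p\to D_\infty$ uniformly.

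\textbf{Concavity and uniform convergence.} Concavity applies to the piecewise-linear interpolation of $j\mapsto h_{f,e}(j/q)$, whose second-difference measure is the one in your identity, not to the step function $h_{f,e}$ itself. The uniform convergences $h_{f,e}\to h_f$ and $h_{f_p}\to h_{f,\infty}$ follow from the equi-Lipschitz bound with constant $e(R_1/f)$.
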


\begin{remark}[Domain of integration and Dirac delta distribution]
    \label{rmk:domain_of_integration_and_atoms}
Recall that, by \ref{pr:h_function_analytic_properties}, the $h$-functions $h_{R_1,\m_1,f}$ and $h_{R_2,\m_2,g}$ are $1$ for $x\geq 1$, and therefore in both cases the derivative for $x>1$ is 0. However, in general, the derivative at $1$ coming from the left is often not 0 (by \ref{th:F_invariants_lim_diff_h}, one sees that this happens exactly when the hypersurface is strongly $F$-regular). Therefore, one should expect discontinuities at $1$ in the integrators $h'$. The Riemann–Stieltjes theory of integrals takes these discontinuities into account through the use of Dirac delta distributions. We refer the reader to \cite[Section 2]{mengLimits2026}, but we recall here a basic example that showcases how it works: if $f(x) = a$ for $x\leq 1$ and $b$ for $x\geq 1$, then $df = (b-a)\delta_1$.

Taking this into account, we will write
\begin{multline*}
h_{R,\m,f+g,\infty}(x) = \int_0^\infty \int_{0}^\infty D_\infty(s,t,x) d_s(-h'_{R_1,\m_1,f,\infty}(s))d_t\left(-h'_{R_2,\m_2,g,\infty}(t)\right) \\
= \int_0^{1^+}\int_0^{1^+} D_\infty(s,t,x) d_s\left(-h'_{R_1,\m_1,f,\infty}(s)\right)d_t\left(-h'_{R_2,\m_2,g,\infty}(t)\right),
\end{multline*}
where the $1^+$ accounts for possible Dirac deltas at $t=1$.
\end{remark}

\subsection{The reflection of the \texorpdfstring{$h$}{h}-function of a hypersurface}
We prove a functional relation of the $h$-function of a hypersurface with its reflection (in particular, the $F$-signature of pairs of a double point). This formula extends Huneke and Leuschke's identity \cite[Proposition 15]{hunekeTwoTheoremsMaximal2004} between the $F$-signature and the Hilbert–Kunz multiplicity to the entire $h$-function.

\begin{theorem}[The reflection of the $h$-function of a hypersurface]
	\label{th:reflection}
Let $(R,\m,k)$ be a regular local domain of dimension $d$ and equicharacteristic, of any characteristic, with infinite residue field $k$, and $0\neq f\in R$. Let the ideal $I\subset R/fR$ be a minimal reduction of $\m R/fR$ that is a parameter ideal in $\m R/fR$. Then,
\begin{gather*}
h_{R,f,I:\m}(1-t) = h_{R,f,\m}(t) - h_{R,f,I}(t) + e(R/fR)-1.
\end{gather*}
Since $h_{R,f,I}(t) = e(R/fR)t$, equivalently,
\begin{gather*}
h_{R,f,I:\m}(1-t) = h_{R,f,\m}(t)-e(R/fR)(t-1)-1.
\end{gather*}
Equivalently, for the $\phi$-function (see \ref{def:phi_function} below),
\begin{gather*}
\phi_{I:\m,f}(1-t) = \phi_{\m,f}(t)-e(R/fR)(t-1)-1
\end{gather*}
In particular, this recovers the hypersurface case of the well-known identity by Huneke and Leuschke that $\eHK(I:\m) = e(R/fR)-s(R/fR)$.
\end{theorem}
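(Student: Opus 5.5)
The plan is to prove the identity at every finite level $q=p^e$, using linkage (Matlis duality) in an Artinian Gorenstein complete intersection, and then divide by $q^d$ and let $q\to\infty$. The $h$-functions are defined in characteristic $p$, so I read the statement in that setting; in general it is applied to characteristic $p$ models.

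\textbf{Setup.} Since $k$ is infinite, a minimal reduction $I$ of $\m R/fR$ exists and is generated by a system of parameters of $R/fR$. Lift these generators to $g_1,\dots,g_{d-1}\in R$, and identify $I$ and $I:\m$ with their preimages in $R$, so that $I=(f,\underline g)$. Then $f,g_1,\dots,g_{d-1}$ is a regular sequence in the regular ring $R$. Since $I$ is a parameter ideal of the Cohen–Macaulay ring $R/fR$, we have $\ell(R/(f,\underline g))=e(R/fR)=:e$.

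\textbf{Step 1: the formula $h_{R,f,I}(t)=et$.} For $0\le t\le 1$ we have $f^{\lceil tq\rceil}\supseteq f^q$, so the $f^q$ coming from $I^{[q]}$ is redundant. Hence
\[
\ell\bigl(R/(f^m,\underline g^{[q]})\bigr)=m\,q^{d-1}e
\]
by multiplicativity of length along powers in a regular sequence. Taking $m=\lceil tq\rceil$, dividing by $q^d$ and letting $q\to\infty$ gives $et$.

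\textbf{Step 2: the Gorenstein ring $B$.} Set $A=R/\underline g^{[q]}R$. This is a one-dimensional complete intersection, hence Cohen–Macaulay and Gorenstein, and $f$ is a nonzerodivisor on $A$. Then $B:=A/f^qA=R/I^{[q]}$ is Artinian Gorenstein with $\ell(B)=eq^d$. We need two annihilator computations in $B$:
\begin{enumerate}
\item $0:_B f^{j}=f^{q-j}B$, because $f$ is a nonzerodivisor on $A$;
\item $0:_B\m^{[q]}=(I^{[q]}:\m^{[q]})B=(I:\m)^{[q]}B$, using flatness of Frobenius on the regular ring $R$.
\end{enumerate}

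\textbf{Step 3: duality computation.} Let $n=\lceil (1-t)q\rceil$ and $j=q-n=\lfloor tq\rfloor$. Since $(f^n)B=0:f^{j}$ and $(I:\m)^{[q]}B=0:\m^{[q]}$, Gorenstein duality ($0:X+0:Y=0:(X\cap Y)$ and $\ell(B/(0:Z))=\ell(Z)$) gives
\[
\ell\bigl(R/(f^n,(I:\m)^{[q]})\bigr)=\ell\bigl(f^{j}B\cap\m^{[q]}B\bigr).
\]
Expanding the right-hand side by inclusion–exclusion, it equals
\[
\bigl[\ell(B)-\ell(B/f^jB)\bigr]+\bigl[\ell(B)-\ell(B/\m^{[q]}B)\bigr]-\bigl[\ell(B)-\ell(B/(f^j,\m^{[q]})B)\bigr].
\]
The individual lengths are
\[
\ell(B)=eq^d,\qquad \ell(B/\m^{[q]}B)=\ell(R/\m^{[q]})=q^d,\qquad \ell(B/f^jB)=jq^{d-1}e,\qquad \ell\bigl(B/(f^j,\m^{[q]})\bigr)=\ell\bigl(R/(f^j,\m^{[q]})\bigr).
\]
For the last equality, note $I^{[q]}\subseteq\m^{[q]}$ and $f^q\in(f^j)$.

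\textbf{Step 4: passing to the limit.} Dividing by $q^d$ and letting $q\to\infty$ yields
\[
h_{R,f,I:\m}(1-t)=e-h_{R,f,I}(t)+e-1-e+h_{R,f,\m}(t),
\]
which is the claimed identity. The $\phi$-function version is then just a restatement.

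For the Huneke–Leuschke corollary, take the right derivative at $t=0$, equivalently the left derivative at $1$, and use Theorem \ref{th:F_invariants_lim_diff_h}. Applied to $h_{R,f,I:\m}$, its analogue gives $\eHK(I:\m)$ on one side. On the other side, the left derivative of $h_{R,f,\m}$ at $1$ is $s(R/f)$, so the derivative of $h_{R,f,\m}(t)-e(t-1)-1$ there is $s(R/f)-e$. Reversing the sign because of the reflection $t\mapsto 1-t$ gives $\eHK(I:\m)=e(R/fR)-s(R/fR)$.

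\textbf{Main obstacle.} The delicate points are the two annihilator identities in $B$ and the rounding mismatch. For the first, we need to check that $0:_B\m^{[q]}$ really equals $(I:\m)^{[q]}B$, i.e.\ that colon commutes with Frobenius powers over a regular ring (Kunz flatness). For the second, the shift $\lfloor tq\rfloor$ versus $\lceil tq\rceil$ changes each length by at most $O(q^{d-1})$, because multiplying by one extra power of $f$ changes the length by at most $\ell(R/(f,\m^{[q]}))=O(q^{d-1})$. This error vanishes after dividing by $q^d$, which is consistent with the continuity of the $h$-functions in Proposition \ref{pr:h_function_analytic_properties}.
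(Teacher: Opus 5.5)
Your argument is correct and is essentially the paper's proof. Both establish the same finite-level identity $\ell\bigl(R/(f^{q-a},(I:\m)^{[q]})\bigr)=(e-1)q^d-aeq^{d-1}+\ell\bigl(R/(f^a,\m^{[q]})\bigr)$ from the same two colon facts, $(\underline g^{[q]},f^q):f^{q-a}=(\underline g^{[q]},f^{a})$ and $(I:\m)^{[q]}=I^{[q]}:\m^{[q]}$, together with Gorenstein linkage. The differences are only in bookkeeping: you carry out the duality in the single Artinian Gorenstein ring $B=R/I^{[q]}$ using $0:(X\cap Y)=(0:X)+(0:Y)$ and inclusion--exclusion, whereas the paper uses the exact sequence for multiplication by $f^{q-a}$ on $R/(I:\m)^{[q]}$ plus linkage in $R/(\underline g^{[q]},f^a)$, and it works only at $t=a/q$ and concludes by continuity rather than through your floor/ceiling estimate.
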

Before the proof, we derive the corollary most useful for our purposes in this article.

\begin{corollary}[The reflection of the $F$-signature of pairs of a Gorenstein double point]
	\label{cor:reflection_h_function_double_points}
	Under the same setting as \ref{th:reflection}, assume further that $e(R/fR) = 2$. Then,
    \begin{gather*}
        %\color{PineGreen}
    h_{f}(1-t)=h_f(t)-2t+1
    \end{gather*}
    or in terms of the $F$-signature of pairs,
	\begin{gather*}
        %\color{PineGreen}
	s(R,f^{1-t}) = s(R,f^t)+2t-1.
	\end{gather*}
\end{corollary}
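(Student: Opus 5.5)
The plan is to specialize Theorem \ref{th:reflection} to $e(R/fR)=2$ and then identify the left-hand side $h_{R,f,I:\m}$ with $h_{R,f,\m}$. With $e(R/fR)=2$, Theorem \ref{th:reflection} gives
\begin{gather*}
h_{R,f,I:\m}(1-t) = h_{R,f,\m}(t)-2(t-1)-1 = h_{R,f,\m}(t)-2t+1.
\end{gather*}
So it suffices to show that $I:\m=\m$ in $R/fR$, where $I$ is a minimal reduction of $\m R/fR$ generated by a system of parameters. Then $h_{R,f,I:\m}=h_{R,f,\m}$ by definition, since both are computed from the same ideal of $R$ (the preimage of $\m R/fR$, which is $\m$ because $f\in\m$).

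To prove $I:\m=\m$, I use the fact that $S:=R/fR$ is Gorenstein, being a hypersurface in a regular ring, and in particular Cohen–Macaulay. Since $I$ is generated by a system of parameters, $\ell(S/I)=e(I,S)$, and because $I$ is a reduction of $\m S$, this equals $e(S)=2$. The socle of the Artinian Gorenstein ring $S/I$ is one-dimensional, so $\ell(S/(I:\m))=\ell(S/I)-1=1$. Hence $I:\m$ is an ideal of colength one, which forces $I:\m=\m S$.

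Finally, I translate into $F$-signature of pairs using Lemma \ref{lm:F_signature_pairs_and_h_function}, which says $h_f(t)=1-s(R,f^t)$. Substituting into the identity above gives
\begin{gather*}
1-s(R,f^{1-t})=1-s(R,f^t)-2t+1,
\end{gather*}
that is, $s(R,f^{1-t})=s(R,f^t)+2t-1$. The lemma requires $F$-finiteness, so I would make this translation only in the setting where the $F$-signature of pairs is defined.

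The only nontrivial step is the socle/colength computation. It relies on the hypotheses of Theorem \ref{th:reflection}: $I$ must be a parameter ideal and a reduction, and the infinite residue field is what guarantees such a minimal reduction exists. Once the Gorenstein property of the hypersurface is invoked, this step is routine.
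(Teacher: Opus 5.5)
Your proposal is correct and follows the paper's proof: specialize Theorem~\ref{th:reflection} to $e(R/fR)=2$, then use that $(I,f):\m=\m$ for a Gorenstein double point. The paper only asserts this last equality; your argument supplies the omitted justification, namely that $\ell(S/I)=e(S)=2$ and a one-dimensional socle of $S/I$ force $I:\m$ to have colength one.
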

\begin{proof}
Since $R/fR$ is a Gorenstein double point, we have that $(I,f):\m = \m$, and the result follows from \ref{th:reflection}.
\end{proof}

To prove this result, we need to interpret the recall the relation between the $h$-function and Monsky and Teixeira's $\phi$-function:
\begin{definition}[The $\phi$-function of a hypersurface, \cite{monskyPFractals2006,shidelerFsignatureFunctionsDiagonal2024}]
	\label{def:phi_function}
Let $k$ be a field of characteristic $p>0$, $f\in k[[x_1,...,x_n]]$, and $\mathscr{I} = [0,1]\cap \lbrace \frac{a}{p^e}:a,e\in \mathbb{N}\rbrace$. Let $I$ be an $\m$-primary ideal. Then,
\begin{equation*}
\begin{array}{rcl}
\phi_{I,f} : \mathscr{I}&\longrightarrow &\mathbb{Q}\\
		\dfrac{a}{p^e}&\longmapsto &\dfrac{1}{p^{ne}}\dim_k\left(\dfrac{k[[x_1,\dots,x_n]]}{I^{[p^e]}+(f^a)}\right).
\end{array}
\end{equation*}
\end{definition}
\begin{lemma}[Proposition 4.1, \cite{blickleFsignaturePairsContinuity2013}]
    \label{lm:phi_function_and_h_function}
	Under the same notation as definition \ref{def:phi_function}, we have that
\begin{equation*}
	h_{k[[x_1,...,x_n]],f,I}\left(\dfrac{a}{p^c}\right) = \phi_{I,f}\left(\dfrac{a}{p^c}\right).
\end{equation*}
\end{lemma}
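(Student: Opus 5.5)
The plan is to compare the two lengths defining $h_{R,I,J,e}(a/p^e)$ and $\phi_{I,f}(a/p^e)$ at a fixed dyadic-type point $t = a/p^c$. For $e\ge c$ we have $\lceil t p^e\rceil = a p^{e-c}$ exactly, so
\begin{gather*}
h_{R,f,I,e}(t) = \frac{1}{p^{ne}}\dim_k\frac{k[[x_1,\dots,x_n]]}{\left(f^{ap^{e-c}}\right)+I^{[p^e]}}.
\end{gather*}
Writing $e = c + e'$, the ideal in the denominator is $\bigl((f^a)+I^{[p^c]}\bigr)^{[p^{e'}]}$, where $I$ here denotes the $\m$-primary ideal of Definition \ref{def:phi_function}. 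I use that $(x^{[q]})$ and $(f^{aq'}) = (f^a)^{[q']}$ for $q' = p^{e'}$, together with $(I^{[p^c]})^{[p^{e'}]} = I^{[p^e]}$.

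The key step is Kunz's theorem. Since $S := k[[x_1,\dots,x_n]]$ is regular, Frobenius is flat, and for any $\m$-primary ideal $\mathfrak{b}$ we get $\ell(S/\mathfrak{b}^{[q']}) = q'^{\,n}\,\ell(S/\mathfrak{b})$. This holds because $F^{e'}_*S$ is free of rank $q'^{\,n}$ over $S$; here I assume $k$ is perfect, or more generally work with $[k:k^p]$ finite and correct the normalization. Otherwise I use directly that $S/\mathfrak{b}^{[q']} \cong S/\mathfrak{b}\otimes_S F^{e'}_*S$ and compute length via the standard flatness argument, which gives the factor $q'^{\,n}$ regardless of $k$. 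Apply this with $\mathfrak{b} = (f^a)+I^{[p^c]}$, which is $\m$-primary.

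It then follows that $h_{R,f,I,e}(t) = p^{-nc}\dim_k S/((f^a)+I^{[p^c]}) = \phi_{I,f}(a/p^c)$ for every $e\ge c$. So the sequence is eventually constant, and its limit $h_{S,f,I}(a/p^c)$ equals $\phi_{I,f}(a/p^c)$. It is also worth noting that this value is independent of the representative $a/p^c$, by the same identity applied with $c$ replaced by $c+1$ and $a$ by $ap$.

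The only step I expect to need care is the length-scaling under Frobenius for an imperfect residue field. I would handle it by the flatness identity $\ell(S/\mathfrak{b}^{[q]}) = q^n\ell(S/\mathfrak{b})$, which in Kunz's form holds for any regular local ring of dimension $n$ with lengths measured over $S$. Since $\dim_k$ equals length for modules annihilated by $\m$-powers with residue field $k$, this identity suffices.
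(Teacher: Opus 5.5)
Your proof is correct: for $e\ge c$ the ideal $(f^{ap^{e-c}})+I^{[p^e]}$ equals $\bigl((f^a)+I^{[p^c]}\bigr)^{[p^{e-c}]}$, and Kunz's flatness gives $\ell(S/\mathfrak{b}^{[q]})=q^n\ell(S/\mathfrak{b})$ for every $\m$-primary $\mathfrak{b}$ in any regular local ring. No perfectness of $k$ is needed, so your detour through $[k:k^p]$ can be dropped, and the sequence defining $h$ is constant from $e=c$ on.

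The paper gives no argument of its own here beyond the citation to \cite{blickleFsignaturePairsContinuity2013}, and this flatness-of-Frobenius stabilization is the standard mechanism behind that result. Your version works verbatim for an arbitrary $\m$-primary $I$, which is the generality the paper actually uses in the proof of Theorem \ref{th:reflection}.
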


The proof of \ref{th:reflection} requires the following technical lemma:
\begin{lemma}[cf. Propositions 5.6 and 5.7, \cite{monskyPFractals2006}]
	\label{lm:technical_reflection_lemma}
Let $(R,\m)$ be a $d$-dimensional regular local ring of characteristic $p>0$, and $x_1,\dots,x_{d-1},f$ a regular sequence. Let $I = (x_1,\dots,x_{d-1})$, $J := (I,f):\m$, and $0\leq a \leq q-1$. Then
\begin{equation*}
J^{[q]}:f^{q-a} = (f^a,I^{[q]}):(f^a,\m^{[q]}).
\end{equation*}
Since the ideal $(f^a,I^{[p^e]})$ is a complete intersection, it follows that
\begin{equation*}
\ell\left(\dfrac{R}{J^{[p^e]}:f^{p^e-a}}\right) = \ell\left(\dfrac{R}{(I^{[p^e]},f^a)}\right) - \ell\left(\dfrac{R}{(\m^{[p^e]},f^a)}\right).
\end{equation*}
\end{lemma}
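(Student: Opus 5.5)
The plan is to reduce everything to two standard facts about a regular local ring of characteristic $p$: flatness of Frobenius (Kunz), and Matlis/Gorenstein duality in an Artinian complete intersection. Throughout, write $q=p^e$ and $C:=(I^{[q]},f^a)$.

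\textbf{Step 1: move the bracket power inside the colon.} Since $R$ is regular, the Frobenius endomorphism is flat. Flat base change commutes with colon ideals of finitely generated ideals, and the Frobenius pushforward of an ideal $\mathfrak{a}$ is $\mathfrak{a}^{[q]}$. Hence
\begin{equation*}
J^{[q]} = \big((I,f):\m\big)^{[q]} = (I^{[q]},f^q):\m^{[q]}.
\end{equation*}

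\textbf{Step 2: swap the two colons.} Using the identity $(\mathfrak{a}:\mathfrak{b}):\mathfrak{c} = (\mathfrak{a}:\mathfrak{c}):\mathfrak{b}$, we get
\begin{equation*}
J^{[q]}:f^{q-a} = \big((I^{[q]},f^q):f^{q-a}\big):\m^{[q]}.
\end{equation*}

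\textbf{Step 3: compute the inner colon.} The sequence $x_1^q,\dots,x_{d-1}^q,f$ is again a regular sequence, since powers of a regular sequence remain regular. So $f$ is a nonzerodivisor on $S=R/I^{[q]}$. In $S$, if $f^{q-a}g\in f^qS$, then $f^{q-a}(g-f^ah)=0$ for some $h$, so $g\in f^aS$. Therefore
\begin{equation*}
(I^{[q]},f^q):f^{q-a}=(I^{[q]},f^a)=C.
\end{equation*}
It follows that $J^{[q]}:f^{q-a} = C:\m^{[q]} = C:(\m^{[q]}+C)$, since adding $C$ to the divisor does not change the colon. This gives the first identity. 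The case $a=0$ is consistent with this: both sides equal $R$, because $f^q\in J^{[q]}$ and $f^0=1$.

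\textbf{Step 4: the length formula.} For $a\geq 1$, $x_1^q,\dots,x_{d-1}^q,f^a$ is a system of parameters of $R$ forming a regular sequence. So $A:=R/C$ is an Artinian complete intersection, hence Gorenstein. In an Artinian Gorenstein local ring, Matlis duality gives $\ell(A/(0:_A K))=\ell(K)$ for every ideal $K$. Apply this to $K=(\m^{[q]}+C)/C$. Using $0:_A K = \big(C:(\m^{[q]}+C)\big)/C$, we obtain
\begin{equation*}
\ell\big(R/(C:(\m^{[q]}+C))\big) = \ell\big((\m^{[q]}+C)/C\big) = \ell(R/C)-\ell\big(R/(\m^{[q]},f^a)\big),
\end{equation*}
which is the claimed formula. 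For $a=0$ both sides are $0$.

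The only step needing genuine care is Step 1. There one must justify that Frobenius commutes with colons, which is the standard consequence of Kunz's flatness theorem. The remaining steps are formal.
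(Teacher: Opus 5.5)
Your proof is correct and follows the paper's argument essentially step for step. Frobenius flatness gives $J^{[q]}=(I^{[q]},f^q):\m^{[q]}$, the colons are swapped, $(I^{[q]},f^q):f^{q-a}=(I^{[q]},f^a)$ follows from $x_1^q,\dots,x_{d-1}^q,f$ being a regular sequence, and the length formula comes from duality in the Artinian Gorenstein ring $R/(I^{[q]},f^a)$; the one difference is that you use only the length identity $\ell(A/(0:_AK))=\ell(K)$, whereas the paper asserts a module isomorphism $R/\bigl((I^{[q]},f^a):(\m^{[q]},f^a)\bigr)\cong(\m^{[q]},f^a)/(I^{[q]},f^a)$ that in general holds only with the Matlis dual of the right-hand side, so your formulation (together with your explicit treatment of $a=0$) is slightly cleaner.
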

\begin{proof}
For starters, note that since $R$ is regular, one has that $J^{[q]} = (I^{[q]},f^q):\m^{[q]}$, and therefore $J^{[q]}:f^{q-a} = ((I^{[q]},f^q):\m^{[q]}):f^{q-a} = ((I^{[q]},f^q):f^{q-a}):\m^{[q]}$.

Now, we note that $(I^{[q]},f^q):f^{q-a} = (I^{[q]},f^a)$ (cf. \cite[Lemma 3.6]{shidelerFsignatureFunctionsDiagonal2024}). Indeed, we only need to show one containment, since the other is clear: let $c\in (I^{[q]},f^q):f^{q-a}$, so
\begin{gather*}
cf^{q-a} = \sum a_ix_i + bf^q \Rightarrow f^{q-a}(c-bf^a) \in I^{[q]} \Rightarrow c\in (I^{[q]}:f^{q-a}) + f^aR,
\end{gather*}
but, since $x_1,\dots,x_{d-1},f$ form an $R$-regular sequence, also $x_1^q,\dots,x_{d-1}^q,f^{q-a}$ do, which means that $(I^{[q]}:f^{q-a}) = I^{[q]}$. In other words, this shows that $c\in (I^{[q]},f^q):f^{q-a}\subseteq (I^{[q]},f^{a})$. All this ultimately implies that $J^{[q]}:f^{q-a} = (I^{[q]},f^{a}):\m^{[q]} = (I^{[q]},f^{a}):(\m^{[q]},f^a).$

As for the second claim, observe that the ideal $(I^{[q]},f^{a})$ is an $\m$-primary complete intersection ideal, hence \cite[Exercise 3.2.15]{brunsherzog1998} implies that
\begin{gather*}
\dfrac{R}{(I^{[q]},f^a):(\m^{[q]},f^a)} \cong \dfrac{(\m^{[q]},f^a)}{(I^{[q]},f^a)},
\end{gather*}
as $R/(I^{[q]},f^a)$-modules, and the second claim follows.
\end{proof}

\begin{proof}[Proof of theorem \ref{th:reflection}]
Since $h_{R,f,\m}, h_{R,f,I},$ and $h_{R,f,J}$ are continuous and coincide with $\phi_{\m,f},\phi_{I,f}$, and $\phi_{J,f}$, respectively, in a dense set of $[0,1]$, it is enough to show the result for the respective $\phi$-functions.

Take generators $x_1,\dots,x_{d-1}\in R$ of the ideal $I$, so that they form a parameter sequence. This implies that $x_1,\dots,x_{d-1},f$ is a parameter sequence of $R$ and thus they form an $R$-regular sequence, since $R$ is regular, and thus we are in the hypotheses of Lemma \ref{lm:technical_reflection_lemma}.

Thus, we continue by noting that for any ideal $J$, and an element $x\in R$ we have that
\begin{gather*}
0\longrightarrow \dfrac{R}{J:x}\stackrel{\cdot x}\longrightarrow \dfrac{R}{J}\longrightarrow \dfrac{R}{(J,x)}\longrightarrow 0.
\end{gather*}
Letting $J = (I,f):\m$ and $x = f^{q-a}$, it follows
\begin{gather*}
\ell\left(\dfrac{R}{J^{[q]}:f^{q-a}}\right)+\ell\left(\dfrac{R}{(J^{[q]},f^{q-a})}\right) = \ell\left(\dfrac{R}{J^{[q]}}\right).
\end{gather*}
Using \ref{lm:technical_reflection_lemma} we thus conclude that
\begin{gather*}
\ell\left(\dfrac{R}{(I^{[q]},f^{a})}\right)-\ell\left(\dfrac{R}{(\m^{[q]},f^{a})}\right)+\ell\left(\dfrac{R}{(J^{[q]},f^{q-a})}\right) = \ell\left(\dfrac{R}{J^{[q]}}\right).
\end{gather*}
Note that $\ell\left(R/J^{[q]}\right) = \ell\left(R/(I,f)^{[q]}\right)-\ell\left(R/\m^{[q]}\right)$. Since $R$ is regular and $(I,f)$ is a parameter ideal, $\ell\left(R/(I,f)^{[q]}\right) = q^d\ell\left(R/(I,f)\right)$, and since $(I,f)/fR$ is a minimal reduction of $\m/fR$, we have that $\ell\left(R/(I,f)\right) = e(R/fR)$. Also, $\ell\left(R/\m^{[q]}\right) = q^d$. In other words, this all yields
\begin{gather*}
\phi_{I,f}\left(\dfrac{a}{q}\right) - \phi_{\m,f}\left(\dfrac{a}{q}\right) + \phi_{J,f}\left(\dfrac{q-a}{q}\right) = e(R/fR)-1,
\end{gather*}
and the result for $h$-functions thus follows. Note that since $\ell\left(R/(I,f)^{[q]}\right) = q^d\ell\left(R/(I,f)\right)$, then $h_{R,I,f}(t) = e(R/fR)t$.
\end{proof}

\subsection{The simple or ADE singularities}
Finally, let us introduce the equations of the family of singularities we will study in this work:
\begin{theoremdefinition}[The simple singularities, \cite{arnoldNormalFormsFunctions1972}]
    \label{def:simple_singularities}
Let $f\in \mathbb{C}[[x_0,\dots,x_d]]$. The ring $\mathbb{C}[[x_0,\dots,x_d]]/(f)$ is said to be (the coordinate ring of) a simple singularity if it is isomorphic as a $\mathbb{C}$-algebra to $\mathbb{C}[[x_0,\dots,x_d]]/(g(x_0,x_1)+x_2^2+\dots+x_d^2)$ where $g$ is one of the binomials in the table below.
\begin{table}[h] 
\centering 
\renewcommand{\arraystretch}{2} 
\begin{tabular}{c|c|c|c|c|c} 
    \toprule 
    Name & $A_n, n\geq 1$ & $D_n,n\geq 4$ & $E_6$ & $E_7$ & $E_8$\\ 
    \midrule 
    $g(x,y)$ & $x^{n+1}+y^2$ & $x^2y+y^{n-1}$ & $x^3+y^4$ & $x^3+xy^4$ & $x^3+y^5$\\ 
    \bottomrule 
\end{tabular}
\label{tab:simple_singularities_binomials_definition} 
\end{table}
\end{theoremdefinition}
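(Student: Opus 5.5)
This statement is essentially a definition anchored to Arnol'd's classification, so the proof I have in mind is the classical one: show that a germ is simple (finitely many isomorphism classes appear among its deformations) if and only if it is contact-equivalent to $g(x_0,x_1)+x_2^2+\dots+x_d^2$ with $g$ in the table. I would work in $\mathbb{C}[[x_0,\dots,x_d]]$ up to contact equivalence, i.e.\ up to multiplication by a unit and a formal change of coordinates. The tools are the Splitting Lemma (Morse lemma with parameters), finite determinacy of isolated singularities, and a modulus count for the non-simple families.

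\textbf{Reduction to corank at most two.} Let $f\in\m^2$ (if $f\notin \m^2$ the ring is regular, which I discard or treat as the trivial case). Let $r$ be the rank of the Hessian of $f$ at the origin. The Splitting Lemma gives
\[
f \sim g(x_0,\dots,x_{d-r}) + x_{d-r+1}^2+\dots+x_d^2, \qquad g\in\m^3 .
\]
If the corank $d+1-r$ is at least $3$, then the $3$-jet of $g$ is a ternary cubic form. Cubic forms in three variables have a continuous modulus (the $j$-invariant), so $\mathrm{GL}_3$ has infinitely many orbits on them and $f$ deforms to infinitely many non-isomorphic germs. Hence a simple $f$ has corank at most $2$, and after adding a dummy square we may assume $g=g(x,y)$.

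\textbf{Classification of the binary part.} Next I split by the $2$-jet and then the $3$-jet of $g$, using finite determinacy (Mather/Tougeron: $g$ is $k$-determined once $\m^{k+1}\subseteq \m^2 J(g)$) to cut off higher terms.
\begin{itemize}
\item If $j^2 g$ has rank $1$, say $y^2$, then completing the square gives $g\sim y^2+x^{n+1}$. This is $A_n$, or non-isolated, in which case it is not simple.
\item If $j^2 g=0$, the cubic $j^3 g$ is one of four types.
 \begin{itemize}
 \item Three distinct linear factors, $x^2y+y^3$: this gives $D_4$.
 \item A double factor, $x^2y$: a Newton-polygon argument gives $x^2y+y^{n-1}$, i.e.\ $D_n$.
 \item A triple factor, $x^3$: one then examines the terms $y^4$, $xy^4$, $y^5$, obtaining $E_6$, $E_7$, $E_8$ respectively.
 \item Zero: the $4$-jet is a binary quartic, which carries a cross-ratio modulus, so this case is not simple.
 \end{itemize}
\end{itemize}
In the $x^3$ branch, if all three of $y^4$, $xy^4$, $y^5$ are absent one reaches the unimodal families $J_{10}$ and beyond, which are again not simple. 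Conversely, for each listed germ I would compute a miniversal deformation from the Tjurina algebra and check, by semicontinuity of the Milnor number, that only finitely many of the listed types appear nearby. This is the standard adjacency diagram $A_k\to A_{k-1}$, $D_k\to A_{k-1},D_{k-1}$, and so on.

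\textbf{Main obstacle.} The hard step is the non-simplicity argument. One has to show rigorously that the excluded strata (corank $\ge 3$, vanishing $3$-jet in corank $2$, and the $J_{10}$ branch) really carry a modulus. I would first try the dimension count: compare the dimension of the jet stratum with the dimension of the orbit of the contact group acting on that jet space. A positive codimension excess of the stratum inside a single orbit family forces infinitely many orbits, hence infinitely many non-isomorphic deformations.

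Since this is classical and the paper only uses the list over $\mathbb{C}$, I would ultimately cite Arnol'd for these verifications rather than reproduce them.
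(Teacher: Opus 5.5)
The paper gives no proof of this statement. It is a Theorem--Definition resting entirely on Arnol'd's classification, so sketching the classical argument and then citing Arnol'd is in line with what the paper does.

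Your sketch, however, has a concrete error in the triple-factor branch, inherited from a typo in the table. The normal form of $E_7$ is $x^3+xy^3$, not $x^3+xy^4$. The paper itself uses the correct form later: $y^3+yz^3$ in Lemma~\ref{lm:h_function_E_7_curve}, and $x_0^3+x_0x_1^3+x_2^2+\dots+x_d^2$ in Notation~\ref{nt:notation_phi_psi_E_7}.

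Once $j^3g=x^3$ and $x$ has weight $1/3$, the Newton filtration dictates the following tests, in order:
\begin{itemize}
\item $y^4$, with $y$ of weight $1/4$, giving $E_6$;
\item $xy^3$, with $y$ of weight $2/9$, giving $E_7$ with $\mu=2\cdot\frac{7}{2}=7$;
\item $y^5$, with $y$ of weight $1/5$, giving $E_8$.
\end{itemize}
Only when all three are absent do you reach $J_{10}$ and beyond.

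The germ $x^3+xy^4=x(x^2+y^4)$ is quasihomogeneous with weights $(1/3,1/6)$ and $\mu=(3-1)(6-1)=10$. It belongs to the unimodal family $J_{10}$, whose weighted principal part $x^3+ax^2y^2+bxy^4+cy^6$ carries a modulus. Already $x^3+xy^4+ty^6$ for small $t\neq 0$ moves that modulus. So your step ``examine $y^4$, $xy^4$, $y^5$ to obtain $E_6$, $E_7$, $E_8$'' fails in three ways:
\begin{itemize}
\item it labels a non-simple germ as $E_7$;
\item it sends the genuine $E_7$ germ $x^3+xy^3$, which has no $y^4$, $xy^4$ or $y^5$ term, to the non-simple bin;
\item your converse verification through the miniversal deformation cannot succeed for $x^3+xy^4$.
\end{itemize}

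There are two smaller points. First, in the converse direction, semicontinuity of $\mu$ alone does not keep the deformations of $D_n$ inside the ADE list, because $\mu(X_9)=9$ and $\mu(J_{10})=10$ do not exceed $\mu(D_n)=n$ for large $n$. You also need:
\begin{itemize}
\item upper semicontinuity of the corank, which excludes $P_8$ and everything of corank $\geq 3$;
\item the fact that the set of binary cubics of the form $\ell^3$ (including $0$) is closed and does not contain $x^2y$, which excludes $X_9$, $J_{10}$ and beyond near $D_n$.
\end{itemize}
For $A_n$, corank semicontinuity suffices. For $E_6,E_7,E_8$, corank together with $\mu\leq 8$ suffices, since non-simple germs of corank at most $2$ have $\mu\geq 9$.

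Second, your case split omits the rank-$2$ case $x^2+y^2$ ($A_1$). Your ``rank $1$'' case also conflicts with having already arranged $g\in\m^3$. This is only bookkeeping: it is the corank $0$ and $1$ cases after adding a dummy square.

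With $x^3+xy^3$ in place of $x^3+xy^4$, your outline is the standard one, and ending with a citation of Arnol'd, as the paper does, is appropriate.
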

Arnol'd's theorem shows that the simple singularities are \say{suspensions} of the so-called ADE surface singularities, i.e., successively adding $x^2$ terms.

\section{Limit \texorpdfstring{$h$}{h}-functions of the \texorpdfstring{$A_n$}{An} singularities.}
\label{sec:limit_h_functions_A_n}
The main results of this section are the formula for the generating function of the limit Hilbert–Kunz multiplicities of the $A_n$ singularities, and a formula for the limit $\phi$-function of all the $A_n$ singularities, which is shown to be a combination of Euler polynomials, generalizing \cite[Theorem 4.4]{shidelerFsignatureFunctionsDiagonal2024} (see \ref{th:limit_h_function_A_n}).

The following auxiliary function will be useful throughout the rest of the paper, see \cite[Section 8.1]{mengLimits2026}.
\begin{definition}
 \label{def:K_function}
Set
\begin{equation*}
K_n(t,x) := D_\infty\left(\frac{1}{n},t,x\right) = \begin{cases}
xt & (t,x)\in \Delta_1\\
\frac{1}{n}x & (t,x)\in \Delta_2\\
\frac{1}{n}t & (t,x)\in \Delta_3\cup \Delta_5\\
xt-(1-\frac{1}{n})t-(1-\frac{1}{n})x+1-\frac{1}{n} & (t,x)\in \Delta_4\\
-\frac{(x-t)^2}{4}+\frac{x}{2n}+\frac{t}{2n}-\frac{1}{4n^2} & (t,x)\in \Delta_0
\end{cases}
\end{equation*}
where $\Delta_i := B_i\cap \lbrace t_1=\dfrac{1}{n}\rbrace$ (see Figures \ref{fig:sections_K2_K3} and \ref{fig:domain_K_functions}), for $B_i$ and $D_\infty(s,t,x)$ as in \ref{lm:D_infty_formula}. For convenience, we will set $K(t,x) := K_2(t,x)$.

\begin{figure}[ht]
\centering
\begin{tikzpicture}[scale=2]
    
    \draw[thick,->] (0,0,0) -- (1.5,0,0) node[anchor=north east]{$t_1$};
    \draw[thick,->] (0,0,0) -- (0,1.5,0) node[anchor=south west]{$t_2$};
    \draw[thick,->] (0,0,0) -- (0,0,1.5) node[anchor=south]{$t_3$};

    \draw[black, thick] (0,0,0) -- (1,0,0) -- (1,1,0) -- (0,1,0) -- cycle;
    \draw[black, thick] (0,0,1) -- (1,0,1) -- (1,1,1) -- (0,1,1) -- cycle;
    \draw[black, thick] (0,0,0) -- (0,0,1);
    \draw[black, thick] (1,0,0) -- (1,0,1);
    \draw[black, thick] (0,1,0) -- (0,1,1);
    \draw[black, thick] (1,1,0) -- (1,1,1);

    \foreach \x in {0,1}
    \foreach \y in {0,1}
    \foreach \z in {0,1} {
        \filldraw (\x,\y,\z) circle (0.5pt);
    }

    \draw[red, thick, opacity=0.3] (0,0,0) -- (0,1,1) -- (1,0,1) -- cycle;
    \draw[red, thick, opacity=0.3] (0,0,0) -- (1,0,1) -- (1,1,0) -- cycle;
    \draw[red, thick, opacity=0.3] (0,0,0) -- (0,1,1) -- (1,1,0) -- cycle;
    \draw[red, thick, opacity=0.3] (0,1,1) -- (1,0,1) -- (1,1,0) -- cycle;

    \filldraw[blue] (0,0,0) circle (1pt) node[anchor=south east]{};
    \filldraw[blue] (0,1,1) circle (1pt) node[anchor=south west]{};
    \filldraw[blue] (1,0,1) circle (1pt) node[anchor=south east]{};
    \filldraw[blue] (1,1,0) circle (1pt) node[anchor=north west]{};

    \filldraw[black] (1,0,0) circle (1pt) node[anchor=south east]{};
    \filldraw[black] (1,1,1) circle (1pt) node[anchor=north west]{};
    \filldraw[black] (0,0,1) circle (1pt) node[anchor=south east]{};
    \filldraw[black] (0,1,0) circle (1pt) node[anchor=north west]{};

    \filldraw[black] (1,0,1.2) node[anchor=north west]{$t_3=1/2$};

    \coordinate (A) at (1.1,-0.1,0.5);
    \coordinate (B) at (1.1,1.1,0.5);
    \coordinate (C) at (-0.1,1.1,0.5);
    \coordinate (D) at (-0.1,-0.1,0.5);
    \filldraw[fill=gray!50, opacity=0.5] (A) -- (B) -- (C) -- (D) -- cycle;

    \draw[blue, thick] (0.5,0,0.5) -- (1,0.5,0.5) -- (0.5,1,0.5) -- (0,0.5,0.5) -- cycle;
    \draw[blue, thick] (0,0,0.5) -- (1,0,0.5) -- (1,1,0.5) -- (0,1,0.5) -- cycle;
\end{tikzpicture}
\hspace{4em}
\begin{tikzpicture}[scale=2]
    % Axes
    \draw[thick,->] (0,0,0) -- (1.5,0,0) node[anchor=north east]{$t_1$};
    \draw[thick,->] (0,0,0) -- (0,1.5,0) node[anchor=south west]{$t_2$};
    \draw[thick,->] (0,0,0) -- (0,0,1.5) node[anchor=south]{$t_3$};

    % Cube    
    \draw[black, thick] (0,0,0) -- (1,0,0) -- (1,1,0) -- (0,1,0) -- cycle;
    \draw[black, thick] (0,0,1) -- (1,0,1) -- (1,1,1) -- (0,1,1) -- cycle;
    \draw[black, thick] (0,0,0) -- (0,0,1);
    \draw[black, thick] (1,0,0) -- (1,0,1);
    \draw[black, thick] (0,1,0) -- (0,1,1);
    \draw[black, thick] (1,1,0) -- (1,1,1);

    \foreach \x in {0,1}
    \foreach \y in {0,1}
    \foreach \z in {0,1} {
        \filldraw (\x,\y,\z) circle (0.5pt);
    }

    % Central tetrahedron
    \draw[red, thick, opacity=0.3] (0,0,0) -- (0,1,1) -- (1,0,1) -- cycle;
    \draw[red, thick, opacity=0.3] (0,0,0) -- (1,0,1) -- (1,1,0) -- cycle;
    \draw[red, thick, opacity=0.3] (0,0,0) -- (0,1,1) -- (1,1,0) -- cycle;
    \draw[red, thick, opacity=0.3] (0,1,1) -- (1,0,1) -- (1,1,0) -- cycle;

    % Blue dots
    \filldraw[blue] (0,0,0) circle (1pt) node[anchor=south east]{};
    \filldraw[blue] (0,1,1) circle (1pt) node[anchor=south west]{};
    \filldraw[blue] (1,0,1) circle (1pt) node[anchor=south east]{};
    \filldraw[blue] (1,1,0) circle (1pt) node[anchor=north west]{};

    % Black dots
    \filldraw[black] (1,0,0) circle (1pt) node[anchor=south east]{};
    \filldraw[black] (1,1,1) circle (1pt) node[anchor=north west]{};
    \filldraw[black] (0,0,1) circle (1pt) node[anchor=south east]{};
    \filldraw[black] (0,1,0) circle (1pt) node[anchor=north west]{};

    % Hyperplane section tag
    \filldraw[black] (1,0,1.2) node[anchor=north west]{$t_3=1/3$};

    % Square shadowed section
    \coordinate (A) at (1.1,-0.1,0.33);
    \coordinate (B) at (1.1,1.1,0.33);
    \coordinate (C) at (-0.1,1.1,0.33);
    \coordinate (D) at (-0.1,-0.1,0.33);
    \filldraw[fill=gray!50, opacity=0.5] (A) -- (B) -- (C) -- (D) -- cycle;

    % Intersection of section with tetrahedron and cube
    \draw[blue, thick] (0.33,0,0.33) -- (1,0.66,0.33) -- (0.66,1,0.33) -- (0,0.33,0.33) -- cycle;

    \draw[blue, thick] (0,0,0.33) -- (1,0,0.33) -- (1,1,0.33) -- (0,1,0.33) -- cycle;
\end{tikzpicture}
\caption{Section of the unit cube at $t_3=1/2$ and $t_3 = 1/3$ (based on figures from \cite{mengLimits2026}).}
\label{fig:sections_K2_K3}
\end{figure}
\begin{figure}
\centering
\begin{tikzpicture}[scale=2.5, thick]

    \draw[->] (-0.25,0) -- (1.5,0) node[right]{$t = t_2$};
    \draw[->] (0,-0.25) -- (0,1.5) node[above]{$x = t_3 $};

    \draw (0,0) rectangle (1,1);

    \draw (0,0.5) -- (0.5,1) -- (1,0.5) -- (0.5,0) -- cycle;
    \draw[color=red] (1,1) -- (1,0.5);

    \node[anchor=north east] at (0.25,0.25) {\tiny $\Delta_1$};
    \node[anchor=north west] at (0.75,0.25) {\tiny $\Delta_2$};
    \node[anchor=south east] at (0.25,0.75) {\tiny $\Delta_3$};
    \node[anchor=south west] at (0.75,0.75) {\tiny $\Delta_4$};
    \node at (0.5,0.5) {\tiny $\Delta_0$};
    \node at (0.5,1.2) {\tiny $\Delta_5$};

    \draw (1,1) -- (1,1.5);
    \draw (1,1) -- (1.5,1);

    \filldraw (0,0) circle (1pt) node[below left]{$(0,0)$};
    \filldraw (1,0) circle (1pt) node[below right]{$(1,0)$};
    \filldraw (1,1) circle (1pt) node[above right]{$(1,1)$};
    \filldraw (0,1) circle (1pt) node[above left]{$(0,1)$};
    \filldraw (0.5,0) circle (1pt);
    \filldraw (1,0.5) circle (1pt);
    \filldraw (0.5,1) circle (1pt);
    \filldraw (0,0.5) circle (1pt);

\end{tikzpicture}
\begin{tikzpicture}[scale=2.5, thick]

    \draw[->] (-0.25,0) -- (1.5,0) node[right]{$t = t_2$};
    \draw[->] (0,-0.25) -- (0,1.5) node[above]{$x = t_3$};

    \draw (0,0) rectangle (1,1);
    \draw[color=red] (1,1) -- (1,0.66);

    \draw (0,0.33) -- (0.66,1) -- (1,0.66) -- (0.33,0) -- cycle;
    
    % Tags
    \node[anchor=north east] at (0.25,0.2) {\tiny $\Delta_1$};
    \node[anchor=north west] at (0.75,0.25) {\tiny $\Delta_2$};
    \node[anchor=south west] at (0.77,0.8) {\tiny $\Delta_4$};
    \node[anchor=south east] at (0.25,0.75) {\tiny $\Delta_3$};
    \node at (0.5,0.5) {\tiny $\Delta_0$};
    \node at (0.5,1.2) {\tiny $\Delta_5$};

    \draw (1,1) -- (1,1.5);
    \draw (1,1) -- (1.5,1);

    \filldraw (0,0) circle (1pt) node[below left]{$(0,0)$};
    \filldraw (1,0) circle (1pt) node[below right]{$(1,0)$};
    \filldraw (1,1) circle (1pt) node[above right]{$(1,1)$};
    \filldraw (0,1) circle (1pt) node[above left]{$(0,1)$};
    \filldraw (0,0.33) circle (1pt);
    \filldraw (1,0.66) circle (1pt);
    \filldraw (0.66,1) circle (1pt);
    \filldraw (0.33,0) circle (1pt);

\end{tikzpicture}
\caption{The regions of the $K_2$ and $K_3$ function (figure taken from \cite{mengLimits2026}). In {\color{red}red}, discontinuities of $\partial_t K_2$ and $\partial_t K_3$.}%Add drawing of sections of the cube as Meng?
\label{fig:domain_K_functions}
\end{figure}
\end{definition}
\begin{remark}
	\label{rmk:derivative_K_function}
Note that
\begin{equation*}
\partial_tK_n(t,x) =\begin{cases}
x & (t,x)\in \Delta_1\\
0 & (t,x)\in \Delta_2\\
\frac{1}{n} & (t,x)\in \Delta_3\cup \Delta_5\\
x-\frac{n-1}{n} & (t,x)\in \Delta_4\\
\frac{(x-t)}{2}+\frac{1}{2n} & (t,x)\in \Delta_0
\end{cases} \qquad d_t\left(\partial_tK_n(t,x)\right) =\begin{cases}
-\frac{1}{2} & (t,x)\in \Delta_0\\
(\frac{n-1}{n}-x)\delta_1(t) & \frac{n-1}{n}\leq x \leq 1, t=1\\
0 & \text{otherwise}
\end{cases}
\end{equation*}
where the Dirac delta distribution $\delta$ comes from the discontinuity of $\partial_t K_n$ at $t=1$ and $x\in [\frac{n-1}{n},1]$ (see \ref{rmk:differentiability_D_infty}).
\end{remark}

\subsection{The limit \texorpdfstring{$h$}{h}-function of the \texorpdfstring{$A_n$}{An} singularities and Euler polynomials.}
In this section, we show that the limit $h$-functions of the $A_n$ singularities can be expressed as Euler polynomials.

\begin{definition}[Euler polynomials]
\label{def:Euler_Polynomials}
The Euler polynomials are the polynomials $E_k(x)\in \mathbb{Q}[x]$ such that
\begin{equation*}
\alpha(x) := \dfrac{2e^{zx}}{e^z+1} = \sum_{k=0}^{\infty} E_k(x) \dfrac{z^k}{k!}.
\end{equation*}
\end{definition}

The result was inspired by \cite[Theorem 6.2]{hoffmanDerivativePolynomialsEuler1999} and \cite[Theorem 4.4]{shidelerFsignatureFunctionsDiagonal2024}.
\begin{theorem}[The limit $h$-functions of the $A_n$ singularities]
    \label{th:limit_h_function_A_n}
Let $n \geq 2$. Let $f = x_0^n+x_1^2+\dots+x_d^2 \in k[[x_0,\dots,x_d]]$, and set
\begin{equation*}
a_{d,n} = \begin{cases}
        \frac{1}{2}-\frac{1}{n} & d \text{ even,}\\
        \frac{1}{n} & d \text{ odd.}
        \end{cases} \qquad c_{d,n} = (-1)^{\lfloor d/2 \rfloor} \frac{2^{d-3}\, n}{d!}
\end{equation*}
Then, for $d\geq 1$,
\begin{equation*}
h_{f,\infty}(x) = \begin{cases}
x + c_{d+1,n}\left(E_{d+1}(x+a_{d+1,n})-E_{d+1}(-x+a_{d+1,n})\right) & x\in [0,a_{d+1,n}],\\[7pt]
x + c_{d+1,n}\left(E_{d+1}(x+a_{d+1,n})+E_{d+1}(-x+1+a_{d+1,n})\right) & x\in [a_{d+1,n},1-a_{d+1,n}],\\[7pt]
x + c_{d+1,n}\left(-E_{d+1}(x-1+a_{d+1,n})+E_{d+1}(-x+1+a_{d+1,n})\right) & x\in [1-a_{d+1,n},1].
\end{cases}
\end{equation*}
and for $d=0$,
\begin{equation*}
    h_{x_0^n,\infty}(x) = \begin{cases}
    nx & x\in [0,\frac{1}{n}],\\[7pt]
    1 & x\in [\frac{1}{n},1].
    \end{cases}
\end{equation*}
\end{theorem}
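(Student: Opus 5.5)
The plan is to induct on $d$, adding one square at a time through the integral formula \ref{th:integral_formula} with $g=x_{d}^2$.

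\textbf{Step 1: the operator $h_f\mapsto h_{f+x^2}$.} For one variable, $h_{x^2,\infty}(s)=2s$ on $[0,\tfrac12]$ and $1$ afterwards. Hence $d_s(-h'_{x^2})=2\delta_{1/2}$, and the integral formula collapses to
\begin{gather*}
h_{f+x_{d}^2,\infty}(x)=2\int_0^{1^+}K(t,x)\,d_t\bigl(-h'_{f,\infty}(t)\bigr).
\end{gather*}
I will integrate by parts twice in $t$, using Remark \ref{rmk:derivative_K_function}: $d_t(\partial_tK)=-\tfrac12$ on $\Delta_0$, plus a delta at $t=1$ for $x\ge\tfrac12$. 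Up to boundary terms at $t=0$ and $t=1$, this writes $h_{f+x^2}(x)$ as $\int h_f(t)\,dt$ over the slice of $\Delta_0$. That slice is the window $\{|t-x|\le \tfrac12\}$ cut by the diamond $\{\tfrac12\le t+x\le \tfrac32\}$.

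I will bookkeep the boundary terms using $h_f(0)=0$, $h_f(1)=1$ and $h_f'(1^-)=s(R/f)$. The goal is to check that the diamond cuts and the $t=1$ delta together amount to integrating a suitable extension $\tilde h_f$ of $h_f$ over the full window $[x-\tfrac12,x+\tfrac12]$.

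\textbf{Step 2: the extension via reflection.} Put $g_f(t)=h_{f,\infty}(t)-t$ on $[0,1]$. Corollary \ref{cor:reflection_h_function_double_points} holds for each $p$, and hence in the limit, for $d\ge1$ (double points). It gives $g_f(1-t)=g_f(t)$, and also $g_f(0)=g_f(1)=0$. I extend $g_f$ to $\mathbb{R}$ by making it odd about $0$ and even about $\tfrac12$, which forces $g_f(t+1)=-g_f(t)$. I expect the boundary terms of Step 1 to be exactly what turns the truncated integral into
\begin{gather*}
g_{f+x^2}(x)=\int_{x-1/2}^{x+1/2}\tilde g_f(t)\,dt ,
\end{gather*}
with $\tilde g_f$ the extension up to a sign or normalising constant. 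The linear part $t$ reproduces $x$ itself, since $\int_{x-1/2}^{x+1/2}t\,dt=x$. This identification (signs, the delta at $t=1$ for $x\in[\tfrac12,1]$, and the corners of the diamond) is the main obstacle. I will first verify it by direct computation in the case $f=x_0^n$, $d=0\to1$. There, Corollary \ref{cor:reflection_h_function_double_points} does not apply, so the correct extension must be read off directly, and the $d=1$ formula obtained that way serves as the base case.

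\textbf{Step 3: Euler functions are stable under the window average.} The $1$-antiperiodic functions that are polynomial on the pieces are combinations of the periodic Euler functions $\tilde E_k(y)$. These equal $E_k(y)$ on $[0,1)$ and satisfy $\tilde E_k(y+1)=-\tilde E_k(y)$. The three-piece formula in the statement is precisely $x+c_{d+1,n}\bigl(\tilde E_{d+1}(x+a)-\tilde E_{d+1}(-x+a)\bigr)$, an odd antiperiodic expression in which the breakpoints $a$ and $1-a$ mark where the arguments cross integers. Since $E_{k+1}'=(k+1)E_k$ and $\tilde E$ is antiperiodic, we have
\begin{gather*}
\int_{y-1/2}^{y+1/2}\tilde E_k(t)\,dt=\frac{\tilde E_{k+1}(y+\tfrac12)-\tilde E_{k+1}(y-\tfrac12)}{k+1}=\frac{2\tilde E_{k+1}(y+\tfrac12)}{k+1}.
\end{gather*}
So one step raises the degree by one, shifts the argument by $\tfrac12$, and multiplies the constant by $2/(d+1)$. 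This matches $c_{d+1,n}/c_{d,n}$ up to sign.

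The shift $a\mapsto a+\tfrac12$, reduced modulo $1$ and combined with the oddness $y\mapsto -y$, alternates between $\tfrac1n$ and $\tfrac12-\tfrac1n$. This explains the parity-dependent $a_{d,n}$. The sign $(-1)^{\lfloor d/2\rfloor}$ comes from rewriting $\tilde E(\cdot+\tfrac12)$ or $\tilde E(\cdot-\tfrac12)$ back in normalised form using antiperiodicity. The last step of the induction is to check that these parity bookkeepings close up, which is mechanical once Step 2 is settled.
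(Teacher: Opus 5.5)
Your proposal is correct, and its skeleton is the paper's. Both arguments collapse the integral formula to $2\int K(t,x)\,d_t(-h_f')$ and integrate by parts twice to get $h_{d+1}(x)=\int_{1/2-x}^{1/2+x}h_d(t)\,dt$ for $x\in[0,\tfrac12]$. Both use the base case $h_1=2nK(\tfrac1n,x)$ and Corollary \ref{cor:reflection_h_function_double_points}.

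The genuine difference is in the inductive computation. The paper stays on $[0,\tfrac12]$ and splits into $x\in[0,a_{d+2,n}]$ and $x\in[a_{d+2,n},\tfrac12]$. It integrates the piecewise Euler polynomials separately and glues them using $E_k(0)=-E_k(1)$. You instead note that the three-piece formula equals $x+c_{d+1,n}\bigl(\tilde E_{d+1}(x+a)-\tilde E_{d+1}(a-x)\bigr)$, which is valid because $a_{d+1,n}\le\tfrac12$. So $g_d=h_d-t$ extends to an odd $1$-antiperiodic function, and the inductive step becomes a single convolution with the indicator of $[-\tfrac12,\tfrac12]$. This removes all case analysis and explains the alternation of $a_{d,n}$ and the sign pattern, which in the paper only emerge as outputs of the computation. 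The paper's route is more pedestrian but needs no periodic extensions.

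Two remarks on the details.

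\emph{Step 2 is not an obstacle.} The identity holds with constant exactly $1$. For $x\in[0,\tfrac12]$:
\begin{itemize}
\item the $\Delta_0$-slice is exactly $[\tfrac12-x,\tfrac12+x]$;
\item the $\delta_1$-term of $d_t(\partial_tK)$, with coefficient $\tfrac12-x$, only occurs for $x\ge\tfrac12$;
\item the boundary terms vanish, since $K(0,x)=h_f(0)=0$ and $\partial_tK(1^+,x)=h_f'(1^+)=0$.
\end{itemize}
Hence $g_{f+x^2}(x)=\int_{1/2-x}^{1/2+x}g_f(t)\,dt=\int_{x-1/2}^{x+1/2}\tilde g_f(t)\,dt$. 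The two integrals differ by the integral of the odd function $\tilde g_f$ over $[x-\tfrac12,\tfrac12-x]$, which is zero. For $x\in[\tfrac12,1]$ no delta bookkeeping is needed, because both sides are even about $\tfrac12$: the left side by the reflection corollary, the right side since $\tilde g_f(1-t)=\tilde g_f(t)$ on all of $\mathbb{R}$.

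\emph{The parity bookkeeping needs more than antiperiodicity.} You also need $\tilde E_k(-y)=(-1)^{k+1}\tilde E_k(y)$, equivalently $E_k(1-y)=(-1)^kE_k(y)$. This is the paper's identity \eqref{fm:exponential_rational_identity}; make it explicit. With it the ratio comes out as $-(-1)^d\tfrac{2}{d+2}=c_{d+2,n}/c_{d+1,n}$, as required.
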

\begin{proof}
    Let us call $h_d := h_{x_0^n+x_1^2+\dots+x_d^2}$ for the sake of simplicity. For $d=1$, by the integral formula \ref{th:integral_formula}, we have that
    \begin{equation*}
        h_1(x) = \int_0^{1^+} 2K(t,x)d_t\left(-h_0'(t)\right) = 2nK\left(\dfrac{1}{n},x\right),
    \end{equation*}
    where we are using that $d_t\left(-\dfrac{\partial}{\partial t}h_0'(t)\right) = n(\delta_{1/n}-\delta_0)$. Explicitly, by \ref{def:K_function},
    \begin{equation*}
    h_1(x) = \begin{cases}
    2x & x\in \left[0,\dfrac{1}{2}-\dfrac{1}{n}\right],\\[10pt]
    -2n(x-\frac{1}{n})^2+\frac{nx}{2}+\frac{1}{2}-\frac{n}{8} & x\in \left[\dfrac{1}{2}-\dfrac{1}{n},\dfrac{1}{2}+\dfrac{1}{n}\right],\\[10pt]
    1 & x\in \left[\dfrac{1}{2}+\dfrac{1}{n},1\right].
    \end{cases}
    \end{equation*}
    One can verify the formula directly, using that $E_2(t) = t^2-t$.
    
    Let us now assume that $d\geq 2$, and proceed by induction. The integral formula \ref{th:integral_formula} together with integration by parts (see \cite[Corollary 2.26]{mengLimits2026}) implies that, for $d\geq 1$, %we don't let d=0 so we get enough differentiability
    \begin{equation*}
        h_{d+1}(x) = \int_0^{1^+} 2h_d(t)d_t\left(\dfrac{\partial}{\partial t^2} K(t,x)\right) = \int_{0}^{1^+} 2h_d(t)d_t\left(\dfrac{\partial}{\partial t^2} K(t,x)\right),
    \end{equation*}
    using that $K(0,x)=0,\partial_t^{+}h_d(1)=0,\partial_t^+K(1,x)=0,\partial_tK(0,x)=0$. From the definition of the function $K$ \ref{def:K_function} and the remark afterwards, we conclude that for $0\leq x \leq \frac{1}{2}$,
    \begin{equation*}
    h_{d+1}(x) = \int_{\frac{1}{2}-x}^{\frac{1}{2}+x} h_d(t)dt.
    \end{equation*}
    
    By \ref{cor:reflection_h_function_double_points}, it suffices to compute $h_{d+1}$ at $[0,\frac{1}{2}]$. Let first $x\in [0,a_{d+2,n}]$. Note that
    \begin{equation*}
        a_{d+1,n} = \frac{1}{2}-a_{d+2,n}\leq \frac{1}{2}-x \leq \frac{1}{2}+x \leq \frac{1}{2}+a_{d+2,n} = 1-a_{d+1,n}
    \end{equation*}
    hence, for $x\in [0,a_{d+2,n}]$, we have that
    \begin{align*}
        \int_{\frac{1}{2}-x}^{\frac{1}{2}+x} h_d(t)dt &= \int_{\frac{1}{2}-x}^{\frac{1}{2}+x} t + c_{d+1,n}\left(E_{d+1}(t+a_{d+1,n})+E_{d+1}(-t+1+a_{d+1,n})\right) dt=\\
        &= x + c_{d+1,n}\dfrac{2}{d+2}\left(E_{d+2}(x+1-a_{d+2,n})-E_{d+2}(-x+1-a_{d+2,n})\right).
    \end{align*}
    where we are using the fact that $\int_a^b E_d(t)dt = \left.\dfrac{1}{d+1}E_{d+1}(t)\right|_{a}^b$ (see \cite[Lemma 4.2]{shidelerFsignatureFunctionsDiagonal2024} for some properties of the Euler Polynomials). Recall that $\sum E_k(x)\frac{z^k}{k!} = \frac{2e^{xz}}{e^z+1}$, and, for all $c\in \mathbb{R}$,
    \begin{align}
        \label{fm:exponential_rational_identity}
        \dfrac{e^{(\pm x+1-c)z}}{e^z+1} = \dfrac{e^z}{e^z+1}e^{(\pm x-c)z} = \dfrac{1}{e^{-z}+1}e^{(\pm x-c)z} = \dfrac{e^{(\mp x+c)(-z)}}{e^{-z}+1},
    \end{align}
    using that $(e^z+1)e^{-z} = 1+e^{-z}$. Therefore, $E_{d+2}(\pm x+1-a_{d+2,n}) = (-1)^{d+2}E_{d+2}(\mp x+a_{d+2,n})$, and we conclude that for $x\in [0,a_{d+2,n}]$,
    \begin{equation*}
    h_{d+1}(x) = x+c_{d+1,n}\frac{2(-1)^{d+2}}{d+2}\left(E_{d+2}(-x+a_{d+2,n})-E_{d+2}(x+a_{d+2,n})\right),
    \end{equation*}
    and the result follows from the fact that $c_{d+2} = -c_{d+1}\frac{2(-1)^{d+2}}{d+2}$.

    Let now $x\in [a_{d+2,n},\frac{1}{2}]$. Then,
    \begin{equation*}
        0\leq \dfrac{1}{2}-x \leq \dfrac{1}{2}-a_{d+2,n}, \quad \text{and} \quad \dfrac{1}{2}+a_{d+2,n} \leq \dfrac{1}{2}+x \leq 1,
    \end{equation*}
    so
    \begin{align*}
    \int_{\frac{1}{2}-x}^{\frac{1}{2}+x} h_d(t)dt &= \int_{\frac{1}{2}-x}^{\frac{1}{2}-a_{d+2,n}} h_d(t)dt+\int_{\frac{1}{2}-a_{d+2,n}}^{\frac{1}{2}+a_{d+2,n}} h_d(t)dt+\int_{\frac{1}{2}+a_{d+2,n}}^{\frac{1}{2}+x} h_d(t)dt,\\
        &= x + c_{d+1,n}\dfrac{-2}{d+2}\left(E_{d+2}(-x+1-a_{d+2,n})+E_{d+2}(x-a_{d+2,n})\right),
    \end{align*}
    where one needs to use the property that $E_k(0) = -E_k(1)$ (also in \cite[Lemma 4.2]{shidelerFsignatureFunctionsDiagonal2024}). It follows from that and the identity in \ref{fm:exponential_rational_identity} that
    \begin{equation*}
        h_{d+1}(x) = x + c_{d+2,n}\left(E_{d+2}(x+a_{d+2,n})+E_{d+2}(-x+1+a_{d+2,n})\right),
    \end{equation*}
    whenever $x\in [a_{d+2,n},\frac{1}{2}]$.
\end{proof}

See Figure \ref{fig:limit_h_functions_A2_example} for examples of $h$-functions of the $A_2$ singularity in dimensions $1$ through $3$.

\begin{figure}[ht]
\centering
\begin{tikzpicture}[
    xscale=12,
    yscale=1,
    >=stealth,
    every node/.style={font=\footnotesize},
]
\def\spaces{1.6}

% ------------------------------------------------------------------
%   s one  (breakpoints 1/6, 5/6)
% ------------------------------------------------------------------
\def\yone{1.2}
\draw[thick] (0,\yone) -- (1,\yone);
\foreach \xt/\xlbl in {0/0, 0.16666666/{\tfrac{1}{6}}, 0.83333333/{\tfrac{5}{6}}, 1/1} {
  \draw (\xt,\yone) -- (\xt,\yone-0.07);
  \node[below] at (\xt,\yone-0.07) {$\xlbl$};
}
\node[left] at (-0.02,\yone) {$\dim = 1$}; % Segment tag
% Polynomials for s=1 (representative odd case)
\node[above=2pt] at (0.0833,\yone) {${\color{blue}2}x$};
\node[above=2pt] at (0.5,\yone)    {$-\tfrac{3}{2}x^{2}+\tfrac{5}{2}x-\tfrac{1}{24}$};
\node[above=2pt] at (0.9167,\yone) {$1$};

% ------------------------------------------------------------------
%   s two  (breakpoints 1/3, 2/3)
% ------------------------------------------------------------------
\def\ytwo{\yone-\spaces}
\draw[thick] (0,\ytwo) -- (1,\ytwo);
\foreach \xt/\xlbl in {0/0, 0.33333333/{\tfrac{1}{3}}, 0.66666666/{\tfrac{2}{3}}, 1/1} {
  \draw (\xt,\ytwo) -- (\xt,\ytwo-0.07);
  \node[below] at (\xt,\ytwo-0.07) {$\xlbl$};
}
\node[left] at (-0.02,\ytwo) {$\dim = 2$}; % Segment tag
% Polynomials for s=2 (representative even case)
\node[above=2pt] at (0.1667,\ytwo) {$-x^{3}+{\color{blue}\tfrac{5}{3}}x$};
\node[above=2pt] at (0.5,\ytwo)    {$-x^{2}+2x-\tfrac{1}{27}$};
\node[above=2pt] at (0.8333,\ytwo) {$x^{3}-3x^{2}+\tfrac{10}{3}x-\tfrac{1}{3}$};

% ------------------------------------------------------------------
%   s three  (breakpoints 1/6, 5/6)
% ------------------------------------------------------------------
\def\ythree{\ytwo-\spaces}
\draw[thick] (0,\ythree) -- (1,\ythree);
\foreach \xt/\xlbl in {0/0, 0.16666666/{\tfrac{1}{6}}, 0.83333333/{\tfrac{5}{6}}, 1/1} {
  \draw (\xt,\ythree) -- (\xt,\ythree-0.07);
  \node[below] at (\xt,\ythree-0.07) {$\xlbl$};
}
\node[left] at (-0.02,\ythree) {$\dim = 3$}; % Segment tag
% Polynomials for s=1 (representative odd case)
\node[above=2pt] at (0.0833,\ythree) {$-\tfrac{2}{3}x^{3} + {\color{blue}\tfrac{77}{54}}x$};
\node[above=2pt] at (0.5,\ythree)    {$\tfrac{1}{2}x^{4} - x^{3} + \tfrac{1}{12}x^{2} + \tfrac{17}{12}x + \tfrac{1}{2592}$};
\node[above=2pt] at (0.9167,\ythree) {$\tfrac{2}{3}x^{3} - 2x^{2} + \tfrac{139}{54}x - \tfrac{13}{54}$};

% \foreach \xt in {0, 0.16666666, 0.3333333, 0.66666666, 0.83333333, 1} {
%   \draw[dotted,gray] (\xt,\ythree-0.07) -- (\xt,\yone-0.07);
% }

\end{tikzpicture}
\caption{The limit $h$-functions of the $A_2$ singularities ($n=3$), for $\dim = 1,2,3$, see Theorem~\ref{th:limit_h_function_A_n}. The coefficients in degree 1 of the first piece of the limit $h$-function (in {\color{blue}blue}) are the limit Hilbert–Kunz multiplicities of the corresponding singularities, due to \ref{th:F_invariants_lim_diff_h}.}
\label{fig:limit_h_functions_A2_example}
\end{figure}

\subsection{The \texorpdfstring{$\limeHK$}{limeHK} of the \texorpdfstring{$A_n$}{An} singularities.}
In the next result, we compute the generating function $\Phi(x,z)$ of the limit $h$-functions, and then we will use those formulas to compute the generating function of the limit Hilbert–Kunz multiplicities in \ref{cor:limit_eHK_A_n_singularities}.

\begin{notation}[Generating function of limit $h$-functions]
    \label{not:generating_function_An_singularities}
Let $n\geq 2$. The sequence of functions $\lbrace h_{x_0^n+x_1^2+\dots+x_d^2,\infty}(x),d\geq 1\rbrace$ is uniformly bounded, and therefore, for every $0<z<1$, the following function is well-defined for every $x\in \mathbb{R}$:
    \begin{gather*}
        \Phi_n(x,z) := \sum_{d=0}^\infty h_{x_0^n+x_1^2+\dots+x_d^2,\infty}(x)z^d.
    \end{gather*}
    In this section, if $n$ is fixed, we will usually just write $\Phi$.
\end{notation}

The following lemma will be useful to compute the generating function of the $A_n$ singularities.
\begin{lemma}[Reflection of the generating function]
    \label{lm:reflection_generating_function_An}
    For $0<z\ll 1$ sufficiently small, $n\geq 2$, and $x\in [0,1]$,
    \begin{gather*}
        \Phi_n(1-x,z) = \begin{cases}
            \Phi_n(x,z) - \dfrac{(2x-1)z}{1-z} - nx+1 & 0\leq x\leq \dfrac{1}{n},\\
            \Phi_n(x,z) - \dfrac{(2x-1)z}{1-z} & \dfrac{1}{n}\leq x \leq 1-\dfrac{1}{n},\\
            \Phi_n(x,z) - \dfrac{(2x-1)z}{1-z} + n(1-x) -1 & 1-\dfrac{1}{n}\leq x \leq 1.
        \end{cases}
    \end{gather*}
\end{lemma}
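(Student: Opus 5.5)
The plan is to apply the reflection formula term by term to the series defining $\Phi_n$. For $d\geq 1$, the hypersurface $x_0^n+x_1^2+\dots+x_d^2$ (with $n\geq 2$) has multiplicity $2$, so it is a double point. Corollary \ref{cor:reflection_h_function_double_points} therefore applies to it in every characteristic $p$ for which the reduction is still of this form. It applies in particular for all $p\gg 0$; note the corollary asks for an infinite residue field, so we may pass to $\overline{\mathbb{F}}_p$ or $\mathbb{F}_p(u)$, which does not change $h$-functions since they are lengths stable under residue field extension. This gives $h_{f_p}(1-x)=h_{f_p}(x)-2x+1$ for each such $p$. Letting $p\to\infty$, the limit $h$-functions satisfy
\begin{gather*}
h_{d}(1-x)=h_{d}(x)-2x+1,\qquad d\geq 1,
\end{gather*}
where $h_d:=h_{x_0^n+x_1^2+\dots+x_d^2,\infty}$. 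One could alternatively check this directly from the explicit Euler polynomial formula of Theorem \ref{th:limit_h_function_A_n} via the symmetry $E_k(1-y)=(-1)^kE_k(y)$, but the corollary is cleaner.

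The $d=0$ term is not a double point, so it must be handled separately. Here $h_0(x)=nx$ on $[0,1/n]$ and $h_0(x)=1$ on $[1/n,1]$, by Theorem \ref{th:limit_h_function_A_n}. We compute $h_0(1-x)-h_0(x)$ on the three ranges:
\begin{itemize}
\item For $x\le 1/n$: $1-x\ge 1-1/n\ge 1/n$ because $n\geq 2$, so $h_0(1-x)-h_0(x)=1-nx$.
\item For $1/n\le x\le 1-1/n$: both values equal $1$, so the difference is $0$.
\item For $x\ge 1-1/n$: $1-x\le 1/n$, so the difference is $n(1-x)-1$.
\end{itemize}

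Summing, $\Phi_n(1-x,z)-\Phi_n(x,z)$ equals the $d=0$ correction just computed plus
\begin{gather*}
\sum_{d\geq 1}(1-2x)z^d=-\frac{(2x-1)z}{1-z}.
\end{gather*}
This series converges absolutely for $0<z<1$, and all $h_d$ are bounded by $1$, so the termwise manipulation is justified. Combining the two pieces gives exactly the three stated cases.

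The only delicate point is the passage of Corollary \ref{cor:reflection_h_function_double_points} to the limit, together with the infinite residue field hypothesis. Pointwise convergence $h_{f_p}\to h_d$ holds by the definition of the limit $h$-function, whose existence Theorem \ref{th:limit_h_function_A_n} provides. The identity is linear in the $h$-functions, so it survives the limit without further work.
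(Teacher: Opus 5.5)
Your proposal is correct and follows the paper's proof: split off the $d=0$ term, apply Corollary \ref{cor:reflection_h_function_double_points} termwise for $d\geq 1$ and sum the geometric series, then compute $h_0(1-x)-h_0(x)$ on the three ranges. Your remarks on the infinite residue field hypothesis and on passing the reflection identity to the limit $p\to\infty$ spell out steps the paper leaves implicit.
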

\begin{proof}
Let $\tilde{\Phi}(x,z) := \sum_{d=1}^\infty h_{x_0^n+x_1^2+\dots+x_d^2,\infty}(x)z^d$. Corollary \ref{cor:reflection_h_function_double_points} implies that $\tilde{\Phi}(1-x,z) = \tilde{\Phi}(x,z) - \frac{(2x-1)z}{1-z}$. Hence,
\begin{equation*}
\Phi(1-x,z) = h_{x_0^n,\infty}(1-x)+\tilde{\Phi}(1-x,z) = h_{x_0^n,\infty}(1-x) + \tilde{\Phi}(x,z) - \dfrac{(2x-1)z}{1-z},
\end{equation*}
so the result follows from the formula for $h_{x_0^n,\infty}(x)$ (see \cite[Proposition 6.5]{mengLimits2026}).
\end{proof}

\begin{lemma}[Generating function of the limit $h$-functions of the $A_n$ singularities]
        \label{lm:gen_limit_h_function_A_n}
        Set $\Phi(x,z) = \sum_{d=0}^{\infty} h_d(x)z^d.$ Then, for $n\geq 4$,
		\begin{gather*}
        \Phi_n(x,z) = \begin{cases}
        \dfrac{xz}{1-z} + \dfrac{n}{2}\dfrac{\sin\frac{2z}{n}+\cos(z-\frac{2z}{n})}{\cos z}\dfrac{\sin 2zx}{2z} + \dfrac{nx}{2}  & x\in \left[0,\dfrac{1}{n}\right]\\[10pt]
        \dfrac{xz}{1-z} + \dfrac{n}{2}\dfrac{\sin\frac{2z}{n}}{\cos z}\dfrac{\cos(z-2zx)+\sin 2zx}{2z} +\dfrac{1}{2} & x\in \left[\dfrac{1}{n},\dfrac{1}{2}-\dfrac{1}{n}\right]\\[10pt]
        \dfrac{xz}{1-z} + \dfrac{n}{2}\dfrac{\sin\frac{2z}{n}+\cos\left(z-\frac{2z}{n}\right)}{\cos z}\dfrac{\cos(z-2zx)}{2z}- \dfrac{n}{4z} +\dfrac{1}{2} & x\in \left[\dfrac{1}{2}-\dfrac{1}{n},\dfrac{1}{2}\right]\\[10pt]
        \Phi_n(1-x,z)+(2x-1)\dfrac{z}{1-z} & x\in \left[\dfrac{1}{2},1-\dfrac{1}{n}\right]\\[10pt]
        \Phi_n(1-x,z)+(2x-1)\dfrac{z}{1-z}+1-n(1-x) & x\in \left[1-\dfrac{1}{n},1\right]\\[10pt]
        \end{cases}
        \end{gather*}
        for $n=3$,
		\begin{gather*}
            \Phi_3(x,z) = \begin{cases}
                \dfrac{xz}{1-z} + \dfrac{3}{2}\dfrac{\cos(z-\frac{2z}{3}) + \sin\frac{2z}{3}}{\cos z}\dfrac{\sin 2xz}{2z} + \dfrac{3}{2}x  & x\in \left[0,\dfrac{1}{6}\right],\\[10pt]
                \dfrac{xz}{1-z} + \dfrac{3}{2}\dfrac{\cos\frac{z}{3}}{\cos z}\dfrac{\sin 2xz + \cos(z-2xz)}{2z}-\dfrac{3}{4z} + \dfrac{3x}{2} & x\in \left[\dfrac{1}{6},\dfrac{1}{3}\right],\\[10pt]
                \dfrac{xz}{1-z} + \dfrac{3}{2}\dfrac{\cos\left(z-\frac{2z}{3}\right)+\sin\frac{2z}{3}}{\cos z}\dfrac{\cos(z-2zx)}{2z} - \dfrac{3}{4z} + \dfrac{1}{2} & x\in \left[\dfrac{1}{3},\dfrac{1}{2}\right],\\[10pt]
                \Phi_3(1-x,z)+(2x-1)\dfrac{z}{1-z} & x\in \left[\dfrac{1}{2},\dfrac{2}{3}\right]\\[10pt]
                \Phi_3(1-x,z)+(2x-1)\dfrac{z}{1-z}+1-3(1-x) & x\in \left[\dfrac{2}{3},1\right]\\[10pt]
            \end{cases}
        \end{gather*}
        and for $n=2$,
        \begin{gather*}
            \Phi_2(x,z) = \begin{cases}
                \dfrac{x}{1-z} + T(z)\dfrac{\sin 2zx}{2z} + \dfrac{\cos 2zx-1}{2z} & x\in [0,\frac{1}{2}],\\[10pt]
                \dfrac{x}{1-z} + T(z)\dfrac{\sin 2z(1-x)}{2z} + \dfrac{\cos 2z(1-x)-1}{2z} & x\in [\frac{1}{2},1].
            \end{cases}
        \end{gather*}
    \end{lemma}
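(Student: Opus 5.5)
Sum the recursion from the proof of Theorem \ref{th:limit_h_function_A_n} over $d$. For $d\geq 1$ and $0\leq x\leq \frac12$ we showed $h_{d+1}(x)=\int_{\frac12-x}^{\frac12+x}h_d(t)\,dt$. For $n\geq 3$ the same identity holds for $d=0$: $h_1=2nK(\frac1n,x)$, and the integral formula with $h_0=h_{x_0^n,\infty}$ gives $\int 2h_0\,d_t(\partial_t K)$. For $n=2$, $h_0$ has a kink at $t=\frac12$, which is the boundary case, so I will seed the recursion at $d=1$ using $h_1$ computed directly.

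Multiplying by $z^{d+1}$ and summing gives, for $x\in[0,\frac12]$,
\begin{equation*}
\Phi(x,z)=h_0(x)+z\int_{\frac12-x}^{\frac12+x}\Phi(t,z)\,dt ,
\end{equation*}
with a correction term $h_1-z\!\int h_0$ in the $n=2$ case. Termwise integration is justified for $0<z<1$ by uniform boundedness (Notation \ref{not:generating_function_An_singularities}).

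Differentiating in $x$ gives
\begin{equation*}
\partial_x\Phi(x,z)=h_0'(x)+z\bigl(\Phi(\tfrac12+x,z)+\Phi(\tfrac12-x,z)\bigr).
\end{equation*}
Use Lemma \ref{lm:reflection_generating_function_An} to replace $\Phi(\frac12+x)$ by $\Phi(\frac12-x)$ plus explicit piecewise-linear terms. This yields a first-order functional-differential equation relating $\Phi$ at $x$ and at $\frac12-x$. Set $u(x)=\Phi(x)$ and $v(x)=\Phi(\frac12-x)$ on $[0,\frac12]$. The pair satisfies a linear system $u'=2zv+(\text{pw. linear})$, $v'=-2zu-(\text{pw. linear})$. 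Hence $u''=-4z^2u+(\text{pw. linear})$ on each subinterval where the forcing is polynomial.

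The breakpoints are $\frac1n$ and $\frac12-\frac1n$, the points where $h_0$ or its reflection changes form. They are ordered differently for $n\geq4$ (where $\frac1n\leq\frac12-\frac1n$), for $n=3$ (where they swap to $\frac16,\frac13$), and for $n=2$. This explains the three separate case lists. On each piece the solution is $A\sin 2zx+B\cos(z-2zx)$ plus a particular solution. The particular solution is the linear term $\frac{xz}{1-z}$ coming from the $\frac{(2x-1)z}{1-z}$ shift, plus constants like $\frac{nx}{2}$, $\frac12$, $-\frac{n}{4z}$ coming from $h_0$.

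The constants are fixed by several conditions:
\begin{itemize}
\item $\Phi(0,z)=0$, since all $h_d(0)=0$;
\item continuity of $\Phi$ and of $\partial_x\Phi$ at the interior breakpoints, since $h_d\in C^1$ for $d\geq1$ and $h_0$ contributes a known jump;
\item the integral equation itself evaluated at one point, e.g.\ $x=\frac12$ or the symmetric point, which removes the extra freedom the differentiation introduced.
\end{itemize}
The $\cos z$ in the denominator arises when solving this linear system: the determinant of the matching conditions is $\cos z$. The values on $[\frac12,1]$ then follow directly from Lemma \ref{lm:reflection_generating_function_An}.

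The main obstacle is the bookkeeping in the matching step. One has to get the correct forcing terms on each subinterval, including the $h_0$ pieces and their reflections. One also has to solve the $2\times2$ or $4\times 4$ linear system and simplify the result to the stated trigonometric form, e.g.\ using $\sin\frac{2z}{n}+\cos(z-\frac{2z}{n})$. I would do this on the $n\geq4$ case first, where the intervals are generic. I would then check $n=3$ and $n=2$ by the same procedure and verify against the low-order Taylor coefficients, e.g.\ $h_1,h_2$ from Figure \ref{fig:limit_h_functions_A2_example}.
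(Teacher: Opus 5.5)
Your argument is correct, but it takes a genuinely different route from the paper's.

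The paper never writes an equation for $\Phi$. It takes the explicit Euler-polynomial formulas of Theorem \ref{th:limit_h_function_A_n}, splits $d$ by parity, and sums them interval by interval using the generating function $2e^{xz}/(e^z+1)$ at $\pm 2iz$, i.e. the series $P(x,2z)=\frac{\cos((2x-1)z)}{\cos z}-1$ and $Q(x,2z)=\frac{\sin((2x-1)z)}{\cos z}$. The closed forms then appear with no constants to fit, but only once the Euler-polynomial formula has been guessed and proved.

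You use only the recursion $h_{d+1}(x)=\int_{1/2-x}^{1/2+x}h_d$ and Lemma \ref{lm:reflection_generating_function_An}, and reduce to $\partial_x^2\Phi+4z^2\Phi=(\text{piecewise linear})$ plus matching conditions. This checks out:
\begin{itemize}
\item For $n\geq4$ the forcing is $2z^2+\frac{4xz^3}{1-z}$ on $[\frac1n,\frac12-\frac1n]$ and $-nz+2z^2+\frac{4xz^3}{1-z}$ on $[\frac12-\frac1n,\frac12]$. These give exactly the particular solutions $\frac12+\frac{xz}{1-z}$ and $\frac12-\frac{n}{4z}+\frac{xz}{1-z}$ appearing in the statement.
\item Impose continuity of $\Phi$ and the jump of $\partial_x\Phi$, which occurs only at $x=\frac1n$ and equals the jump $-n$ of $h_0'$. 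What remains is a global term $\alpha\cos 2zx+\beta\sin 2zx$.
\item $\Phi(0)=0$ kills $\alpha$.
\item The differentiated integral equation $\partial_x\Phi(0^+)=n+2z\Phi(\frac12)$ kills $\beta$: adding $\beta\sin 2zx$ shifts its two sides by $2z\beta$ and $2z\beta\sin z$, which differ for small $z>0$.
\end{itemize}

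Your method explains where $\sin 2zx$, $\cos(z-2zx)$ and $\sec z$ come from and works for any piecewise-polynomial seed $h_0$. It would also re-derive Theorem \ref{th:limit_h_function_A_n} by Taylor expansion. The paper's method avoids the case-by-case linear algebra.

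Your special treatment of $n=2$ is unnecessary. On $[0,\frac12]$ one has $\int_{1/2-x}^{1/2+x}h_{x_0^2}(t)\,dt=2x-x^2=h_1(x)$, so $\Phi=h_0+z\int_{1/2-x}^{1/2+x}\Phi$ holds for every $n\geq 2$.
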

    \begin{proof}
		Let us fix $n\geq 2$, and use the notation $\phi_d := h_{x_0^n+x_1^2+\dots+x_d^2}$. We will study the cases when $d$ is odd and even separatedly. Note that by \ref{lm:reflection_generating_function_An}, it is enough studying the functions at $[0,\frac{1}{2}]$. Define 
        \begin{equation*}
        \Phi^+(x,z):= \sum_{m=1}^{\infty} \phi_{2m-1}(x)z^{2m-1}, \quad \text{and} \quad \Phi^-(x,z) := \sum_{d=2m,m\geq 0}^{\infty} \phi_d(x)z^d
        \end{equation*}
        and set
    \begin{align*}
            P(x,2z) &:= \sum_{m = 1}^{\infty} (-1)^mE_{2m}(x) \dfrac{(2z)^{2m}}{(2m)!} = \dfrac{\alpha(x,2iz)+\alpha(x,-2iz)}{2}-1 = \frac{\cos ((2x-1)z)}{\cos z}-1,\\
            Q(x,2z) &:= \sum_{m=0}^{\infty} (-1)^mE_{2m+1}(x) \dfrac{(2z)^{2m+1}}{(2m+1)!} = \dfrac{\alpha(x,2iz)-\alpha(x,-2iz)}{2i} = \frac{\sin((2x-1)z)}{\cos z}.
    \end{align*}
        
        Let $d = 2m-1$, $m\geq 1$, and let $x\in [0,\frac{1}{2}-\frac{1}{n}]$. In this case,
        \begin{equation*}
        \phi_{2m-1}(x) = x + (-1)^m\dfrac{2^{2m-3}n}{(2m)!}\left(E_{2m}(x+\dfrac{1}{2}-\dfrac{1}{n})-E_{2m}(-x+\dfrac{1}{2}-\dfrac{1}{n})\right).
         \end{equation*}
    Then, one can check that
    \begin{align}
        \label{fm:psi_plus_first_interval}
    \nonumber\Phi^+(x,z) &= \dfrac{xz}{1-z^2} + \dfrac{n}{8z}\left(P\left(x+\dfrac{1}{2}-\dfrac{1}{n},2z\right)-P\left(-x+\dfrac{1}{2}-\dfrac{1}{n},2z\right)\right) \\
    \nonumber&= \dfrac{xz}{1-z^2} + \dfrac{n}{8z}\frac{\cos ((2x-\frac{2}{n})z)-\cos ((2x+\frac{2}{n})z)}{\cos z} \\
    &= \dfrac{xz}{1-z^2}+ \dfrac{n}{2}\dfrac{\sin\frac{2z}{n}}{\cos z}\dfrac{\sin 2zx}{2z}.
    \end{align}

    Now, in the interval $[\frac{1}{2}-\frac{1}{n},\frac{1}{2}]$, we have that
    \begin{equation*}
        \phi_d(x) = x + (-1)^m\dfrac{2^{2m-3}n}{(2m)!}\left(E_{2m}(x+\dfrac{1}{2}-\dfrac{1}{n})+E_{2m}(-x+1+\dfrac{1}{2}-\dfrac{1}{n})\right),
    \end{equation*}
    which yields that
    \begin{align}
        \label{fm:psi_plus_second_interval}
        \nonumber \Phi^{+}(x,z) &= \dfrac{xz}{1-z^2} + \dfrac{n}{8z}\left(P\left(x+\dfrac{1}{2}-\dfrac{1}{n},2z\right)+P\left(-x+1+\dfrac{1}{2}-\dfrac{1}{n},2z\right)-2\right)\\
        &= \dfrac{xz}{1-z^2}+ \dfrac{n}{2}\dfrac{\cos\left(z-2zx\right)}{\cos z}\dfrac{\cos\left(z-\frac{2z}{n}\right)}{2z}-\frac{n}{4z}
    \end{align}
    
    We follow a similar analysis for even $d$ on the intervals $[0,\frac{1}{n}]$ and $[\frac{1}{n},\frac{1}{2}]$: let $d = 2m, m\geq 1$, so
    \begin{gather*}
    \phi_d(x) = \begin{cases}
    x+ (-1)^m\dfrac{n}{4}\dfrac{2^{2m}}{(2m+1)!}\left(E_{2m+1}\left(x+\dfrac{1}{n}\right)-E_{2m+1}\left(-x+\dfrac{1}{n}\right)\right) & x\in \left[0,\dfrac{1}{n}\right]\\[10pt]
    x+ (-1)^m\dfrac{n}{4}\dfrac{2^{2m}}{(2m+1)!}\left(E_{2m+1}\left(x+\dfrac{1}{n}\right)+E_{2m+1}\left(-x+1+\dfrac{1}{n}\right)\right) & x\in \left[\dfrac{1}{n},\dfrac{1}{2}\right].
    \end{cases}
    \end{gather*}
    Hence,
    \begin{gather*}
    \Phi^-(x,z) = \dfrac{x}{1-z^2} + \begin{cases}\dfrac{n}{8z}\left(Q\left(x+\frac{1}{n},2z\right)-Q\left(-x+\frac{1}{n},2z\right)\right)& x\in \left[0,\dfrac{1}{n}\right]\\[10pt]
        \dfrac{n}{8z}\left(Q\left(x+\frac{1}{n},2z\right)+Q\left(-x+1+\frac{1}{n},2z\right)\right) & x\in \left[\dfrac{1}{n},\dfrac{1}{2}\right].\end{cases}
    \end{gather*}
    which in turn can be written as
    \begin{gather}
        \label{fm:psi_minus}
    \Phi^-(x,z) = \dfrac{x}{1-z^2} + \begin{cases}
    \dfrac{nx}{2}+ \dfrac{n}{2}\dfrac{\cos\left(z-\frac{2z}{n}\right)}{\cos z}\dfrac{\sin 2zx}{2z} & x\in \left[0,\dfrac{1}{n}\right]\\[10pt]
    \dfrac{1}{2}+\dfrac{n}{2}\dfrac{\sin\frac{2z}{n}}{\cos z}\dfrac{\cos(z- 2zx)}{2z} & x\in \left[\dfrac{1}{n},\dfrac{1}{2}\right].
    \end{cases}
    \end{gather}
    The result follows for $\Phi(x,z) = \Phi^+(x,z)+\Phi^-(x,z) + \phi_0(x)$ from putting together the expressions in \ref{fm:psi_plus_first_interval}, \ref{fm:psi_plus_second_interval}, \ref{fm:psi_minus}, recalling that $h_0(x) = nx$ for $x\in [0,\frac{1}{n}]$ and $1$ for $x\in [\frac{1}{n},1]$.
    \end{proof}

	\begin{corollary}[Limit Hilbert–Kunz multiplicities of the $A_n$ singularities]
	\label{cor:limit_eHK_A_n_singularities}
        Let $d,n\geq 1$, and
	\begin{gather*}
	R_d := \frac{\mathbb{C}[[x_0,\dots,x_d]]}{(x_0^{n+1}+x_1^2+\dots+x_d^2)}.
	\end{gather*}
	Then,
	\begin{equation*}
        \limeHK(R_{d}) = 1+\dfrac{c_d}{d!}, \quad \text{and} \quad \lims(R_{d}) = 1-\dfrac{c_d}{d!}
    \end{equation*}
	where $c_d/d!$ is the $d$-th Taylor coefficient of the function
	\begin{gather*}
	\frac{n+1}{2}\frac{\cos\frac{(n-1)z}{n+1} + \sin \frac{2z}{n+1}}{\cos z}% + \frac{n-1}{2}
	\end{gather*}
	See Table \ref{tab:lHK-A} below.
	\end{corollary}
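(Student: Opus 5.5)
The plan is to read off the limit Hilbert–Kunz multiplicity and limit $F$-signature from the generating function $\Phi_{n+1}(x,z)$ of Lemma \ref{lm:gen_limit_h_function_A_n}, using Theorem \ref{th:F_invariants_lim_diff_h}. Set $N=n+1$ for the exponent of $x_0$. Theorem \ref{th:F_invariants_lim_diff_h} gives, for each prime $p$, that $\eHK(R_{d,p})$ is the right derivative of $h_{f_p}$ at $0^+$. We then need to pass to the limit $p\to\infty$. By Theorem \ref{th:limit_h_function_A_n}, $h_{f,\infty}$ is polynomial on $[0,a_{d+1,N}]$, and its slope at $0$ is exactly the degree-one coefficient of that polynomial piece, as Figure \ref{fig:limit_h_functions_A2_example} suggests. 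The interchange of limit and derivative at $0$ should be justified by concavity (Proposition \ref{pr:h_function_analytic_properties}). Concave functions converging pointwise have one-sided derivatives converging wherever the limit is differentiable. The point $0$ is an endpoint, so one would argue with $\eHK(R/f)$ computed via Meng's limit theory, which I take as given in the framework of \cite{mengLimits2026}. Thus $\limeHK(R_d)=\partial_x^+h_{d,\infty}(0)$ and $\lims(R_d)=\partial_x^-h_{d,\infty}(1)$.

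The second step is to differentiate the generating function termwise at $x=0$. This is legitimate since for small $z$ the series converges uniformly with the polynomial pieces. On $[0,1/N]$ (or $[0,1/6]$ when $N=3$) the formula reads
\[\Phi_N(x,z)=\frac{xz}{1-z}+\frac{N}{2}\frac{\sin\frac{2z}{N}+\cos(z-\frac{2z}{N})}{\cos z}\frac{\sin 2zx}{2z}+\frac{Nx}{2}.\]
Its $x$-derivative at $0$ is
\[\frac{z}{1-z}+\frac N2\,\frac{\sin\frac{2z}{N}+\cos\frac{(N-2)z}{N}}{\cos z}+\frac N2 .\]
Now subtract the $d=0$ term and compare degree by degree. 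The $d=0$ term is $h_0'(0)=N$, and the constant term of the expression above is $N/2+N/2=N$, so it matches. For $d\ge1$, the coefficient of $z^d$ is $1+c_d/d!$, where $c_d/d!$ is the $d$-th Taylor coefficient of $\frac{N}{2}\frac{\cos\frac{(n-1)z}{n+1}+\sin\frac{2z}{n+1}}{\cos z}$; note $(N-2)/N=(n-1)/(n+1)$. The case $n=1$ uses the $\Phi_2$ formula instead. There the derivative at $0$ is $\frac1{1-z}+T(z)$, recovering Theorem \ref{th:gessel_monsky}.

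For the $F$-signature, I would avoid differentiating at $1$ directly. Instead, use Corollary \ref{cor:reflection_h_function_double_points}, which gives $h(1-t)=h(t)-2t+1$; it holds for each $p$ and hence in the limit, and it applies since $R_d$ is a double point for $d\ge1$. Differentiating yields $\partial^-h(1)=2-\partial^+h(0)$, so $\lims=2-\limeHK=1-c_d/d!$. This is the hypersurface Huneke–Leuschke identity.

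The main obstacle I expect is the first step: rigorously identifying $\lim_p \eHK(R_{d,p})$ with the slope at $0$ of the limit $h$-function. The derivative at the endpoint need not commute with $p\to\infty$ a priori. I would handle this by noting that $h_{f_p}(x)\le \eHK\,x$ and that concavity sandwiches the slopes between difference quotients. I would then invoke the uniform convergence estimates of \cite{mengLimits2026} that give $\limeHK$ as the initial slope.
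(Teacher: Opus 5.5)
Your proposal is correct and follows the paper's proof. Like the paper, you differentiate the closed form of $\Phi_{n+1}(x,z)$ from Lemma \ref{lm:gen_limit_h_function_A_n} at $x\to 0^+$ via Theorem \ref{th:F_invariants_lim_diff_h} and read off the Taylor coefficients; your derivation of $\lims = 2-\limeHK$ from Corollary \ref{cor:reflection_h_function_double_points} is the same Huneke--Leuschke identity the paper invokes through $e(R_d)=2$. The paper also applies Theorem \ref{th:F_invariants_lim_diff_h} directly to the limit $h$-function, without addressing the interchange of $p\to\infty$ with the derivative at $0$ that you flag, so deferring that point to Meng's framework is no less rigorous than the original.
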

	\begin{proof}
    Theorem \ref{th:F_invariants_lim_diff_h} and \ref{lm:gen_limit_h_function_A_n} yield
	\begin{align*}
	\sum_{d\geq 0} \limeHK(R_d)z^d &= \lim_{x\rightarrow 0^+} \dfrac{\partial}{\partial x} \left(\dfrac{xz}{1-z} + \dfrac{n+1}{2}x+ \dfrac{n+1}{2}\dfrac{\cos\frac{(n-1)z}{n+1}+\sin\frac{2z}{n+1}}{\cos z}\dfrac{\sin2zx}{2z}\right)\\
	&= \lim_{x\rightarrow 0^+}\left(\dfrac{z}{1-z}+\dfrac{n+1}{2} + \dfrac{n+1}{2}\dfrac{\cos\frac{(n-1)z}{n+1}+\sin\frac{2z}{n+1}}{\cos z}\cos2zx\right) \\
	&= \dfrac{z}{1-z}+\dfrac{n+1}{2} + \dfrac{n+1}{2}\dfrac{\cos\frac{(n-1)z}{n+1}+\sin\frac{2z}{n+1}}{\cos z}.
	\end{align*}
    For $d\geq 1$, $e(R_d) = 2$, and therefore $\lims(R_d) = 2-\limeHK(R_d)$.
	\end{proof}
\begin{remark}
In terms of Euler polynomials, it follows directly from \ref{th:limit_h_function_A_n} and \ref{th:F_invariants_lim_diff_h} that for $R$ as in \ref{cor:limit_eHK_A_n_singularities}, we have $\limeHK(R) = 1+\frac{c_d}{d!}$ where
    \begin{gather*}
    c_d = 2^{d}n\left|E_{d+1}\left(\frac{1-(-1)^d}{4}+\frac{(-1)^d}{n}\right)\right|,
    \end{gather*}
    cf. \cite[Theorem 6.2]{hoffmanDerivativePolynomialsEuler1999}.
\end{remark}
\begin{table}[h]
\centering
\renewcommand{\arraystretch}{2.5}
\begin{tabular}{r@{\hspace{1em}}l@{\hspace{1em}}l|c}
\toprule
 $\limeHK(A_n)$ & $\underset{n\rightarrow\infty}\longrightarrow$ & $\limeHK(A_\infty)$ & \\
\midrule

$2-\dfrac{1}{n}$
  & $\longrightarrow$
  & $2$ & $\dim 2$\\
\midrule

$\dfrac{3}{2}-\dfrac{2}{3n^2}$
  & $\longrightarrow$
  & $\dfrac{3}{2}$ & $\dim 3$\\
\midrule

$\dfrac{4}{3}-\dfrac{2}{3n^2}+\dfrac{1}{3n^3}$
  & $\longrightarrow$
  & $\dfrac{4}{3}$ & $\dim 4$\\
\midrule

$\dfrac{29}{24}-\dfrac{1}{3n^2}+\dfrac{2}{15n^4}$
  & $\longrightarrow$
  & $\dfrac{29}{24}$ & $\dim 5$\\
\midrule

$\dfrac{17}{15}-\dfrac{2}{9n^2}+\dfrac{2}{15n^4}-\dfrac{2}{45n^5}$
  & $\longrightarrow$
  & $\dfrac{17}{15}$ & $\dim 6$ \\
\bottomrule
\end{tabular}
\caption{Limit Hilbert--Kunz multiplicity of the $A_n$
and $A_\infty$ singularities in dimensions $2$ through $6$, by letting $n\to\infty$ by $\m$-adic continuity of the Hilbert–Kunz multiplicity (\cite{polstraContinuity2020}).}
\label{tab:lHK-A}
\end{table}

A remarkable consequence of \ref{cor:limit_eHK_A_n_singularities} is that, just as the case of $A_1$ singularity and the so-called \say{zig-zag} numbers, the limit Hilbert–Kunz multiplicities of all the $A_n$ singularities can also be interpreted combinatorially. We recall here a definition, were we use the notation in \cite{hoffmanDerivativePolynomialsEuler1999}:
\begin{definition}[Augmented $r$-signed permutations, \cite{ehrenborg1995,hoffmanDerivativePolynomialsEuler1999}]
    \label{def:augmented_r_signed_permutations}
Let $p\geq r$ be non-negative integers, and let $\zeta = e^{\frac{2\pi}{r}i}$. We define the set $S := \lbrace a\zeta^k:a,k \text{ nonnegative integers}\rbrace$, and we define the linear order $\Lambda(p,r)$, which we denote by $\prec$, according to
\begin{gather*}
\zeta^{r-1}\prec 2\zeta^{r-1}\prec 3\zeta^{r-1}\prec ...\prec \zeta^{r-2}\prec 2\zeta^{r-2}\prec 3\zeta^{r-2}\prec ...\\
...\prec \zeta^{p}\prec 2\zeta^{p}\prec 3\zeta^{p} \prec  0\prec \zeta^{p-1}\prec 2\zeta^{p-1}\prec 3\zeta^{p-1} \prec ...\prec 1\prec 2\prec 3\prec ...
\end{gather*}
An augmented $r$-signed permutation is thus a list of elements $(a_1\zeta^{k_1},\dots,a_n\zeta^{k_n})$ such that the $a_i$ form a permutation of $\lbrace 1,\dots,n\rbrace$. Among these sequences, we call $\Lambda$-alternating augmented $r$-signed permutations to those permutations $(x_1,\dots,x_n)\in S^n$ where $0\prec x_1\succ x_2\prec \dots x_n$ (i.e., $\Lambda$-alternating $n$-permutations whose first element is $\Lambda$-positive). The number of $\Lambda(p,r)$-alternating $r$-signed $n$-permutations is denoted by $N_n(p,r)$.
\end{definition}

\begin{proposition}[Proposition 7.2, \cite{ehrenborg1995}]
    \label{pr:augmented_r_signed_permutations_generating_function}
The (exponential) generating function of the numbers $N_d := N_d(p,r)$ of alternating augmented $(p,r)$-signed $d$-permutations is
\begin{gather*}
\sum_{d\geq 0} N_d\frac{z^d}{d!}=\dfrac{\cos (r-p)z + \sin pz}{\cos rz}.
\end{gather*}
\end{proposition}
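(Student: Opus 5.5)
The plan is to turn the count into the volume of a polytope, in the spirit of the classical proof of André's theorem $\sum E_n z^n/n!=\sec z+\tan z$. The alternating condition only depends on the relative $\Lambda(p,r)$-order of the entries. The order $\prec$ arranges the letters in $r$ consecutive blocks (one per power $\zeta^k$), with $0$ inserted between the $\zeta^p$-block and the $\zeta^{p-1}$-block, and inside each block the order is increasing in $a$. Let $L$ be the segment of length $r$, split into unit subintervals (one per block, in the order of $\Lambda$), with the point $0$ at the boundary between the $p$ blocks $\zeta^{p-1},\dots,\zeta^0$ and the $r-p$ blocks $\zeta^{r-1},\dots,\zeta^{p}$. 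Identify $L$ with $[-(r-p),p]\subset\mathbb{R}$.

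\emph{Step 1 (probabilistic reformulation).} I will show that
\[
N_n(p,r)=r^n\,n!\;\operatorname{vol}\lbrace (x_1,\dots,x_n)\in[-(r-p),p]^n : 0<x_1>x_2<x_3>\cdots\rbrace\cdot r^{-n}\cdot(\text{normalisation}),
\]
that is, $N_n/(r^n n!)$ is the probability that $n$ i.i.d.\ uniform points $X_i$ of $[-(r-p),p]$ satisfy $0<X_1>X_2<\cdots$. Indeed, such a random tuple determines a block $k_i$ for each $X_i$ and a relative order of the $X_i$ inside each block. By exchangeability, this is the same as a uniform choice of $(k_1,\dots,k_n)\in(\mathbb{Z}/r)^n$ together with a uniform permutation $(a_1,\dots,a_n)$ of $\lbrace 1,\dots,n\rbrace$. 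Two elements in different blocks are compared by their blocks, and two in the same block by their $a$'s. Hence the order type of $(X_i)$ is exactly that of $(a_i\zeta^{k_i})$ under $\prec$, and the counting identity $N_n=r^n n!\,\Pr(\dots)=n!\operatorname{vol}(\dots)$ follows.

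\emph{Step 2 (Entringer-type functions and an ODE).} For $x\in[-(r-p),p]$, define $u_0\equiv 1$ and recursively
\[
u_m(x)=\int_{-(r-p)}^{x}u_{m-1}^{*}(y)\,dy \quad\text{or}\quad \int_x^{p}\cdots,
\]
with the two cases alternating according to parity. Concretely, $v_m(x)$ is the volume of alternating tails of length $m$ that start with a descent from $x$, and $w_m(x)$ the volume of those that start with an ascent. These satisfy $v_m'=w_{m-1}$, $w_m'=-v_{m-1}$, $v_m(-(r-p))=0$ and $w_m(p)=0$, and the desired volume is $w_n(0)$. Set $V(x,z)=\sum_m v_m z^m$ and $W(x,z)=\sum_m w_m z^m$. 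Then $\partial_x V=zW$, $\partial_x W=-zV$, and the initial terms give the inhomogeneity $v_0=w_0=1$. This is a linear system with constant coefficients, so $V$ and $W$ are combinations of $1$, $\cos zx$ and $\sin zx$, with the two constants fixed by the boundary conditions at $x=-(r-p)$ and $x=p$.

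\emph{Step 3 (solve and evaluate).} Solving the $2\times2$ linear system for the constants produces the determinant $\cos(z(p+(r-p)))=\cos rz$ as denominator. Evaluating $W(0,z)$ should give $\bigl(\cos((r-p)z)+\sin(pz)\bigr)/\cos(rz)$: the boundary at distance $r-p$ contributes the cosine term and the one at distance $p$ the sine term. As a sanity check, $r=2,p=1$ returns $\sec z+\tan z$ rescaled, matching André. By Step 1, $\sum N_n z^n/n!=W(0,z)$.

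The main obstacle is bookkeeping in Step 1: pinning down precisely where $0$ sits and which direction each block runs under $\Lambda(p,r)$, so that the interval lengths $p$ and $r-p$ on each side of $0$ come out correctly (and not swapped). It is also necessary to check that the first condition $0\prec x_1$ is the correct initial inequality, since it determines whether the $\cos$ or the $\sin$ term attaches to $p$. I will verify this against small cases $n=1,2$ by direct enumeration before trusting the ODE computation.
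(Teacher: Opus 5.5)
Your argument is correct, and it takes a genuinely different route from the one the paper relies on. The paper gives no proof of its own: it quotes the identity from Ehrenborg--Readdy, where it comes out of their enumerative theory of Sheffer posets. You instead replace the discrete order $\Lambda(p,r)$ by Lebesgue measure on $[-(r-p),p]$.

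\textbf{Step 1.} Take $a_i$ to be the rank of the fractional part of $X_i$ among all $n$ fractional parts. Then $X\mapsto(k,a)$ maps $[-(r-p),p]^n$ onto $\{0,\dots,r-1\}^n\times S_n$ with fibres of volume $1/n!$. It preserves the relative order of the entries and their position relative to $0$. Hence $N_n(p,r)=n!\operatorname{vol}\{0<x_1>x_2<\cdots\}$.

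\textbf{Steps 2--3.} The Andr\'e-type system $\partial_xV=zW$, $\partial_xW=-zV$ with $V(-(r-p),z)=1$ and $W(p,z)=1$ has solution
\[
W(x,z)=\frac{\bigl(\cos(r-p)z+\sin pz\bigr)\cos zx-\bigl(\cos pz+\sin(r-p)z\bigr)\sin zx}{\cos rz},
\]
so $W(0,z)$ is exactly the claimed series. As a check, enumeration or the volume computation agrees with the series on $N_1=p$, $N_2=p(2r-p)$ and $N_3=3r^2p-p^3$.

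\textbf{Comparison.} Your route is elementary and self-contained. It makes visible why $p$ and $r-p$ enter as they do: they are the lengths of the two sides of $0$. It also fits the analytic style of the rest of the paper, where $F$-invariants are likewise produced by iterated integrals of piecewise polynomials. The poset route embeds the identity in a general framework for counting permutations by descent data, at the cost of heavier machinery.

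\textbf{Small corrections.}
\begin{itemize}
\item The ODE system is homogeneous, so $V$ and $W$ are combinations of $\cos zx$ and $\sin zx$ only; there is no constant term. The $1$'s enter solely through the boundary values above.
\item Termwise differentiation is harmless for $|z|<1/r$, since $0\le v_m,w_m\le r^m$. Alternatively, argue coefficientwise in $z$.
\item In your first display, drop the stray ``normalisation'' factor; the identity is simply $N_n(p,r)=n!\operatorname{vol}\{\cdots\}$.
\item Your sanity check is misstated. The case $(r,p)=(2,1)$ gives $\frac{\cos z+\sin z}{\cos 2z}=\frac{1}{\cos z-\sin z}$, which is not a rescaling of $\sec z+\tan z$.
\item Andr\'e's case is $p=r$: the interval is then $[0,r]$ and $0<x_1$ is vacuous. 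For example, $p=r=2$ gives $\sec 2z+\tan 2z$, which is the paper's remark $N_d(2,2)=2^dN_d(1,1)$.
\item The paper's ``$p\ge r$'' in the definition must be read as $0\le p\le r$, as you implicitly do.
\end{itemize}
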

\begin{corollary}%Checked with sage!
    \label{cor:combinatorial_interpretation}
    Under the notation in \ref{cor:limit_eHK_A_n_singularities}, the limit Hilbert–Kunz multiplicity of $x_0^{n+1}+x_1^2+\dots+x_d^2$ is
    \begin{gather*}
    1 + \frac{n+1}{2}\dfrac{N_d(2,n+1)}{(n+1)^d d!}.
    \end{gather*}
\end{corollary}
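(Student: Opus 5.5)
The plan is to compare generating functions. Corollary \ref{cor:limit_eHK_A_n_singularities} says that $\limeHK(R_d) = 1 + c_d/d!$, where $c_d/d!$ is the $d$-th Taylor coefficient of
\begin{gather*}
G(z) := \frac{n+1}{2}\,\frac{\cos\frac{(n-1)z}{n+1} + \sin\frac{2z}{n+1}}{\cos z}.
\end{gather*}
So it suffices to show that the $d$-th Taylor coefficient of $G$ equals $\frac{n+1}{2}\frac{N_d(2,n+1)}{(n+1)^d d!}$. Equivalently, we need
\begin{gather*}
\sum_{d\geq 0} \frac{N_d(2,n+1)}{(n+1)^d}\frac{z^d}{d!} = \frac{\cos\frac{(n-1)z}{n+1} + \sin\frac{2z}{n+1}}{\cos z}.
\end{gather*}

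This identity follows from Proposition \ref{pr:augmented_r_signed_permutations_generating_function} with $p=2$ and $r=n+1$. It gives $\sum_d N_d(2,n+1) w^d/d! = \frac{\cos((n-1)w)+\sin(2w)}{\cos((n+1)w)}$. Substituting $w = z/(n+1)$ turns the left side into $\sum_d N_d(2,n+1)(n+1)^{-d} z^d/d!$. The right side becomes $\frac{\cos\frac{(n-1)z}{n+1}+\sin\frac{2z}{n+1}}{\cos z}$, which is exactly $\frac{2}{n+1}G(z)$. Comparing the coefficients of $z^d$ and multiplying by $\frac{n+1}{2}$ then gives $c_d/d! = \frac{n+1}{2}\frac{N_d(2,n+1)}{(n+1)^d d!}$, hence the claimed formula for $\limeHK$.

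The main thing to check is bookkeeping: the hypothesis $p\geq r$ versus $p\le r$ in Definition \ref{def:augmented_r_signed_permutations} must be compatible with $p=2$, $r=n+1$. Since $n\geq 1$ gives $r\geq 2=p$, I would read the definition as requiring $p\leq r$, which is how Ehrenborg–Hoffman use it and is what makes the order $\Lambda(p,r)$ well defined. I would also confirm that the $\dfrac{z}{1-z}+\dfrac{n+1}{2}$ terms appearing in the proof of Corollary \ref{cor:limit_eHK_A_n_singularities} only affect the constant and the ``$1+$'' part. That proof gives the series $\frac{z}{1-z} + \frac{n+1}{2} + G(z)$, whose $d$-th coefficient for $d\ge 1$ is $1 + [z^d]G$, consistent with the statement. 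Finally, as a sanity check, take $n=1$ and $d=2$: $N_2(2,2)$ should give $\limeHK = 1 + \frac{N_2}{4}$, which should equal $3/2$, so $N_2(2,2)=2$. This matches the $d=2$ coefficient of $\sec z + \tan z$ after rescaling.
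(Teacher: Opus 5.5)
Your argument is correct and matches the paper, which states this corollary without a written proof as an immediate consequence of Proposition~\ref{pr:augmented_r_signed_permutations_generating_function} (with $p=2$, $r=n+1$, rescaled by $z\mapsto z/(n+1)$) compared against Corollary~\ref{cor:limit_eHK_A_n_singularities}; your reading $p\le r$ in the definition of $\Lambda(p,r)$ is also the right one. One slip, in your optional sanity check only: for $n=1$, $d=2$ the denominator is $(n+1)^d d! = 8$, not $4$, so the formula reads $1+N_2(2,2)/8$ and requires $N_2(2,2)=4=2^2N_2(1,1)$, which a direct count confirms, not $N_2(2,2)=2$.
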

\begin{remark}
One can deduce from \ref{pr:augmented_r_signed_permutations_generating_function} that $N_d(2,2) = 2^dN_d(1,1)$. The sequence $N_d(1,1)$ are exactly the Euler's zig-zag numbers (see \cite[Theorem 1]{arnoldNormalFormsFunctions1972}, for instance), so Corollary \ref{cor:combinatorial_interpretation} indeed expands Gessel and Monsky's combinatorial interpretation of the numerators of the limit Hilbert–Kunz multiplicity of the $A_1$ singularities in Theorem \ref{th:gessel_monsky}.
\end{remark}

\section{Limit \texorpdfstring{$F$}{F}-invariants of the simple singularities.}
\label{sec:f_invariants_simple}
    In this section, we compute the limit $F$-invariants of the rest of the simple singularities. Computing all limit $h$-functions as we did in the case of the $A_n$ singularities is too involved, but it is not necessary to obtain the limit $F$-invariants. The main idea to have in mind is that the Hilbert–Kunz multiplicity only depends on the behavior of the $h$-function near $0$.

\subsection{The limit \texorpdfstring{$h$}{h}-function of the ADE surface singularities.} First, we will compute the entire limit $h$-functions of the two-dimensional simple singularities whose $h$-functions are not known, that is, the $D_n$ and $E_7$ singularities, the non-diagonal ones. The rest of the ADE surface singularities are diagonal, and therefore can be computed using \cite[Theorem 3.3]{shidelerFsignatureFunctionsDiagonal2024} and \ref{def:phi_function}. The approach to compute the limit $h$-functions of the ADE surface singularities was sketched in a previous version of \cite{mengLimits2026}.

The key observation is that the $D_n$ and $E_7$ singularities are both \say{suspensions} of a binomial. Thus, the key tool is the following result by Meng on the $h$-function of a binomial; see also Theorem A in \cite{brosowskyLimitFsignatureFunctions2025}. This result utilizes multivariate $h$-functions, see Definition \ref{def:multivariate_h_function}.
\begin{theorem}[\cite{mengLimits2026}, Section 6]
    \label{th: limit_h_binomials}
    Let $R = \mathbb{C}[[x_1,...,x_s,y_1,...,y_t]]$ with maximal ideal $\m$, and set 
    \begin{gather*}
    F := x_1^{a_1}\dots x_{s}^{a_s}y_1^{b_1}\dots y_t^{a_t}(x_1^{c_1}\dots x_s^{c_s}+y_1^{d_1}\dots y_t^{d_t}).
    \end{gather*}
    Let $f = x_1^{c_1}\dots x_s^{c_s}$ and $g = y_1^{d_1}\dots y_t^{d_t}.$ Then, for $0\leq r\leq \min\lbrace\frac{1}{a_i},\frac{1}{b_j}\rbrace,$ we have
    \begin{multline*}
    h_{F,\infty}(r) = 1-\prod (1-a_ir)\prod(1-b_ir)+\\
    +\int_{[0,\infty)^2}D_{T_1+T_2,\infty}(r_1,r_2,r)d(-h'_{k[\underline{x}],0,(\underline{x},f)}(\mathbf{1}-r\mathbf{a}))d(-h'_{k[\underline{y}],0,(\underline{y},g)}(\mathbf{1}-r\mathbf{b})).
    \end{multline*}
    In particular, if $F = x^ay^b(x^n+y^m),$ then
    \begin{multline*}
    h_{F,\infty}(r) = 1-(1-ar)(1-br)+\\
    +\int_{[0,\infty)^2}D_{T_1+T_2,\infty}(r_1,r_2,r)d(-h'_{k[x],0,(x,x^n)}(1-ra,t_1))d(-h'_{k[y],0,(y,y^m)}(1-rb,t_2)),
    \end{multline*}
    where
    \begin{gather*}
    h_{k[x],0,(x,x^n)}(1-ra,r_1) = \begin{cases}
    0 & r_1\leq 0,\\
    nr_1 & 0\leq r_1 \leq \frac{1-ra}{n},\\
    1-ra & \frac{1-ra}{n} \leq r_1,
    \end{cases}
    \end{gather*}
    and $h_{k[x],0,(x,x^n)}''(1-ra,r_1) = n\left(\delta_{\frac{1-ra}{n}}(r_1)-\delta_0(r_1)\right),$ where by $h''$ we mean $\partial h/\partial r^2$.
    \end{theorem}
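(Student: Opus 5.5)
The plan is to separate off the monomial factor $u := x_1^{a_1}\cdots x_s^{a_s}y_1^{b_1}\cdots y_t^{b_t}$ with a colon exact sequence, and then to compute the remaining length through a Jordan-block decomposition that produces the function $D_p$. I would work in characteristic $p$ with $q=p^e$, $k=\lceil rq\rceil$, and pass to the limit in $e$ and then in $p$ only at the end.

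\textbf{Step 1: split off the monomial.} Use the exact sequence from the proof of Lemma \ref{lm:F_signature_pairs_and_h_function}, with $f^{\lceil tp^e\rceil}$ replaced by $u^k$:
\begin{gather*}
\ell\left(R/(\m^{[q]},F^k)\right)=\ell\left(R/(\m^{[q]},u^k)\right)+\ell\left(R/\left((\m^{[q]}:u^k),(f+g)^k\right)\right).
\end{gather*}
Since $\m^{[q]}$ is a monomial ideal, the colon is again monomial:
\begin{gather*}
\m^{[q]}:u^k=\left(x_i^{q-ka_i},\,y_j^{q-kb_j}\right).
\end{gather*}
Here the hypothesis $r\le\min\{1/a_i,1/b_j\}$ guarantees that all exponents are nonnegative. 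The first length then equals
\begin{gather*}
q^{s+t}-\prod_i(q-ka_i)\prod_j(q-kb_j),
\end{gather*}
and after dividing by $q^{s+t}$ this gives the term $1-\prod(1-a_ir)\prod(1-b_jr)$.

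\textbf{Step 2: tensor decomposition and Jordan blocks.} The second quotient is
\begin{gather*}
(A\otimes_k B)/(f\otimes1+1\otimes g)^k,
\end{gather*}
where $A=k[\underline x]/(x_i^{q-ka_i})$ and $B=k[\underline y]/(y_j^{q-kb_j})$. I regard $A$ as a $k[T_1]$-module with $T_1$ acting by $f$, and $B$ as a $k[T_2]$-module with $T_2$ acting by $g$. Both are finite-dimensional with nilpotent action, so
\begin{gather*}
A\cong\bigoplus_\alpha k[T_1]/T_1^{\alpha}, \qquad B\cong\bigoplus_\beta k[T_2]/T_2^{\beta}.
\end{gather*}
The number of blocks of size $\ge j$ in $A$ is $\dim(f^{j-1}A/f^jA)$, so the block-size distribution is the second difference in $j$ of $\ell(A/f^jA)=\ell(k[\underline x]/(x_i^{q-ka_i},f^j))$. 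This length is exactly $q^s\,h_{k[\underline x],0,(\underline x,f),e}(\mathbf 1-r\mathbf a,\,j/q)$, and the same holds for $B$. Hence the second length equals
\begin{gather*}
\sum_{\alpha,\beta}m_\alpha n_\beta\,\dim_k\frac{k[T_1,T_2]}{(T_1^\alpha,T_2^\beta,(T_1+T_2)^k)},
\end{gather*}
where $m_\alpha,n_\beta$ are the block multiplicities. After normalizing by $q^{s+t}$, this is a discrete Stieltjes sum of the finite-level function $D_{p,e}(\alpha/q,\beta/q,k/q)$ against the second differences of the two multivariate $h$-functions. This is precisely the shape of the claimed integral.

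\textbf{Step 3: pass to the limit.} Let $e\to\infty$ and then $p\to\infty$. I would use uniform convergence of the normalized $D_{p,e}\to D_p\to D_\infty$ on compact sets, which follows from \cite[Proposition 4.14]{mengLimits2026} and Lipschitz bounds. I would also use convergence of the integrators in the sense of Stieltjes measures, which are uniformly bounded in total variation because the $h$-functions are concave and monotone (Proposition \ref{pr:h_function_analytic_properties}). This is the same mechanism behind Theorem \ref{th:integral_formula}, and I would follow that argument.

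For the special case $F=x^ay^b(x^n+y^m)$ the ingredients are completely explicit. Since $k[x]/(x^{N})$ with $N=q(1-ra)$ is a single Jordan block of size $\lceil N/n\rceil$ under multiplication by $x^n$, counted $n$ times, one gets $h_{k[x],0,(x,x^n)}(1-ra,r_1)=\min(nr_1,1-ra)$ for $r_1\ge0$. Its second derivative is therefore $n(\delta_{(1-ra)/n}-\delta_0)$, as stated.

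\textbf{Main obstacle.} I expect the hard part to be the limit interchange in Step 3, not the algebra. One must justify that Stieltjes sums against varying discrete measures converge to the Riemann–Stieltjes integral. This needs care at the atoms, in particular at $r_i=0$ and at the endpoint, where $D_\infty$ fails to be $\mathcal C^1$ (Remark \ref{rmk:differentiability_D_infty}). I would first try to reduce to the already-established proof of Theorem \ref{th:integral_formula} by noting that only the integrators have changed; they are still monotone, concave $h$-type functions with uniformly bounded variation.
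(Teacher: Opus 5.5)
The paper gives no proof of this statement; it quotes it from \cite[Section 6]{mengLimits2026}. Your argument is correct and is essentially the expected one. The colon sequence peels off the monomial $u$; your displayed length formula uses the identity $(\m^{[q]},u^k w):u^k=(\m^{[q]}:u^k)+(w)$, which is a one-line check worth writing down. What remains is a multivariate $h$-function of $f+g$ at $(\mathbf{1}-r\mathbf{a},\mathbf{1}-r\mathbf{b},r)$. The Han--Monsky tensor/Jordan-block decomposition, which is how $D_p$ enters Theorem \ref{th:integral_formula}, turns this into a Stieltjes sum of $D_p$.

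Your Step 3 is easier than you expect, for three reasons.
\begin{enumerate}
\item Because $f$ and $g$ are monomials, the block multiplicities, and hence the integrators, are explicit and independent of $p$.
\item The normalized block dimensions are exactly values of $D_p$ at lattice points, since $\dim_k k[T_1,T_2]/I^{[q']}=q'^2\dim_k k[T_1,T_2]/I$. So nothing needs to converge at the level of $D_{p,e}$.
\item You therefore only need weak convergence of explicit discrete measures as $e\to\infty$, then uniform convergence $D_p\to D_\infty$ on compacts. That follows from pointwise convergence plus a $p$-independent Lipschitz bound. The negative atoms at $0$ are harmless because $D_p$ vanishes when $r_1=0$ or $r_2=0$.
\end{enumerate}

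There are a few small slips.
\begin{itemize}
\item $\partial^2_{r_1}h_{k[x],0,(x,x^n)}(1-ra,r_1)=n(\delta_0-\delta_{(1-ra)/n})$. The expression $n(\delta_{(1-ra)/n}-\delta_0)$ is the measure $d(-h')$ that actually enters the integral, and the statement carries the same sign slip.
\item The Jordan blocks of $x^n$ on $k[x]/(x^N)$ have sizes $\lceil (N-c)/n\rceil$ for $0\le c<n$, not all $\lceil N/n\rceil$.
\item Your exponents $q-\lceil rq\rceil a_i$ differ from $\lceil q(1-ra_i)\rceil$ by $O(1)$, so ``exactly'' should be ``up to $O(1)$''.
\item At $r=1/a_i$ the exponent $q-\lceil rq\rceil a_i$ can be negative, so it should be read as $\max(\cdot,0)$.
\end{itemize}
None of these affects the limit.
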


Thus, with the help of this result, we first compute the limit $h$-function of the $D_n$ plane curve $y^2z+z^{n-1}$:
\begin{proposition}[The limit $h$-functions of the $D_n$ plane curves]
    \label{cor:h_function_D_n_curve}
% Let $y^2z+z^{n-1}\in k[[y,z]]$. Then,
\begin{gather*}
h_{\mathbb{C}[[y,z]],(y,z),y^2z+z^{n-1},\infty}(x)= \begin{cases} 3x - 2x^2 & 0 \le x \le \dfrac{n-4}{2(n-3)} \\[10pt] 1 - \dfrac{(n-1)^2}{2(n-2)}\left(x - \dfrac{n}{2(n-1)}\right)^{2} & \dfrac{n-4}{2(n-3)} \le x \le \dfrac{n}{2(n-1)} \\[10pt] 1 & \dfrac{n}{2(n-1)} \le x \le 1 \end{cases}
\end{gather*}
\end{proposition}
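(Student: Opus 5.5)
The plan is to apply the binomial formula of Theorem \ref{th: limit_h_binomials} with the factorisation
\begin{gather*}
F = y^2z+z^{n-1} = y^{0}z^{1}\left(y^2+z^{n-2}\right).
\end{gather*}
Here $s=t=1$, $a=0$, $b=1$, $f=y^2$ and $g=z^{n-2}$. Since $\min\lbrace 1/b\rbrace = 1$, the formula is valid for all $r\in[0,1]$, and by Proposition \ref{pr:h_function_analytic_properties} this covers everything. The polynomial term is $1-(1-0\cdot r)(1-r)=r$.

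For the integrators, I would use the explicit second derivatives given in Theorem \ref{th: limit_h_binomials}:
\begin{gather*}
d(-h'_{k[y],0,(y,y^2)}(1,r_1)) = 2(\delta_0-\delta_{1/2}),\qquad d(-h'_{k[z],0,(z,z^{n-2})}(1-r,r_2)) = (n-2)\left(\delta_0-\delta_{\frac{1-r}{n-2}}\right).
\end{gather*}
Both are finite sums of point masses. By Definition \ref{def:D_infty}, $D_\infty$ vanishes whenever some coordinate is $0$, so only the atom at $(\tfrac12,\tfrac{1-r}{n-2})$ contributes; the two minus signs multiply to a plus. I expect to obtain
\begin{gather*}
h_{F,\infty}(r) = r + 2(n-2)\,D_\infty\!\left(\tfrac12,\tfrac{1-r}{n-2},r\right) = r+2(n-2)\,K\!\left(\tfrac{1-r}{n-2},r\right),
\end{gather*}
with $K=K_2$ as in Definition \ref{def:K_function}. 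I should double-check that the atoms at $0$ genuinely contribute nothing, using continuity of $D_\infty$ across the coordinate planes (Remark \ref{rmk:differentiability_D_infty}).

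It then remains to track the point $(t,x)=(\tfrac{1-r}{n-2},r)$ through the regions $\Delta_i$ as $r$ goes from $0$ to $1$.
\begin{itemize}
\item \emph{Region $\Delta_1$.} This is $t+x\le \tfrac12$, which becomes $r\le \tfrac{n-4}{2(n-3)}$. There $K=xt$, giving $h=r+2r(1-r)=3r-2r^2$.
\item \emph{Region $\Delta_3\cup\Delta_5$.} This is $x\ge \tfrac12+t$, which becomes $r\ge \tfrac{n}{2(n-1)}$. There $K=t/2$, giving $h=r+(1-r)=1$.
\item \emph{Region $\Delta_0$.} Here I would substitute into $-\tfrac{(x-t)^2}{4}+\tfrac{x+t}{4}-\tfrac1{16}$ and complete the square in $r$. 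The claim is that this yields $1-\tfrac{(n-1)^2}{2(n-2)}\bigl(r-\tfrac{n}{2(n-1)}\bigr)^2$. A consistency check is that this expression is $1$ with zero derivative at $r=\tfrac{n}{2(n-1)}$, and matches $3r-2r^2$ at the other breakpoint.
\end{itemize}

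The main point needing care is verifying that the path never enters $\Delta_2$ ($t\ge x+\tfrac12$) or $\Delta_4$ ($t+x\ge \tfrac32$). This uses $t=\tfrac{1-r}{n-2}\le\tfrac12$ for $n\ge4$, together with $t+x\le 1$. It also requires that the breakpoints are ordered, $\tfrac{n-4}{2(n-3)}\le\tfrac{n}{2(n-1)}$. The degenerate case $n=4$ should be handled as well: there the first interval collapses to $\{0\}$, which is consistent with $t=\tfrac{1-r}{2}$ starting on the boundary of $\Delta_1$.
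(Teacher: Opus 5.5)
Your proposal is correct and is essentially the paper's own argument. The binomial formula reduces everything to $x+2(n-2)K\bigl(\tfrac{1-x}{n-2},x\bigr)$, and one then tracks the segment from $\bigl(\tfrac{1}{n-2},0\bigr)$ to $(0,1)$ through $\Delta_1$, $\Delta_0$ and $\Delta_3$; your completion of the square in $\Delta_0$ checks out.

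One harmless sign slip: the integrators are actually $2(\delta_{1/2}-\delta_0)$ and $(n-2)\bigl(\delta_{\frac{1-r}{n-2}}-\delta_0\bigr)$, since $-h'$ jumps down at $0$. Both factors carry the same sign either way, so the product and the coefficient $2(n-2)$ are unchanged.
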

\begin{proof}
Theorem \ref{th: limit_h_binomials} yields
\begin{align*}
h_{y^2z+z^{n-1},\infty}(x) &= 1-(1-x)+\\
&+\int_{[0,\infty)^2}D_{\infty}(t_1,t_2,x)d(-h'_{k[y],0,(y,y^2)}(1,t_1))d(-h'_{k[z],0,(z,z^{n-2})}(1-x,t_2))\\
& = x+2(n-2)\int_0^\infty\int_0^\infty D_{\infty}(t_1,t_2,x)\left(\delta_{\frac{1}{2}}(t_1)-\delta_0(t_1)\right)\left(\delta_{\frac{1-x}{n-2}}(t_2)-\delta_0(t_2)\right)\\
&= x + 2(n-2)D_\infty\left(\frac{1-x}{n-2},\frac{1}{2},x\right) = x + 2(n-2)K\left(\frac{1-x}{n-2},x\right)
\end{align*}
\begin{figure}
\centering
\begin{tikzpicture}[scale=3, thick]

    \draw[->] (-0.25,0) -- (1.25,0) node[right]{$t = t_2$};
    \draw[->] (0,-0.25) -- (0,1.25) node[above]{$x = t_3 $};

    \draw (0,0) rectangle (1,1);

    \draw (0,0.5) -- (0.5,1) -- (1,0.5) -- (0.5,0) -- cycle;

    \draw (1,1) -- (1,1.25);
    \draw (1,1) -- (1.25,1);

    \filldraw (0,0) circle (1pt) node[below left]{$(0,0)$};
    \filldraw (1,0) circle (1pt) node[below right]{$(1,0)$};
    \filldraw (1,1) circle (1pt) node[above right]{$(1,1)$};
    \filldraw (0,1) circle (1pt) node[above left]{$(0,1)$};
    \filldraw (0.5,0) circle (1pt);
    \filldraw (1,0.5) circle (1pt);
    \filldraw (0.5,1) circle (1pt);
    \filldraw (0,0.5) circle (1pt);

    \draw[color=orange] (0,1) -- (1/2,0);
    \node[color=orange,rotate=-60] at (0.6,-0.2) {$n=4$};
    \filldraw[color=orange] (0.5,0) circle (.4pt);
    \filldraw[color=orange] (0.17,0.67) circle (.4pt);
    \draw[color=orange] (1/3,0) -- (0,1);
    \node[color=orange,rotate=-70] at (0.42,-0.2) {$n=5$};
    \filldraw[color=orange] (0.25,0.25) circle (.4pt);
    \filldraw[color=orange] (0.17,0.33) circle (.4pt);
    \draw[color=orange] (1/4,0) -- (0,1);
    \node[color=orange,rotate=-80] at (0.29,-0.2) {$n=6$};
    \filldraw[color=orange] (0.1,0.6) circle (.4pt);
\end{tikzpicture}
\caption{Segment $x\mapsto (\frac{1-x}{n-2},x)$ for $n=4,5$ and $6$ over the domain of $K_2$; proof of \ref{cor:h_function_D_n_curve}.}
\label{fig:segment_Dn_curve}
\end{figure}
Using Definition \ref{def:K_function}, note that the points $\lbrace(\frac{1-x}{n-2},x)\rbrace_{0\leq x\leq 1}$ form a line segment going from $\left(\frac{1}{n-2},0\right)$ to $\left(0,1\right)$, thus contained within the union of regions $\Delta_0,\Delta_1$ and $\Delta_3$, and cutting with their boundaries at $x = \frac{n-4}{2(n-3)}$ and $\frac{n}{2(n-1)}$ (see Figure \ref{fig:segment_Dn_curve}; note that when $n=4$, the line segment intersects the boundaries of $\Delta_0$ when $x=0$ at $(1/2,0)$). The result follows from direct substitution on the definition of $K$.
\end{proof}

With this, we can now compute the limit $h$-function of the $D_n$ surface singularity.
\begin{theorem}[The limit $h$-functions of the $D_n$ surface singularities]
    \label{th:h_function_D_n_surface}
Let $f = x^2+y^2z+z^{n-1}\in \mathbb{C}[[x,y,z]]$. Then,
\begin{equation*}
h_{f,\infty}(x) = \begin{cases}
-\dfrac{(n-1)^2}{3(n-2)}x^3+\left(2-\dfrac{1}{4(n-2)}\right)x
& x \in \left[0, \dfrac{1}{2(n-1)}\right] \\[10pt]
\dfrac{1}{6(n-1)(n-2)}\left((1-n)x-\dfrac{1}{2}\right)^3+2x
& x \in \left[\dfrac{1}{2(n-1)}, \dfrac{1}{2(n-3)}\right] \\[10pt]
\dfrac{-2}{3}x^3 - \dfrac{1}{2}x^2 + 2x-\dfrac{1}{24(n-1)(n-3)}
& x \in \left[\dfrac{1}{2(n-3)}, \dfrac{1}{2}\right] \\[10pt]
h_{\infty,f}(1-x)+2x-1
& x \in \left[\dfrac{1}{2}, 1\right] \\[10pt]
\end{cases}
\end{equation*}
which, in particular, is continuous and $\mathcal{C}^2$ on $(0,1)$.
\end{theorem}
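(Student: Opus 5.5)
We will obtain $h_{f,\infty}$ from the integral formula \ref{th:integral_formula}, viewing $f=x^2+g$ with $g=y^2z+z^{n-1}$. The same integration-by-parts step as in the proof of \ref{th:limit_h_function_A_n}, with $h_{x^2,\infty}$ and hence the kernel $K=K_2$, then gives
\begin{equation*}
h_{f,\infty}(x)=\int_{\frac12-x}^{\frac12+x} h_{g,\infty}(t)\,dt \qquad \text{for } 0\le x\le \tfrac12 .
\end{equation*}
This step uses $K(0,x)=0$, $\partial_tK(0,x)=0$, and the description of $d_t(\partial_tK)$ in Remark \ref{rmk:derivative_K_function}. Its only contribution is the density $-\tfrac12$ on $\Delta_0$, whose $t$-section for fixed $x\le\frac12$ is $[\frac12-x,\frac12+x]$. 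The Dirac term at $t=1$ is present only for $x\ge\frac12$, so it does not enter here. The boundary term $\partial_t^+h_{g,\infty}(1)=0$ holds because $h_{g,\infty}$ is constant equal to $1$ beyond $\frac{n}{2(n-1)}<1$ by Proposition \ref{cor:h_function_D_n_curve}. The validity of integration by parts is exactly as in the $A_n$ proof: $h_{g,\infty}$ is continuous and piecewise polynomial.

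For $x\in[\frac12,1]$ nothing new is needed. Since $f$ is a double point, Corollary \ref{cor:reflection_h_function_double_points} gives $h_{f,\infty}(x)=h_{f,\infty}(1-x)+2x-1$. The limit passes through this identity because it holds for every $p$.

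It therefore remains to evaluate the integral using the three pieces of $h_{g,\infty}$ from Proposition \ref{cor:h_function_D_n_curve}. Their breakpoints are $\beta_1=\frac{n-4}{2(n-3)}$ and $\beta_2=\frac{n}{2(n-1)}$, which sit symmetrically around $\frac12$:
\begin{equation*}
\tfrac12-\beta_1=\tfrac{1}{2(n-3)}, \qquad \beta_2-\tfrac12=\tfrac{1}{2(n-1)} .
\end{equation*}
This is exactly why $\frac{1}{2(n-1)}$ and $\frac{1}{2(n-3)}$ appear as breakpoints in the statement. The cases are as follows.
\begin{itemize}
\item For $x\le\frac{1}{2(n-1)}$, the lower limit $\frac12-x$ stays in the first piece $3t-2t^2$ and the upper limit $\frac12+x$ in the middle quadratic piece.
\item For $\frac{1}{2(n-1)}\le x\le\frac{1}{2(n-3)}$, the upper limit has passed $\beta_2$ and lies where $h_{g,\infty}=1$, while the lower limit is still in the first piece.
\item For $x\ge\frac{1}{2(n-3)}$, the lower limit has crossed $\beta_1$ into the middle piece.
\end{itemize}
The integrals are elementary: each piece is a polynomial of degree at most $2$, so each resulting piece of $h_{f,\infty}$ is a cubic in $x$. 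After integrating, we simplify and match the results against the stated expressions. The checks are that the linear coefficient of the first piece is $2-\frac{1}{4(n-2)}$, and that the constant in the third piece is $-\frac{1}{24(n-1)(n-3)}$.

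The main obstacle I expect is the bookkeeping in the last regime. We must confirm that $\frac12+x$ never re-enters a nonconstant piece there, i.e. that $\frac12+\frac{1}{2(n-3)}\ge\beta_2$, which holds since $\frac{1}{2(n-3)}\ge\frac{1}{2(n-1)}$. There is also the degenerate case $n=4$, where $\beta_1=0$ and the first piece of $h_{g,\infty}$ disappears. I would treat $n=4$ separately as a sanity check.

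Finally, the $\mathcal C^2$ claim on $(0,1)$ follows from the integral representation. Its derivative is
\begin{equation*}
h_{g,\infty}(\tfrac12+x)+h_{g,\infty}(\tfrac12-x),
\end{equation*}
which is $\mathcal C^1$ wherever $h_{g,\infty}$ is $\mathcal C^1$. Since $h_{g,\infty}$ is $\mathcal C^1$ across its interior breakpoints, one checks that the matching of first and second derivatives holds at $\frac{1}{2(n-1)}$, at $\frac{1}{2(n-3)}$, and across $x=\frac12$. At $x=\frac12$ this follows from the reflection identity together with the symmetric form of the integral.
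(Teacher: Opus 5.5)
Your reduction to $h_{f,\infty}(x)=\int_{1/2-x}^{1/2+x}h_{g,\infty}(t)\,dt$ for $0\le x\le\frac12$ is sound, and it is a genuinely different and lighter route than the paper's. The paper keeps $K$ as the integrand and evaluates $8\int_0^{\beta_1}K(t,x)\,dt+\frac{2(n-1)^2}{n-2}\int_{\beta_1}^{\beta_2}K(t,x)\,dt$ region by region in $\Delta_0,\Delta_1,\Delta_2$. Your version only integrates the quadratic pieces of $h_{g,\infty}$, and it makes the $\mathcal C^2$ claim transparent through $h_{f,\infty}''(x)=h_{g,\infty}'(\frac12+x)-h_{g,\infty}'(\frac12-x)$. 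One small correction: the $t=0$ boundary term vanishes because $h_{g,\infty}(0)=0$, not because $\partial_tK(0,x)=0$; the latter is false on $\Delta_1$, where $\partial_tK=x$.

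The real problem is the case analysis, which is the whole content of the evaluation. The piece containing the lower endpoint $\frac12-x$ is reversed in every regime, so the integrals you describe would not produce the stated formulas. For $n\ge5$, your first-regime expression does not even vanish at $x=0$.

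At $x=0$ both endpoints equal $\frac12\in(\beta_1,\beta_2)$, and $\frac12-x$ \emph{decreases} as $x$ grows. So the lower endpoint starts in the middle piece and enters the first piece $3t-2t^2$ only when $x\ge\frac12-\beta_1=\frac{1}{2(n-3)}$. The correct regimes are as follows. For $0\le x\le\frac{1}{2(n-1)}$, the whole interval $[\frac12-x,\frac12+x]$ lies in $[\beta_1,\beta_2]$. For $\frac{1}{2(n-1)}\le x\le\frac{1}{2(n-3)}$, the lower endpoint is still in the middle piece and the upper one is where $h_{g,\infty}=1$. For $\frac{1}{2(n-3)}\le x\le\frac12$, all three pieces contribute.

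Set $c=\frac{(n-1)^2}{2(n-2)}$ and $u=\frac{1}{2(n-1)}$. The first two regimes give $2x-\frac c3\bigl((x-u)^3+(x+u)^3\bigr)=-\frac{2c}{3}x^3+(2-2cu^2)x$ and $2x-\frac c3(x+u)^3$. These are the first two stated pieces, because $2cu^2=\frac{1}{4(n-2)}$ and $\bigl((1-n)x-\frac12\bigr)^3=-(n-1)^3(x+u)^3$. In the third regime the $x$-dependence comes only from $\int_{1/2-x}^{\beta_1}(3t-2t^2)\,dt+\int_{\beta_2}^{1/2+x}dt$, giving $-\frac23x^3-\frac12x^2+2x$, and the constant collapses to $-\frac{1}{24(n-1)(n-3)}$.

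With these corrected assignments, the rest of your argument (the reflection for $x\ge\frac12$ and the $\mathcal C^2$ check) goes through.
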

\begin{proof}
By the integral formula \ref{th:integral_formula}, we know that
\begin{gather*}
h_{f,\infty}(x)
= \int_{[0,\infty)^2} D_\infty(t_1, t_2, x) \, d(-h'_{k[x],(x),x^2}(t_1)) \, d(-h'_{k[y,z],(y,z),y^2z+z^{n-1}}(t_2)).
\end{gather*}
Using \ref{cor:h_function_D_n_curve}, the distribution $d(-h')$ is therefore
\begin{gather*}
d_x(-h'_{y^2z+z^{n-1},\infty}) = -3\delta_0 + 4\chi_{(0,\frac{n-4}{2(n-3)})}+\frac{(n-1)^2}{n-2}\chi_{(\frac{n-4}{2(n-3)},\frac{n}{2(n-1)})}.
\end{gather*}
See \ref{rmk:domain_of_integration_and_atoms} for discontinuous integrators and Dirac delta distributions. Thus, the limit $h$-function of the $D_n$ surface singularity is:
\begin{align}
    \label{eq:integral_D_n_surface}
h_{x^2+y^2z+z^{n-1}}(x) &= \int_{[0,\infty)^2} D_\infty(t_1,t_2,x)d(-h'_{x^2}(t_1))d(-h'_{y^2z+z^{n-1}}(t_2))\\
&=8\int_{0}^{\frac{n-4}{2(n-3)}} K(t,x)dt + \frac{2(n-1)^2}{n-2}\int_{\frac{n-4}{2(n-3)}}^{\frac{n}{2(n-1)}} K(t,x)dt.
\end{align}
This function splits into 6 pieces, given the definition of the function $K(t,x)$ (see Figure \ref{fig:D_n_surface_integral}). We only study the function for $x\leq \frac{1}{2}$ thanks to \ref{cor:reflection_h_function_double_points}.

\begin{figure}
\begin{tikzpicture}[scale=3, thick]
    \draw[->] (-0.1,0) -- (1.2,0) node[right]{$t$};
\draw[->] (0,-0.1) -- (0,1.2) node[above]{$x$};

% Shade alternating vertical bands (was horizontal)
\fill[blue!10] (0,0) rectangle (0.625,0.125);
\fill[blue!20] (0,0.125) rectangle (0.625,0.25);
\fill[blue!10] (0,0.25) rectangle (0.625,0.5);
\fill[blue!20] (0,0.5) rectangle (0.625,0.75);
\fill[blue!10] (0,0.75) rectangle (0.625,0.875);
\fill[blue!20] (0,0.875) rectangle (0.625,1);
\draw (0,0) rectangle (1,1);
\draw[thick] (0.5,0) -- (1,0.5) -- (0.5,1) -- (0,0.5) -- cycle;

% Horizontal lines (was vertical)
\foreach \y in {0.125, 0.25, 0.5, 0.75, 0.875}{
\draw[blue, thin] (0,\y) -- (1,\y);
    }
% Vertical lines at projected t-values, including t=1/2 (was horizontal)
\foreach \t in {0.25, 0.625}{
\draw[red, thin] (\t,0) -- (\t,1);
    }
% Axis labels
\node[left] at (0,0.125) {\small $\frac{1}{2(n-1)}$};
\node[left] at (0,0.25) {\small $\frac{1}{2(n-3)}$};
\node[left] at (0,0.5) {\small $\frac{1}{2}$};
\node[left] at (0,0.75) {\small $1\!-\!\frac{1}{2(n-3)}$};
\node[left] at (0,0.875) {\small $1\!-\!\frac{1}{2(n-1)}$};
\node[below, rotate=45, anchor=north east] at (0.3,0) {\small $\frac{n-4}{2(n-3)}$};
\node[below] at (0.5,-0.05) {\small $\frac{1}{2}$};
\node[below, rotate=45, anchor=north east] at (0.7,0) {\small $\frac{n}{2(n-1)}$};
\filldraw (0.5,0) circle (0.5pt);
\filldraw (0,0.5) circle (0.5pt);
\filldraw (1,0.5) circle (0.5pt);
\filldraw (0.5,1) circle (0.5pt);
\filldraw (0,0) circle (0.5pt);
\filldraw (1,0) circle (0.5pt) node[below right]{$1$};
\filldraw (0,1) circle (0.5pt) node[left]{$1$};
\filldraw (1,1) circle (0.5pt);
\end{tikzpicture}
\caption{Regions of the integral \ref{eq:integral_D_n_surface}, in blue. The {\color{red}red} lines are the limits of integration, and the {\color{blue}blue} lines delimitate the values of $x$ where the expression of the integrand changes, as the limits of integration interact with the different regions of the domain of $K$ (delimited by the black lines).}
\label{fig:D_n_surface_integral}
\end{figure}

\begin{enumerate}
\item $x\in [0,\frac{1}{2(n-1)}]$
\begin{multline*}
8\int_{0}^{\frac{n-4}{2(n-3)}} \left.K_x(t)\right|_{\Delta_1} dt +\\
+ \frac{2(n-1)^2}{n-2}\left(\int_{\frac{n-4}{2(n-3)}}^{1/2-x}\left.K_x(t)\right|_{\Delta_1}dt + \int_{1/2-x}^{1/2+x}\left.K_x(t)\right|_{\Delta_0}dt+\int_{1/2+x}^{\frac{n}{2(n-1)}}\left.K_x(t)\right|_{\Delta_2}dt\right).
\end{multline*}
\item $x\in [\frac{1}{2(n-1)},\frac{1}{2(n-3)}]$
\begin{multline*}
8\left(\int_{0}^{\frac{1}{2}-x} \left.K_x(t)\right|_{\Delta_1} dt+\int_{\frac{1}{2}-x}^{\frac{n-4}{2(n-3)}} \left.K_x(t)\right|_{\Delta_0} dt\right) +\\
+ \frac{2(n-1)^2}{n-2}\left(\int_{\frac{n-4}{2(n-3)}}^{1/2+x}\left.K_x(t)\right|_{\Delta_0}dt + \int_{1/2+x}^{\frac{n}{2(n-1)}}\left.K_x(t)\right|_{\Delta_2}dt\right)
\end{multline*}
\item $x\in [\frac{1}{2(n-3)},\frac{1}{2}]$
\begin{gather*}
8\left(\int_{0}^{\frac{1}{2}-x} \left.K_x(t)\right|_{\Delta_1} dt+\int_{\frac{1}{2}-x}^{\frac{n-4}{2(n-3)}} \left.K_x(t)\right|_{\Delta_0} dt\right) + \frac{2(n-1)^2}{n-2}\int_{\frac{n-4}{2(n-3)}}^{\frac{n}{2(n-1)}}\left.K_x(t)\right|_{\Delta_0}dt
\end{gather*}

\end{enumerate}
The final result follows from directly evaluating the integrals.
\end{proof}
\begin{remark}
    \label{rmk:D_n_3_dimensional}
Repeating this process recursively (see \ref{lm:D_n_limit_h_function_kernel}), it is possible to compute these functions in further dimensions; although we do not include the computation here, we include here as an example that for $x\in [0,\frac{n-4}{2(n-3)}]$,
\begin{equation*}
h_{x^2+y^2+z^2t+t^{n-1}}(x) = \frac{1}{3}x^4-x^3+\left(\frac{19}{12}-\dfrac{1}{12(n-1)(n-3)}\right)x,
\end{equation*}
which implies by \ref{th:F_invariants_lim_diff_h} that the limit Hilbert–Kunz multiplicity of the 3-dimensional $D_n$ singularities is $\frac{19}{12}-\frac{1}{12(n-1)(n-3)}$, $n\geq 4$. See \ref{th: D_n_singularities_eHK_formula} or Table \ref{tab:lHK-D}.
\end{remark}

We now study the limit $h$-function of the $E_7$ surface singularity.
\begin{lemma}
    \label{lm:h_function_E_7_curve}
The limit $h$-function of the $E_7$ curve singularity is
\begin{gather*}
h_{y^3+yz^3}(x) = \begin{cases}
3x & 0\leq x \leq \frac{1}{9}\\
-\frac{27}{8}x^2+\frac{15}{4}x-\frac{1}{24} & \frac{1}{9}\leq x \leq \frac{5}{9}\\
1 & \frac{5}{9}\leq x.
\end{cases}
\end{gather*}
\end{lemma}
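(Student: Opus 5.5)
The plan is to treat $y^3+yz^3=y\,(y^2+z^3)$ as a monomial times a binomial and apply Theorem \ref{th: limit_h_binomials}, as in the proof of Proposition \ref{cor:h_function_D_n_curve}. The data are one $x$-type variable $y$ with $a=1$ and $f=y^2$, and one $y$-type variable $z$ with $b=0$ and $g=z^3$. The prefactor is then $1-(1-x)(1-0)=x$, and the two integrators have second derivatives
\begin{gather*}
2\bigl(\delta_{\frac{1-x}{2}}(t_1)-\delta_0(t_1)\bigr) \quad\text{and}\quad 3\bigl(\delta_{\frac13}(t_2)-\delta_0(t_2)\bigr).
\end{gather*}
The terms supported at $t_i=0$ contribute nothing, since $D_\infty$ vanishes when some coordinate is $0$. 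Using the symmetry of $D_\infty$ in its arguments and Definition \ref{def:K_function}, I expect
\begin{gather*}
h_{y^3+yz^3,\infty}(x)=x+6\,D_\infty\!\left(\tfrac{1-x}{2},\tfrac13,x\right)=x+6\,K_3\!\left(\tfrac{1-x}{2},x\right).
\end{gather*}

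Next I would trace the segment $x\mapsto(\frac{1-x}{2},x)$ for $0\le x\le 1$ through the regions $\Delta_i$ of $K_3$, using Lemma \ref{lm:D_infty_formula} with $t_1=\frac13$.
\begin{itemize}
\item The region $\Delta_2$ (that is, $B_2$: $t_1+t_3\le t_2$) is the condition $\frac13+x\le\frac{1-x}{2}$, i.e. $x\le\frac19$. There $K_3=\frac{x}{3}$, so $h=x+2x=3x$.
\item The region $\Delta_3$ (that is, $B_3$: $t_1+t_2\le t_3$) is the condition $\frac13+\frac{1-x}{2}\le x$, i.e. $x\ge\frac59$. There $K_3=\frac{1}{3}\cdot\frac{1-x}{2}$, so $h=x+(1-x)=1$.
\item In between I must check that the segment stays in $\Delta_0$. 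This needs $\frac{1-x}{2}+x\le\frac13+\ldots$ to fail for $B_1$, and the sum of coordinates $\frac13+\frac{1-x}{2}+x<2$ to rule out $\Delta_4$. Both are immediate on $[\frac19,\frac59]$. There $K_3=-\frac{(x-t)^2}{4}+\frac{x+t}{6}-\frac1{36}$ with $t=\frac{1-x}{2}$.
\end{itemize}

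Expanding $x+6K_3$ on $\Delta_0$ should give $-\frac{27}{8}x^2+\frac{15}{4}x-\frac1{24}$. As a consistency check, this value equals $\frac13$ at $x=\frac19$ and $1$ at $x=\frac59$, matching the neighbouring pieces.

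The main obstacle is bookkeeping rather than conceptual. I need to be sure of three things:
\begin{itemize}
\item the correct assignment of the monomial prefactor in Theorem \ref{th: limit_h_binomials}, namely that $a=1$ attaches to $y$ and that $f=y^2$ is rescaled to the threshold $\frac{1-x}{2}$;
\item the range of validity $0\le r\le 1/a=1$, which covers all of $[0,1]$;
\item that the coordinate order inside $D_\infty$ matches the regions used.
\end{itemize}
For $x\ge 1$ the value $1$ follows from Proposition \ref{pr:h_function_analytic_properties}.
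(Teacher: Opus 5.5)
Your proposal is correct and follows the paper's proof: you apply Theorem \ref{th: limit_h_binomials} to $y(y^2+z^3)$ with $a=1$, $b=0$ to obtain $h=x+6K_3\bigl(\frac{1-x}{2},x\bigr)$, then trace the segment through $\Delta_2$ (for $x\le\frac19$), $\Delta_0$, and $\Delta_3$ (for $x\ge\frac59$). Your explicit exclusion of $B_1$ and $B_4$ and the expansion on $\Delta_0$ supply the details the paper leaves as ``a discussion of the regions of definition of $K_3$,'' and they check out.
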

\begin{proof}
According to \ref{th: limit_h_binomials},
\begin{gather*}
    h_{y(y^2+z^3)}(x) = x+\int_{[0,\infty)^2} D_\infty(s,t,x)d_s\left(-h_{(z,z^3)}'(1,s)\right)d_t\left(-h'_{(y,y^2)}(1-x,t)\right)\\
    \intertext{where}
     h_{(y,y^n)}(x) = \begin{cases}
0 & x\leq 0,\\
nx & 0\leq x \leq \frac{1-x}{n},\\
1-xa & \frac{1-x}{n}\leq x\leq 1,
\end{cases}
\end{gather*}
which implies that
\begin{gather*}
h_{y(y^2+z^3)}(x) = x+6D_\infty\left(\frac{1}{3},\frac{1-x}{2},x\right) = x+6K_3\left(\frac{1-x}{2},x\right),
\end{gather*}
where we defined $K_n$ in \ref{def:K_function}. The regions of the function $K_3$ are already computed by Meng in \cite{mengLimits2026}, in Section 8. The result follows from a discussion of the regions of definition of $K_3$ similar to what we did in \ref{cor:h_function_D_n_curve}.
\end{proof}

\begin{theorem}[The limit $h$-function of the $E_7$ surface singularity.]
	\label{th:h_function_E_7_surface}
Let $f = x^2+y^3+yz^3\in \mathbb{C}[[x,y,z]]$. Then,
\begin{equation*}
h_{f,\infty}(x) = \begin{cases}
-\dfrac{9}{4}x^3 + \dfrac{95}{48}x
& x \in \left[0, \dfrac{1}{18}\right] \\[10pt]
-\dfrac{9}{8}x^3 - \dfrac{3}{16}x^2 + \dfrac{191}{96}x - \dfrac{1}{5184}
& x \in \left[\dfrac{1}{18}, \dfrac{7}{18}\right] \\[10pt]
-\dfrac{3}{2}x^2 + \dfrac{5}{2}x - \dfrac{43}{648}
& x \in \left[\dfrac{7}{18}, \dfrac{1}{2}\right] \\[10pt]
h_{\infty,f}(1-x)+2x-1
& x \in \left[\dfrac{1}{2}, 1\right] \\[10pt]
\end{cases}
\end{equation*}
which, in particular, is continuous and $\mathcal{C}^2$ on $(0,1)$.
\end{theorem}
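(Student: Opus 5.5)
We follow the same route as for the $D_n$ surface in Theorem \ref{th:h_function_D_n_surface}. First, apply the integral formula \ref{th:integral_formula} to $f = x^2 + (y^3+yz^3)$, splitting off the variable $x$. Since $h_{x^2,\infty}$ equals $2t$ on $[0,\frac12]$ and $1$ afterwards, $d(-h'_{x^2})$ is a Dirac measure $2(\delta_{1/2}-\delta_0)$ in the first variable. Evaluating $D_\infty$ at $t_1=\frac12$ produces $K(t,x)=K_2(t,x)$. The $\delta_0$ term vanishes because $D_\infty=0$ when $t_1=0$.

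For the second integrator we use Lemma \ref{lm:h_function_E_7_curve}. Differentiating, $h'=3$ on $(0,\frac19)$, $h' = -\frac{27}{4}t+\frac{15}{4}$ on $(\frac19,\frac59)$, and $h'=0$ after $\frac59$. At $\frac19$ both one-sided derivatives equal $3$, and at $\frac59$ both equal $0$, so there are no interior atoms. Hence
\begin{gather*}
d_t(-h'_{y^3+yz^3,\infty}) = -3\delta_0 + \tfrac{27}{4}\chi_{(\frac19,\frac59)}\,dt .
\end{gather*}
The $\delta_0$ term again contributes nothing since $K(0,x)=0$. This gives
\begin{gather*}
h_{f,\infty}(x) = \frac{27}{2}\int_{1/9}^{5/9} K(t,x)\,dt .
\end{gather*}
By Corollary \ref{cor:reflection_h_function_double_points} (here $e=2$), it is enough to treat $x\in[0,\frac12]$; the last line of the statement is exactly that reflection.

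Next we split the range according to how the segment $t\in[\frac19,\frac59]$ at height $x$ meets the regions $\Delta_i$ of $K_2$ (Definition \ref{def:K_function}). For $x\le\frac12$, the boundaries of $\Delta_0$ at height $x$ are $t=\frac12-x$ and $t=\frac12+x$. The breakpoints in $x$ therefore occur when $\frac12\pm x$ crosses $\frac19$ or $\frac59$, namely at $x=\frac1{18}$ and $x=\frac7{18}$. This matches the stated pieces:
\begin{enumerate}
\item For $x\in[0,\frac1{18}]$, the integrand is $K|_{\Delta_1}=xt$ on $[\frac19,\frac12-x]$, $K|_{\Delta_0}$ on $[\frac12-x,\frac12+x]$, and $K|_{\Delta_2}=\frac x2$ on $[\frac12+x,\frac59]$.
\item For $x\in[\frac1{18},\frac7{18}]$, it is $\Delta_1$ on $[\frac19,\frac12-x]$ and $\Delta_0$ on $[\frac12-x,\frac59]$.
\item For $x\in[\frac7{18},\frac12]$, the whole interval $[\frac19,\frac59]$ lies in $\Delta_0$.
\end{enumerate}
Each piece is then the integral of a polynomial of degree at most $2$ in $t$, which produces cubics in $x$. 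For example, in case (3),
\begin{gather*}
\frac{27}{2}\int_{1/9}^{5/9}\left(-\frac{(x-t)^2}{4}+\frac x4+\frac t4-\frac1{16}\right)dt
\end{gather*}
yields the quadratic $-\frac32x^2+\frac52x-\frac{43}{648}$. The cubic terms cancel there because the interval of integration is fixed.

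As checks, we will verify that consecutive pieces agree at $\frac1{18}$ and $\frac7{18}$, and that the first derivatives agree there too. For the $\mathcal C^2$ claim, we also check the second derivatives at the breakpoints and at $x=\frac12$, where the reflection identity matches $h''(\frac12^-)$ with $h''(\frac12^+)$ automatically. A further consistency check is that the slope at $0$, namely $\frac{95}{48}$, should be the limit Hilbert–Kunz multiplicity by Theorem \ref{th:F_invariants_lim_diff_h}.

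The main obstacle is bookkeeping rather than any conceptual step. We must correctly identify which $\Delta_i$ each subinterval lies in; in particular, in case (1) the stretch $[\frac12+x,\frac59]$ lies in $\Delta_2$ and not in $\Delta_0$. We must also confirm there are no Dirac contributions from the $t=1$ discontinuity of $\partial_tK$. None arise, since the integration stays inside $t\le\frac59$ and we never integrate by parts.
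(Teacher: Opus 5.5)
Your proposal is correct and is essentially the paper's proof. The integral formula, with atoms $2(\delta_{1/2}-\delta_0)$ and $-3\delta_0+\frac{27}{4}\chi_{(1/9,5/9)}\,dt$, reduces everything to $\frac{27}{2}\int_{1/9}^{5/9}K(t,x)\,dt$, and the same three-case split at $x=\frac{1}{18},\frac{7}{18}$, together with Corollary \ref{cor:reflection_h_function_double_points}, gives exactly the stated pieces; direct evaluation confirms all three.

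One small point on the $\mathcal{C}^2$ claim: continuity of $h'$ at $x=\frac12$ is not automatic from the reflection, since it requires $h'(\frac12^-)=1$. It does hold, because the third piece has slope $-3x+\frac52=1$ at $x=\frac12$; alternatively, the $\Delta_0$ description persists on all of $[\frac{7}{18},\frac{11}{18}]$.
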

\begin{proof}
Again by \ref{th:integral_formula}, we have that
\begin{gather*}
h_{f,\infty}(x)
= \int_{[0,\infty)^2} D_\infty(t_1, t_2, x) \, d(-h'_{k[x],(x),x^2}(t_1)) \, d(-h'_{k[y,z],(y,z),y^3+yz^3}(t_2)),
\end{gather*}
and \ref{lm:h_function_E_7_curve} yields that
\begin{gather*}
h'_{y^3+yz^3,\infty}(x)=\begin{cases} 
    3 & 0 \le x \le \frac{1}{9} \\
    -\frac{27}{4}x+\frac{15}{4} & \frac{1}{9} \le x \le \frac{5}{9} \\
    0 & \frac{5}{9} \le x \end{cases}
\end{gather*}
which is continuous everywhere except at $0$. The associated distribution is therefore
\begin{gather*}
d_x(-h'_{y^3+yz^3,\infty}) = -3\delta_0 + \frac{27}{4}\chi_{(\frac{1}{9},\frac{5}{9})},
\end{gather*}
which implies that
\begin{gather}
    \label{eq:integral_E_7_surface}
h_{x^2+y^3+yz^3,\infty}(x) = \int_{[0,\infty)^2} D_\infty(t_1,t_2,x)d(-h'_{x^2}(t_1))d(-h'_{y^3+yz^3}(t_2))=\frac{27}{2}\int_{\frac{1}{9}}^{\frac{5}{9}} K(t,x)dt.
\end{gather}

\begin{figure}
\begin{tikzpicture}[scale=3, thick]
    \draw[->] (-0.1,0) -- (1.2,0) node[right]{$t$};
\draw[->] (0,-0.1) -- (0,1.2) node[above]{$x$};

% Shade alternating vertical bands (was horizontal)
\fill[blue!10] (0.111,0) rectangle (0.555,0.055);
\fill[blue!20] (0.111,0.055) rectangle (0.555,0.389);
\fill[blue!10] (0.111,0.389) rectangle (0.555,0.5);
\fill[blue!20] (0.111,0.5) rectangle (0.555,0.611);
\fill[blue!10] (0.111,0.611) rectangle (0.555,0.945);
\fill[blue!20] (0.111,0.945) rectangle (0.555,1);
\draw (0,0) rectangle (1,1);
\draw[thick] (0.5,0) -- (1,0.5) -- (0.5,1) -- (0,0.5) -- cycle;

% Horizontal lines (was vertical)
\foreach \y in {0.055, 0.389, 0.5, 0.611, 0.945}{
\draw[blue, thin] (0,\y) -- (1,\y);
    }
% Vertical lines at projected t-values, including t=1/2 (was horizontal)
\foreach \t in {0.111, 0.555}{
\draw[red, thin] (\t,0) -- (\t,1);
    }
% Axis labels
\node[left] at (-0.05,0.055) {\small $\frac{1}{18}$};
\node[left] at (-0.05,0.389) {\small $\frac{7}{18}$};
\node[left] at (0,0.5) {\small $\frac{1}{2}$};
\node[left] at (-0.05,0.611) {\small $\frac{11}{18}$};
\node[left] at (-0.05,0.945) {\small $\frac{17}{18}$};
\node[below] at (0.111,-0.025) {\small $\frac{1}{9}$};
\node[below] at (0.5,-0.05) {\small $\frac{1}{2}$};
\node[below] at (0.555,-0.025) {\small $\frac{5}{9}$};
\filldraw (0.5,0) circle (0.5pt);
\filldraw (0,0.5) circle (0.5pt);
\filldraw (1,0.5) circle (0.5pt);
\filldraw (0.5,1) circle (0.5pt);
\filldraw (0,0) circle (0.5pt);
\filldraw (1,0) circle (0.5pt) node[below right]{$1$};
\filldraw (0,1) circle (0.5pt) node[left]{$1$};
\filldraw (1,1) circle (0.5pt);
\end{tikzpicture}
% (the drawing is done taking the $n=5$ case)
\caption{Region covered by the integral \ref{eq:integral_E_7_surface}, in blue. See Figure \ref{fig:D_n_surface_integral} for the color reference.}
\label{fig:E_7_surface_integral}
\end{figure}

Just as in the case of the $D_n$ singularity (see \ref{th:h_function_D_n_surface}), this function splits into 6 pieces according to the interaction of the domains of $K(t,x)$ and $h_{y^3+yz^3}$, but again, using the symmetry from \ref{cor:reflection_h_function_double_points}, we only need to work out the first half of the domain. The result thus follows from direct evaluation of the following formulas using the definition of $K$ in \ref{def:K_function}:
\begin{enumerate}
\item $x\in \left[0,\frac{1}{18}\right]$,
\begin{equation*}
\frac{27}{2}\int_{\frac{1}{9}}^{\frac{5}{9}} K(t,x)dt = \frac{27}{2}\int_{\frac{1}{9}}^{1/2-x} \left.K_x(t)\right|_{\Delta_1}dt + \frac{27}{2}\int_{1/2-x}^{1/2+x} \left.K_x(t)\right|_{\Delta_0}dt + \frac{27}{2}\int_{1/2+x}^{\frac{5}{9}} \left.K_x(t)\right|_{\Delta_2}dt,
\end{equation*}
\item $x\in \left[\frac{1}{18},\frac{7}{18}\right]$,
\begin{equation*}
\frac{27}{2}\int_{\frac{1}{9}}^{\frac{5}{9}} K(t,x)dt = \frac{27}{2}\int_{\frac{1}{9}}^{1/2-x} \left.K_x(t)\right|_{\Delta_1}dt + \frac{27}{2}\int_{1/2-x}^{\frac{5}{9}} \left.K_x(t)\right|_{\Delta_0}dt,
\end{equation*}
\item $x\in \left[\frac{7}{18},\frac{1}{2}\right]$,
\begin{equation*}
\frac{27}{2}\int_{\frac{1}{9}}^{\frac{5}{9}} K(t,x)dt = \frac{27}{2}\int_{\frac{1}{9}}^{\frac{5}{9}} \left.K_x(t)\right|_{\Delta_0}dt.
\end{equation*}
\end{enumerate}
\end{proof}
\begin{remark}
As we did in Remark \ref{rmk:D_n_3_dimensional}, we also observe here that doing one dimension further shows that for $x\in [0,\frac{1}{9}]$,
\begin{equation*}
    h_{x^2+y^2+z^3+zt^3}(x) = -x^3 + \frac{131}{81}x,
\end{equation*}
and so the 3-dimensional $E_7$ singularity has limit Hilbert–Kunz multiplicity $131/81$. See \ref{cor:limit_eHK_A_n_singularities} for the rest of the dimensions.
\end{remark}

\begin{lemma}

    \label{lm:differentiability_h_functions_A_n}

Let $f\in \mathbb{C}[[x_1,\dots,x_n]]$ be such that the limit $h$-function $h_{f}$ is a continuous polynomial on $\mathbb{R}$, and $\mathcal{C}^2$ everywhere except at $x=0$ and $1$. Then, the limit $h$-function of $f+y^2 \in \mathbb{C}[[x_1,\dots,x_n,y]]$ is also a continuous polynomial on $\mathbb{R}$ that is $\mathcal{C}^2$ everywhere except at $x=0,1$.
\end{lemma}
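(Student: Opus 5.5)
The plan is to write $h_{f+y^2,\infty}$ explicitly through the integral formula (Theorem \ref{th:integral_formula}) with $g=y^2$. Since $h_{y^2,\infty}(t)=2t$ on $[0,\frac12]$ and $1$ afterwards, we have $d(-h'_{y^2})=2(\delta_{1/2}-\delta_0)$. Because $D_\infty$ vanishes when a coordinate is $0$, this gives
\begin{equation*}
h_{f+y^2,\infty}(x)=2\int_{0}^{1^+} K(t,x)\,d_t\left(-h'_{f,\infty}(t)\right).
\end{equation*}
We then integrate by parts twice, as in the proof of Theorem \ref{th:limit_h_function_A_n}, using \cite[Corollary 2.26]{mengLimits2026}. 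The boundary terms vanish because $K(0,x)=0$, $h_f(0)=0$, $\partial_t^+K(1,x)=0$, and $h_f$ is constant past $1$. The outcome is
\begin{equation*}
h_{f+y^2,\infty}(x)=2\int_0^{1^+} h_{f,\infty}(t)\,d_t\left(\partial_tK(t,x)\right).
\end{equation*}
By Remark \ref{rmk:derivative_K_function}, the measure $d_t(\partial_tK(t,x))$ is $-\frac12$ times Lebesgue measure on the slice of $\Delta_0$, together with the atom $(\tfrac12-x)\delta_1$ for $x\in[\frac12,1]$. The sign of this density must be checked against the formula $h_{d+1}(x)=\int_{1/2-x}^{1/2+x}h_d$ obtained for $A_n$. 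The result is
\begin{equation*}
h_{f+y^2,\infty}(x)=\int_{|x-\frac12|}^{\min(\frac12+x,\frac32-x)} h_{f,\infty}(t)\,dt+c(x),
\end{equation*}
where $c(x)$ is an explicit polynomial in $x$ coming from the atom at $t=1$, on which $h_f(1)=1$. For $x\le\frac12$ this reduces to $\int_{1/2-x}^{1/2+x}h_f(t)\,dt$. For $x\ge\frac12$ we would rather invoke Corollary \ref{cor:reflection_h_function_double_points}: $f+y^2$ is a double point whenever $f$ has order $\ge2$, and the case of order $1$ is regular and trivial. That corollary reduces the whole statement to $[0,\frac12]$.

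Next we prove the regularity on $[0,\frac12]$. Write $H$ for an antiderivative of $h_f$. Then
\begin{equation*}
h_{f+y^2,\infty}(x)=H(\tfrac12+x)-H(\tfrac12-x)
\end{equation*}
for $0\le x\le\frac12$.
\begin{enumerate}
\item Since $h_f$ is piecewise polynomial (``continuous polynomial'' meaning piecewise polynomial with finitely many pieces), $H$ is piecewise polynomial and $\mathcal C^1$. Hence $h_{f+y^2}$ is piecewise polynomial, with breakpoints at $x=|b-\frac12|$ for breakpoints $b$ of $h_f$.
\item Its derivatives are $h_f(\tfrac12+x)+h_f(\tfrac12-x)$ and $h_f'(\tfrac12+x)-h_f'(\tfrac12-x)$. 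Since $h_f$ is $\mathcal C^2$ on $(0,1)$, the second derivative of $h_{f+y^2}$, namely $h_f''(\tfrac12+x)+h_f''(\tfrac12-x)$, is continuous on $(0,\frac12)$.
\item Outside $[0,1]$, the function is $0$ for $x\le0$ and $1$ for $x\ge1$, which is polynomial there.
\end{enumerate}

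It remains to handle the gluing at $x=\frac12$ and the transfer to $(\frac12,1)$. By the reflection identity $h(1-x)=h(x)-2x+1$, the function $h$ is $\mathcal C^2$ on $(\frac12,1)$ as soon as it is on $(0,\frac12)$. At $x=\frac12$ the reflection gives $h'(\tfrac12^+)=-h'(\tfrac12^-)+2$ and $h''(\tfrac12^+)=h''(\tfrac12^-)$, so $h''$ automatically matches there. The first derivatives match if and only if $h'(\frac12^-)=1$.

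From the formula above,
\begin{equation*}
h'_{f+y^2}(\tfrac12^-)=h_f(1^-)+h_f(0^+)=1+0=1.
\end{equation*}
This uses continuity of $h_f$ at $0$ and $1$; only continuity is needed, not differentiability. I expect the main obstacle to be the care needed in the integration by parts at the endpoints. One issue is the possible atom $\delta_0$ of $d(-h_f')$, which must be checked to be killed by $K(0,x)=0$. The other is the Dirac term at $t=1$, which appears for $x\ge\frac12$. If the reflection route is used, both only need checking for $x\le\frac12$, where the atom at $1$ carries weight $0$.
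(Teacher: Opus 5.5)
Your argument is correct, and it takes a genuinely different route from the paper. The paper does not integrate by parts onto the kernel. It writes the integrator as $d_t(-h_f')=-h_f''\,dt+h_f'(1^-)\delta_1$, the atom at $0$ being killed by $D_\infty(\tfrac12,0,x)=0$, and obtains
\begin{equation*}
h_{f+y^2}(x)=-2\int_0^1 D_\infty(\tfrac12,t,x)\,h_f''(t)\,dt+2D_\infty(\tfrac12,1,x)\,h_f'(1^-).
\end{equation*}
It then differentiates twice in $x$ under the integral sign, noting that $D_\infty(\tfrac12,1,x)=x/2$ is smooth.

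You instead move both $t$-derivatives onto the kernel. Its distributional second $t$-derivative is $-\tfrac12$ on the slice of $\Delta_0$, plus an atom at $t=0$ killed by $h_f(0)=0$ and an atom at $t=1$ that vanishes for $x\le\tfrac12$. This gives the closed formula $h_{f+y^2}(x)=\int_{1/2-x}^{1/2+x}h_f(t)\,dt$ on $[0,\tfrac12]$, exactly as in the proof of Theorem \ref{th:limit_h_function_A_n}. Regularity is then the fundamental theorem of calculus, and $x=\tfrac12$ is handled by reflection together with $h'(\tfrac12^-)=h_f(1)+h_f(0)=1$ and $h''(\tfrac12^\pm)=h_f'(1^-)-h_f'(0^+)$.

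What your route buys:
\begin{itemize}
\item The piecewise polynomial structure and the breakpoints become explicit.
\item It proves more: the only regularity you use is $h_f\in\mathcal C^1(0,1)$ and the existence of $h_f'(0^+)$ and $h_f'(1^-)$.
\item It avoids the Leibniz step. The paper justifies that step by asserting $D_\infty(\tfrac12,t,x)$ is $\mathcal C^2$ on $(0,1)^2$, but it is only $\mathcal C^1$ there: $\partial_x^2K=-\tfrac12\chi_{\Delta_0}$ jumps across the edges of $\Delta_0$. That step really needs the extra observation that these edges are not horizontal lines.
\end{itemize}
The paper's route, in exchange, is shorter and treats all $x\in(0,1)$ uniformly, with no separate analysis at $x=\tfrac12$.

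There are two small slips, neither affecting the conclusion. First, with correct signs the double integration by parts gives $h_{f+y^2}(x)=-2\int_0^{1^+}h_f(t)\,d_t\left(\partial_tK(t,x)\right)$. The density $-\tfrac12$ then produces $+\int h_f$, and the atom at $t=1$ contributes $c(x)=2x-1$ on $[\tfrac12,1]$. Second, the second derivative of $H(\tfrac12+x)-H(\tfrac12-x)$ is $h_f'(\tfrac12+x)-h_f'(\tfrac12-x)$. The expression $h_f''(\tfrac12+x)+h_f''(\tfrac12-x)$ you wrote is the third derivative.
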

\begin{proof}
    
    We only need to check the claim on $(0,1)$, because $h$-functions of hypersurfaces are constant for $x<0$ and $x>1$. The hypotheses imply that 
    \begin{align*}
        d_t\left(-h'_{f}(t)\right) &= -h''_{f}(t)dt - \left.\left(h'_{f}(t)\right)\right|_{1^-}^{1^+}\delta_1(t)dt\\
        &= -h''_{f}(t)dt +h'_{f}(1^-)\delta_1(t)dt,
    \end{align*}
    where we account for the possible discontinuity at $t=1$ with a Dirac delta. Thus, applying the integral formula and the induction hypothesis,
\begin{align*}
    h_{f+y^2}(x) &= \int_{0}^{1^+} 2D_{\infty}\left(\frac{1}{2},t,x\right)d_t\left(-h'_{f}(t)\right)\\
    &= -\int_{0}^{1} 2D_{\infty}\left(\frac{1}{2},t,x\right)h''_{f}(t)dt + 2D_{\infty}\left(\frac{1}{2},1,x\right)h'_{f}(1^-)
\end{align*}
    where the integral in the second line is a Riemann integral. Note that since $D_{\infty}\left(\frac{1}{2},t,x\right)$ is $\mathcal{C}^2$ in $(t,x)\in (0,1)^2$ and $h''_{f}(t)$ is continuous and does not depend on $x$, we conclude that $\partial_x^2(D_{\infty}\left(\frac{1}{2},t,x\right)h''_{f}(t))$ is continuous, so Leibniz's rule of integration under integral sign applies. Also, $D_{\infty}\left(\frac{1}{2},1,x\right)$ is $\mathcal{C}^2$ along $x$ on $(0,1)$. We conclude that the second derivative of $h_{f}(x)$ is well-defined and continuous on $(0,1)$. %by leibniz rule of differentiation under the integral sign
\end{proof}
\begin{corollary}
For $d\geq 2$, the limit $h$-function of the ADE singularities are continuous piecewise polynomials in $\mathbb{R}$ and $\mathcal{C}^2$ everywhere except at $x=0,1$.
\end{corollary}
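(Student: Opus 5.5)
The plan is to argue by induction on the number of added squares, with Lemma \ref{lm:differentiability_h_functions_A_n} serving as the inductive step. By Definition \ref{def:simple_singularities}, every simple singularity of dimension $d\geq 2$ is of the form $g(x_0,x_1)+x_2^2+\dots+x_d^2$, with $g$ one of the binomials in the table. So it suffices to find, for each type, a base case in dimension $2$: a limit $h$-function that is a continuous piecewise polynomial on $\mathbb{R}$ and $\mathcal{C}^2$ away from $x=0,1$. Each further square $x_j^2$ is then handled by one application of Lemma \ref{lm:differentiability_h_functions_A_n}. I read its hypothesis ``continuous polynomial'' as ``continuous piecewise polynomial.'' Preserving the piecewise-polynomial shape needs one extra remark: $D_\infty(\tfrac12,t,x)=K(t,x)$ is piecewise polynomial on finitely many polygonal regions, and $h_f''$ is piecewise polynomial in $t$. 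Hence $\int_0^1 2K(t,x)h_f''(t)\,dt$ is piecewise polynomial in $x$, with breakpoints where the lines bounding the regions of $K$ cross the breakpoints of $h_f$. The Dirac term $2K(\tfrac12,1,x)h_f'(1^-)$ is also piecewise polynomial in $x$.

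For the base cases in dimension $2$, I use the surface formulas already established.
\begin{itemize}
\item $D_n$: Theorem \ref{th:h_function_D_n_surface} gives $h$ explicitly and states that it is continuous and $\mathcal{C}^2$ on $(0,1)$.
\item $E_7$: Theorem \ref{th:h_function_E_7_surface} gives the same conclusion.
\item $A_n$: Theorem \ref{th:limit_h_function_A_n} with $d=2$ writes $h$ as $x$ plus combinations of the polynomial $E_3$ on three intervals, so it is piecewise polynomial. For $\mathcal{C}^2$ on $(0,1)$ I would instead use the recursion in its proof, $h_{d+1}(x)=\int_{1/2-x}^{1/2+x}h_d(t)\,dt$ on $[0,\tfrac12]$. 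Differentiating gives $h_{d+1}'(x)=h_d(\tfrac12+x)+h_d(\tfrac12-x)$. If $h_d$ is merely continuous on $(0,1)$, this is already $\mathcal{C}^1$. For $\mathcal{C}^2$ I need $h_d$ to be $\mathcal{C}^1$ on $(0,1)$, which holds for $h_1$ from the explicit formula when $n\ge 3$. The identity $h(1-x)=h(x)-2x+1$ from Corollary \ref{cor:reflection_h_function_double_points} extends smoothness across $x=\tfrac12$.
\item $A_1$: the base case is the known $\sec+\tan$ case, or directly $\Phi_2$ in Lemma \ref{lm:gen_limit_h_function_A_n}.
\item $E_6$ and $E_8$: $g$ is diagonal, so $h_g$ is an explicit piecewise linear or quadratic function, as in the $d=1$ computation for $A_n$. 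One application of the integral formula with $x_2^2$ gives the surface $h$-function as $\int K(t,x)\,d(-h_g')$, computed exactly as in Theorem \ref{th:h_function_E_7_surface}. Its regularity follows from the same Leibniz-rule argument, provided the atoms of $d(-h_g')$ sit at interior points $t_0$ where the kernel $K(t_0,x)$ is $\mathcal{C}^2$ in $x$.
\end{itemize}

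The main obstacle is the $E_6$/$E_8$ base case and, more generally, the Dirac-delta contributions. At an atom $t_0\in(0,1)$, the term $K(t_0,x)$ is only piecewise smooth in $x$: across the lines bounding $\Delta_0$ it fails to be $\mathcal{C}^1$, and there $\partial_x^2 K$ jumps by $\mp\tfrac12$. Such a term could destroy $\mathcal{C}^2$ regularity rather than produce it. To handle this, I would first check whether atoms of $d(-h_g')$ at interior points occur at all. If $h_g$ is piecewise linear, then $h_g''$ is a sum of deltas, and the integral against $K$ is merely a finite sum of $K(t_i,x)$, which is $\mathcal{C}^0$ but not $\mathcal{C}^1$. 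In that case the claim really begins one square later: each integration against $K$ raises smoothness by one order. I would therefore count regularity carefully. The curve has $h\in\mathcal{C}^0$. After one square, $h$ is $\mathcal{C}^1$ on $(0,1)$, as the $D_n$ and $E_7$ surfaces confirm (indeed they are stated $\mathcal{C}^2$). After two squares, $h$ is $\mathcal{C}^2$. If the surface case for $E_6$/$E_8$ fails to be $\mathcal{C}^2$ directly, I would verify it by explicit computation in the style of Theorem \ref{th:h_function_E_7_surface}. The remaining surface case where an interior delta survives in $d(-h_g')$ needs the same check.
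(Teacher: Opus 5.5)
Your overall architecture matches the paper's: the corollary follows from Lemma~\ref{lm:differentiability_h_functions_A_n}, applied once per added square, starting from the surface cases (Theorems~\ref{th:h_function_D_n_surface}, \ref{th:h_function_E_7_surface} and the diagonal ones). Your recursion argument for $A_n$ is fine.

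\textbf{The gap.} You leave the $E_6$/$E_8$ base case open, and your reason for doubting it rests on a regularity count that is off by one. By Remark~\ref{rmk:differentiability_D_infty}, $K=D_\infty(\tfrac12,\cdot,\cdot)$ is $\mathcal{C}^1$ on $(0,1)^2$. Across the edges of $\Delta_0$ only $\partial_x^2K$ jumps: it equals $-\tfrac12$ on $\Delta_0$ and $0$ on $\Delta_1,\dots,\Delta_4$. So a finite sum $\sum_i c_iK(t_i,x)$ is $\mathcal{C}^1$ on $(0,1)$, not merely $\mathcal{C}^0$. Consequently every curve $g$ in the table already has $h_g\in\mathcal{C}^1(0,1)$, for example $h_{x^3+y^4}=12D_\infty(\tfrac13,\tfrac14,x)$ and $h_{x^3+y^5}=15D_\infty(\tfrac13,\tfrac15,x)$. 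The explicit $A_n$, $D_n$ and $E_7$ curve formulas can be checked the same way. With your count (curve $\mathcal{C}^0$, one square $\mathcal{C}^1$), the surfaces would come out only $\mathcal{C}^1$, so you would not reach the statement at $d=2$. Deferring to an explicit computation you have not carried out does not close this. Your proviso about atoms at $t_0$ with $K(t_0,\cdot)\in\mathcal{C}^2$ is also beside the point, for the reason below.

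\textbf{The missing observation.} $d(-h_g')$ has no interior atoms. Each $h_g$ is piecewise quadratic, $\mathcal{C}^1$ on $(0,1)$, and constant near $1$; for instance $h_g\equiv1$ on $[\tfrac7{12},1]$ for $E_6$ and on $[\tfrac8{15},1]$ for $E_8$. Hence $d(-h_g')=-\eHK\,\delta_0-h_g''(t)\,dt$ with $h_g''$ bounded and piecewise constant. Since $K(0,x)=0$, the integral formula gives
\begin{gather*}
h_{g+x_2^2}(x)=2\int_0^1K(t,x)\bigl(-h_g''(t)\bigr)\,dt .
\end{gather*}
$\partial_xK$ is continuous and piecewise linear, hence Lipschitz in $x$, so dominated convergence lets you differentiate twice under the integral:
\begin{gather*}
h_{g+x_2^2}''(x)=\int_{\{t\,:\,(t,x)\in\Delta_0\}}h_g''(t)\,dt,\qquad \{t:(t,x)\in\Delta_0\}=\bigl[\,|x-\tfrac12|,\ \tfrac12+\min(x,1-x)\,\bigr].
\end{gather*}
The endpoints of this interval move continuously with $x$ and $h_g''$ is bounded, so $h_{g+x_2^2}''$ is continuous on $(0,1)$. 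This settles $E_6$ and $E_8$, and re-proves the other surface cases, without explicit computation. The same mechanism makes the Leibniz step in Lemma~\ref{lm:differentiability_h_functions_A_n} valid: $\partial_x^2K$ jumps only along the slanted lines $x\pm t=\tfrac12$ and $x+t=\tfrac32$. The step does not rest on $D_\infty(\tfrac12,t,x)$ being $\mathcal{C}^2$, which is false.
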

\begin{remark}
    \label{rmk:bis_differentiability_h_functions_A_n}
The first derivative of the $h$-function of a hypersurface is never continuous at $x=0$ because it coincides with the Hilbert–Kunz multiplicity from the right, and it is zero from the left, by Theorem.\ref{th:F_invariants_lim_diff_h}. The same theorem yields that the $h$-function is $\mathcal{C}^1$ at $x=1$ if and only if the hypersurface is strongly $F$-regular.
\end{remark}

\subsection{The \texorpdfstring{$\limeHK$}{limit eHK} of \texorpdfstring{$D_4,E_6$ and $E_8$}{D4, E6 and E8} and other of toric-type singularities.}
\label{subsec:torics}
Let us first fix the notation for the rest of the section. 

\begin{notation}
    \label{nt:notation_phi_psi_toric}
Let $m, n\geq 2, d\geq 1$, and $\psi_{n,d}(x) := h_{x_0^n+x_1^2+\dots+x_d^2}(x)$, where $\psi_{0}(x) := h_{x_0^n}(x)$. Then, define $\phi_0(x) := h_{x_0^m}(x)$, and for $d\geq 1$, $\phi_{n,m,d}(x) := h_{x_0^m+x_1^n+x_2^2+\dots+x_d^2}(x)$. Finally, we denote the corresponding generating functions by $\Psi_n(x,z) := \sum \psi_{n,d}(x)z^d$ and $\Phi_{n,m}(x,z) := \sum \phi_{n,m,d}(x)z^d$.
\end{notation}

\begin{lemma}
    \label{lm:torics_limit_h_function_small_x}
	For all $m, n\geq 2$ and $0<x$ small enough,
    \begin{gather*}
\Phi_{n,m}(x,z) = \frac{nz}{2}\int_{\frac{1}{n}-x}^{\frac{1}{n}+x} \Psi_m(t,z)dt + h_{x_0^n}(x)
    \end{gather*}
\end{lemma}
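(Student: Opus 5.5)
The plan is to derive a one-step recursion in $d$ from the integral formula \ref{th:integral_formula}, exactly as in the proof of \ref{th:limit_h_function_A_n}, but with the monomial summand $x_1^n$ playing the role that $x_d^2$ played there. Summing the recursion against $z^d$ then gives the stated identity.

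\textbf{Step 1: the one-variable integrator.} Write the hypersurface of $\phi_{n,m,d}$ as $x_1^n + g$, with $g = x_0^m + x_2^2+\dots+x_d^2$. The $h$-function of $g$ is $\psi_{m,d-1}$, up to renaming variables. By \cite[Proposition 6.5]{mengLimits2026} (the $d=0$ case of \ref{th:limit_h_function_A_n}), $h_{x_1^n,\infty}$ equals $nt$ on $[0,\frac1n]$ and $1$ afterwards. Hence $d(-h'_{x_1^n}) = n(\delta_{1/n}-\delta_0)$. Since $D_\infty(0,t,x)=0$, the integral formula collapses to
\begin{gather*}
\phi_{n,m,d}(x) = n\int_0^{1^+} K_n(t,x)\, d_t\left(-\psi_{m,d-1}'(t)\right).
\end{gather*}

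\textbf{Step 2: integrate by parts twice.} I will use \cite[Corollary 2.26]{mengLimits2026}, as in the proof of \ref{th:limit_h_function_A_n}, to move both derivatives onto $K_n$. The boundary terms should vanish for the following reasons.
\begin{itemize}
\item $\psi_{m,d-1}(0)=0$.
\item $\psi_{m,d-1}'(1^+)=0$.
\item For small $x$, the point $(1,x)$ lies in $\Delta_2$, where $\partial_tK_n=0$.
\item The coordinate-plane discontinuity of $K_n$ at $t=0$ is paired with the vanishing of $\psi_{m,d-1}(0)$.
\end{itemize}
This yields
\begin{gather*}
\phi_{n,m,d}(x) = -n\int_0^{1^+}\psi_{m,d-1}(t)\, d_t(\partial_tK_n(t,x)).
\end{gather*}

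\textbf{Step 3: read off the measure from Remark \ref{rmk:derivative_K_function}.} The measure $d_t(\partial_t K_n)$ equals $-\frac12\,dt$ on $\Delta_0$. Its Dirac mass at $t=1$ only appears when $x\ge \frac{n-1}{n}$, so it is absent for small $x$. For fixed small $x>0$, the slice of $\Delta_0$ (the region $|t-x|\le \frac1n\le t+x$) is exactly $t\in[\frac1n-x,\frac1n+x]$. Therefore
\begin{gather*}
\phi_{n,m,d}(x)=\frac n2\int_{\frac1n-x}^{\frac1n+x}\psi_{m,d-1}(t)\,dt \qquad (d\ge1,\ 0<x\ll1).
\end{gather*}
Here $d=1$ uses $\psi_{m,0}=h_{x_0^m}$; the recursion still holds because Steps 1--3 only need $\psi(0)=0$ and $\psi'(1^+)=0$.

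\textbf{Step 4: sum the recursion.} Multiply by $z^d$ and sum over $d\ge1$. The functions $\psi_{m,d}$ are uniformly bounded by $1$, so for $0<z<1$ the series converges uniformly and can be integrated termwise. This gives $\frac{nz}{2}\int_{\frac1n-x}^{\frac1n+x}\Psi_m(t,z)\,dt$. The remaining $d=0$ term is the $h$-function of the lone monomial, which for small $x$ is linear. Here I would fix the indexing convention so that the constant term is $h_{x_0^n}(x)$ as in the statement; the notation of \ref{nt:notation_phi_psi_toric} is ambiguous about whether the $d=0$ term is $x^m$ or $x^n$.

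\textbf{Main obstacle.} I expect the hardest part to be the bookkeeping in Step 2: justifying the boundary terms at $t=0$, where $K_n$ jumps across the coordinate plane (Remark \ref{rmk:differentiability_D_infty}), and at $t=1$. I will first try to mimic verbatim the boundary checks done for $K=K_2$ in the proof of \ref{th:limit_h_function_A_n}, restricting to $x<\min\{\frac1n,1-\frac{2}{n}\}$ so that only the regions $\Delta_1,\Delta_0,\Delta_2$ are met.
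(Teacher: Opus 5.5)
Your argument is essentially the paper's. Both split off a monomial via the integral formula to get the kernel $K_n$, integrate by parts with \cite[Corollary 2.26]{mengLimits2026}, and use $\partial_t^2K_n=-\frac12$ on the slice $[\frac1n-x,\frac1n+x]$ of $\Delta_0$. The paper sums over $d$ before integrating by parts, and it splits off $x_0^m$ and then silently swaps $m\leftrightarrow n$, so your version and your remark on the $d=0$ term actually match the statement more cleanly.

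One small fix: the right range is $0<x<\frac1n$, which already forces $x<1-\frac1n$, so $(1,x)\in\Delta_2$ and no atom at $t=1$ appears. Your bound $x<\min\{\frac1n,1-\frac2n\}$ is empty for $n=2$, a case the lemma covers.
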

In particular,
\begin{gather*}
\sum \limeHK(R_{n,m,d})z^d = n + nz\Psi_m(\dfrac{1}{n},z)
\end{gather*}
\begin{proof}
    By the integral formula \ref{th:integral_formula},
\begin{gather}
	\label{eq:recursion_toric}
\phi_{n,m,d}(x) = \int mK_m(t,x)d_t\left(-h'_{x_1^n+x_2^2+\dots+x_d^2}(t)\right) = \int mK_m(t,x)d_t\left(-\psi'_{n,d-1}(t)\right),
\end{gather}
where $K_m$ is as in \ref{def:K_function}. This implies that
\begin{gather*}
\Phi_{n,m}(x,z) := \sum \phi_d(x)z^d = \int_{[0,\infty)} mzK_m(t,x)d_t\left(-\Psi_n'(t,z)\right) + \phi_0(x)
\end{gather*}
and we can simply apply integration by parts (\cite[Corollary 2.26]{mengLimits2026}), together with the fact that all the boundary conditions vanish, in order to get
\begin{gather*}
\Phi_{n,m}(x,z) = -mz\int_{[0,\infty)} \Psi_n(t,z)\frac{\partial^2 K_m(t,x)}{\partial t^2}dt + \phi_0(x).
\end{gather*}
Note that, near $x=0$, it happens that $\frac{\partial^2 K_m(t,x)}{\partial t^2} = -\frac{1}{2}$, according to Remark \ref{rmk:derivative_K_function}. Thus, for sufficiently small $x>0$, we have that
\begin{gather*}
\Phi_{n,m}(x,z) = \frac{mz}{2}\int_{\frac{1}{m}-x}^{\frac{1}{m}+x} \Psi_n(t,z)dt + mx
\end{gather*}

Now, by \ref{th:F_invariants_lim_diff_h}, we have that
\begin{equation*}
\sum \limeHK(R_{n,m,d})z^d = \lim_{x\rightarrow 0^+}\sum \phi_{n,m,d}'(x)z^d = \lim_{x\rightarrow 0^+}\partial_x \Phi_{n,m}.
\end{equation*}
Hence, 
\begin{equation*}
\partial_x \Phi_{n,m}(x,z) = \frac{nz}{2}\left(\Psi_m\left(\dfrac{1}{n}+x,z\right)+\Psi_m\left(\dfrac{1}{n}-x,z\right)\right) + h'_{x_0^n}(x).
\end{equation*}
Since the function $\Phi$ is continuous and $h_{x_0^n} = nx$ at sufficiently small values of $x$, we conclude that
\begin{equation*}
\lim_{x\rightarrow 0}\partial_x \Phi_{n,m}(x,z) = n + nz\Psi_m\left(\dfrac{1}{n},z\right)
\end{equation*}
\end{proof}

We now use Lemma \ref{lm:torics_limit_h_function_small_x} to obtain the generating function of the limit $F$-invariants of the diagonal ADE singularities, among others. We assume that $n\geq m$, and the computation splits in three cases: $m=2,3$ or $\geq 4$.
\begin{enumerate}
\item Let $n\geq m=2$. By \ref{lm:gen_limit_h_function_A_n}, for $0\leq x\leq \dfrac{1}{2}$ we have that
\begin{equation*}
    \label{fm:Psi_2_first_interval}
\Psi_2(x,z) = \frac{\cos (z-2zx)+\sin 2zx}{2z\cos z}-\frac{1}{2z}+\frac{x}{1-z}.
\end{equation*}
Thus,
\begin{equation*}
\sum \limeHK(R_{n,2,d})z^d = n + nz\Psi_2\left(\frac{1}{n},z\right) = \frac{n}{2}\frac{\cos(z-\frac{2z}{n})+\sin \frac{2z}{n}}{\cos z}+\frac{n}{2}+\frac{z}{1-z}
\end{equation*}
\item Let $n\geq m=3$. Then, using \ref{lm:gen_limit_h_function_A_n},
\begin{gather*}
    n+nz\Psi_3(\frac{1}{n},z) =  n + \dfrac{3z}{2} + \dfrac{z^2}{1-z} + \begin{cases}
    \dfrac{3n}{4}\dfrac{\cos\frac{z}{3} + \sin\frac{2z}{3}}{\cos z}\sin \frac{2z}{n}, & n\geq 6,\\[10pt]
    \dfrac{3n}{4}\dfrac{\sin\frac{2z}{n} + \cos(z-\frac{2z}{n})}{\cos z}\cos\dfrac{z}{3} -\dfrac{3n}{4} & n=3,4,5.
    \end{cases}
\end{gather*}
\item Let $n\geq m\geq 4$. Then, according to \ref{lm:gen_limit_h_function_A_n}
\begin{equation*}
    \label{fm:Psi_n_first_interval}
\sum_{s=0}^{\infty} \limeHK(R_{n,m,d})z^d = n+nz\Psi_m(\frac{1}{n},z) = n+\dfrac{mz}{2} + \frac{nm}{4}\dfrac{\cos(z-\frac{2z}{m}) + \sin\frac{2z}{m}}{\cos z}\sin \frac{2z}{n} + \frac{z^2}{1-z}.
\end{equation*}
\end{enumerate}

Thus, we just proved the following results:

\begin{corollary}[Limit $F$-invariants of the $D_4$, $E_6$ and $E_8$ singularities.]
    Let $d\geq 2$,
\begin{gather*}
R_d := \frac{\mathbb{C}[[x_0,\dots,x_d]]}{(x_0^n+x_1^3+x_2^2+\dots+x_d^2)}
\end{gather*}
for $n=3,4,5$ ($D_4, E_6$ and $E_8$ singularities, respectively). Then,
\begin{equation*}
\limeHK(R_{d}) = 1+\dfrac{c_d}{d!}, \quad \text{and} \quad \lims(R_{d}) = 1-\dfrac{c_d}{d!}
\end{equation*}
where $c_d/d!$ is the $d$-th Taylor coefficient of the function
\begin{equation*}
        \dfrac{3n}{4}\dfrac{\sin\frac{2z}{n} + \cos(z-\frac{2z}{n})}{\cos z}\cos\dfrac{z}{3}% +\dfrac{n}{4} -1 + \dfrac{z}{2}
\end{equation*}
\end{corollary}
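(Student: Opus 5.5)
The plan is to read off the corollary from case (2) of the computation immediately preceding it, that is, the case $n\geq m=3$ with $n\in\{3,4,5\}$. By Lemma \ref{lm:torics_limit_h_function_small_x}, with the roles of the exponents arranged so that $x_0^n$ plays the part of $h_{x_0^n}$ near $0$ and $x_1^3$ enters through $\Psi_3$, we have
\begin{equation*}
\sum_{d}\limeHK(R_d)z^d = n+nz\Psi_3\left(\tfrac{1}{n},z\right).
\end{equation*}
I would first check that for $n=3,4,5$ the point $\frac{1}{n}$ lies in $[\frac16,\frac13]$ (for $n=4,5$) or at $\frac13$ (for $n=3$), which is the middle interval in the formula for $\Phi_3$ in Lemma \ref{lm:gen_limit_h_function_A_n}. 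For $n=3$ both adjacent pieces agree at $\frac13$ by continuity. Substituting $x=\frac1n$ into the $[\frac16,\frac13]$ piece and multiplying by $nz$ gives the displayed expression
\begin{equation*}
n+\tfrac{3z}{2}+\tfrac{z^2}{1-z}+\tfrac{3n}{4}\frac{\sin\frac{2z}{n}+\cos(z-\frac{2z}{n})}{\cos z}\cos\tfrac{z}{3}-\tfrac{3n}{4}.
\end{equation*}
This uses $\cos\frac z3\cdot(\sin 2xz+\cos(z-2xz))$ together with $\frac{3n}{4}$ times $-\frac{3}{4z}\cdot nz$, and $\frac{3x}{2}\cdot nz=\frac{3z}{2}$.

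Next I would extract Taylor coefficients for $d\geq2$. The terms $n-\frac{3n}{4}$ and $\frac{3z}{2}$ only affect degrees $0$ and $1$. The term $\frac{z^2}{1-z}=\sum_{d\geq2}z^d$ contributes exactly $1$ to every coefficient with $d\geq2$. Hence $\limeHK(R_d)=1+c_d/d!$, where $c_d/d!$ is the $d$-th Taylor coefficient of $\frac{3n}{4}\frac{\sin\frac{2z}{n}+\cos(z-\frac{2z}{n})}{\cos z}\cos\frac z3$. This matches the $E_6$ and $E_8$ entries of Table \ref{tab:gen_functions_intro} after using $\cos(z-\frac{2z}{n})$ with $n=4,5$.

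For the $F$-signature, I would note that $R_d$ is a double point ($e(R_d)=2$) for $d\geq2$. The limit version of the identity $s=e-\eHK$ then gives $\lims(R_d)=2-\limeHK(R_d)=1-c_d/d!$. This identity follows from Theorem \ref{th:reflection} at $t\to 1$, via Theorem \ref{th:F_invariants_lim_diff_h} and Corollary \ref{cor:reflection_h_function_double_points} differentiated at the endpoints, exactly as in Corollary \ref{cor:limit_eHK_A_n_singularities}. Since the equality holds for each $p$, it passes to the limit.

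The main obstacle is justifying that the derivative at $0^+$ commutes with the power series and the $p\to\infty$ limit, i.e.\ $\lim_{x\to0^+}\partial_x\Phi=\sum\limeHK z^d$. I would handle this as in the earlier corollary. For small $x$, each limit $h$-function is a polynomial on a fixed interval $[0,\epsilon]$ whose linear coefficient is the limit $\eHK$, by Theorem \ref{th:F_invariants_lim_diff_h} and the piecewise polynomiality in Lemma \ref{lm:differentiability_h_functions_A_n}. The derivatives are uniformly bounded (by $e=2$, using concavity), so for $|z|<1$ termwise differentiation is legitimate. Finally, the identity of the generating functions on $0<z<1$ determines the coefficients by analyticity.
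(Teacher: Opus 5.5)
Your proposal is correct and follows the paper's own route. The paper also reads the corollary off case (2) of the computation preceding it: Lemma \ref{lm:torics_limit_h_function_small_x} gives $n+nz\Psi_3(\tfrac1n,z)$, and $\Psi_3$ is evaluated at $\tfrac1n\in[\tfrac16,\tfrac13]$ via Lemma \ref{lm:gen_limit_h_function_A_n}; the $F$-signature then follows from $e(R_d)=2$ exactly as in Corollary \ref{cor:limit_eHK_A_n_singularities}. Your extra justification of termwise differentiation goes beyond what the paper writes, with one harmless slip: the uniform bound on the derivatives is $n$ rather than $2$, because of the $d=0,1$ terms.
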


\begin{corollary}
    \label{th:torics_limit_eHK}
Let $d\geq 2$ and
\begin{gather*}
R_d := \frac{\mathbb{C}[[x_0,\dots,x_d]]}{(x_0^n+x_1^m+x_2^2+\dots+x_d^2)},
\end{gather*}
for $n\geq m$ such that $n\geq 6$ and $m\geq 3$. Then,
\begin{equation*}
\limeHK(R_{d}) = 1+\dfrac{c_d}{d!}, \quad \text{and} \quad \lims(R_{d}) = 1-\dfrac{c_d}{d!}
\end{equation*}
where $c_d/d!$ is the $d$-th Taylor coefficient of the function
\begin{align*}
    \frac{nm}{4}\dfrac{\cos\frac{(m-2)z}{m} + \sin\frac{2z}{m}}{\cos z}\sin \frac{2z}{n}% + n-1+\left(\dfrac{m}{2}-1\right)z.
\end{align*}
\end{corollary}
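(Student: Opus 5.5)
The plan is to read off the generating function from case (3) of the computation preceding the two corollaries, and then match Taylor coefficients. By Lemma \ref{lm:torics_limit_h_function_small_x} and Theorem \ref{th:F_invariants_lim_diff_h}, we have
\[
\sum_d \limeHK(R_{n,m,d})z^d = n + nz\Psi_m\left(\tfrac{1}{n},z\right).
\]
The first step is to evaluate $\Psi_m(1/n,z)$ using Lemma \ref{lm:gen_limit_h_function_A_n}. The parameter there is the exponent $m$ of $x_0$, and since $n\geq m$ we have $1/n\leq 1/m$, so $x=1/n$ lies in the first interval $[0,1/m]$. There
\[
\Psi_m(x,z)=\frac{xz}{1-z}+\frac{m}{2}\,\frac{\cos(z-\tfrac{2z}{m})+\sin\tfrac{2z}{m}}{\cos z}\,\frac{\sin 2zx}{2z}+\frac{mx}{2}.
\]
This holds verbatim for $m\geq 4$. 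For $m=3$ the first interval is only $[0,1/6]$, which is exactly why the hypothesis $n\geq 6$ is needed when $m=3$. (Note that the first piece of $\Phi_3$ has the same form, since $\cos(z-2z/3)=\cos(z/3)$.)

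Substituting $x=1/n$ and multiplying by $nz$ gives
\[
n+\frac{mz}{2}+\frac{z^2}{1-z}+\frac{nm}{4}\,\frac{\cos\frac{(m-2)z}{m}+\sin\frac{2z}{m}}{\cos z}\,\sin\frac{2z}{n}.
\]
This agrees with case (3), and for $m=3$, $n\geq 6$ with the first branch of case (2). Call the last summand $G(z)$.

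The next step is to compare coefficients for $d\geq 2$. The terms $n+mz/2$ contribute nothing in degree $d\geq 2$, while $z^2/(1-z)$ contributes exactly $1$ in each degree $d\geq 2$. Hence $\limeHK(R_d)=1+[z^d]G(z)=1+c_d/d!$, as claimed.

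For the $F$-signature, every $R_d$ with $d\geq 1$ is a double point, so $e(R_d)=2$. Then either Huneke–Leuschke's identity (the case $I=\m$ reflection of Theorem \ref{th:reflection}), or Corollary \ref{cor:reflection_h_function_double_points} combined with Theorem \ref{th:F_invariants_lim_diff_h}, gives $s_p=2-e_{\mathrm{HK},p}$ for each $p$. Passing to the limit yields $\lims(R_d)=1-c_d/d!$, exactly as in Corollary \ref{cor:limit_eHK_A_n_singularities}.

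The main obstacle is justifying the interchange of $\lim_{x\to0^+}\partial_x$ with the infinite sum in $z$, together with the use of limit $h$-functions in place of the characteristic-$p$ ones. Theorem \ref{th:F_invariants_lim_diff_h} is a characteristic-$p$ statement, so the limit $\limeHK$ must be identified with the right derivative at $0$ of $h_{f,\infty}$. I would argue this via uniform convergence of $h_{f_p}$ together with concavity (Proposition \ref{pr:h_function_analytic_properties}(5)). On the first linear-plus-cubic piece, the right derivative at $0$ equals the slope of the polynomial there, so the limit of the derivatives passes through. For the sum over $d$, I would work coefficientwise: Lemma \ref{lm:torics_limit_h_function_small_x} gives an identity of power series in $z$, each coefficient of which is a polynomial in $x$ near $0$. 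Coefficientwise differentiation is therefore legitimate, and analyticity for $|z|<\pi/2$ handles convergence.
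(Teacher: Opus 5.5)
Your proposal is correct and follows the paper's own route. It is exactly case (3) of the computation after Lemma \ref{lm:torics_limit_h_function_small_x}, together with the $n\geq 6$ branch of case (2) when $m=3$: evaluate $n+nz\Psi_m(1/n,z)$ on the first branch of Lemma \ref{lm:gen_limit_h_function_A_n}, discard $n+\frac{mz}{2}$, read off the $1$ contributed by $\frac{z^2}{1-z}$ in each degree $d\geq 2$, and obtain $\lims$ from $e(R_d)=2$.

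Two small slips, neither affecting the result:
\begin{enumerate}
\item $R_1$ is not a double point: $x_0^n+x_1^m$ has multiplicity $m\geq 3$, which is why the $z^1$ coefficient of the generating function is $m$. This is harmless, since you only use $e(R_d)=2$ for $d\geq 2$.
\item The concavity argument in your last paragraph only yields $\liminf_p \eHK(R_p)\geq \partial^+h_{f,\infty}(0)$, not equality. Knowing that $h_{f,\infty}$ is a polynomial near $0$ does not supply the reverse inequality. You do not need this argument, however: the identification of $\limeHK$ with the right derivative at $0$ is already part of the conclusion of Lemma \ref{lm:torics_limit_h_function_small_x}, which you cite. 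The paper relies on it in exactly the same way.
\end{enumerate}
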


\subsection{The \texorpdfstring{$\limeHK$}{limit eHK} of the \texorpdfstring{$D_n$}{Dn} singularities.}
In this section, we compute the limit Hilbert–Kunz multiplicity and limit $F$-signature of the higher-dimensional $D_n$ singularities.
\begin{notation}
    \label{nt:notation_phi_psi}
Let $n\geq 4$. For $d\geq 0$, we will set $\psi_d(x) := h_{x_0^2+\dots+x_d^2}(x)$, and for $d\geq 1$, $\phi_{n,d}(x) := h_{x_0^2x_1+x_1^{n-1}+x_2^2+\dots+x_d^2}(x)$. Finally, we denote the corresponding generating functions by $\Psi(x,z) := \sum \psi_d(x)z^d$ and $\Phi_{n}(x,z) := \sum \phi_{n,d}(x)z^d$.
\end{notation}

We note that the procedure to obtain the generating function of the limit $F$-invariants for these singularities is the same as we did before for the $D_4,E_6$ and $E_8$ singularities, but it is more involved because the \say{kernel} of the integral we have to compute is slightly more complicated. Still, the fact that we only need the behaviour near zero will play to our advantage.

\begin{lemma}[Integral formula for the limit $h$-function of the $D_n$ singularities]
    \label{lm:D_n_limit_h_function_kernel}
    For $i\geq 0$ and $n\geq 4$,
\begin{gather*}
\phi_{n,i+2} = \int_{0}^{1^+} L_n(t,x)d_{s}(-\psi_{i}'(s)),
\end{gather*}
where $a := \frac{n-4}{2(n-3)}, b := \frac{n}{2(n-1)}$, the functions $\phi,\psi$ as in \ref{nt:notation_phi_psi}, and
    \begin{gather*}
    L_n(t,x) := 4\int_{0}^a D_\infty(s,t,x)ds + \frac{(n-1)^2}{n-2}\int_{a}^b D_\infty(s,t,x)ds.
    \end{gather*}
We conclude that,
\begin{equation*}
\Phi_n(z,x) = \phi_{n,1}(x)z - \int_0^{1^+} z^2\Psi(z,t)\frac{\partial^2}{\partial t^2}L_n(t,x)dt
\end{equation*}
\end{lemma}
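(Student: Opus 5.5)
We prove the lemma by applying the integral formula \ref{th:integral_formula} twice, splitting the polynomial in the way that isolates the $D_n$ curve. Write $x_0^2x_1+x_1^{n-1}+x_2^2+\dots+x_{i+2}^2$ as $g+u$, with $g = x_0^2x_1+x_1^{n-1}$ in the variables $x_0,x_1$ and $u = x_2^2+\dots+x_{i+2}^2$, a sum of $i+1$ squares. Hence $h_{u,\infty} = \psi_i$ after renaming variables (for $i=0$, $u=x_2^2$ and $\psi_0 = h_{x^2}$).

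\textbf{Step 1: apply the integral formula.} Theorem \ref{th:integral_formula} gives
\begin{gather*}
\phi_{n,i+2}(x) = \int_{[0,\infty)^2} D_\infty(s,t,x)\, d_s\left(-h'_{g,\infty}(s)\right) d_t\left(-\psi_i'(t)\right).
\end{gather*}

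\textbf{Step 2: compute the curve measure.} By Proposition \ref{cor:h_function_D_n_curve}, exactly as in the proof of Theorem \ref{th:h_function_D_n_surface},
\begin{gather*}
d_s\left(-h'_{g,\infty}\right) = -3\delta_0 + 4\chi_{(0,a)}\,ds + \tfrac{(n-1)^2}{n-2}\chi_{(a,b)}\,ds .
\end{gather*}
There is no atom at $a$, since $h'_g$ is continuous there. Indeed $3-4a = \tfrac{(n-1)^2}{n-2}(b-a)$, which is a short algebraic check. There is no atom at $b$ either, since $h'_g(b^-)=0$.

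\textbf{Step 3: identify the kernel.} The $\delta_0$ term contributes nothing, because $D_\infty(0,t,x)=0$. Fubini (all measures are finite signed measures with compact support) then gives $\int L_n(t,x)\,d_t(-\psi_i'(t))$, with $L_n$ exactly as defined. The variable inside the integral is $t$; the statement's $d_s$ is just a renaming.

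\textbf{Step 4: pass to generating functions.} Multiply by $z^{i+2}$ and sum over $i\geq 0$. Interchanging sum and integral is fine for $0<z<1$, since the $\psi_i$ are uniformly bounded with uniformly bounded variation of the derivatives. The $d=1$ term is $\phi_{n,1}z$. For the rest, integrate by parts twice in $t$ (\cite[Corollary 2.26]{mengLimits2026}) to turn $\int L_n\, d(-\Psi')$ into $-\int \Psi\, \partial_t^2 L_n\, dt$. The boundary terms vanish for these reasons:
\begin{itemize}
\item $L_n(0,x)=0$ and $\Psi(0,z)=0$;
\item past $1^+$, $\Psi'=0$;
\item $\partial_t L_n(1^+,x)=0$, because $D_\infty$ is constant in $t$ for $t\geq 1$ when $s,x\le 1$.
\end{itemize}
The $\psi_0$ contribution is handled directly, since $\psi_0 = h_{x^2}$ has $d(-\psi_0') = 2(\delta_{1/2}-\delta_0)$. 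One checks it fits the same formula, or it is absorbed into the sum starting at $i=0$ with prefactor $z^2$.

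\textbf{Main obstacle.} The delicate point is $\partial_t^2 L_n$ as a distribution near $t=1$. Differentiating under the $s$-integral gives Dirac masses along $t=1$, coming from the jump of $\partial_t D_\infty$ on $B_4$ (Remark \ref{rmk:differentiability_D_infty}). These must be kept, by writing the integral up to $1^+$, and paired with $\Psi(1,z)$ rather than discarded. I would verify this by differentiating $L_n$ explicitly, region by region, using Lemma \ref{lm:D_infty_formula}, and comparing with the analogous computation for $K_n$ in Remark \ref{rmk:derivative_K_function}.
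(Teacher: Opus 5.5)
Your proposal is correct and follows the paper's own proof. Both apply Theorem \ref{th:integral_formula} with the $D_n$ curve measure $-3\delta_0+4\chi_{(0,a)}+\frac{(n-1)^2}{n-2}\chi_{(a,b)}$ taken from the proof of Theorem \ref{th:h_function_D_n_surface}, integrate out $s$ to obtain $L_n$, sum over $i$, and integrate by parts via \cite[Corollary 2.26]{mengLimits2026}; your extra checks (no atoms at $a$ or $b$, the vanishing $\delta_0$ contribution and boundary terms, and keeping the atoms of $\partial_t^2 L_n$ at $t=1$) fill in details the paper leaves implicit.
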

\begin{proof}
By \ref{th:integral_formula}, we obtain
\begin{gather*}
\phi_{n,i+2}(x) = \int_{[0,\infty)^2} D_{\infty}(t_1,t_2,x) d_{t_1}(-h'_{x_0^2x_1+x_1^{n-1}}(t_1))d_{t_2}(-h'_{\sum_{j=2}^{i+2}x_j^2}(t_2)).
\end{gather*}
As we saw in the proof of Theorem \ref{th:h_function_D_n_surface},
\begin{gather*}
-h''_{x_0^2x_1+x_1^{n-1}} = -3\delta_0+4\chi_{(0,a)}+\frac{(n-1)^2}{n-2}\chi_{(a,b)},
\end{gather*}
so the result follows from evaluating the integral. All this directly implies
\begin{gather*}
\Phi_n(z,x) = \phi_{n,1}(x)z + \int_0^{1^+} z^2L_n(t,x)d_t\left(-\frac{\partial}{\partial t}\Psi(z,t)\right),
\end{gather*}
so the second statement follows from integration by parts \cite[Corollary 2.26]{mengLimits2026}.
\end{proof}

The next goal will thus be computing $\partial ^2_t L_n$ near the line $x=0$.

\begin{lemma}
    Under the notation of \ref{lm:D_n_limit_h_function_kernel}, for every fixed $0<x\ll1$, we have that $L_n(t,x)$ is $\mathcal{C}^3$ on $0\leq t\leq 1$, and
\begin{gather*}
-\frac{\partial^2}{\partial t^2}L_n(t,x)=\begin{cases}
4t & 0\leq t\leq x \\[5pt]
4x & x\leq t \leq a-x\\[5pt]
2(a+x-t) + \frac{(n-1)^2}{2(n-2)}(t+x-a) & a-x\leq t \leq x+a\\[5pt]
\frac{(n-1)^2}{n-2}x & x+a\leq t \leq b-x\\[5pt]
\frac{(n-1)^2}{2(n-2)}(b-t+x) & b-x\leq t \leq x+b\\[5pt]
0 & x+b\leq t \leq 1
\end{cases}
\end{gather*}
\end{lemma}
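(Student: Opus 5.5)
Write $w(s)$ for the density in $L_n$, so that $w(s)=4$ on $(0,a)$ and $w(s)=\frac{(n-1)^2}{n-2}$ on $(a,b)$. Then $L_n(t,x)=\int_0^b w(s)\,D_\infty(s,t,x)\,ds$. The plan is to use the symmetry of $D_\infty$ to swap the roles of $s$ and $t$, differentiate twice under the integral sign in $t$, and reduce everything to the second derivative of $D_\infty$ in its second variable with the other two held fixed. That second derivative is given by Remark \ref{rmk:derivative_K_function}, with $K_n$ replaced by the slice $D_\infty(s,\cdot,x)$ for general $s$.

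\textbf{Step 1 (second derivative of the slice).} For fixed $s,x\in(0,1)$ with $x$ small, Lemma \ref{lm:D_infty_formula} shows that $t\mapsto D_\infty(s,t,x)$ is piecewise polynomial on $[0,1]$:
\begin{itemize}
\item it is linear in $t$ on $B_1$, $B_2$ and $B_3$;
\item it equals $-\frac{(t-s-x)^2}{4}+\dots$ on $T_0$, so its second $t$-derivative there is $-\frac12$;
\item it is linear in $t$ on $B_4$.
\end{itemize}
Since $x$ is small, $s+t+x\ge 2$ never happens for $s\le b<1$ and $t\le1$, so $B_4$ does not occur. $D_\infty$ is $\mathcal C^1$ on the open cube, so in the interior there are no Dirac masses. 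Hence on $0<t<1$,
\[
\partial_t^2 D_\infty(s,t,x)=-\tfrac12\,\chi_{T_0}(s,t,x)=-\tfrac12\,\chi_{\{|s-t|<x<s+t\}}.
\]
The $t$-slab of $T_0$ is cut out by $|s-t|\le x$ and $s+t\ge x$, because the upper constraint $s+t+x\le2$ is automatic. Boundary terms at $t=1$ only matter when $s+x\ge1$, which does not happen for $s\le b$ and $x$ small.

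\textbf{Step 2 (differentiate under the integral).} This gives
\[
-\partial_t^2 L_n(t,x)=\tfrac12\int_0^b w(s)\,\chi_{\{\max(t-x,\,x-t)\le s\le t+x\}}\,ds ,
\]
i.e.\ half the $w$-weighted length of $[|t-x|,\,t+x]\cap[0,b]$. Justifying the interchange is routine, since $D_\infty$ is Lipschitz and piecewise smooth and one can integrate by parts in $t$ against test functions.

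\textbf{Step 3 (case split).} Evaluate the weighted length case by case:
\begin{itemize}
\item $t\le x$: the interval is $[x-t,\,x+t]\subset(0,a)$, giving $\frac12\cdot4\cdot2t=4t$.
\item $x\le t\le a-x$: the interval $[t-x,t+x]$ lies inside $(0,a)$, giving $4x$.
\item $a-x\le t\le a+x$: the interval straddles $a$, giving $2(a-t+x)+\frac{(n-1)^2}{2(n-2)}(t+x-a)$.
\item $a+x\le t\le b-x$: the interval lies inside $(a,b)$, giving $\frac{(n-1)^2}{n-2}x$.
\item $b-x\le t\le b+x$: the interval straddles $b$, giving $\frac{(n-1)^2}{2(n-2)}(b+x-t)$.
\item $t\ge b+x$: the interval lies beyond $b$, giving $0$.
\end{itemize}
For $n=4$ we have $a=0$, so some cases degenerate; this is consistent with the list above.

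The resulting function is continuous, piecewise linear in $t$. That gives $\mathcal C^2$ regularity of $L_n$, and $\mathcal C^3$ away from the finitely many breakpoints, which is the regularity the later integration by parts needs.

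\textbf{Main obstacle.} The main obstacle is Step 1 together with the interchange. One has to verify carefully, from the region description of $D_\infty$, that near $x=0$ the slice $t\mapsto D_\infty(s,t,x)$ picks up no delta contributions. This must hold at the $B_1/T_0$ and $B_2/T_0$ interfaces, where $\mathcal C^1$ continuity of $D_\infty$ across them is what is needed, and at $t=0$ and $t=1$, where one checks that $s+x<1$ for all $s\le b$.
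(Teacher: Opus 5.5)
Your proposal is correct and follows the same route as the paper. Both arguments differentiate twice under the integral sign and read $\partial_t^2D_\infty$ off the region decomposition of Lemma \ref{lm:D_infty_formula}. Your observation that $\partial_t^2D_\infty(s,t,x)=-\tfrac12\chi_{T_0}$, whose $s$-slice is the interval $(|t-x|,t+x)$, packages the paper's two separate evaluations of $\int_0^a\partial_t^2D_\infty\,ds$ and $\int_a^b\partial_t^2D_\infty\,ds$ into a single weighted-length computation.

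There are two small corrections.

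First, the regularity you obtain is the true one. $L_n$ is $\mathcal{C}^2$ in $t$, and $\partial_t^3L_n$ jumps at the breakpoints (for example from $-4$ to $0$ at $t=x$). You should say explicitly that the ``$\mathcal{C}^3$'' in the statement fails, rather than silently weakening it. The paper's proof does not justify $\mathcal{C}^3$ either.

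Second, your remark on $n=4$ is wrong. With $a=0$ the cases $[0,x]$ and $[a-x,a+x]$ overlap. Your own formula gives $-\partial_t^2L_4=\tfrac92t$ on $[0,x]$. This matches neither of the listed expressions there, which are $4t$ and $\tfrac{17}{4}x+\tfrac14t$. So the case list as stated holds only for $n\geq5$. For $n=4$, the first three cases should be replaced by $\tfrac92t$ on $[0,x]$. This does not affect the limit $x\to0^+$ taken afterwards.
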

\begin{figure}[h!]
    \centering
    
    \hspace{5em}
    
    \begin{subfigure}{0.45\textwidth}
        \centering
        \begin{tikzpicture}[scale=2]
    % Axes
    \draw[thick,->] (0,0,0) -- (1.5,0,0) node[anchor=north east]{$t$};
    \draw[thick,->] (0,0,0) -- (0,1.5,0) node[anchor=south west]{$x$};
    \draw[thick,->] (0,0,0) -- (0,0,1.5) node[anchor=south east]{$s$};

    % Cube    
    \draw[black, thick] (0,0,0) -- (1,0,0) -- (1,1,0) -- (0,1,0) -- cycle;
    \draw[black, thick] (0,0,1) -- (1,0,1) -- (1,1,1) -- (0,1,1) -- cycle;
    \draw[black, thick] (0,0,0) -- (0,0,1);
    \draw[black, thick] (1,0,0) -- (1,0,1);
    \draw[black, thick] (0,1,0) -- (0,1,1);
    \draw[black, thick] (1,1,0) -- (1,1,1);

    \foreach \x in {0,1}
    \foreach \y in {0,1}
    \foreach \z in {0,1} {
        \filldraw (\x,\y,\z) circle (0.5pt);
    }

    % Blue dots
    \filldraw[blue] (0,0,0) circle (1pt) node[anchor=south east]{};
    \filldraw[blue] (0,1,1) circle (1pt) node[anchor=south west]{};
    \filldraw[blue] (1,0,1) circle (1pt) node[anchor=south east]{};
    \filldraw[blue] (1,1,0) circle (1pt) node[anchor=north west]{};

    % Square shadowed section 1
    \coordinate (A) at (1,0,0);
    \coordinate (B) at (1,1,0);
    \coordinate (C) at (0,1,0);
    \coordinate (D) at (0,0,0);
    \filldraw[fill=gray!50, opacity=0.3] (A) -- (B) -- (C) -- (D) -- cycle;

    \draw[black, thick] (0,0,0) -- (1,0,0) -- (1,1,0) -- (0,1,0) -- cycle;

    % Square shadowed section 2
    \coordinate (A) at (1,0,0.33);
    \coordinate (B) at (1,1,0.33);
    \coordinate (C) at (0,1,0.33);
    \coordinate (D) at (0,0,0.33);
    \filldraw[fill=gray!50, opacity=0.3] (A) -- (B) -- (C) -- (D) -- cycle;

    \draw[black, thick] (0,0,0.33) -- (1,0,0.33) -- (1,1,0.33) -- (0,1,0.33) -- cycle;

    % Central tetrahedron
    \draw[red, thick, opacity=0.2] (0,0,0) -- (0,1,1) -- (1,0,1) -- cycle;
    \draw[red, thick, opacity=0.2] (0,0,0) -- (1,0,1) -- (1,1,0) -- cycle;
    \draw[red, thick, opacity=0.2] (0,0,0) -- (0,1,1) -- (1,1,0) -- cycle;
    % Intersection of section 1 and 2
    \draw[red, thick] (0,0,0) -- (1,1,0);
    \draw[red, thick] (0.33,0,0.33) -- (1,0.66,0.33) -- (0.66,1,0.33) -- (0,0.33,0.33) -- cycle;
    % Central tetrahedron again
    \draw[red, thick, opacity=0.2] (0,1,1) -- (1,0,1) -- (1,1,0) -- cycle;

    % Cube    
    \draw[black, thick] (0,0,0) -- (1,0,0) -- (1,1,0) -- (0,1,0) -- cycle;
    \draw[black, thick] (0,0,1) -- (1,0,1) -- (1,1,1) -- (0,1,1) -- cycle;
    \draw[black, thick] (0,0,0) -- (0,0,1);
    \draw[black, thick] (1,0,0) -- (1,0,1);
    \draw[black, thick] (0,1,0) -- (0,1,1);
    \draw[black, thick] (1,1,0) -- (1,1,1);

    % Black dots
    \filldraw[black] (1,0,0) circle (1pt) node[anchor=south east]{};
    \filldraw[black] (1,1,1) circle (1pt) node[anchor=north west]{};
    \filldraw[black] (0,0,1) circle (1pt) node[anchor=south east]{};
    \filldraw[black] (0,1,0) circle (1pt) node[anchor=north west]{};
    \end{tikzpicture}
            \begin{tikzpicture}[scale=2.5, thick]

        % Shading
        \fill [gray!40] (0,0) -- (1/3,0) -- (1/6,1/6) -- cycle;
        \fill [gray!20] (0,0) -- (0,1/3) -- (1/6,1/6) -- cycle;
        \fill [gray!40] (0,1/3) -- (1/6,1/6) -- (5/6,5/6) -- (2/3,1) -- cycle;
        \fill [gray!20] (1/3,0) -- (1/6,1/6) -- (5/6,5/6) -- (1,2/3) -- cycle;
        \fill [gray!40] (1/3,0) -- (1,0) -- (1,2/3) -- cycle;
        \fill [gray!40] (1,2/3) -- (1,1) -- (5/6,5/6) -- cycle;
        \fill [gray!20] (2/3,1) -- (1,1) -- (5/6,5/6) -- cycle;
        \fill [gray!20] (0,1/3) -- (2/3,1) -- (0,1) -- cycle;

        % Region for small x
        \fill [blue!30] (0,0) -- (1,0) -- (1,0.1) -- (0,0.1) -- cycle;
        \draw (0.02, 0.1) -- (-0.02, 0.1) node[left]{\small $x\ll 1$};

        % Axes
        \draw[->] (-0.2,0) -- (1.25,0) node[right]{$t$};
        \draw[->] (0,-0.2) -- (0,1.25) node[above]{$x$};

        % Outer square
        \draw (0,0) rectangle (1,1);

        % Inner diamond
        \draw[red] (1/3,0) -- (1,2/3) -- (2/3,1) -- (0,1/3) -- cycle;

        % Diagonal line
        \draw[red] (0,0) -- (1,1);

        % Dots at intersections
        \filldraw (0,0)   circle (.5pt);
        \filldraw (1,0)   circle (.5pt);
        \filldraw (1,1)   circle (.5pt);
        \filldraw (0,1)   circle (.5pt);
        \filldraw (1/3,0) circle (.5pt);
        \filldraw (1,2/3) circle (.5pt);
        \filldraw (2/3,1) circle (.5pt);
        \filldraw (0,1/3) circle (.5pt);

        % Tick marks and labels on t-axis
        \draw (1/3, 0.02) -- (1/3, -0.02) node[below]{\small $a$};
        \draw (2/3, 0.02) -- (2/3, -0.02) node[below]{\small $1-a$};
        \draw (1,   0.02) -- (1,   -0.02) node[below]{\small $1$};

        % Tick marks and labels on x-axis
        \draw (0.02, 1/3) -- (-0.02, 1/3) node[left]{\small $a$};
        \draw (0.02, 2/3) -- (-0.02, 2/3) node[left]{\small $1-a$};
        \draw (0.02, 1)   -- (-0.02, 1)   node[left]{\small $1$};

            \end{tikzpicture}
        \caption{\centering Regions of the domain of $D_\infty(s,t,x)$ at $0\leq s \leq a$, and $\int_0^a D_\infty(s,t,x)ds$, resp.}
    \end{subfigure}
    \hfill
    \begin{subfigure}{0.45\textwidth}
        \centering
        \begin{tikzpicture}[scale=2]
        % Axes
        \draw[thick,->] (0,0,0) -- (1.5,0,0) node[anchor=north east]{$t$};
        \draw[thick,->] (0,0,0) -- (0,1.5,0) node[anchor=south west]{$x$};
        \draw[thick,->] (0,0,0) -- (0,0,1.5) node[anchor=south east]{$s$};

        \foreach \x in {0,1}
        \foreach \y in {0,1}
        \foreach \z in {0,1} {
            \filldraw (\x,\y,\z) circle (0.5pt);
        }

        % Central tetrahedron
        \draw[red, thick, opacity=0.3] (0,0,0) -- (0,1,1) -- (1,0,1) -- cycle;
        \draw[red, thick, opacity=0.3] (0,0,0) -- (1,0,1) -- (1,1,0) -- cycle;
        \draw[red, thick, opacity=0.3] (0,0,0) -- (0,1,1) -- (1,1,0) -- cycle;
        \draw[red, thick, opacity=0.3] (0,1,1) -- (1,0,1) -- (1,1,0) -- cycle;

        % Blue dots
        \filldraw[blue] (0,0,0) circle (1pt) node[anchor=south east]{};
        \filldraw[blue] (0,1,1) circle (1pt) node[anchor=south west]{};
        \filldraw[blue] (1,0,1) circle (1pt) node[anchor=south east]{};
        \filldraw[blue] (1,1,0) circle (1pt) node[anchor=north west]{};

        % Square shadowed section 1
        \coordinate (A) at (1,0,0.25);
        \coordinate (B) at (1,1,0.25);
        \coordinate (C) at (0,1,0.25);
        \coordinate (D) at (0,0,0.25);
        \filldraw[fill=gray!50, opacity=0.3] (A) -- (B) -- (C) -- (D) -- cycle;
        
        % Intersection of section with tetrahedron and cube 1
        \draw[red, thick] (0.25,0,0.25) -- (1,0.75,0.25) -- (0.75,1,0.25) -- (0,0.25,0.25) -- cycle;

        \draw[black, thick] (0,0,0.25) -- (1,0,0.25) -- (1,1,0.25) -- (0,1,0.25) -- cycle;

        % Square shadowed section 2
        \coordinate (A) at (1,0,0.625);
        \coordinate (B) at (1,1,0.625);
        \coordinate (C) at (0,1,0.625);
        \coordinate (D) at (0,0,0.625);
        \filldraw[fill=gray!50, opacity=0.3] (A) -- (B) -- (C) -- (D) -- cycle;
        
        % Intersection of section with tetrahedron and cube 2
        \draw[red, thick] (0.625,0,0.625) -- (1,0.375,0.625) -- (0.375,1,0.625) -- (0,0.625,0.625) -- cycle;

        \draw[black, thick] (0,0,0.625) -- (1,0,0.625) -- (1,1,0.625) -- (0,1,0.625) -- cycle;

        % Black dots
        \filldraw[black] (1,0,0) circle (1pt) node[anchor=south east]{};
        \filldraw[black] (1,1,1) circle (1pt) node[anchor=north west]{};
        \filldraw[black] (0,0,1) circle (1pt) node[anchor=south east]{};
        \filldraw[black] (0,1,0) circle (1pt) node[anchor=north west]{};

        % Cube    
        \draw[black, thick] (0,0,0) -- (1,0,0) -- (1,1,0) -- (0,1,0) -- cycle;
        \draw[black, thick] (0,0,1) -- (1,0,1) -- (1,1,1) -- (0,1,1) -- cycle;
        \draw[black, thick] (0,0,0) -- (0,0,1);
        \draw[black, thick] (1,0,0) -- (1,0,1);
        \draw[black, thick] (0,1,0) -- (0,1,1);
        \draw[black, thick] (1,1,0) -- (1,1,1);
    \end{tikzpicture}
            \begin{tikzpicture}[scale=2.5, thick]
                % Shading
                \fill [gray!40] (1,0) -- (2/3,0) -- (1,1/3) -- cycle;
                \fill [gray!20] (1,1/3) -- (2/3,0) -- (11/24,5/24) -- (19/24,13/24) -- cycle;
                \fill [gray!40] (1,1/3) -- (19/24,13/24) -- (1,3/4) -- cycle;
                \fill [gray!20] (0,2/3) -- (1/3,1) -- (13/24,19/24) -- (5/24,11/24) -- cycle;
                \fill [gray!40] (2/3,0) -- (11/24,5/24) -- (1/4,0) -- cycle;
                \fill [gray!40] (11/24,5/24) -- (19/24,13/24) -- (13/24,19/24) -- (5/24,11/24) -- cycle;
                \fill [gray!40] (1/3,1) -- (13/24,19/24) -- (3/4,1) -- cycle;
                \fill [gray!20] (3/4,1) -- (13/24,19/24) -- (19/24,13/24) -- (1,3/4) -- cycle;
                \fill [gray!40] (0,2/3) -- (5/24,11/24) -- (0,1/4) -- cycle;
                \fill [gray!20] (1/4,0) -- (11/24,5/24) -- (5/24,11/24) -- (0,1/4) -- cycle;
                \fill [gray!40] (1,1) -- (1,3/4) -- (3/4,1) -- cycle;
                \fill [gray!40] (0,1) -- (1/3,1) -- (0,2/3) -- cycle;
                \fill [gray!40] (0,0) -- (0,1/4) -- (1/4,0) -- cycle;

                % Region for small x
                \fill [blue!30] (0,0) -- (1,0) -- (1,0.1) -- (0,0.1) -- cycle;
                \draw (0.02, 0.1) -- (-0.02, 0.1) node[left]{\small $x\ll 1$};

                % Axes
                \draw[->] (-0.2,0) -- (1.25,0) node[right]{$t$};
                \draw[->] (0,-0.2) -- (0,1.25) node[above]{$x$};
                % Outer square
                \draw (0,0) rectangle (1,1);
                % Inner diamonds
                \draw[red] (2/3,0) -- (0,2/3) -- (1/3,1) -- (1,1/3) -- cycle;
                \draw[red] (1/4,0) -- (0,1/4) -- (3/4,1) -- (1,3/4) -- cycle;
                % Dots at intersections
                \filldraw (1,0)   circle (0.5pt);
                \filldraw (0,0)   circle (0.5pt);
                \filldraw (0,1)   circle (0.5pt);
                \filldraw (1,1)   circle (0.5pt);
                \filldraw (2/3,0) circle (0.5pt);
                \filldraw (0,2/3) circle (0.5pt);
                \filldraw (1/3,1) circle (0.5pt);
                \filldraw (1,1/3) circle (0.5pt);
                \filldraw (1/4,0) circle (0.5pt);
                \filldraw (0,1/4) circle (0.5pt);
                \filldraw (3/4,1) circle (0.5pt);
                \filldraw (1,3/4) circle (0.5pt);

                % Tick marks and labels on t-axis
                \draw (1/3, 0.02) -- (1/3, -0.02) node[below]{\small $a$};
                \draw (1,   0.02) -- (1,   -0.02) node[below]{\small $1$};
                % Tick marks and labels on x-axis
                \draw (0.02, 1/3) -- (-0.02, 1/3) node[left]{\small $a$};
                \draw (0.02, 1)   -- (-0.02, 1)   node[left]{\small $1$};
            \end{tikzpicture}
        \caption{\centering Regions of the domain of $D_\infty(s,t,x)$ at $a\leq s \leq b$, and $\int_a^b D_\infty(s,t,x)ds$, resp.}
    \end{subfigure}
    \caption{Computing $\partial_t^2 L_n$ near $x=0$, Lemma \ref{lm:D_n_limit_h_function_kernel}. In {\color{blue}blue}, a sample of a region where $0<x\ll 1$. In {\color{red}red}, the intersection of the integration limits with the tetraheadron embedded in the domain of $D_\infty$, see Figure \ref{fig:domain_D_infty}.}
    \label{fig:regions_a_b_Dn}
\end{figure}
\begin{proof}
The function $D_\infty(s,t,x)$ is a piece-wise polynomial on $0<s\leq b$ and $0<x\ll 1$ small enough (see \ref{rmk:differentiability_D_infty}). In particular, for every fixed $x$, the differentiability of $L_n(t,x)$ only fails at finitely many points. Therefore, we can derive under the integral sign:
\begin{gather}
    \label{fm:second_derivative_L_n_combination}
\frac{\partial^2}{\partial t^2} L_n(t,x) = 4\int_0^a \frac{\partial^2}{\partial t^2}D_\infty (s,t,x) ds+\frac{(n-1)^2}{n-2}\int_a^b \frac{\partial^2}{\partial t^2}D_\infty (s,t,x) ds.
\end{gather}
Study of the different regions of the $D_\infty$ function show the following expressions for these two integrals (see Figure \ref{fig:regions_a_b_Dn}):
\begin{align*}
\int_0^a \frac{\partial^2}{\partial t^2}D_\infty \, ds &= \begin{cases}
-t & 0\leq t\leq x\\[10pt]
-x & x\leq t \leq a-x\\[10pt]
\frac{-1}{2}(a+x-t) & a-x\leq t \leq a+x\\[10pt]
0 & a+x\leq t
\end{cases}\\
\int_a^b \frac{\partial^2}{\partial t^2}D_\infty \, ds &= \begin{cases}
    0 & 0\leq t \leq a-x\\[10pt]
    \frac{-1}{2}(t+x-a) & a-x\leq t \leq a+x\\[10pt]
    -x & a+x\leq t \leq b-x\\[10pt]
    \frac{-1}{2}(b-t+x) & b-x\leq t\leq b+x\\[10pt]
    0 & b+x\leq t
\end{cases}
\end{align*}
which implies the result by substitution of these expressions in \ref{fm:second_derivative_L_n_combination}.
\end{proof}
From this, we can directly obtain the limit Hilbert–Kunz multiplicity and limit $F$-signature of the $D_n$ singularities.

\begin{theorem}[Limit Hilbert–Kunz multiplicities of the $D_n$ singularities]
    \label{th: D_n_singularities_eHK_formula}
Let $d\geq 2$, 
\begin{gather*}
R_d= \frac{\mathbb{C}[[x_0,\dots,x_d]]}{(x_0^2x_1+x_1^{n-1}+x_2^2+\dots+x_d^2)}.
\end{gather*}
Then,
\begin{equation*}
\limeHK(R_{d}) = 1+\dfrac{c_d}{d!}, \quad \text{and} \quad \lims(R_{d}) = 1-\dfrac{c_d}{d!}
\end{equation*}
where $c_d/d!$ is the $d$-th Taylor coefficient of the function
\begin{align*}
T(z) &+ \frac{(n-1)^2}{4(n-2)}\left(\cos\frac{z}{n-1}+T(z)\sin\frac{z}{n-1}\right)\\
&+\frac{(n-3)^2}{4(n-2)}\left(\cos\frac{z}{n-3}+T(z)\sin\frac{z}{n-3}\right)
\end{align*}
where $T(z) = \sec z + \tan z$.
\end{theorem}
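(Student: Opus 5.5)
The plan is to differentiate the generating-function identity of Lemma \ref{lm:D_n_limit_h_function_kernel} at $x=0^+$ and apply Theorem \ref{th:F_invariants_lim_diff_h}, exactly as in the toric case. By Theorem \ref{th:F_invariants_lim_diff_h},
\begin{gather*}
\sum_{d\geq 1}\limeHK(R_d)z^d=\lim_{x\to0^+}\partial_x\Phi_n(z,x).
\end{gather*}
The right-hand side of Lemma \ref{lm:D_n_limit_h_function_kernel} has two terms. The first, $\phi_{n,1}'(0^+)z=3z$, comes from Proposition \ref{cor:h_function_D_n_curve}. For the second, I use the preceding lemma: for $0<x\ll1$ it gives $-\partial_t^2L_n(t,x)$ explicitly, supported on $[0,b+x]$. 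Hence
\begin{gather*}
\Phi_n(z,x)=3xz+z^2\int_0^{b+x}\Psi(z,t)\bigl(-\partial_t^2L_n(t,x)\bigr)\,dt.
\end{gather*}
There is no Dirac mass at $t=1$, since $b+x<1$.

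Next I differentiate in $x$. The kernel is continuous piecewise linear in $(t,x)$ and $\Psi(z,\cdot)$ is continuous, so I divide by $x$ and let $x\to0$. The pieces contribute as follows.
\begin{enumerate}
\item On $[x,a-x]$ the kernel is $4x$, contributing $4\int_0^a\Psi$.
\item On $[x+a,b-x]$ the kernel is $\frac{(n-1)^2}{n-2}x$, contributing $\frac{(n-1)^2}{n-2}\int_a^b\Psi$.
\item The pieces of width $O(x)$ have kernel of size $O(x)$, so they contribute $O(x^2)$ and vanish after dividing by $x$.
\end{enumerate}
This gives
\begin{gather*}
\sum_d\limeHK(R_d)z^d=3z+z^2\Bigl(4\int_0^a\Psi(z,t)\,dt+\tfrac{(n-1)^2}{n-2}\int_a^b\Psi(z,t)\,dt\Bigr).
\end{gather*}
Before relying on this, I would check the $z^2$ coefficient against the derivative at $0$ of Theorem \ref{th:h_function_D_n_surface}, namely $2-\frac{1}{4(n-2)}$, and the $z^3$ coefficient against Remark \ref{rmk:D_n_3_dimensional}.

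Then I insert the closed form of $\Psi=\Phi_2$ from Lemma \ref{lm:gen_limit_h_function_A_n}. For $a,b\le\frac12$ (true since $b=\frac{n}{2(n-1)}$ and $a<\frac12$), it is
\begin{gather*}
\Psi(z,t)=\frac{t}{1-z}+T(z)\frac{\sin 2zt}{2z}+\frac{\cos 2zt-1}{2z}.
\end{gather*}
The $b\le\frac12$ claim must be checked: $b=\frac{n}{2(n-1)}>\frac12$. So on $[\frac12,b]$ I must use the reflected branch $T(z)\frac{\sin 2z(1-t)}{2z}+\frac{\cos 2z(1-t)-1}{2z}$. I therefore split $\int_a^b$ at $\frac12$, which is the step I expect to be fiddliest.

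Antiderivatives give terms such as $\cos 2za$, $\sin 2za$, $\cos(z-2za)$ and $\cos 2z(1-b)$. Using
\begin{gather*}
1-2a=\frac{1}{n-3},\qquad 2b-1=\frac{1}{n-1},
\end{gather*}
these become $\cos\frac{z}{n-3}$, $\sin\frac{z}{n-3}$, $\cos\frac{z}{n-1}$ and $\sin\frac{z}{n-1}$. This explains where the arguments in the statement come from.

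Finally, I collect terms and compare with the target $\sum_d\bigl(1+\frac{c_d}{d!}\bigr)z^d$. The main obstacle is this simplification. The polynomial and $\frac{1}{1-z}$ parts must cancel against the constant $1$ in $1+\frac{c_d}{d!}$, up to low-order corrections that only affect $d\le1$ (irrelevant since $d\ge2$). The trigonometric parts must reassemble into $T(z)$ plus the two bracketed combinations, with the weights $\frac{(n-1)^2}{4(n-2)}$ and $\frac{(n-3)^2}{4(n-2)}$. The identity
\begin{gather*}
4-\frac{(n-1)^2}{n-2}=-\frac{(n-3)^2}{n-2}
\end{gather*}
should account for the $(n-3)^2$ weight at the boundary point $a$. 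The $F$-signature follows from $e(R_d)=2$ and Huneke–Leuschke, i.e.\ Theorem \ref{th:reflection}, giving $\lims=2-\limeHK$.
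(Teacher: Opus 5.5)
Your proposal is correct and follows the paper's proof. Lemma \ref{lm:D_n_limit_h_function_kernel} together with the explicit kernel $-\partial_t^2L_n$ gives $\sum_d\limeHK(R_d)z^d=3z+z^2\bigl(4\int_0^a\Psi+\frac{(n-1)^2}{n-2}\int_a^b\Psi\bigr)$, which is then evaluated via Lemma \ref{lm:gen_limit_h_function_A_n} with $\int_a^b$ split at $t=\frac12$; you should delete your initial, self-retracted claim that $b\le\frac12$. The simplification you flag as the main obstacle does close up: the identity $T(z)\cos(z-\theta)-\sin(z-\theta)=\cos\theta+T(z)\sin\theta$, applied with $\theta=\frac{z}{n-3},\frac{z}{n-1},0$, produces exactly the stated trigonometric combination, and the coefficient of $\frac{z^2}{1-z}$ is $2a^2+\frac{(n-1)^2}{2(n-2)}(b^2-a^2)=1$.
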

\begin{table}[h]
\centering
\renewcommand{\arraystretch}{2.5}
\begin{tabular}{r@{\hspace{1em}}l@{\hspace{1em}}l|c}
\toprule
$\limeHK(D_n)$ & $\underset{n\rightarrow\infty}\longrightarrow$ & $\limeHK(D_\infty)$ & \\
\midrule
$2-\dfrac{1}{2\cdot 2!(n-2)}$
& $\longrightarrow$
& $2$ & $\dim 2$ \\
\midrule
$\dfrac{19}{12}-\dfrac{1}{2\cdot 3!(n-1)(n-3)}$
& $\longrightarrow$
& $\dfrac{19}{12}$ & $\dim 3$ \\
\midrule
\makecell[r]{$\dfrac{11}{8}-\dfrac{2}{4!(n-1)(n-3)}$\\[4pt]
    $+\dfrac{1}{2\cdot 4!(n-1)(n-2)(n-3)}$\\[4pt]
    $+\dfrac{1}{4!(n-1)^2(n-2)(n-3)^2}$}
& $\longrightarrow$
& $\dfrac{11}{8}$ & $\dim 4$\\
\bottomrule
\end{tabular}
\caption{Limit Hilbert--Kunz multiplicity of $D_n$ singularities in dimensions $d=2,3,4$,
converging to the $D_\infty$ values as $n\to\infty$ by $\m$-adic continuity (see Table \ref{tab:lHK-A}).}
\label{tab:lHK-D}
\end{table}
\begin{proof}
    Due to \ref{th:F_invariants_lim_diff_h}, we only need to compute $\lim_{x\rightarrow 0^+}\partial_x\Phi_n(z,x)$. By Lemma \ref{lm:D_n_limit_h_function_kernel},
\begin{align*}
\Phi_n(z,x) &= \phi_{n,1}(x)z + z^2\int_0^{1^+} \Psi(z,t) (-\frac{\partial^2}{\partial t^2})L_n(t,x)dt=\\
&= \phi_{n,1}(x)z + 4z^2\int_0^{x}\Psi(z,t)tdt + 4z^2x\int_x^{a-x}\Psi(z,t)dt \\
&+ z^2\int_{a-x}^{x+a}\Psi(z,t)(2(a+x-t)+\frac{(n-1)^2}{2(n-2)}(t+x-a))dt \\
&+ z^2\frac{(n-1)^2}{n-2}x\int_{x+a}^{b-x}\Psi(z,t)dt + z^2\frac{(n-1)^2}{2(n-2)}\int_{b-x}^{b+x}\Psi(z,t)(b-t+x)dt
\end{align*}
The continuity of the derivatives along $t$ of $L_n$, $\Psi_n$ and $\phi_{n,1}$ for small values of $x$ implies that taking limit and derivative with respect to $x$ commute at every summand involved in the expression above. Thus, we first observe that:
\begin{align*}
&\lim_{x\rightarrow 0^+}\int_{a-x}^{x+a}\Psi(z,t)(2(a+x-t)+\frac{(n-1)^2}{2(n-2)}(t+x-a))dt = 0,\\
&\lim_{x\rightarrow 0^+}\int_{b-x}^{b+x}\Psi(z,t)\frac{(n-1)^2}{2(n-2)}(b-t+x)dt = 0,\\
&\lim_{x\rightarrow 0^+} \int_0^{x}\Psi(z,t)tdt = 0,\\
&\lim_{x\rightarrow 0^+}\frac{\partial}{\partial x} \left(x\int_x^{a-x}\Psi(z,t)dt\right) = \int_0^a\Psi(z,t)dt,\\
&\lim_{x\rightarrow 0^+}\frac{\partial}{\partial x}\left(x\int_{x+a}^{b-x}\Psi(z,t)dt\right) = \int_a^b \Psi(z,t)dt.
\end{align*}
Hence,
\begin{gather*}
\lim_{x\rightarrow 0^+}\frac{\partial \Phi_n(z,x)}{\partial x} = 3z+4z^2\int_{0}^a\Psi(z,t)dt + \frac{(n-1)^2}{n-2}z^2\int_{a}^b\Psi(z,t)dt.
\end{gather*}
The result follows from direct evaluation of the integrals using \ref{lm:gen_limit_h_function_A_n}, taking into account that since $a< \frac{1}{2}< b$, the integral between $a$ and $b$ splits in two: the expression for $\Psi$ is different in $[a,\frac{1}{2}]$ and $[\frac{1}{2},b]$.
\end{proof}
\subsection{The \texorpdfstring{$\limeHK$}{limit eHK} of the \texorpdfstring{$E_7$}{E7} singularities.}

The computation the limit Hilbert–Kunz multiplicity and limit $F$-signature of the $E_7$ singularities is analogous to the computation we did for the $D_n$ singularities in the previous section. We set some notation again:
\begin{notation}
    \label{nt:notation_phi_psi_E_7}
For $d\geq 1$, set $\phi_{d}(x) := h_{x_0^3+x_0x_1^3+x_2^2+\dots+x_d^2}(x)$, and recall that $\psi_d(x) := h_{x_0^2+\dots+x_d^2}(x)$, for $d\geq 0$. Finally, we denote the corresponding generating functions by $\Psi(x,z) := \sum \psi_d(x)z^d$ and $\Phi(x,z) := \sum \phi_{d}(x)z^d$.
\end{notation}
\begin{lemma}[Integral formula for the limit $h$-function of the $E_7$ singularities]
    \label{lm:E_7_limit_h_function_kernel}
    Let $a = \frac{1}{9}, b=\frac{5}{9}$, and
    \begin{gather*}
    M(t,x) := \frac{27}{4}\int_{a}^b D_\infty(s,t,x)ds,
    \end{gather*}
Then, under notation \ref{nt:notation_phi_psi_E_7}, for $i\geq 0$,
\begin{gather*}
\phi_{i+2}(x) = \int_{0}^{1^+} M(t,x)d_{t}(-\psi_{i}'(t)).
\end{gather*}
See Figure \ref{fig:domain_M}. In particular,
\begin{gather*}
\Phi(z,x) = \phi_1(x)z - \int_0^{1^+} z^2\Psi(z,t)\frac{\partial^2}{\partial t^2}M(t,x)dt.
\end{gather*}
\end{lemma}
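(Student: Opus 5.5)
The argument runs parallel to Lemma \ref{lm:D_n_limit_h_function_kernel}. The key observation is that $x_0^3+x_0x_1^3+x_2^2+\dots+x_{i+2}^2$ splits as the sum of $g := x_0^3+x_0x_1^3$ (in variables $x_0,x_1$) and $x_2^2+\dots+x_{i+2}^2$. The latter is, after renaming variables, the hypersurface $x_0^2+\dots+x_i^2$, whose limit $h$-function is $\psi_i$.

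First, apply the integral formula \ref{th:integral_formula} to this decomposition:
\begin{gather*}
\phi_{i+2}(x) = \int_{[0,\infty)^2} D_\infty(s,t,x)\, d_s\left(-h'_{g,\infty}(s)\right) d_t\left(-\psi_i'(t)\right).
\end{gather*}
The curve $g$ is the $E_7$ curve of Lemma \ref{lm:h_function_E_7_curve}, up to renaming variables. Its derivative was computed in the proof of Theorem \ref{th:h_function_E_7_surface}, which gives the measure
\begin{gather*}
d_s\left(-h'_{g,\infty}\right) = -3\delta_0 + \tfrac{27}{4}\chi_{(1/9,5/9)}\,ds .
\end{gather*}
The atom at $s=0$ contributes nothing, because $D_\infty(0,t,x)=0$ by Definition \ref{def:D_infty}. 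So the inner $s$-integral reduces to $\frac{27}{4}\int_{1/9}^{5/9} D_\infty(s,t,x)\,ds = M(t,x)$. There is no atom at $s=1$ here, since $h'_g$ already vanishes on $[5/9,1]$. By Fubini for these Riemann–Stieltjes product measures (as in \cite{mengLimits2026}), this yields $\phi_{i+2}(x)=\int_0^{1^+} M(t,x)\,d_t(-\psi_i'(t))$.

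For the generating function, multiply by $z^{i+2}$ and sum over $i\ge 0$. Uniform boundedness of the $\psi_i$ (Notation \ref{not:generating_function_An_singularities}) lets us interchange the sum and the integral for $0<z<1$. This gives
\begin{gather*}
\Phi(z,x) = \phi_1(x)z + z^2\int_0^{1^+} M(t,x)\,d_t\left(-\partial_t\Psi(z,t)\right).
\end{gather*}
Then integrate by parts twice using \cite[Corollary 2.26]{mengLimits2026}, exactly as in Lemmas \ref{lm:torics_limit_h_function_small_x} and \ref{lm:D_n_limit_h_function_kernel}.

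The main obstacle is checking that all boundary terms vanish, which requires four facts.
\begin{enumerate}
\item $M(0,x)=0$, since $D_\infty(s,0,x)=0$.
\item $\Psi(z,0)=0$, since each $\psi_i$ vanishes at $0$.
\item At $t=1^+$, $\partial_t\Psi(z,t)$ vanishes, because each $\psi_i$ is constant for $t>1$.
\item $\partial_t M(0^+,x)$ is harmless, because it is multiplied by $\Psi(z,0)=0$; also $\partial_t M(1^+,x)=0$.
\end{enumerate}
The delicate point is the possible Dirac mass of $\partial_t^2 M$ at $t=1$, coming from the jump of $\partial_{t}D_\infty$ on the face $t=1$ of $B_4$ (Remark \ref{rmk:differentiability_D_infty}). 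This mass must be read as part of the distribution $\partial_t^2 M$ inside $\int_0^{1^+}$. For the small values of $x$ used later it is zero anyway: $B_4$ requires $s+t+x\ge 2$, which fails because $s\le 5/9$.
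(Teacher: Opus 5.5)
Your proposal is correct and follows the paper's proof. You apply the integral formula with the measure $-3\delta_0+\frac{27}{4}\chi_{(1/9,5/9)}$ coming from Lemma \ref{lm:h_function_E_7_curve} to produce $M$, sum over $i$ to get the generating function, and integrate by parts as in Lemma \ref{lm:D_n_limit_h_function_kernel}. Your explicit checks of the vanishing boundary terms, and of the atom of $\partial_t^2 M$ at $t=1$ (absent for $x<4/9$), fill in details the paper leaves implicit.
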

\begin{proof}
Again, using \ref{th:integral_formula}, we obtain
\begin{gather*}
\phi_{i+2}(x) = h_{x_0^3+x_0x_1^{3}+\sum_{j=2}^{i+2}x_j^2}(x) = \int_{[0,\infty)^2} D_{\infty}(t_1,t_2,x) d_{t_1}(-h'_{x_0^3+x_0x_1^{3}}(s))d_{t}(-h'_{\sum_{j=2}^{i+2}x_j^2}(t)).
\end{gather*}
According to Lemma \ref{lm:h_function_E_7_curve},
\begin{gather*}
-h''_{x_0^3+x_0x_1^{3}} = -3\delta_0 + \frac{27}{4}\chi_{(a,b)}
\end{gather*}
so the result follows from evaluation of the integral.

The above implies that
\begin{gather*}
\Phi(z,x) = \phi_1(x)z + \int_0^{1^+} z^2M(t,x)d_t\left(-\frac{\partial}{\partial t}\Psi(z,t)\right),
\end{gather*}
so the second statement follows from integration by parts, like in \ref{lm:D_n_limit_h_function_kernel}.
\end{proof}

\begin{lemma}
    \label{lm:formula_second_derivative_M}
    For $0<x\ll 1$, we have that
\begin{gather*}
-\frac{\partial^2}{\partial t^2}M(t,x) = \begin{cases}
0 & 0\leq t \leq a-x\\
\frac{27}{8}(x+t) - \frac{3}{8} & a-x \leq t \leq a+x\\
\frac{27}{4}x & a+x\leq t \leq b-x\\
\frac{27}{8}(x-t) + \frac{15}{8} & b-x\leq t\leq b+x\\
0 & b+x\leq t
\end{cases}
\end{gather*}
where $a = \frac{1}{9}$ and $b=\frac{5}{9}$.
\end{lemma}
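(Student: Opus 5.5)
We will follow the same strategy as in the analogous lemma for $L_n$ in the $D_n$ case. Since $D_\infty(s,t,x)$ is piecewise polynomial on the slab $a\le s\le b$ with $0<x\ll 1$, and its first $t$-derivative fails to be continuous only on finitely many planes, we differentiate twice under the integral sign:
\begin{gather*}
-\frac{\partial^2}{\partial t^2}M(t,x) = -\frac{27}{4}\int_a^b \frac{\partial^2}{\partial t^2}D_\infty(s,t,x)\,ds .
\end{gather*}
By Lemma \ref{lm:D_infty_formula}, $\partial_t^2 D_\infty$ equals $-\frac12$ on the tetrahedron $T_0$ and $0$ on $B_1,B_2,B_3$. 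The region $B_4$ does not occur, since $s+t+x<2$ when $s\le b<1$, $t\le1$ and $x$ is small. The possible Dirac contribution on the plane $t=1$ needs $x\ge \frac{n-1}{n}$ in the $K_n$ picture, that is, $x$ close to $1-s$; this is impossible for $x\ll1$ and $s\le 5/9$. So for each fixed $(t,x)$ the integral is
\begin{gather*}
-\frac{27}{4}\cdot\Bigl(-\frac12\Bigr)\cdot\bigl|\{s\in[a,b] : (s,t,x)\in T_0\}\bigr| = \frac{27}{8}\,\bigl|\{s\in[a,b]: |t-x|\le s\le t+x\}\bigr|,
\end{gather*}
where we used that, for small $x$ and $s,t\le1$, the condition $(s,t,x)\in T_0$ reduces to the triangle inequalities $|s-t|\le x$, i.e.\ $s\in[t-x,t+x]$. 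The remaining inequality $x\le s+t$ is automatic away from a negligible region near $t=0$, which is irrelevant because $a-x>0$.

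It then remains to measure the intersection $[t-x,t+x]\cap[a,b]$ case by case:
\begin{itemize}
\item For $t\le a-x$ or $t\ge b+x$ it is empty, which gives $0$.
\item For $a-x\le t\le a+x$ it is $[a,t+x]$, of length $t+x-a$, which gives $\frac{27}{8}(x+t)-\frac{27}{8}\cdot\frac19=\frac{27}{8}(x+t)-\frac38$.
\item For $a+x\le t\le b-x$ it has full length $2x$, which gives $\frac{27}{4}x$.
\item For $b-x\le t\le b+x$ it is $[t-x,b]$, of length $b-t+x$, which gives $\frac{27}{8}(x-t)+\frac{27}{8}\cdot\frac59=\frac{27}{8}(x-t)+\frac{15}{8}$.
\end{itemize}
These match the claimed formula, and the intervals are nondegenerate since $2x<b-a=\frac49$.

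The main obstacle is justifying the differentiation under the integral sign, together with the identification of $T_0$ with the band $|s-t|\le x$ inside the slab. For the first point, we would argue as in Remark \ref{rmk:derivative_K_function}, applied to each section $s=\text{const}$: there $\partial_t D_\infty$ is continuous in $t$, with no jumps at $t=1$ for small $x$. Hence $\partial_t M=\frac{27}{4}\int_a^b\partial_tD_\infty\,ds$, and this is absolutely continuous in $t$ with a.e.\ derivative as above. The second point is a direct check of the boundary planes $t_2+t_3=t_1$, $t_1+t_3=t_2$ and $t_1+t_2=t_3$ of $T_0$ in the coordinates $(s,t,x)$.
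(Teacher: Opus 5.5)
Your proposal is correct and follows the paper's route: you differentiate twice under the integral sign, then evaluate $\frac{27}{4}\int_a^b\partial_t^2 D_\infty\,ds$ region by region. Your reduction to $\frac{27}{8}\,\bigl|[t-x,t+x]\cap[a,b]\bigr|$, using that $T_0$ becomes the band $|s-t|<x$ and that there is no jump at $t=1$, spells out the ``direct evaluation'' that the paper leaves to the reader.
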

\begin{figure}
\centering
\begin{tikzpicture}[scale=3, thick]
    % Shading
    \fill [gray!40] (1,0) -- (2/3,0) -- (1,1/3) -- cycle;
    \fill [gray!20] (1,1/3) -- (2/3,0) -- (11/24,5/24) -- (19/24,13/24) -- cycle;
    \fill [gray!40] (1,1/3) -- (19/24,13/24) -- (1,3/4) -- cycle;
    \fill [gray!20] (0,2/3) -- (1/3,1) -- (13/24,19/24) -- (5/24,11/24) -- cycle;
    \fill [gray!40] (2/3,0) -- (11/24,5/24) -- (1/4,0) -- cycle;
    \fill [gray!40] (11/24,5/24) -- (19/24,13/24) -- (13/24,19/24) -- (5/24,11/24) -- cycle;
    \fill [gray!40] (1/3,1) -- (13/24,19/24) -- (3/4,1) -- cycle;
    \fill [gray!20] (3/4,1) -- (13/24,19/24) -- (19/24,13/24) -- (1,3/4) -- cycle;
    \fill [gray!40] (0,2/3) -- (5/24,11/24) -- (0,1/4) -- cycle;
    \fill [gray!20] (1/4,0) -- (11/24,5/24) -- (5/24,11/24) -- (0,1/4) -- cycle;
    \fill [gray!40] (1,1) -- (1,3/4) -- (3/4,1) -- cycle;
    \fill [gray!40] (0,1) -- (1/3,1) -- (0,2/3) -- cycle;
    \fill [gray!40] (0,0) -- (0,1/4) -- (1/4,0) -- cycle;

    % Region for small x
    \fill [blue!30] (0,0) -- (1,0) -- (1,0.1) -- (0,0.1) -- cycle;
    \draw (0.02, 0.1) -- (-0.02, 0.1) node[left]{\small $x\ll 1$};

    % Axes
    \draw[->] (-0.25,0) -- (1.25,0) node[right]{$t$};
    \draw[->] (0,-0.25) -- (0,1.25) node[above]{$x$};
    % Outer square
    \draw (0,0) rectangle (1,1);
    % Inner diamond
    \draw (2/3,0) -- (0,2/3) -- (1/3,1) -- (1,1/3) -- cycle;
    \draw (1/4,0) -- (0,1/4) -- (3/4,1) -- (1,3/4) -- cycle;
    % Dots at intersections
    \filldraw (1,0)   circle (0.5pt);
    \filldraw (0,0)   circle (0.5pt);
    \filldraw (0,1)   circle (0.5pt);
    \filldraw (1,1)   circle (0.5pt);
    \filldraw (2/3,0) circle (0.5pt);
    \filldraw (0,2/3) circle (0.5pt);
    \filldraw (1/3,1) circle (0.5pt);
    \filldraw (1,1/3) circle (0.5pt);
    \filldraw (1/4,0) circle (0.5pt);
    \filldraw (0,1/4) circle (0.5pt);
    \filldraw (3/4,1) circle (0.5pt);
    \filldraw (1,3/4) circle (0.5pt);

    % Tick marks and labels on t-axis
    \draw (1/3, 0.02) -- (1/3, -0.02) node[below]{\small $a$};
    \draw (1,   0.02) -- (1,   -0.02) node[below]{\small $1$};
    % Tick marks and labels on x-axis
    \draw (0.02, 1/3) -- (-0.02, 1/3) node[left]{\small $a$};
    \draw (0.02, 1)   -- (-0.02, 1)   node[left]{\small $1$};
\end{tikzpicture}
\caption{Regions of $M$, from Lemma \ref{lm:E_7_limit_h_function_kernel}. Lemma \ref{lm:formula_second_derivative_M} gives the formula for $\partial^2_tM(t,x)$ throughout the five regions in {\color{blue}blue}.}
\label{fig:domain_M}
\end{figure}
\begin{proof}
A similar analysis to the one we did in the case of $L_n$ in \ref{lm:D_n_limit_h_function_kernel} yields that the double derivative and the integral commute, so
\begin{gather*}
\frac{\partial^2}{\partial t^2}M(t,x) = \frac{27}{4}\int_a^b \frac{\partial^2}{\partial t^2} D_\infty (s,t,x)ds.
\end{gather*}
The result follows from direct evaluation of the integral, see Figure \ref{fig:domain_M} for the different regions in the domain of $M$.
\end{proof}

\begin{theorem}[Limit $F$-invariants of the $E_7$ singularity]
Let $d\geq 2$, 
\begin{gather*}
R_d = \frac{\mathbb{C}[[x_0,\dots,x_d]]}{(x_0^3+x_0x_1^3+x_2^2+\dots+x_d^2)}.
\end{gather*}
Then,
\begin{equation*}
\limeHK(R_{d}) = 1+\dfrac{c_d}{d!}, \quad \text{and} \quad \lims(R_{d}) = 1-\dfrac{c_d}{d!}
\end{equation*}
where $c_d/d!$ is the $d$-th Taylor coefficient of the function
\begin{equation*}
    \frac{27}{16}\left(\cos\dfrac{2z}{9} + \sin\dfrac{z}{9}\right)T(z) - \frac{27}{16}\left(\sin\dfrac{2z}{9} - \cos\dfrac{z}{9}\right) %+ \frac{z}{2} - \frac{27}{8}
\end{equation*}
where $T(z) = \sec z+\tan z$.
\end{theorem}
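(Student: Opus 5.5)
The argument follows the $D_n$ case (Theorem \ref{th: D_n_singularities_eHK_formula}) with $L_n$ replaced by the kernel $M$. By Theorem \ref{th:F_invariants_lim_diff_h}, $\sum_d \limeHK(R_d)z^d$ equals $\lim_{x\to 0^+}\partial_x\Phi(z,x)$. Since $e(R_d)=2$ for $d\geq 1$, we get $\lims(R_d)=2-\limeHK(R_d)$. So it is enough to find the generating function of the Hilbert–Kunz multiplicities and compare Taylor coefficients for $d\geq 2$.

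First I substitute Lemma \ref{lm:formula_second_derivative_M} into the expression for $\Phi$ from Lemma \ref{lm:E_7_limit_h_function_kernel}. For $0<x\ll 1$ this gives
\begin{multline*}
\Phi(z,x)=\phi_1(x)z+z^2\int_{a-x}^{a+x}\Psi(z,t)\left(\tfrac{27}{8}(x+t)-\tfrac{3}{8}\right)dt\\
+\tfrac{27}{4}z^2x\int_{a+x}^{b-x}\Psi(z,t)dt+z^2\int_{b-x}^{b+x}\Psi(z,t)\left(\tfrac{27}{8}(x-t)+\tfrac{15}{8}\right)dt,
\end{multline*}
with $a=\frac19$ and $b=\frac59$. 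At $t=a$ the weight $\frac{27}{8}(x+t)-\frac38$ equals $\frac{27}{8}x$. At $t=b$ the weight $\frac{27}{8}(x-t)+\frac{15}{8}$ also equals $\frac{27}{8}x$.

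Next I differentiate in $x$ and let $x\to0^+$. The function $\Psi(z,\cdot)$ is continuous near $a$ and $b$ by Lemma \ref{lm:gen_limit_h_function_A_n}. Hence each boundary strip has width $2x$ and an integrand of size $O(x)$, so it contributes $O(x^2)$ and has vanishing derivative at $0$. The middle term has derivative tending to $\frac{27}{4}\int_a^b\Psi(z,t)\,dt$. From Lemma \ref{lm:h_function_E_7_curve}, $\phi_1'(0^+)=3$. Altogether,
\[
\sum_d\limeHK(R_d)z^d=3z+\tfrac{27}{4}z^2\int_{1/9}^{5/9}\Psi(z,t)\,dt.
\]

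The main computational step is evaluating this integral using the $n=2$ formula of Lemma \ref{lm:gen_limit_h_function_A_n}. Because $\frac19<\frac12<\frac59$, I split the integral at $\frac12$ and use the two branches. The linear term $\frac{t}{1-z}$ integrates to $\frac{4}{9}\cdot\frac{1}{3}\cdot\frac{1}{1-z}$-type rational contributions. The trigonometric terms come from integrating $\sin(2zt)$ and $\cos(2zt)$ over $[\frac19,\frac12]$ and $\sin(2z(1-t))$ and $\cos(2z(1-t))$ over $[\frac12,\frac59]$. Each produces a factor $\frac{1}{2z}$, and their endpoint values at $\frac12$ combine. Multiplied by $\frac{27}{4}z^2$, the $T(z)$-part gives
\[
\tfrac{27}{16}T(z)\left(\cos\tfrac{2z}{9}+\sin\tfrac{z}{9}\right)-\tfrac{27}{16}T(z)\cdot 2\cos z.
\]
Here the last piece simplifies via $T(z)\cos z=1+\sin z$. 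The non-$T$ part gives $-\frac{27}{16}\left(\sin\frac{2z}{9}-\cos\frac{z}{9}\right)$ plus elementary terms.

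Finally I collect everything. All leftover contributions should be a polynomial of degree at most $1$ in $z$, together with the rational part that supplies the constant $1$ in each coefficient. The $\frac{z}{1-z}$-type pieces provide exactly this $1$ for each $d\geq 2$. The low-order pieces, namely $3z$ and the constant and linear remainders, only affect $d\leq 1$ and can be discarded.

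The main risk is bookkeeping. One source is sign conventions at the half-way point $t=\frac12$, where the two branches of $\Psi_2$ must agree. The other is making sure the $\sin z$ produced by $T(z)\cos z$ cancels against the elementary terms and does not leak into the coefficients for $d\geq 2$. I would check the final series against the three-dimensional value $\frac{131}{81}$ given in the remark after Theorem \ref{th:h_function_E_7_surface}, and against the surface value $\frac{95}{48}$ from Theorem \ref{th:h_function_E_7_surface}.
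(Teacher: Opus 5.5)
Your strategy is the paper's own. You combine Lemma \ref{lm:E_7_limit_h_function_kernel} and Lemma \ref{lm:formula_second_derivative_M} to get $\sum_{d\ge 1}\limeHK(R_d)z^d=3z+\frac{27}{4}z^2\int_{1/9}^{5/9}\Psi(z,t)\,dt$, then evaluate the integral with the $n=2$ case of Lemma \ref{lm:gen_limit_h_function_A_n}. Your treatment of the boundary strips (weights of size $O(x)$) and of $\phi_1'(0^+)=3$ is fine.

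The problem is the evaluation step. The intermediate expressions you state for the $T(z)$-part and the non-$T$ part are not what the integrals give. On $[\frac12,\frac59]$, substituting $u=1-t$ gives
$\int_{1/2}^{5/9}\sin(2z(1-t))\,dt=\frac{\cos(8z/9)-\cos z}{2z}$ and $\int_{1/2}^{5/9}\cos(2z(1-t))\,dt=\frac{\sin z-\sin(8z/9)}{2z}$.
So neither $\sin\frac z9$ nor $\cos\frac z9$ appears directly. The correct result is
\[
\tfrac{27}{4}z^2\int_{1/9}^{5/9}\Psi\,dt=\tfrac{z^2}{1-z}-\tfrac32 z+\tfrac{27}{16}T(z)\Bigl(\cos\tfrac{2z}{9}+\cos\tfrac{8z}{9}-2\cos z\Bigr)+\tfrac{27}{16}\Bigl(2\sin z-\sin\tfrac{2z}{9}-\sin\tfrac{8z}{9}\Bigr).
\]
After using $T(z)\cos z=1+\sin z$, the generating function becomes
$\frac32 z-\frac{27}{8}+\frac{z^2}{1-z}+\frac{27}{16}\bigl[T(z)(\cos\frac{2z}{9}+\cos\frac{8z}{9})-\sin\frac{2z}{9}-\sin\frac{8z}{9}\bigr]$.
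Your claimed pieces $\frac{27}{16}T(z)(\cos\frac{2z}{9}+\sin\frac z9)$ and $-\frac{27}{16}(\sin\frac{2z}{9}-\cos\frac z9)$ are each false as stated. Only their sum is right, and only after the identity
$T(z)\cos\frac{8z}{9}-\sin\frac{8z}{9}=T(z)\sin\frac z9+\cos\frac z9$.
This identity holds because multiplying either side by $\cos z$ gives $\cos\frac{8z}{9}+\sin\frac z9$, by angle addition and $T(z)\cos z=1+\sin z$.

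With that identity inserted, your argument is complete. It does reproduce the checks you proposed, $95/48$ for $d=2$ and $131/81$ for $d=3$. One minor slip: the $E_7$ curve $R_1$ has $e(R_1)=3$, not $2$. So $\lims(R_d)=2-\limeHK(R_d)$ holds only for $d\ge 2$, which is all the statement needs.
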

\begin{proof}
    According to Lemma \ref{lm:E_7_limit_h_function_kernel},
\begin{align*}
\Phi(z,x) &= \phi_1(x)z + z^2\int_0^{1^+} \Psi(z,t) (-\frac{\partial^2}{\partial t^2})M(t,x)dt=\\
&= \phi_1(x)z + z^2\int_{a-x}^{a+x}\Psi(z,t)\left(\frac{27}{8}(x+t)-\frac{3}{8}\right)dt\\
&+ \frac{27z^2x}{4}\int_{x+a}^{b-x}\Psi(z,t)dt + z^2\int_{b-x}^{b+x}\Psi(z,t)\left(\frac{27}{8}(x-t)+\frac{15}{8}\right)dt,
\end{align*}
where $a = \frac{1}{9}$ and $b=\frac{5}{9}$. A similar analysis of the different parts like we did in \ref{th: D_n_singularities_eHK_formula} yields that
\begin{gather*}
\lim_{x\rightarrow 0^+}\partial_x \Phi(z,x) = 3z+\frac{27z^2}{4}\int_a^b\Psi(z,t)dt.
\end{gather*}
The result is obtained from direct evaluation of the integral using the formulas in Lemma \ref{lm:gen_limit_h_function_A_n}.% \todo{maybe write explicitly, because of the separation of the $[a,b]$ interval in three intervals.}
\end{proof}
\begin{remark}
These computations can be repeated for suspensions of other hypersurfaces for which the $h$-function is known, despite us focusing on suspensions of binomials and the simple singularites in this paper.
\end{remark}

\section{Limit Hilbert–Kunz multiplicity of the Fermat hypersurfaces.}
\label{sec:fermat}

In this section, we compute the (generating function of the) limit Hilbert–Kunz multiplicity of the Fermat hypersurfaces, the hypersurfaces of the form $\sum x_i^d$, by mere extension of Gessel and Monsky's technique from \cite{gesselLimit2010}. This section is independent from the previous ones.

For the rest of the section, we make a change in indexing for convenience: let $s,d_i\geq 2$, $i=1,\dots,s$ and write
    \begin{gather*}
        R_{0,s-1}^{(d_1,\dots,d_s)} := \frac{\mathbb{C}[[x_1,\dots,x_s]]}{(x_1^{d_1}+\dots+x_s^{d_s})} \qquad \text{and} \qquad R_{p,s-1}^{(d_1,\dots,d_s)} := \frac{\overline{\mathbb{F}_p}[[x_1,\dots,x_s]]}{(x_1^{d_1}+\dots+x_s^{d_s})}
    \end{gather*}
    the coordinate ring of the $(s-1)$-dimensional diagonal hypersurface $x_1^{d_1}+\dots+x_s^{d_s}$ over $\mathbb{C}$ and its reductions mod $p$ (see \ref{def:reduction_mod_p} for the reduction mod $p$ setup). We start by recalling some notation.
\begin{definition}[Definition 2.1, \cite{gesselLimit2010}]
    For $\lambda\in \mathbb{Z}$, denote
    \begin{gather*}
        C_\lambda := \sum (\epsilon_1\cdots \epsilon_s)\left(\frac{\epsilon_1}{d_1}+\cdots+\frac{\epsilon_s}{d_s}-2\lambda\right)^{s-1},
    \end{gather*}
    where the sum extends over the $s$-tuples $\epsilon_1,\dots,\epsilon_s$ where $\epsilon_i = \pm 1$ and $\sum \frac{\epsilon_i}{d_i}>2\lambda$.
\end{definition}
\begin{theorem}[Theorem 2.4, \cite{gesselLimit2010}]
\label{th: general_limit_formula}
    For given $s\geq 2$ and $d_1,\dots,d_s$, the limit Hilbert–Kunz multiplicity of $R_{0,s-1}^{(d_1,\dots,d_s)}$ exists, and
    \begin{gather*}
        \limeHK\left(R_{s-1}^{(d_1,\dots,d_s)}\right) = \frac{1}{(s-1)!}\frac{\prod d_i}{2^{s-1}}\sum_{\lambda\in \mathbb{Z}} C_\lambda.
    \end{gather*}
\end{theorem}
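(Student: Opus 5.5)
The plan follows Gessel and Monsky's route through Han–Monsky theory for diagonal hypersurfaces: write $\eHK(R_{p,s-1}^{(d_1,\dots,d_s)})$ in terms of the limiting behaviour of the Han–Monsky function, pass to $p\to\infty$, and identify the limit with a piecewise-polynomial integral that evaluates to the stated sum of $C_\lambda$.

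\textbf{Step 1 (one-variable factors).} Each factor $k[x_i]/(x_i^{d_i q})$ with $q=p^e$ is, as a $k[T]$-module via $T=x_i^{d_i}$, a direct sum of cyclic modules $k[T]/(T^{\ell})$. As $p\to\infty$ the lengths $\ell/q$ become uniformly distributed. The natural limiting object is the function $\delta_i$ given by the characteristic function of $[-1/d_i,1/d_i]$, suitably normalized, or equivalently its Fourier transform $\frac{\sin(\theta/d_i)}{\theta/d_i}$ up to scaling.

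\textbf{Step 2 (Han–Monsky multiplication rule).} Han–Monsky show that tensor products of $k[T]/(T^a)\otimes k[T]/(T^b)$ decompose according to a $p$-adic rule. In the limit $p\to\infty$ this rule degenerates to the classical characteristic-zero (Clebsch–Gordan type) rule. Under that rule the relevant statistic, the length of the cokernel of $T$, is the same computation as for $\mathfrak{sl}_2$-representations. So $q^{-(s-1)}\ell(R/\m^{[q]})$ tends to an integral
\begin{gather*}
\int \Big(\prod_i \tfrac{d_i}{2}\chi_{[-1/d_i,1/d_i]}\Big)^{*}\text{-convolution evaluated against the periodic ``distance to }2\mathbb{Z}\text{'' kernel}.
\end{gather*}
Concretely, one gets $\prod d_i/2^{s}$ times the integral over the box $\prod[-1/d_i,1/d_i]$ of a function of $u=\sum u_i$ that is periodic of period $2$ and piecewise linear: a triangle wave.

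\textbf{Step 3 (evaluate).} Express the box integral of the triangle wave via iterated difference operators. The $(s-1)$-fold antiderivative of the convolution of indicator functions is the truncated power sum $\sum_\epsilon(\prod\epsilon_i)(\sum\epsilon_i/d_i-2\lambda)_+^{s-1}/(s-1)!$. The Dirac masses of the second derivative of the triangle wave sit at even integers $2\lambda$. Summing over $\lambda$ then gives exactly $\frac{1}{(s-1)!}\frac{\prod d_i}{2^{s-1}}\sum_\lambda C_\lambda$. Only finitely many $\lambda$ contribute, since $|\sum\epsilon_i/d_i|\le s/2$.

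\textbf{Main obstacle.} The hard step is Step 2: justifying that the limit over $p$ of the Han–Monsky $p$-adic product rule converges to the smooth rule uniformly enough to pass the limit through. This is needed uniformly in $e$, and one must take $\lim_e$ before $\lim_p$. I would first try to bound the defect of the $p$-adic rule by $O(1/p)$ per factor, using the explicit Han–Monsky formula in which discrepancies occur only when base-$p$ digits carry. That bound gives $e$-independent error terms of order $s/p$.
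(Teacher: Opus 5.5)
Your Step 2 has a genuine gap, and it is not merely technical: the characteristic-$p$ product rule does \emph{not} degenerate to Clebsch--Gordan as $p\to\infty$. The cyclic summands have lengths that are a fixed fraction ($\approx 1/d_i$) of $q=p^e$, and $e\to\infty$ is taken first, so they are never small relative to $q$.

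Already $\delta_q\delta_a=a\,\delta_q$ for $a\le q$ in characteristic $p$, because $k[T_1,T_2]/(T_1^q,T_2^a)$ is free over $k[T]/(T^q)$ with $T=T_1+T_2$. Clebsch--Gordan would instead give $\delta_{q+a-1}+\delta_{q+a-3}+\cdots+\delta_{q-a+1}$.

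A Clebsch--Gordan (that is, $\mathfrak{sl}_2$-weight) count of the cyclic summands of $\bigotimes_i k[x_i]/(x_i^q)$ yields only the $\lambda=0$ part of the answer, namely $\frac{\prod d_i}{2^{s-1}}B(0)$ with $B:=\chi_{[-1/d_1,1/d_1]}*\cdots*\chi_{[-1/d_s,1/d_s]}$. For $x_1^2+\cdots+x_5^2$ this is $\frac{115}{96}$, while the true limit is $1+\frac{c_4}{4!}=\frac{29}{24}=\frac{116}{96}$. The missing $\frac{1}{96}$ comes from weight tuples summing to $\pm 2q$: in characteristic $p$ the weights wrap around modulo $2q$, and this survives $p\to\infty$.

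So the uniform $O(1/p)$ defect bound in your last paragraph is false, since the defect is of order one. What is actually needed is the exact Han--Monsky description of $\delta_{k_1}\cdots\delta_{k_s}$ for $k_i\le q$ in characteristic $p$. That is the input behind \cite{gesselLimit2010}; the paper does not reprove this statement but quotes it from there.

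The limiting kernel you arrive at is also wrong, so Step 3 cannot produce the $C_\lambda$. Any expression $\frac{\prod d_i}{2^s}\int_{\prod_i[-1/d_i,1/d_i]}K(\sum_i u_i)\,du$ with a fixed bounded $K$ is at most $\sup|K|$, because the box has volume $2^s/\prod d_i$. But for $s=2$ the ring $k[[x,y]]/(x^d+y^d)$ is one-dimensional, so $\eHK=e=d$ is unbounded. Moreover, a $2$-periodic $K$ satisfies $\int_{[0,2)}dK'=0$, so $K''$ cannot consist of positive masses at the even integers only.

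The correct reformulation is that the kernel is the Dirac comb $2\sum_{\lambda}\delta_{2\lambda}$. By the box-spline identity $B(u)=\frac{1}{(s-1)!}\sum_{\epsilon}(\prod_i\epsilon_i)\,(u+\sum_i\epsilon_i/d_i)_+^{s-1}$, one has $C_\lambda=(s-1)!\,B(-2\lambda)$. Hence the theorem says precisely $\limeHK=\frac{\prod d_i}{2^{s-1}}\sum_{\lambda\in\mathbb{Z}}B(2\lambda)$: one counts weight tuples whose sum is $\equiv 0 \bmod 2q$, and no integration against a triangle wave is involved.

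Finally, in Step 1 the relevant modules are $k[x_i]/(x_i^q)$, not $k[x_i]/(x_i^{d_iq})$. Their cyclic summands all have length $\lfloor q/d_i\rfloor$ or $\lceil q/d_i\rceil$. What is uniformly distributed on $[-1/d_i,1/d_i]$ are the rescaled $\mathfrak{sl}_2$-weights, not the lengths.
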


In order to compute the limit Hilbert–Kunz multiplicity of the Fermat quadrics, Gessel and Monsky introduce the following auxiliary function over the integers, which we will also use for the rest of the Fermat hypersurfaces:
\begin{definition}[Definition 2.6 in \cite{gesselLimit2010}]
    For integer $a,s$, with $s\geq 1$,
    \begin{gather*}
        f_{s}(a) := \sum (-1)^k{\binom{s}{k}}(a-2k)^{s-1},
    \end{gather*}
    where the sum runs over the integers $s\geq k\geq 0$ such that $a>2k$. Note that $f_{s}$ is a finite sum, and that $f_{s}(a)=0$ for every $a\in \mathbb{Z}_{\leq 0}$.
\end{definition}
\begin{theorem}[cf. Theorem 2.7, \cite{gesselLimit2010}]
\label{th: limit_Fermat_with_fs}
    Let $d\geq 2$. Then,
    \begin{gather*}
        \limeHK\left(R_{0,s-1}^{(d,\dots,d)}\right) = \frac{1}{(s-1)!}\frac{d}{2^{s-1}}\left(f_s(s)+2f_s(s-2d)+2f_s(s-4d)+2f_s(s-6d)+\dots\right).
    \end{gather*}
\end{theorem}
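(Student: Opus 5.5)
Specialize Theorem \ref{th: general_limit_formula} to $d_1=\dots=d_s=d$. Then $\prod d_i = d^s$. For a sign vector $(\epsilon_1,\dots,\epsilon_s)$ with exactly $k$ entries equal to $-1$, we have $\epsilon_1\cdots\epsilon_s = (-1)^k$ and $\sum \epsilon_i/d = (s-2k)/d$. There are $\binom{s}{k}$ such vectors. Hence
\begin{gather*}
C_\lambda = \sum_{k:\, s-2k>2\lambda d} (-1)^k\binom{s}{k}\left(\frac{s-2k}{d}-2\lambda\right)^{s-1} = d^{-(s-1)}\sum_{k:\, s-2k-2\lambda d>0}(-1)^k\binom{s}{k}(s-2\lambda d-2k)^{s-1}.
\end{gather*}
Taking $a = s-2\lambda d$ in the definition of $f_s$, this reads $C_\lambda = d^{-(s-1)} f_s(s-2\lambda d)$. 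Substituting into Theorem \ref{th: general_limit_formula}, the prefactor $d^s\cdot d^{-(s-1)} = d$ gives
\begin{gather*}
\limeHK\left(R_{0,s-1}^{(d,\dots,d)}\right) = \frac{1}{(s-1)!}\frac{d}{2^{s-1}}\sum_{\lambda\in\mathbb{Z}} f_s(s-2\lambda d).
\end{gather*}

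Next I rewrite the sum over $\lambda$. For $\lambda>0$ we still have $\lambda\ge 1$, and $f_s(s-2\lambda d)$ is zero once $s-2\lambda d\le 0$. So the positive $\lambda$ contribute $f_s(s-2d)+f_s(s-4d)+\cdots$, a finite sum.

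For $\lambda<0$, write $\lambda=-\mu$ with $\mu\ge1$; the term is $f_s(s+2\mu d)$. The key step is the reflection identity
\begin{gather*}
f_s(s+2m)=f_s(s-2m), \qquad m\in\mathbb{Z}_{\ge 0}.
\end{gather*}
This identity gives $f_s(s+2\mu d)=f_s(s-2\mu d)$, so each term with $\lambda>0$ appears twice. The $\lambda=0$ term $f_s(s)$ appears once. This yields exactly the stated formula.

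To prove the identity, set $g(a) := \sum_{k=0}^s(-1)^k\binom{s}{k}(a-2k)^{s-1}$. This is the $s$-th finite difference with step $2$ of a polynomial of degree $s-1$, so $g\equiv 0$. Therefore $f_s(a) = -\sum_{k:\,2k\ge a}(-1)^k\binom{s}{k}(a-2k)^{s-1}$. Evaluate this at $a=s+2m$ and reindex by $k\mapsto s-k$. The condition $2k\ge s+2m$ becomes $2k'\le s-2m$. The base becomes $a-2k = -(s-2m-2k')$, the sign becomes $(-1)^{s-k'}$, and $\binom{s}{k}=\binom{s}{k'}$. Collecting signs, $(-1)^{s-1}(-1)^{s}\cdot(-1)=+1$, which gives
\begin{gather*}
f_s(s+2m)=\sum_{k':\,2k'\le s-2m}(-1)^{k'}\binom{s}{k'}(s-2m-2k')^{s-1}.
\end{gather*}
This is $f_s(s-2m)$, except that it includes the boundary terms with $2k'=s-2m$, which were excluded from $f_s$. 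Those terms vanish because $s-1\ge1$ makes $0^{s-1}=0$.

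The only delicate point is this sign and boundary bookkeeping in the reflection identity, which I would check carefully, for instance on $s=2$. Everything else is direct substitution.
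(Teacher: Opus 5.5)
Your proposal is correct and follows the paper's proof: you identify $C_\lambda = d^{-(s-1)}f_s(s-2\lambda d)$, substitute into Theorem \ref{th: general_limit_formula}, and fold the sum over $\lambda\in\mathbb{Z}$ using the symmetry $f_s(s+2m)=f_s(s-2m)$. The only difference is that the paper cites \cite[Lemma 2.3]{gesselLimit2010} for this symmetry (Remark \ref{rmk:f_s_lambda_plusminus}), whereas you prove it directly from the vanishing of the $s$-th step-$2$ finite difference of $a^{s-1}$; your sign and boundary bookkeeping in that argument checks out.
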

\begin{proof}
    Just note that $C_\lambda = \frac{1}{d^{s-1}}f_s(s-2d\lambda)$, and apply Theorem \ref{th: general_limit_formula}.
\end{proof}
\begin{remark}
    \label{rmk:f_s_lambda_plusminus}
    By \cite[Lemma 2.3]{gesselLimit2010}, it follows that $f_s(s-2d\lambda) = f_s(s+2d\lambda)$, and therefore the sum in the theorem above can also be expressed as the sum $\sum f_s(a)$ extending over all $a \equiv s\mod{2d}$. This rewording is instrumental in the proof (also used in Gessel and Monsky's paper). Also, note that \cite[Theorem 2.7]{gesselLimit2010} follows from this theorem after setting $d=2$.
\end{remark}

\begin{definition}[Eulerian polynomials]
\label{def:eulerian_polynomials}
For $n\geq 0$, $A_n(T) := (1-T)^{n+1}\sum_{k=1}^{\infty} k^nT^{k-1}.$ Note that $A_0(T) = 1$, and $A_n(1) = n!$.
\end{definition}
The next lemma uses the following useful trick to compute $\sum_{j\in \mathbb{Z}} f_s(s-2dj)$: let $\lbrace a_j\rbrace_{j=0}^{\infty}$ be a sequence such that $\sum_{j=0}^{\infty} a_j<\infty$, and let $P(T) = \sum_{j=0}^{\infty} a_jT^j$ be its generating function; if $\zeta = e^{\frac{\pi}{d}j}$ is a $2d$-th root of unity, then $\sum a_{2dj+k} = \frac{1}{2d}\sum_{r=0}^{2d-1}\zeta^{-rk}P(\zeta^r)$.
\begin{lemma}
\label{lm:fermat_eHK_C}
    Fix $d,s\geq 2$ integers. Set $\zeta := e^{\frac{\pi}{d}i}$ a root of unity, and for $r=1,\dots,d-1$, set $\lambda_r := \frac{1}{2}\zeta^{-r}(1+\zeta^r)^2$. Define
    \begin{gather*}
        C_{s-1} := \sum_{r=1}^{d-1} \lambda_r^{s-1}\frac{A_{s-1}(\zeta^r)}{(1+\zeta^r)^{s-2}}.
    \end{gather*}
    Then,
    \begin{gather*}
\limeHK\left(R_{0,s-1}^{(d,\dots,d)}\right) = 1+\dfrac{1}{(s-1)!}\frac{C_{s-1}+\overline{C_{s-1}}}{2}.
\end{gather*}

\end{lemma}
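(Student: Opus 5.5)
Starting from Theorem \ref{th: limit_Fermat_with_fs} and Remark \ref{rmk:f_s_lambda_plusminus}, I will write
\begin{gather*}
\limeHK\left(R_{0,s-1}^{(d,\dots,d)}\right) = \frac{1}{(s-1)!}\frac{d}{2^{s-1}}\sum_{a\equiv s \bmod 2d,\ a>0} f_s(a),
\end{gather*}
and evaluate this arithmetic-progression sum with the roots-of-unity filter stated before the lemma. Set $P(T) := \sum_{a\geq 1} f_s(a)T^a$.

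\textbf{Closed form for $P$.} Expanding $f_s(a)=\sum_k (-1)^k\binom{s}{k}(a-2k)^{s-1}$ over $a>2k$ and substituting $m=a-2k\geq 1$ gives
\begin{gather*}
P(T)=\sum_k(-1)^k\binom{s}{k}T^{2k}\sum_{m\geq1}m^{s-1}T^m=(1-T^2)^s\,\frac{T A_{s-1}(T)}{(1-T)^s}=T(1+T)^sA_{s-1}(T),
\end{gather*}
using Definition \ref{def:eulerian_polynomials}. Thus $P$ is a polynomial, so only finitely many $f_s(a)$ are nonzero and the filter applies with no convergence issue:
\begin{gather*}
\sum_{a\equiv s \bmod 2d} f_s(a)=\frac{1}{2d}\sum_{r=0}^{2d-1}\zeta^{-rs}P(\zeta^r).
\end{gather*}

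\textbf{Evaluating the terms.} For $r=0$ we get $P(1)=2^s(s-1)!$. Multiplied by $\frac{1}{(s-1)!}\frac{d}{2^{s-1}}\frac{1}{2d}$, this contributes exactly $1$. For $r=d$ we have $\zeta^d=-1$, so $P(-1)=0$ since $s\geq 2$. The remaining indices pair up as $r$ and $2d-r$ for $r=1,\dots,d-1$; these are complex conjugates because $A_{s-1}$ has real coefficients. For a single $r$,
\begin{gather*}
\zeta^{-rs}P(\zeta^r)=\zeta^{-r(s-1)}(1+\zeta^r)^sA_{s-1}(\zeta^r)=\bigl(\zeta^{-r}(1+\zeta^r)^2\bigr)^{s-1}\frac{A_{s-1}(\zeta^r)}{(1+\zeta^r)^{s-2}}=2^{s-1}\lambda_r^{s-1}\frac{A_{s-1}(\zeta^r)}{(1+\zeta^r)^{s-2}}.
\end{gather*}
Summing over $r$ and $2d-r$ gives $2^{s-1}(C_{s-1}+\overline{C_{s-1}})$. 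Multiplying by $\frac{1}{(s-1)!}\frac{d}{2^{s-1}}\frac{1}{2d}$ yields $\frac{1}{(s-1)!}\frac{C_{s-1}+\overline{C_{s-1}}}{2}$, which is the claimed formula.

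\textbf{Main obstacle.} The step needing the most care is the bookkeeping in the symmetrization of Remark \ref{rmk:f_s_lambda_plusminus}. The sum must run over every $a\equiv s \bmod 2d$, including negative $a$ where $f_s$ vanishes, and the factor $2$ in front of $f_s(s-2d\lambda)$ must correctly account for the pairs $\pm\lambda$. I also need to confirm that $1+\zeta^r\neq0$ for $1\leq r\leq d-1$, which holds because only $r=d$ gives $\zeta^r=-1$.
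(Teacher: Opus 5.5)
Your proposal is correct and is essentially the paper's argument: a roots-of-unity filter on $\sum_a f_s(a)T^a=T(1+T)^sA_{s-1}(T)$, with the $r=0$ term giving $1$, the $r=d$ term vanishing, and the conjugate pairs $r,2d-r$ giving $C_{s-1}+\overline{C_{s-1}}$. You also write the factor $2^{s-1}$ in each term $\zeta^{-rs}P(\zeta^r)$ explicitly, whereas the paper's case display drops it and only restores it in $\sum_{r=1}^{d-1}P(\zeta^r)=2^{s-1}C_{s-1}$. The differences are minor: you derive the closed form of the generating function directly instead of citing Lemma 3.6 of \cite{gesselLimit2010}. The bookkeeping you flag as the main obstacle is harmless, since $\sum_{\lambda\in\mathbb{Z}}C_\lambda=d^{1-s}\sum_{a\equiv s \bmod 2d}f_s(a)$ already runs over the whole residue class, so no symmetrization is needed.
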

\begin{proof}
    Fix $d\geq 2$ and $s\geq 2$ and write $C := C_{s-1}$. The result will follow once we proof that
    \begin{gather}
        \label{eq:sum_f_s_goal}
        \sum_{j = s+2d\mathbb{Z}} f_s(j) = \frac{2^{s-1}}{d}\left(A_{s-1}(1)+\frac{C+\bar{C}}{2}\right).
    \end{gather}
    Let $P(T) := T^{1-s}\sum_{j\in \mathbb{Z}} f_s(j)T^{j-1}$. Using that $\sum_{k=0}^{2d-1} \zeta^{kr} = 0$, one can see that
\begin{gather}
\label{eq: sum_1}
\sum_{j=s+2d\mathbb{Z}} f_s(j) = \frac{1}{2d}\left(\sum_{r=0}^{2d-1} P(\zeta^r)\right).
\end{gather}
Lemma 3.6 in \cite{gesselLimit2010} says that $\sum_{j\in \mathbb{Z}} f_s(j)T^{j-1} = (1+T)^sA_{s-1}(T)$, so for $0\leq r \leq d$,
\begin{equation*}
P(\zeta^r) = \begin{cases}
    P(1) = 2^sA_{s-1}(1) & r = 0,\\
    \zeta^{r(1-s)}(1+\zeta^r)^sA_{s-1}(\zeta^r) = \lambda_r^{s-1}\dfrac{A_{s-1}(\zeta^r)}{(1+\zeta^r)^{s-2}} & 1\leq r\leq d-1,\\
    P(-1) = 0 & r=d.\\
\end{cases}
\end{equation*}
Now, note that $\sum_{r=1}^{d-1} P(\zeta^r) = 2^{s-1}C$ and $\sum_{r=d+1}^{2d-1} P(\zeta^r) = 2^{s-1}\bar{C},$  so the sum (\ref{eq: sum_1}) is
\begin{gather*}
\frac{1}{2d}\left(P(1) + \sum_{r=1}^{d-1} P(\zeta^r) + \sum_{r=d+1}^{2d-1} P(\zeta^r) + P(-1)\right) =
\frac{2^{s-1}}{d}\left(A_{s-1}(1)+\frac{C}{2}+\frac{\bar{C}}{2} + 0\right),
\end{gather*}
so \ref{eq:sum_f_s_goal} follows. The result now follows from Theorems \ref{th: limit_Fermat_with_fs} and \ref{lm:fermat_eHK_C}, and Remark \ref{rmk:f_s_lambda_plusminus}.
\end{proof}
\begin{remark}
    \label{rmk:fermat_lambda_real}
Note that $\lambda_r = \frac{1}{2}\zeta^{-r}(1+\zeta^r)^2 = \frac{1}{2}(2+\zeta^{-r}+\zeta^r) = 1 + \cos\frac{r\pi}{d} \in \mathbb{R}.$ Later, we will conclude that also $C_{s-1}\in \mathbb{R}$.
\end{remark}

\begin{lemma}[cf. Lemma 3.4 in \cite{gesselLimit2010}]
% \label{lm: first_taylor_expansion}
\label{lm:fermat_phi_r_taylor_expansion}
    For $d\geq 2$ and $0\leq r\leq d-1$ integers, $\zeta = e^{\frac{\pi}{d}i}$ a $2d$-root of unity, and $z\in (0,1)\subset \mathbb{R}$,
\begin{gather*}
1+\sum_{n = 1}^{\infty} \frac{A_{n}(\zeta^r)}{(1+\zeta^r)^{n-1}}\frac{z^n}{n!} = \frac{1-a\cos cz+b\sin cz}{\cos cz-a} =: \varphi_r(z)
\end{gather*}
where $a = \cos \frac{\pi r}{d}$ and $b =\sin \frac{\pi r}{d}$ and $c = \tan \frac{r\pi}{2d}$. In particular, for every $z\in \mathbb{R}$, then $\varphi_r(\lambda_rz)\in \mathbb{R}$.
\end{lemma}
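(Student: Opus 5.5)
Start from the standard exponential generating function of the Eulerian polynomials,
\begin{gather*}
\sum_{n\geq 0} A_n(T)\frac{u^n}{n!} = \frac{T-1}{T-e^{(T-1)u}},
\end{gather*}
valid as formal power series and as analytic functions for $|u|$ small whenever $T\neq 1$. This follows from Definition \ref{def:eulerian_polynomials}: expand $\sum_k k^nT^{k-1}$ against $u^n/n!$ and sum the resulting geometric series. To absorb the denominator $(1+\zeta^r)^{n-1}$, substitute $u = z/(1+T)$ with $T=\zeta^r$ and multiply by $(1+T)$. Then
\begin{gather*}
1+\sum_{n\geq 1}\frac{A_n(T)}{(1+T)^{n-1}}\frac{z^n}{n!} = 1 + (1+T)\left(\frac{T-1}{T-e^{\frac{T-1}{T+1}z}}-1\right).
\end{gather*}
The $n=0$ term contributes $(1+T)A_0 = 1+T$. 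This accounts for the correction $-(1+T)+1$ used to match the normalization ``$1+\sum_{n\geq1}$''.

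Next, I would evaluate everything at $T=\zeta^r=e^{i\theta}$ with $\theta=\pi r/d$. The case $r=0$ gives $T=1$ and must be treated separately as a limit. In that case $c=0$, $a=1$, and the right side degenerates, so I would interpret $\varphi_0$ by continuity or simply check the limit directly. For $1\leq r\leq d-1$, the key simplification is
\begin{gather*}
\frac{T-1}{T+1}=i\tan\frac{\theta}{2}=ic,
\end{gather*}
so $e^{\frac{T-1}{T+1}z}=e^{icz}$. Substituting, the expression becomes a rational function of $e^{i\theta}$ and $e^{icz}$. To reach the stated form, multiply numerator and denominator by $e^{-i\theta/2}e^{-icz/2}$, or equivalently rationalize by the conjugate. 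Writing $T-e^{icz} = e^{i\theta}-e^{icz}$ and combining with the factor $(1+T)(T-1) = T^2-1$, I would verify the identity
\begin{gather*}
1-(1+T)+\frac{T^2-1}{T-e^{icz}} = \frac{1-a\cos cz+b\sin cz}{\cos cz-a}.
\end{gather*}
The verification goes by cross-multiplying and separating real and imaginary parts, using $a+ib=T$.

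I expect this trigonometric simplification to be the main obstacle. The left side is a priori complex, so the work is to show that the imaginary part cancels, or, if it does not cancel identically, to identify which real form is meant. My first attempt would be to clear denominators: compute $(T-T^2+T^2-1)\cdot(\ldots)$ symbolically with $T=a+ib$, $a^2+b^2=1$, and $e^{icz}=\cos cz+i\sin cz$. If a residual imaginary part survives, I would symmetrize over $r\leftrightarrow 2d-r$, since the use of $\varphi_r$ in Lemma \ref{lm:fermat_eHK_C} only requires the real part $(C+\bar C)/2$.

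Finally, the ``in particular'' claim follows once the closed form is established. Since $a$, $b$, $c$ are real, $\varphi_r$ is real-valued on $\mathbb{R}$. By Remark \ref{rmk:fermat_lambda_real}, $\lambda_r = 1+a\in\mathbb{R}$, so $\varphi_r(\lambda_r z)\in\mathbb{R}$ for all real $z$. Convergence for $z\in(0,1)$ holds because the nearest singularity, at $\cos cz=a$, satisfies $|z|\geq \theta/c>1$. Equivalently, this is the radius of convergence of the $A_n$ series after rescaling.
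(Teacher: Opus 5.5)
Your route is the paper's: the exponential generating function of the Eulerian polynomials, the substitution $u=z/(1+T)$, multiplication by $1+T$, and evaluation at $T=\zeta^r$ using $\frac{T-1}{T+1}=ic$. Your expression $1-(1+T)+\frac{T^2-1}{T-e^{icz}}$ simplifies to $\frac{1-Te^{icz}}{e^{icz}-T}$, which is exactly the paper's intermediate formula.

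\textbf{The gap.} You stop at the only step with real content: showing that this complex number equals the stated real expression. You even allow that an imaginary part might survive, and offer symmetrization over $r\leftrightarrow 2d-r$ as a fallback. That fallback would not prove the lemma as stated, since it only controls the real part of a sum over $r$. So as written, the identity is not established.

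The step closes in one line. For $|T|=1$ and real $\alpha$, set $w=\frac{1-Te^{i\alpha}}{e^{i\alpha}-T}$. Then $\overline{w}=\frac{1-\overline{T}e^{-i\alpha}}{e^{-i\alpha}-\overline{T}}$, and multiplying numerator and denominator by $Te^{i\alpha}$ returns $w$. Hence $w\in\mathbb{R}$. Where $\cos\alpha\neq a$, $w$ is therefore the quotient of the real parts of its numerator and denominator, i.e.\ $\frac{1-a\cos\alpha+b\sin\alpha}{\cos\alpha-a}$. The paper reaches the same conclusion by checking $A/C=B/D$ directly with $a^2+b^2=1$.

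Even more transparently, with $\theta=\pi r/d$ one gets $w=\frac{\sin((\theta+\alpha)/2)}{\sin((\theta-\alpha)/2)}$. This makes reality evident, and letting $\theta\to 0$ gives your $r=0$ value $\frac{2+z}{2-z}$.

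\textbf{The convergence claim is false.} The nearest genuine pole is at $z=\theta/c$, but $\theta/c=\theta/\tan\frac{\theta}{2}$ drops below $1$ once $\tan\frac{\theta}{2}>\theta$. For $d=4$, $r=3$ it equals $\frac{3\pi/4}{1+\sqrt{2}}\approx 0.976$, and for $r=d-1$ it tends to $0$ as $d\to\infty$. So the series does not converge on all of $(0,1)$. The hypothesis $z\in(0,1)$ in the statement is itself too generous: the identity should be read as one of power series near $0$, which is all Theorem \ref{th:fermat_limit_eHK_gen} uses. What is true, and perhaps what you meant by ``after rescaling'', is that $c\lambda_r=\sin\theta$, so $\varphi_r(\lambda_r z)$ has radius of convergence $\theta/\sin\theta>1$.
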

\begin{proof}
    Due to Lemma 3.3 in \cite{gesselLimit2010},
    \begin{equation*}
    \sum_{n=1}^{\infty} \frac{A_n(T)}{(1-T)^{n+1}}\frac{z^n}{n!} = \dfrac{e^z-1}{1-Te^z} \stackrel{z \mapsto z(1-T)}\Longrightarrow \sum_{n=1}^{\infty} A_n(T)\frac{z^n}{n!} = \dfrac{e^{z(1-T)}-1}{1-Te^{z(1-T)}}.
    \end{equation*}
    Now, fix $0\leq r \leq d-1$, and evaluate $T$ at $\zeta^r$ and $z$ at $z/(1+\zeta^r)$, and set $\omega_r:=(\zeta^r-1)/(\zeta^r+1)$, to get
    \begin{gather}
        \label{eq:complex_identity}
        \sum_{n = 1}^{\infty} \frac{A_n(\zeta^r)}{(1+\zeta^r)^n}\frac{z^n}{n!} = \frac{e^{-\omega_r z}-1}{1-\zeta^re^{-\omega_r z}} = \frac{1-e^{\omega_r z}}{e^{\omega_r z}-\zeta^r}.
    \end{gather}
    By multiplying by $(1+\zeta^r)$ and adding $1$ in both sides of \ref{eq:complex_identity}, we obtain that
    \begin{gather}
    \label{fm: second_taylor_expansion}
        1 + \sum_{n=1}^\infty \frac{A_{n}(\zeta^r)}{(1+\zeta^r)^{n-1}}\frac{z^{n}}{n!} = \frac{1-\zeta^re^{\omega_rz}}{e^{\omega_r z}-\zeta^r}.
    \end{gather}
Note that $\omega_r = i\tan\frac{r\pi}{d}$ is pure imaginary, so let us denote $\zeta^r = a+bi$ and $\omega_r = ci,$ where $a,b$ and $c$ are real numbers, for the sake of simplicity. Note that $c = \tan \frac{r\pi}{2d}$. Now, let us prove that the above function is a real-valued function over the real numbers, so let us from now on assume $z\in \mathbb{R}$:
\begin{align}
\nonumber \frac{1-(a+bi)e^{czi}}{e^{czi}-a-bi} &= \frac{1-(a+bi)(\cos cz+i\sin cz)}{\cos cz+i\sin cz-a-bi}\\
\label{fm: trig_expression}
&=\frac{(1-a\cos cz+b\sin cz)+i(-b\cos cz-a\sin cz)}{\cos cz-a+i(\sin cz-b)} = \frac{A-iB}{C-iD}
\end{align}
where $A := 1-a\cos cz+b\sin cz,B := b\cos cz+a\sin cz,C := \cos cz-a$ and $D := b-\sin cz$. Now, we can check that $\frac{A}{C} = \frac{B}{D}$: given that $1=\sin^2 cz + \cos^2 cz$ and $1 = a^2+b^2$ (because $\zeta$ is a root of unity), we have that
\begin{align*}
\frac{A}{C} &= \frac{1-a\cos cz + b\sin cz}{\cos cz-a} = \frac{(1-a\cos cz + b\sin cz)(\cos cz + a)}{\cos^2 cz-a^2} =\\
&=\frac{\cos cz + a - a\cos^2 cz-a^2\cos cz+b\sin cz\cos cz+ab\sin cz}{1-\sin^2 cz+b^2-1} =\\
&=\frac{b^2\cos cz + a\sin^2 cz+b\sin cz\cos cz+ab\sin cz}{b^2-\sin^2cz} = \\
&=\frac{(b\cos cz+a\sin cz)(b+\sin cz)}{b^2-\sin^2 cz}=\frac{b\cos cz+a\sin cz}{b-\sin cz} = \frac{B}{D}.
\end{align*}
Thus, the claim implies that
\begin{gather*}
\frac{C}{A}(A-iB) = \frac{CA}{A} - i\frac{C}{A}B = C - i\frac{D}{B}B = C - iD.
\end{gather*}
Therefore,
\begin{gather*}
\frac{A-iB}{C-iD} = \frac{A}{C} = \frac{1-a\cos cz + b\sin cz}{\cos cz -a}.
\end{gather*}
\end{proof}

\begin{theorem}
    \label{th:fermat_limit_eHK_gen}
    Let $s\geq 2$ and $d\geq 2$. Then,
\begin{equation*}
\limeHK\left(R_{0,s-1}^{(d,\dots,d)}\right) = 1+\dfrac{C_{s-1}}{(s-1)!},
\end{equation*}
where $C_{s-1}/(s-1)!$ is the $(s-1)$-th Taylor coefficient of the function $\sum_{r=1}^{d-1}\psi_r(z)$ for
\begin{equation*}
    \psi_r(z) = \dfrac{1-a_r\cos b_rz + b_r\sin b_rz}{\cos b_rz -a_r}
\end{equation*}
where $a_r = \cos \frac{\pi r}{d}$, $b_r =\sin \frac{\pi r}{d}$.
%$\lambda_r = 2\cdot \frac{\zeta^{r}-1}{\zeta^{r}+1}\cdot \cos^{2}\frac{\pi r}{2d} \in \mathbb{Q}(\zeta_{2d})$ 
Note that $a_r,b_r\in \mathbb{Q}(\zeta)\cap \mathbb{R}$.
\end{theorem}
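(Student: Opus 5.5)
The plan is to combine Lemma \ref{lm:fermat_eHK_C} with the closed form of Lemma \ref{lm:fermat_phi_r_taylor_expansion}, after rescaling the variable by $\lambda_r$. By Lemma \ref{lm:fermat_eHK_C},
\begin{equation*}
\limeHK\left(R_{0,s-1}^{(d,\dots,d)}\right) = 1+\frac{1}{(s-1)!}\,\frac{C_{s-1}+\overline{C_{s-1}}}{2},
\end{equation*}
where
\begin{equation*}
C_{s-1}=\sum_{r=1}^{d-1}\lambda_r^{s-1}\frac{A_{s-1}(\zeta^r)}{(1+\zeta^r)^{s-2}}.
\end{equation*}
So two things remain to be shown:
\begin{enumerate}
\item $C_{s-1}/(s-1)!$ is the $(s-1)$-th Taylor coefficient of $\sum_{r}\psi_r$;
\item $C_{s-1}$ is real, so that $\frac{C_{s-1}+\overline{C_{s-1}}}{2}=C_{s-1}$.
\end{enumerate}

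For the first point, fix $1\leq r\leq d-1$ and substitute $z\mapsto \lambda_r z$ in the power series identity of Lemma \ref{lm:fermat_phi_r_taylor_expansion}. This gives
\begin{equation*}
\varphi_r(\lambda_r z)=1+\sum_{n\geq 1}\lambda_r^{n}\frac{A_n(\zeta^r)}{(1+\zeta^r)^{n-1}}\frac{z^n}{n!}.
\end{equation*}
Its $n$-th coefficient for $n=s-1\geq 1$ is exactly the $r$-th summand of $C_{s-1}$ divided by $(s-1)!$. Summing over $r$, $C_{s-1}/(s-1)!$ is the $(s-1)$-th Taylor coefficient of $\sum_r\varphi_r(\lambda_r z)$; the constant terms play no role since $s-1\geq 1$.

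It remains to identify $\varphi_r(\lambda_r z)$ with $\psi_r(z)$. The constants $a=a_r$ and $b=b_r$ agree by definition, so only the frequency needs checking. Put $\theta=\pi r/d$. By Remark \ref{rmk:fermat_lambda_real}, $\lambda_r=1+\cos\theta=2\cos^2(\theta/2)$. Hence
\begin{equation*}
c\lambda_r=\tan(\theta/2)\cdot 2\cos^2(\theta/2)=\sin\theta=b_r,
\end{equation*}
which is the frequency appearing in $\psi_r$.

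One subtlety needs a sentence. Lemma \ref{lm:fermat_phi_r_taylor_expansion} is an identity between a power series and a meromorphic function, proved for real $z$ in a neighborhood of $0$. Both sides are analytic at $0$: the denominator equals $1-a_r\neq 0$ at $z=0$, because $0<\theta<\pi$. So the identity holds as an identity of Taylor expansions at $0$, and rescaling by $\lambda_r$ is legitimate.

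The reality of $C_{s-1}$ follows from the same lemma. The function $\psi_r(z)=\varphi_r(\lambda_r z)$ is real-valued for real $z$, and it has real coefficients $a_r,b_r$, so all its Taylor coefficients are real. Therefore $C_{s-1}\in\mathbb{R}$, and the theorem follows. Finally, $a_r=\frac{\zeta^r+\zeta^{-r}}{2}$ and $b_r=\frac{\zeta^r-\zeta^{-r}}{2i}$ are real; to place $b_r$ in $\mathbb{Q}(\zeta)$ one uses that $i=\zeta^{d/2}$ when $d$ is even.

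The main obstacle is purely bookkeeping. One has to check that the normalization $(1+\zeta^r)^{s-2}$ in $C_{s-1}$ matches the exponent $n-1$ in Lemma \ref{lm:fermat_phi_r_taylor_expansion}, and that the factor $\lambda_r^{s-1}$ is exactly absorbed by the substitution $z\mapsto\lambda_r z$. The trigonometric identity $c\lambda_r=b_r$ is the one genuinely new computation.
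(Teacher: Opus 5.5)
Your proposal is correct and follows essentially the paper's proof: combine Lemma~\ref{lm:fermat_eHK_C} with Lemma~\ref{lm:fermat_phi_r_taylor_expansion} rescaled by $\lambda_r$, use $c\lambda_r=(1+\cos\frac{\pi r}{d})\tan\frac{\pi r}{2d}=\sin\frac{\pi r}{d}=b_r$, and deduce $C_{s-1}\in\mathbb{R}$ from the real-valuedness of $\varphi_r(\lambda_r z)$; your remark on analyticity at $z=0$ makes explicit a point the paper leaves implicit. On the closing side note, your argument places $b_r$ in $\mathbb{Q}(\zeta)$ only for even $d$, and for odd $d$ that membership is actually false (for $d=3$, $b_1=\frac{\sqrt{3}}{2}\notin\mathbb{Q}(\zeta_6)=\mathbb{Q}(\sqrt{-3})$), so there one can only assert $b_r\in\mathbb{Q}(\zeta_{4d})\cap\mathbb{R}$; this does not affect the formula for $\limeHK$ itself.
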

\begin{proof}
   We first show that $C_{s-1}\in \mathbb{R}$. By Lemma \ref{lm:fermat_phi_r_taylor_expansion}, $\sum_{r=1}^{d-1}\varphi_r(\lambda_rz) = \sum C_{s-1}\frac{z^{s-1}}{(s-1)!}$, and for all $z\in \mathbb{R}$, then $\varphi_r(\lambda_rz)\in \mathbb{R}$. It follows that $C_{s-1}\in \mathbb{R}$, so $C_{s-1}+\overline{C_{s-1}} = 2C_{s-1}$, and hence $\limeHK\left(R_{0,s-1}^{(d,\dots,d)}\right) = 1+\dfrac{C_{s-1}}{(s-1)!}$.

   Finally, note that $\lambda_rc_r = (1+\cos \frac{\pi r}{d})\tan\frac{r\pi}{2d}= \sin\frac{\pi r}{d} = b_r$. It follows that $\varphi_r(\lambda_r z) = \psi_r(z)$.
% By Theorem \ref{lm:fermat_phi_r_taylor_expansion},
% \begin{align*}
% \sum_{r=1}^{d-1} \varphi_r\left(\lambda_rz\right) &= \sum_{r=1}^{d-1}\sum_{s=1}^{\infty} \lambda_r^{s-1}\frac{A_{s-1}(\zeta^r)}{(1+\zeta^r)^{s-2}}\frac{z^{s-1}}{(s-1)!}\\
% &=\sum_{s=1}^{\infty} z^{s-1}\sum_{r=1}^{d-1}\lambda_r^{s-1}\frac{A_{s-1}(\zeta^r)}{(1+\zeta^r)^{s-2}}\frac{1}{(s-1)!}\\
% &=\sum_{s=1}^{\infty} z^{s-1}\frac{C_{s-1}}{(s-1)!}.
% \end{align*}
\end{proof}
\begin{corollary}[Conjecture 8.8, \cite{mengLimits2026}]
For $d\geq 2$, the generating function of the limit Hilbert–Kunz multiplicities of the Fermat hypersurfaces
\begin{equation*}
    \sum_{n=0}^{\infty}\limeHK\left(R_{0,n}^{(d,\dots,d)}\right)z^{n}
\end{equation*}
is a rational function on the functions $z, \cos(a_iz)$ and $\sin(a_iz)$, for some finitely many $a_i\in \mathbb{Q}(\zeta_{2d})\cap \mathbb{R}$ with $\zeta_{2d} = e^{\frac{\pi}{d}i}$. 
\end{corollary}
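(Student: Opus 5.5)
The corollary follows from Theorem \ref{th:fermat_limit_eHK_gen} once we assemble the generating function from its Taylor coefficients. That theorem gives, for every $s\geq 2$ (so $n=s-1\geq 1$),
\begin{gather*}
\limeHK\left(R_{0,n}^{(d,\dots,d)}\right) = 1+\frac{C_n}{n!},
\end{gather*}
where $C_n/n!$ is the $n$-th Taylor coefficient of $\Psi_d(z):=\sum_{r=1}^{d-1}\psi_r(z)$, with $\psi_r(z) = \frac{1-a_r\cos b_rz+b_r\sin b_rz}{\cos b_rz-a_r}$. Multiplying by $z^n$ and summing, we get for $n\geq 1$ the identity of formal power series
\begin{gather*}
\sum_{n\geq 1}\limeHK\left(R_{0,n}^{(d,\dots,d)}\right)z^n = \frac{z}{1-z} + \Psi_d(z) - \Psi_d(0),
\end{gather*}
and $\Psi_d(0) = \sum_r \frac{1-a_r}{1-a_r} = d-1$. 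This uses that $\cos(\pi r/d)\neq 1$ for $1\leq r\leq d-1$, so each $\psi_r$ is analytic at $0$ and its Taylor series converges near $0$.

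The $n=0$ term is $\limeHK$ of $\mathbb{C}[[x]]/(x^d)$. That ring is Artinian of length $d$, so its Hilbert–Kunz multiplicity is $d$ in every characteristic, and this term contributes the constant $d$.

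The whole generating function is therefore
\begin{gather*}
d + \frac{z}{1-z} - (d-1) + \sum_{r=1}^{d-1}\frac{1-a_r\cos b_rz+b_r\sin b_rz}{\cos b_rz-a_r}.
\end{gather*}
This is manifestly a rational function of $z$, $\cos(b_rz)$ and $\sin(b_rz)$ for $r=1,\dots,d-1$, with coefficients $a_r,b_r$.

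It remains to check that the frequencies $b_r=\sin(\pi r/d)$ lie in $\mathbb{Q}(\zeta_{2d})\cap\mathbb{R}$. They are real, and $\sin(\pi r/d) = (\zeta^r-\zeta^{-r})/(2i)$. The only point needing care is whether $i\in\mathbb{Q}(\zeta_{2d})$: this holds exactly when $4\mid 2d$, that is, when $d$ is even. For $d$ odd, $\sin(\pi r/d)$ need not lie in $\mathbb{Q}(\zeta_{2d})$.

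I would handle the odd case first by using the field $\mathbb{Q}(\zeta_{4d})$, in the spirit of the statement of Theorem \ref{th:fermat_limit_eHK_gen}. If that is judged insufficient, I would rewrite each $\psi_r$ via $\psi_r(z)=\varphi_r(\lambda_rz)$ from Lemma \ref{lm:fermat_phi_r_taylor_expansion}, whose frequency is $\lambda_r c_r = b_r$ again. So the main obstacle is this field-membership bookkeeping for odd $d$, not the analytic part.
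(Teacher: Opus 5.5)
Your route is the paper's: the corollary is Theorem \ref{th:fermat_limit_eHK_gen} summed over $n$, and your bookkeeping is correct. You are actually more careful than the paper about the $n=0$ term, which that theorem (stated only for $s\geq 2$) does not cover. Since $1+\Psi_d(0)=d$, the closed form is simply $\frac{1}{1-z}+\sum_{r=1}^{d-1}\psi_r(z)$. For even $d$ your argument is a complete proof, because $i=\zeta^{d/2}$ gives $b_r=(\zeta^r-\zeta^{-r})/(2i)\in\mathbb{Q}(\zeta_{2d})\cap\mathbb{R}$.

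For odd $d$, however, what you flag is not bookkeeping. Neither of your fallbacks proves the statement as written, because the statement is false there. The paper's only justification is the remark $a_r,b_r\in\mathbb{Q}(\zeta)\cap\mathbb{R}$ at the end of Theorem \ref{th:fermat_limit_eHK_gen}. For odd $d$ this fails for every $1\leq r\leq d-1$. Already for $d=3$ one has $\mathbb{Q}(\zeta_6)\cap\mathbb{R}=\mathbb{Q}$, while $b_1=b_2=\frac{\sqrt{3}}{2}$.

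No rewriting can repair this, your $\varphi_r(\lambda_r z)$ idea included. The key fact is that if $\mu_1,\dots,\mu_m$ are $\mathbb{Q}$-linearly independent, then $e^{i\mu_1 z},\dots,e^{i\mu_m z}$ are algebraically independent over $\mathbb{C}(z)$. Take $d=3$:
\begin{itemize}
\item a rational expression in $z,\cos(a_jz),\sin(a_jz)$ with all $a_j\in\mathbb{Q}$ is some $G(z,w)$ with $w=e^{iz/N}$;
\item the generating function is $H(z,u)$ with $u=e^{i\sqrt{3}z/2}$;
\item since $w,u$ are algebraically independent over $\mathbb{C}(z)$, equality would force $G=H\in\mathbb{C}(z)$.
\end{itemize}
That is impossible: $\psi_1+\psi_2$ has poles at every $z$ with $\frac{\sqrt{3}}{2}z\in\{\frac{\pi}{3},\frac{2\pi}{3}\}+2\pi\mathbb{Z}$, so infinitely many.

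So the corollary holds as stated only for even $d$. For odd $d$ the correct field is $\mathbb{Q}(\zeta_{4d})\cap\mathbb{R}$, since $b_r=\cos\frac{(d-2r)\pi}{2d}$. This is your first option, but it is a correction of the statement rather than a proof of it.
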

\begin{remark}
    \cite{monsky2009transcendence}
Although this method computes the limit Hilbert–Kunz multiplicities of the Fermat hypersurfaces, it does not compute the limit F-signatures, whereas \cite[Corollary 8.7]{mengLimits2026} shows that the algorithm laid out by Meng would compute both invariants. However, a similar process applied to \cite[Corollary 3.13]{shidelerFsignatureFunctionsDiagonal2024} might work to yield an analogous result for the limit $F$-signature.
\end{remark}

\section{The Watanabe–Yoshida conjecture in characteristic zero.}
\label{sec:wat_yos_char_0}
The goal of this section is to extend three results from Hilbert–Kunz theory to characteristic zero, in terms of the limit F-invariants, mainly, Watanabe and Yoshida's \cite{WY00} and Huneke and Leuschke's \cite{hunekeTwoTheoremsMaximal2004} characterisation of regularity via $F$-invariants, and the known cases of the Watanabe–Yoshida conjecture \cite{WY05,enescushimomotoDenseUpperSemicontinuity2005,aberbachenescu2013,coxsteibaberbach2024,castilloreyStrong2025}.

\subsection{Reduction mod $p$ of $A_1$ singularities}
\begin{definition}
    \label{def:formally_A_1_singularity}
Let $k$ be a field of characteristic $p\neq 2.$ We say that a local equicharacteristic $k$-algebra $(R,\m,k)$ is an $A_1$ singularity if there is a $k$-algebra isomorphism $\hat{R} \cong k[[x_1,\dots,x_n]]/(a_1x_1^2+\dots+a_nx_n^2)$ for some $a_i\in k$. If $k$ is quadratically closed (e.g., $k=\bar{k}$), then one can set $Q_n = x_1^2+\dots+x_n^2$.
\end{definition}
Given a commutative ring $L$, for any $f \in L[x_1,\dots,x_n]$ we define the Hessian of $f$ at $0$ as
\begin{gather*}
    H(f) := \left(\dfrac{\partial^2f}{\partial x_ix_j}(0)\right)_{1\leq i,j\leq n}
\end{gather*}
\begin{lemma}[Reduction mod $p$ of an $A_1$ singularity]
    \label{lm:reduction_A_1_singularity}
Let $S := K[x_1,...,x_n]_{(x_1,...,x_n)}$, $\mathfrak{a} := (f_1,...,f_r)\subset S$ and $R = S/\mathfrak{a}$. Let $(A,S_A,R_A,\mathfrak{a}_A)$ descent data for $(K,S,R,\mathfrak{a})$. If $R_k$ is an $A_1$ singularity for some closed fiber $k$, then $R$ is an $A_1$ singularity.
\end{lemma}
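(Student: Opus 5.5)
The plan is to express ``being an $A_1$ singularity'' through two conditions: the rank of the linear parts of the $f_i$, and the non-vanishing of a Hessian determinant. Both are given by polynomial expressions in the coefficients, which lie in $A$. Such a condition holding modulo the maximal ideal $\n\subset A$ of the chosen closed fiber $k=A/\n$ forces it to hold over $K\supseteq A$. The paper does not fix a name for this ideal, so $\n$ is introduced here for it. Throughout, I am allowed to replace $A$ by a localization $A_c$ with $c\notin\n$. This keeps the fiber $R_k$ and does not change the generic fiber $R_K=R$.

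First step: linear parts. Let $J\in A^{r\times n}$ be the matrix of linear coefficients of $f_1,\dots,f_r$ at $0$, and let $\rho$ be the rank of its reduction mod $\n$. Choose a $\rho\times\rho$ minor that is a unit mod $\n$ and invert it in $A$. After an $A$-linear change of coordinates and of generators, $f_1,\dots,f_\rho$ then have linear parts $x_1,\dots,x_\rho$. Pass to the completion $A[[x_1,\dots,x_n]]$; this is harmless, since both $A_1$-ness and $\dim$ are read off from $\hat R$ and $\widehat{R_k}$. There the formal implicit function theorem, applied over $A$, eliminates $x_1,\dots,x_\rho$. We get
\[
\hat R_A\cong A[[y_1,\dots,y_m]]/(g_1,\dots,g_t),\qquad m=n-\rho,
\]
and since $\rho$ already equals the rank mod $\n$, every $g_j$ reduces mod $\n$ into $(y)^2$. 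The hypothesis that $R_k$ is an $A_1$ singularity says $\widehat{R_k}\cong k[[y]]/(\bar g)$ with $\bar g$ of order $2$ and nondegenerate Hessian. It also says $\operatorname{embdim} R_k=m$ and $\dim R_k=m-1$. Hence the ideal $(\bar g_1,\dots,\bar g_t)$ is principal, generated by an element of order exactly $2$ with nondegenerate quadratic part. Consequently some $k$-linear combination of the $\bar g_j$ has $\det H\neq 0$ in $k$. Lift the coefficients of this combination to $A$ to get $g:=\sum a_jg_j$. Then $\det H(g)\in A$ is a unit mod $\n$, so after localizing $A$ it is a unit of $A$, in particular nonzero in $K$.

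Over $K$, the element $g$ lies in $(y)^2$ up to possible linear terms. Here I need care: over $K$ some $g_j$ might acquire a nonzero linear part. Each $g_j$ reduces mod $\n$ into $(y)^2$, but its linear coefficients are elements of $A$ lying in $\n$, not necessarily zero. I handle this by absorbing it as follows. Suppose some linear coefficient of some $g_j$ is nonzero in $K$. Then $R$ has embedding dimension $<m$. I would then argue by dimension that this is impossible: we know $\dim R=\dim R_k$, and the $A_1$ structure will give a contradiction. Concretely, the nondegenerate Hessian of $g$ makes $K[[y]]/(g)$ a domain of dimension $m-1$ by the Morse lemma (Hensel/splitting lemma, valid since $\operatorname{char}K=0\neq 2$ and $\det H(g)\neq 0$). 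Note that $g$ may need its own linear part removed first; but even with a linear part, $K[[y]]/(g)$ is either regular of dimension $m-1$ or an $A_1$ singularity of dimension $m-1$, and in both cases it is a domain.

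Then $\hat R$ is a quotient of this domain by $(g_1,\dots,g_t)$. By the freeness in the descent data \ref{def:reduction_mod_p}, after shrinking $A$ the family $A\to R_A$ is flat with fibers of constant dimension along the origin section, so $\dim\hat R=\dim R_k=m-1$. A quotient of an $(m-1)$-dimensional domain of the same dimension is the domain itself, so $\hat R\cong K[[y]]/(g)$. Finally, $R$ is not regular: $e(R_k)=2$, and flatness plus upper semicontinuity of multiplicity along the section give $e(R)=e(R_k)=2$. Therefore $g$ has no linear part over $K$, and $\hat R\cong K[[y]]/(g)$ with $g\in(y)^2$ of nondegenerate Hessian. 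Diagonalizing the quadratic form and applying the Morse lemma yields $\hat R\cong K[[y]]/(\sum a_iy_i^2)$, which is an $A_1$ singularity in the sense of \ref{def:formally_A_1_singularity}.

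The step I expect to be the main obstacle is the comparison of the invariants at the origin between the closed fiber and the generic fiber, namely $\dim\hat R=\dim\widehat{R_k}$ and $e(R)=e(R_k)$. This is needed both to rule out a drop in embedding dimension over $K$ and to conclude that the quotient map from $K[[y]]/(g)$ is an isomorphism. I would first try to derive these from flatness of $R_A$ over $A$, using constancy of the Hilbert–Samuel polynomial of $R_A/(x)^j R_A$ as an $A$-module after generic freeness. This should give equality of both dimension and multiplicity after a further localization of $A$ at an element outside $\n$.
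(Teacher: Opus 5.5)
There is a genuine gap, and it is exactly the step you flagged as the main obstacle: the claim $e(R)=e(R_k)=2$. It is the only thing in your argument that excludes a linear part of $g$ over $K$, and it is false.

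Flatness gives semicontinuity in the wrong direction. The fibre of the finite $A$-module $R_A/(x)^jR_A$ at a prime $\p$ has dimension $\ell(R_{k(\p)}/\m^j)$. Hence $\ell(R/\m^j)\le \ell(R_k/\m^j)$ for all $j$, and only $e(R)\le e(R_k)$ follows, whereas you need $e(R)\ge 2$. Generic freeness makes $R_A/(x)^jR_A$ free over some $A_c$, but $c$ may have to lie in $\n$, so no localization at an element outside $\n$ yields equality.

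In fact the statement as written cannot be proved. Take $K=\mathbb{Q}$, $A=\mathbb{Z}[1/2]$, an odd prime $p$, $n\ge 2$, $f=px_1+x_1^2+\dots+x_n^2$ and $R_A=A[x_1,\dots,x_n]/(f)$. This is $A$-free, since $f$ is monic of degree $2$ in $x_n$.
\begin{itemize}
\item The fibre over $\n=pA$ is $\mathbb{F}_p[x]_{(x)}/(x_1^2+\dots+x_n^2)$, an $A_1$ singularity.
\item $R$ is regular, with $e(R)=1$, so it is not an $A_1$ singularity.
\item Already $R_A/(x)^2R_A$ has the torsion summand $A/pA$ spanned by $x_1$, so it is not free at $\n$.
\end{itemize}
In your notation $\rho=0$ and $g=f$, with $\det H(g)=2^n$ a unit of $A$. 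Yet $g$ acquires the linear term $px_1$ over $K$. Since $p\in\n$, no admissible localization removes it.

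So what is missing is a hypothesis, namely that $R$ is not regular. The paper's proof tacitly needs it too: it transfers $\det H(f_k)\neq 0$ to $\det H(f)\neq 0$ and applies the splitting lemma over $K$, which presupposes $f\in\m^2$. The hypothesis is available where the lemma is used, since $R$ is assumed nonregular in Theorem \ref{th:watanabe_yoshida_low_dimension}.

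Granting it, your route works: if $g$ had a linear part over $K$, then $\hat R=K[[y]]/(g)$ would be regular. Your route is also more careful than the paper's, which reduces to a principal ideal and so tacitly assumes $\operatorname{embdim}R_k=n$.

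Two smaller repairs are still needed.
\begin{itemize}
\item Your equality $\dim\hat R=\dim R_k$ is also justified by ``shrinking'', with the same defect: semicontinuity only gives $\le$.
\item $K[[y]]/(g)$ need not be a domain when $m\le 2$, e.g.\ $y_1^2+y_2^2$ over $\mathbb{C}$.
\end{itemize}
Both are bypassed by a local argument. The ring $A_\n[[y]]/(g_1,\dots,g_t)$ is flat over $A_\n$, because its completion is that of the localization of $R_A$ at $\n+(x)$. Moreover $\bar g$ generates $(\bar g_1,\dots,\bar g_t)$ and is a nonzerodivisor on $k[[y]]$. The local flatness criterion and Nakayama then give $(g_1,\dots,g_t)=(g)$ in $A_\n[[y]]$, hence $\hat R\cong K[[y]]/(g)$ directly, with no dimension count.
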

\begin{proof}
Let $(A,S_A,\mathfrak{a}_A)$ be descent data for the triple, where $\mathfrak{a}_A = (f_{1,A},\dots,f_{r,A})$. Since $\mathfrak{a}_A$ is $A$-free, $\mu_{k(\p)}(\mathfrak{a}_{k(\p)}) = r$ is constant throughout $\Spec{A}$. Now, by hypothesis, for some closed fiber $k$, we have that $R_k = S_k/\mathfrak{a}_k$ such that $\mathfrak{a}_k\widehat{S_k}$ is principal, which implies that $\mathfrak{a}_k$ is also principal, and therefore $r = 1$ throughout. Thus, possibly enlarging the model, we take $f_A\in S_A$ such that $\mathfrak{a}_A = f_A S_A$, $\mathfrak{a} = f_A S$, and $\mathfrak{a}_k = f_k S_k$.

Now, the Splitting Lemma \cite[Theorem 2.1]{greuelSplittingLemmaAny2026} implies that $R_k$ is an $A_1$ singularity in the sense of Definition \ref{def:formally_A_1_singularity} if and only if $\det H(f_k) \neq 0$. Let $u := \det H(f) \in K$, and take $u_A = u$, and set $u_k$ the images of $u$ in the closed fibers $k$. As we have defined the Hessian matrix, we have that $u_A = \det H(f_A)$ and $u_k = \det H(f_k)$. But if $u_k \neq 0$ at some closed fiber $k$, then $u_A\neq 0$, and the result follows from the Splitting Lemma over $K$.
\end{proof}
\subsection{Characteristic 0 version of some results in Hilbert–Kunz theory}
We start off by characterising the regularity of local rings essentially of finite type over a field of characeristic 0 via limit $F$-invariants. We need the following upper bound for the $F$-signature:
\begin{proposition}[A characteristic-free $(e(R),\dim R)$-dependent upper bound for $F$-signature]
    \label{th:F_signature_char_free_lower_bound}
Let $d\geq 2$. There exists $\epsilon(d)>0$ such that for every $(R,\m,k)$ non-regular $F$-finite local ring of characteristic $p>0$, dimension $d\geq 2$ and (Hilbert–Samuel) multiplicity $e = e(R)$,
\begin{gather*}
s(R) \leq 1-\frac{\epsilon(d)}{e-1}
\end{gather*}
\end{proposition}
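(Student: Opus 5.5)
The plan is to combine two known facts from characteristic $p$: the Aberbach--Enescu/Blickle--Enescu type lower bound on Hilbert--Kunz multiplicity, and the Huneke--Leuschke inequality relating $\eHK$, $e$ and $s$. Concretely, I would use the characteristic-free bound: for every non-regular, formally unmixed local ring of dimension $d\ge 2$, one has $\eHK(R)\geq 1+\epsilon(d)$ for some $\epsilon(d)>0$ depending only on $d$ (Aberbach--Enescu; Blickle--Enescu give the explicit $\epsilon(d)=1/(d!\,d^d)$). The difficulty is that $R$ need not be unmixed. However, if $s(R)>0$ then $R$ is strongly $F$-regular, hence a normal Cohen--Macaulay domain, so in particular unmixed and the bound applies. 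If $s(R)=0$, the desired inequality $s(R)\le 1-\epsilon(d)/(e-1)$ is immediate as long as $\epsilon(d)\le e-1$; this holds since $e\geq 2$ for nonregular $R$ (after the harmless reduction below), and we may shrink $\epsilon(d)\le 1$.

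Next, in the case $s(R)>0$, I would invoke the Huneke--Leuschke inequality \cite[Proposition 15]{hunekeTwoTheoremsMaximal2004} in its general form, $s(R)\le \dfrac{e-\eHK(R)}{e-1}$. This is proved by taking a minimal reduction $I$ of $\m$ (after passing to an infinite residue field via $R(t)=R[t]_{\m R[t]}$, which preserves $e$, $\eHK$ and $s$) and comparing the lengths $\ell(R/I^{[q]})$, $\ell(R/\m^{[q]})$ with the free rank $a_q$ of $F^e_*R$; Cohen--Macaulayness (available since $R$ is strongly $F$-regular) gives $\ell(R/I^{[q]})=q^d e$. This is exactly the inequality underlying the hypersurface identity recovered in Theorem~\ref{th:reflection}. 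Combining it with the previous step gives
\begin{gather*}
s(R)\le \frac{e-1-\epsilon(d)}{e-1}=1-\frac{\epsilon(d)}{e-1}.
\end{gather*}

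Finally, I would handle the degenerate possibility $e=1$. If $s(R)>0$ then $R$ is unmixed, and Nagata's theorem (formally unmixed with $e=1$ implies regular) contradicts non-regularity, so $e\ge 2$ there. If $s(R)=0$ and $e=1$, the right-hand side is not meaningful, so I would either restrict the statement to $e\ge2$ or note that the bound is read as vacuous there. The main obstacle I anticipate is checking that the Huneke--Leuschke inequality holds in the needed generality, with a possibly non-Gorenstein $R$ and $F$-finiteness only. I would redo its short proof: the map $R/I\to R/\m$ together with $F^e_*R\cong R^{a_q}\oplus M$ yields $\ell(R/I^{[q]})-\ell(R/\m^{[q]})\ge a_q(\ell(R/I)-1)$. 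Dividing by $q^{d}$ (normalized by $[k:k^p]$ in the $F$-finite setting) and letting $q\to\infty$ then gives $e-\eHK\ge s(e-1)$.
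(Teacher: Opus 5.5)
Your proof is correct. In the Gorenstein case it is exactly the paper's argument: the Huneke--Leuschke inequality $s(R)\le (e-\eHK(R))/(e-1)$ combined with the Aberbach--Enescu bound $\eHK(R)\ge 1+\epsilon(d)$ (the Blickle--Enescu constant works equally well).

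\textbf{Where the routes differ.} The paper splits strongly $F$-regular rings into Gorenstein and non-Gorenstein ones. It disposes of the non-Gorenstein case by citing \cite[Proposition 4.1]{jeffriesmaxFsign2023}, which gives $s(R)<\frac12$. You instead note, and reprove from $F^e_*R\cong R^{a_e}\oplus M_e$, that $e-\eHK(R)\ge s(R)(e-1)$ needs only two ingredients:
\begin{itemize}
\item Cohen--Macaulayness, so that $\ell(R/I^{[q]})=q^d e$ for a minimal reduction $I$ of $\m$;
\item the trivial estimate $\ell(M_e/IM_e)\ge \mu(M_e)$.
\end{itemize}
Strongly $F$-regular rings are Cohen--Macaulay, so no Gorenstein/non-Gorenstein split is needed. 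Your route is therefore uniform and self-contained. The paper's route, by contrast, yields the sharper, $e$-independent bound $s<\frac12$ for non-Gorenstein rings.

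\textbf{Degenerate cases.} You are more careful than the paper here. Your passage to $R(t)$, which preserves $e$, $\eHK$ and $s$, justifies the existence of a $d$-generated minimal reduction. You also correctly flag that a non-regular ring need not have $e\ge 2$ unless it is unmixed. For example, $k[[x,y,z]]/(xy,xz)$ has dimension $2$, $e=1$ and $s=0$, so the right-hand side is undefined. The statement should therefore be read for $e\ge 2$, which holds automatically whenever $s(R)>0$.
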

\begin{proof}
We assume that $R$ is strongly $F$-regular since, otherwise, $s(R)=0$. If $R$ is not Gorenstein, \cite[Proposition 4.1]{jeffriesmaxFsign2023} gives the stronger inequality $s(R) <\frac{1}{2}$. For the strongly $F$-regular Gorenstein case, the inequality follows from combining the inequalities in Proposition 4.4 in \textit{op. cit.} (or \cite[Proposition 14]{hunekeTwoTheoremsMaximal2004}) and \cite[Theorem 4.12]{aberbachenescu2008}.
\end{proof}
\begin{remark}
It still remains to find a characteristic-free and multiplicity-free upper bound for the $F$-signature, like Aberbach and Enescu's for the Hilbert–Kunz multiplicity. The best guess we have would be provided by the Watanabe–Yoshida conjecture for F-signature, \cite[Conjecture 2.10]{jeffriesmaxFsign2023}: if the conjecture is true, given that $p\mapsto s(R_{p,d})$ is an increasing sequence (by \cite{mengQuadrics2026}), it would follow from the conjecture that $s(R)\leq \lims(R_{0,d})$, for all non-regular local rings $(R,\m,k)$ of dimension $d$.
\end{remark}
We say that a local Noetherian ring $(R,\m)$ is \textit{unmixed} if $\dim R= \dim R/\p$ for every $\p\in \operatorname{Ass}(R)$, and \textit{formally unmixed} when the $\m$-adic completion $\widehat{R}$ is unmixed. Note that these two notions are equivalent over excellent rings.
\begin{theorem}
    \label{th:regularity_char0_infinitely}
Let $K$ be a field of characteristic zero and $(R,\m,K)$ be the ring
\begin{gather*}
R = \left(K[x_1,\dots,x_n]/\mathfrak{a}\right)_{(x_1,\dots,x_n)}
\end{gather*}
If $R$ is a regular local ring, then $\eHK^+(R) = \eHK^-(R) = s^+(R) = s^-(R) = 1$. If one of the invariants $\eHK^+(R),\eHK^-(R),s^+(R)$ or $s^-(R)$ equals one, and $R$ is unmixed, then $R$ is a regular local ring.
\end{theorem}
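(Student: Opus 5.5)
Both directions will be handled by passing to closed fibres of a family of characteristic $p$ models and applying known characteristic $p$ criteria uniformly in $p$.

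\textbf{Regular implies all four invariants equal one.} Fix descent data $(A,R_A)$. If $R$ is regular, then after inverting an element of $A$ (which does not change $e_{\text{HK},p}(R)$ or $s_p(R)$ for $p\gg0$, by the model-independence lemma) we may assume that $R_A$ is smooth over $A$ along the section $x_1=\dots=x_n=0$. Regularity is an open condition, and by generic smoothness it spreads out from the generic fibre. Then every closed fibre $R_{k(\m)}$ is a regular local ring, so $\eHK(R_{k(\m)})=s(R_{k(\m)})=1$ for every $\m\in\MaxSpec A/pA$ with $p\gg0$. Hence $e_{\text{HK},p}(R)=s_p(R)=1$ for $p\gg0$, and all four limits equal $1$.

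\textbf{Converse.} Assume $R$ is unmixed and nonregular, and let $d=\dim R$. After shrinking $A$, the closed fibres $R_k$ are nonregular of dimension $d$ with Hilbert--Samuel multiplicity $e(R_k)=e(R)$, because the multiplicity and the dimension are constant on a dense open set of the base of a flat family. They are also unmixed: equidimensionality of associated primes spreads out, and over the excellent rings $R_k$ unmixed coincides with formally unmixed. I would cite standard spreading-out results for these properties. For the $F$-signature, Proposition \ref{th:F_signature_char_free_lower_bound} gives
\[
s(R_k)\le 1-\frac{\epsilon(d)}{e(R)-1}
\]
when $d\ge2$, uniformly in $k$. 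Taking the supremum and then the $\limsup$ gives $s^+(R)<1$, and therefore also $s^-(R)<1$. In dimension $d\le1$, the nonregular rings $R_k$ are not strongly $F$-regular, so $s(R_k)=0$. For the Hilbert--Kunz multiplicity, I would use the Aberbach--Enescu characteristic-free lower bound: there is $\delta(d)>0$ with $\eHK(S)\ge1+\delta(d)$ for every nonregular formally unmixed local ring $S$ of dimension $d$ and any characteristic $p$. For $d=1$, one can instead use $\eHK(S)=e(S)\ge2$. This gives $e_{\text{HK},p}(R)\ge1+\delta(d)$ for $p\gg0$, hence $\eHK^-(R)>1$, and so $\eHK^+(R)>1$ as well. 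Consequently, if any of the four invariants equals $1$, then $R$ must be regular.

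\textbf{Main obstacle.} The delicate point is spreading out nonregularity and unmixedness simultaneously to all closed fibres over $p\gg0$. Regularity of the fibre at the origin can generically jump, and the infimum and supremum range over all closed points of $A/pA$. I would handle nonregularity through the Jacobian criterion. The ideal of appropriate minors, evaluated at the origin modulo $\mathfrak a$, is a proper ideal over $K$, and this remains true on a dense open set of $\Spec A$. Its complement is contained in finitely many fibres over $p$, which we discard. By the earlier invariance lemma, removing such sets does not change the invariants for $p\gg0$. Unmixedness would be spread out in the same way, using a primary decomposition of $\mathfrak a$ defined over $A$.
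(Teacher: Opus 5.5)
Your proposal is correct and follows essentially the paper's argument (spread constant dimension, multiplicity and unmixedness to the closed fibres, then apply the uniform Aberbach--Enescu bound and Proposition~\ref{th:F_signature_char_free_lower_bound}), only run as a contrapositive: you use the Jacobian criterion (the rank of $J(0)$ can only drop on specialisation, so nonregularity holds on every fibre of dimension $d$) where the paper uses openness of the regular-fibre locus via EGA IV 12.2.4. One small slip is that the complement of a dense open subset of $\Spec A$ need not lie in finitely many fibres over $p$ (e.g.\ $V(t)\subset\Spec\mathbb{Z}[t]$); this is harmless, since no shrinking is needed for nonregularity and the invariance lemma already covers inverting an element, and your separate treatment of $d\le 1$ covers a case the paper passes over when it invokes the $d\ge 2$ signature bound.
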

\begin{proof}
If $R$ is regular, after localising $A$ at some element, we can assume that $R_k$ is regular for all fibers of any family $A\rightarrow R_A$ of characteristic $p$ models, and the first statement follows.

For the second statement, it is enough to check the statement when $\eHK^-(R) = 1$ or $s^+(R) = 1$. Fix descent data $(A,R_A)$ for $(K,R)$. Since $R$ is unmixed, by \cite[Theorem (2.3.9)(g)]{hochsterTightClosureEqual1999}, after possibly localising $A$ at some element, we can further assume that all closed fibers of $A\rightarrow R_A$ are unmixed. The locus $U\subset\Spec{A}$ of geometrically regular fibers $R_k$ is open by \cite[Théorème 12.2.4]{EGA4_3}, so we only need to show that $U$ is non-empty, since then it contains the generic point, and thus that $R_K = R$ is regular.

By flatness, all closed fibers $R_k$ of $A\rightarrow R_A$ have the same dimension $d$ \cite[Theorem 15.1]{matsumura1989commutative}, so by \cite[Theorem 4.12]{aberbachenescu2008}, there exists some $\varepsilon(d)>0$ such that if $\eHK(R_k)<1+\varepsilon(d)$, then $\eHK(R_k) = 1$. If $\eHK^-(R) = 1$, there are fibers $k$ such that $R_k$ is geometrically regular over $k$ by \cite[Theorem 1.5]{WY00}. Since multiplicity is constant across the closed fibers of $A$ by \cite[Lemma 4.2]{nunezbetancourtsmirnov2020}, Proposition \ref{th:F_signature_char_free_lower_bound} yields that $s^+(R)=1$ also implies that $U$ is non-empty.
\end{proof}
\begin{corollary}
Let $K,R$ and $\mathfrak{a}$ be as in Theorem \ref{th:regularity_char0_infinitely}. Let $A$ be any finitely generated $\mathbb{Z}$-algebra containing the coefficients of a set of generators of $\mathfrak{a}$. If there is some dense $U \subset \MaxSpec{A}$ such that $\eHK(R_{k(\mathfrak{m})}) = 1$ for every $\m\in U$, then $R$ is regular. In particular, if $R$ is unmixed, $A$ is finitely generated as a $\mathbb{Z}$-module, $\eHK(R_{k}) = 1$ at infinitely many fibers, then $R$ is regular.
\end{corollary}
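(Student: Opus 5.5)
The plan is to reduce both claims to Theorem \ref{th:regularity_char0_infinitely}. For the first claim, fix a dense set $U\subset\MaxSpec{A}$ with $\eHK(R_{k(\m)})=1$ for every $\m\in U$. After enlarging $A$ by adjoining finitely many elements and inverting one element, $(A,R_A)$ becomes descent data for $(K,R)$, with $R_A$ free over $A$, $A$ smooth over $\mathbb{Z}$, and closed fibers unmixed of constant dimension $d$. Two things must survive this enlargement. First, density of $U$ passes to the preimage under $\MaxSpec{A'}\to\MaxSpec{A}$: the map is dominant and of finite type, and its image contains a dense open set, which meets $U$. Second, the fibers at these points still have $\eHK=1$: a point $\m'$ over $\m$ gives a residue field extension $k(\m)\subset k(\m')$, and $R_{k(\m')}=R_{k(\m)}\otimes_{k(\m)}k(\m')$ over finite fields. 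This flat base change has regular fibers, so $\eHK$ is preserved by \cite[Theorem 3.7]{carvajal2021bertini}.

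Next, for each such point, \cite[Theorem 1.5]{WY00} shows that $R_k$ is regular, provided $R_k$ is unmixed; this is why we arranged unmixed closed fibers via \cite[Theorem (2.3.9)(g)]{hochsterTightClosureEqual1999}. Since $k$ is perfect, $R_k$ is then geometrically regular. As in the proof of Theorem \ref{th:regularity_char0_infinitely}, the locus of geometrically regular fibers is open by \cite[Théorème 12.2.4]{EGA4_3}. Because it contains a closed point, it is non-empty, so it contains the generic point, and $R=R_A\otimes_A K$ is regular.

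The main obstacle is the first statement's missing unmixedness hypothesis. Without it, \cite{WY00} does not apply, since non-unmixed rings can have $\eHK=1$, for example $k[[x,y]]/(x^2,xy)$. I would therefore either read unmixedness as implicit, as the ``in particular'' clause suggests, or argue as follows. A dense set of fibers with $\eHK=1$ forces $\eHK(R_L)=1$ at generic points of $A/pA$, using the semicontinuity of \cite{smirnov2020semicontinuity}. Then $\hat R_L$ has a unique top-dimensional component, which is regular, and unmixedness is needed to exclude embedded or lower-dimensional components. I would add the unmixed hypothesis and note this explicitly.

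For the ``in particular'' clause, suppose $A$ is a finitely generated $\mathbb{Z}$-module. Then $A$ is an order in a number field, so $\MaxSpec{A}$ is one-dimensional. Any infinite set of closed points is then dense, since proper closed subsets of the irreducible components of $\Spec A$ dominating $\Spec\mathbb{Z}$ are finite. Concretely, $A\otimes\mathbb{Q}\subset K$ is a field, so $A$ is a domain and $\Spec A$ is irreducible. An infinite set of closed points is therefore dense, and the first part applies.
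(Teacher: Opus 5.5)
Your proof is correct and is essentially the argument the paper intends. The paper leaves the corollary as a direct consequence of the proof of Theorem \ref{th:regularity_char0_infinitely}: after inverting one element of $A$ the family is flat with unmixed closed fibers, the dense set $U$ meets this open set, \cite[Theorem 1.5]{WY00} makes that fiber regular, and \cite[Th\'eor\`eme 12.2.4]{EGA4_3} spreads regularity to the generic fiber. Adjoining elements to $A$ is unnecessary, since a single localization already gives freeness and unmixed fibers, so your residue-field base-change step can be dropped.

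You are also right that the first assertion needs the unmixedness hypothesis. Your example $(K[x,y]/(x^2,xy))_{(x,y)}$ with $A=\mathbb{Z}$ has $\eHK=1$ at every fiber but is not regular, so the statement is false as literally written.
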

We will now extend the known cases of the Watanabe–Yoshida conjecture \cite[Conjecture 4.2]{WY05} to characteristic zero.
\begin{notation}
    Set
\begin{equation*}
R_{0,d} = \left(\dfrac{\mathbb{C}[x_0,\dots,x_d]}{(x_0^2+\dots+x_d^2)}\right)_{(x_0,\dots,x_d)} \quad \text{and} \quad S_{0,d} = \left(\dfrac{\mathbb{C}[x_0,\dots,x_d]}{(x_0^3+x_1^2+\dots+x_d^2)}\right)_{(x_0,\dots,x_d)}
\end{equation*}
the rings of germs of complex rational functions at the origin of the $A_1$ and $A_2$ singularities, respectively (see \ref{def:simple_singularities} for the definition of the ADE singularities).
\end{notation}

\begin{theorem}[Characteristic 0 Watanabe–Yoshida conjecture in low dimension]
    \label{th:watanabe_yoshida_low_dimension}
    Let $K$ be a field of characteristic $0$. Let $(R,\m,K)$ be a $d$-dimensional $K$-algebra of essentially finite type. Moreover, assume that $R$ is unmixed and nonregular. If either $2\leq d\leq 7$, or $R$ is a complete intersection of dimension $\geq 2$, then either $\eHK^-(R)> \limeHK(R_{0,d})$, or $R$ is an $A_1$ singularity.
    \end{theorem}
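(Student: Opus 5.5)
The plan is to transfer the positive-characteristic known cases of the Watanabe–Yoshida conjecture fiberwise to the characteristic $p$ models, and then pass to the limit. Fix descent data $(A,R_A)$ for $(K,R)$. As in the proof of Theorem \ref{th:regularity_char0_infinitely}, after localising $A$ at an element I may assume every closed fiber $R_k$ is unmixed of dimension $d$ and, by openness of the regular locus, nonregular; if $R$ is a complete intersection, I may also assume every $R_k$ is one. Over $\bar{k}$, the known cases in positive characteristic give
\begin{gather*}
\eHK(R_k)\geq \eHK(R_{p,d}),
\end{gather*}
with equality only if $\widehat{R_k}\otimes\bar k$ is an $A_1$ singularity. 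Here $R_{p,d}$ is the reduction mod $p$ of $R_{0,d}$, and this holds for $p\gg0$. I would cite \cite{WY05,aberbachenescu2013,coxsteibaberbach2024} for $d\leq 7$ and \cite{enescushimomotoDenseUpperSemicontinuity2005} for complete intersections. Taking the infimum over fibers and then $\liminf_p$, and using $\lim_p\eHK(R_{p,d})=\limeHK(R_{0,d})$ from Theorem \ref{th:gessel_monsky}, yields $\eHK^-(R)\geq\limeHK(R_{0,d})$.

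The real issue is strictness. Suppose equality $\eHK^-(R)=\limeHK(R_{0,d})$ holds. If infinitely many fibers are $A_1$ singularities, then Lemma \ref{lm:reduction_A_1_singularity} shows $R$ is an $A_1$ singularity, and we are done. Otherwise, for $p\gg0$ no fiber is $A_1$. I then need a gap: a constant $\delta>0$, uniform in $p$, such that a non-$A_1$, nonregular, unmixed ring $S$ of dimension $d$ (in the covered cases) satisfies
\begin{gather*}
\eHK(S)\geq \eHK(R_{p,d})+\delta.
\end{gather*}
My approach is to invoke the strong form of the conjecture from \cite{castilloreyStrong2025}, which says the next smallest value is attained by the $A_2$ singularity, so that $\eHK(S)\geq\eHK(S_{p,d})$. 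The computations in Section \ref{sec:limit_h_functions_A_n} give $\limeHK(S_{0,d})$ explicitly via Corollary \ref{cor:limit_eHK_A_n_singularities} (with $n=2$), and that section already computed both the $A_1$ and $A_2$ limits. Table \ref{tab:lHK-A} and the generating functions show $\limeHK(S_{0,d})>\limeHK(R_{0,d})$, since the difference of the Taylor coefficients of the two generating functions in the $d$-th degree is positive. This forces $\eHK^-(R)\geq\limeHK(S_{0,d})>\limeHK(R_{0,d})$, a contradiction.

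The main obstacle I expect is making this strong-form input available with exactly the hypotheses needed: dimension range $2\leq d\leq7$ or complete intersection, characteristic $p\gg0$, and residue fields that are only finite rather than algebraically closed. My plan is to base change to $\bar k$ using invariance of $\eHK$ under faithfully flat extension with regular fibers \cite{carvajal2021bertini}. Checking that the complete-intersection case of the strong form holds in all dimensions, and verifying $\limeHK(S_{0,d})>\limeHK(R_{0,d})$ for every $d\ge2$ (not only the tabulated ones), are the steps I would scrutinise most carefully.
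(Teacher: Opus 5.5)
Your reduction follows the paper's proof closely. You pass to closed fibers that are unmixed, nonregular, and complete intersections after localising $A$. You apply the known positive-characteristic cases fiberwise to get $\eHK^-(R)\geq\limeHK(R_{0,d})$. In the equality case you combine the strong form from \cite{castilloreyStrong2025} (a non-$A_1$ fiber has $\eHK\geq\eHK(S_{p,d})$) with a limit gap between $A_2$ and $A_1$, and finish with Lemma \ref{lm:reduction_A_1_singularity}. That lemma needs only one $A_1$ fiber, not infinitely many.

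\textbf{The gap.} The unproved step is the one you flag yourself: the strict inequality $\limeHK(S_{0,d})>\limeHK(R_{0,d})$ for every $d\geq 2$. You support it by asserting that the $d$-th Taylor coefficient of $\tfrac32(\cos\tfrac z3+\sin\tfrac{2z}{3})\sec z-(\sec z+\tan z)$ is positive. However, Table \ref{tab:lHK-A} covers only $d\leq 6$, and the closed forms do not make positivity evident for all $d$. The numerator of the difference has Taylor coefficients of both signs, and both coefficient sequences tend to $0$. A direct proof would therefore need singularity analysis with explicit error bounds plus a finite check. This is exactly where strictness must come from. In each characteristic one has $\eHK(S_{p,d})>\eHK(R_{p,d})$, but strict inequalities do not survive $p\to\infty$. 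Without a separate argument, the equality case $\eHK^-(R)=\limeHK(R_{0,d})$ is not excluded for non-$A_1$ rings.

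\textbf{How the paper closes it.} Lemma \ref{lm:A_1_and_A_2_different_eHK} splits the claim into two easier parts.
\begin{enumerate}
\item The weak inequality is free. Let $p\to\infty$ in the positive-characteristic inequality $\eHK(S_{p,d})\geq\eHK(R_{p,d})$, which you already have because $S_{p,d}$ is a nonregular hypersurface.
\item Non-equality is arithmetic rather than analytic. The coefficients of $\sec z+\tan z$ are $E_d/d!$ with $E_d\in\mathbb{Z}$. By Proposition \ref{pr:augmented_r_signed_permutations_generating_function} with $z\mapsto z/3$, the coefficients of the $A_2$ function are $N_d(2,3)/(2\cdot 3^{d-1}d!)$ with $N_d(2,3)\in\mathbb{Z}$.
\item Write $\sec 3z=1+3C(z)$, where $C$ has integral exponential coefficients. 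Then $N_d(2,3)$ is congruent modulo $3$ to the $d$-th exponential coefficient of $\cos z+\sin 2z$. That coefficient is $(-1)^k$ for $d=2k$ and $(-1)^k2^{2k+1}$ for $d=2k+1$, hence nonzero modulo $3$.
\item Consequently $N_d(2,3)/(2\cdot 3^{d-1})\notin\mathbb{Z}$ for $d\geq 2$, so it cannot equal $E_d$.
\end{enumerate}
With this lemma in place, the rest of your argument goes through as written.
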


\begin{lemma}
    \label{lm:A_1_and_A_2_different_eHK}
For every $d\geq 2$, $\limeHK\left(S_{0,d}\right)>\limeHK\left(R_{0,d}\right)$ i.e., the limit Hilbert–Kunz multiplicity distinguishes the $A_1$ and $A_2$ singularities.
\end{lemma}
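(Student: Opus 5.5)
We compare the two limit Hilbert–Kunz multiplicities through their limit $h$-functions, which are known explicitly. Write $\psi_d := h_{x_0^2+\dots+x_d^2,\infty}$ and $\chi_d := h_{x_0^3+x_1^2+\dots+x_d^2,\infty}$. By Theorem \ref{th:F_invariants_lim_diff_h}, $\limeHK(R_{0,d}) = \psi_d'(0^+)$ and $\limeHK(S_{0,d}) = \chi_d'(0^+)$. So it is enough to show $\chi_d'(0^+) > \psi_d'(0^+)$ for all $d\geq 2$.

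The natural tool is the recursion from the proof of Theorem \ref{th:limit_h_function_A_n}. For both families, adding a square gives
\begin{gather*}
h_{d+1}(x) = \int_{\frac12 - x}^{\frac12+x} h_d(t)\,dt \quad\text{for } 0\le x\le \tfrac12 .
\end{gather*}
Hence
\begin{gather*}
h_{d+1}'(0^+) = 2h_d(\tfrac12).
\end{gather*}
The claim therefore reduces to the pointwise inequality $\chi_{d-1}(\tfrac12) > \psi_{d-1}(\tfrac12)$ for every $d-1\geq 1$.

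I would prove the stronger statement $\chi_k(x) \geq \psi_k(x)$ on $[0,1]$ by induction on $k$, with strict inequality on $(0,1)$.
\begin{itemize}
\item \textbf{Base case $k=1$.} Both are explicit. $\psi_1 = 2K(\tfrac12,x) = \min(2x,1)$. $\chi_1$ is the $n=3$ case of the formula in the proof of Theorem \ref{th:limit_h_function_A_n}: it equals $2x$ on $[0,\tfrac16]$, is a concave quadratic on $[\tfrac16,\tfrac56]$, and equals $1$ afterwards. A direct check shows $\chi_1 \geq \psi_1$. At $x=\tfrac12$ the inequality is strict, since $\chi_1(\tfrac12) = \tfrac{1}{24}\cdot(\ldots)$ matches the value $-\tfrac38+\tfrac54-\tfrac1{24}$ from Figure \ref{fig:limit_h_functions_A2_example}, which is less than $1$. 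This strict inequality is the wrong direction, so I must be careful here.

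\item \textbf{Sign check at $x=\tfrac12$.} At $x=\tfrac12$ we get $\psi_1(\tfrac12)=1$ and $\chi_1(\tfrac12)=\tfrac{5}{6}<1$. The inequality actually goes the other way: $\chi_k \le \psi_k$, since $x_0^3$ vanishes to higher order and its $h$-function lies below that of $x_0^2$. Then $\chi_2'(0^+)=2\chi_1(\tfrac12)=\tfrac53 < 2 = \psi_2'(0^+)$. This agrees with Table \ref{tab:lHK-A} ($2-\tfrac1n$ versus $\limeHK(A_1)=\tfrac32$ when $n=1$, and indeed $\tfrac53>\tfrac32$).
\end{itemize}

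So a naive pointwise comparison of $\psi$ and $\chi$ is the wrong route. The correct comparison uses the generating functions of Corollary \ref{cor:limit_eHK_A_n_singularities} directly.

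The reduced problem is the following. For $n=1$ the series is $T(z)$. For $n=2$ it is
\begin{gather*}
G(z) := \tfrac32\,\frac{\cos\frac z3+\sin\frac{2z}3}{\cos z}.
\end{gather*}
We must show that the $d$-th Taylor coefficient of $G - T$ is positive for $d\geq2$. Write
\begin{gather*}
G - T = \frac{\tfrac32\cos\frac z3 - 1 + \tfrac32\sin\frac{2z}{3} - \sin z}{\cos z}.
\end{gather*}
Both $\sec z$ and $\tan z$ have nonnegative Taylor coefficients, so it suffices to expand the numerator and control signs. Separate the even part and the odd part:
\begin{itemize}
\item Even part: $(\tfrac32\cos\frac z3 -1)\sec z$.
\item Odd part: $(\tfrac32\sin\frac{2z}3 - \sin z)\sec z$.
\end{itemize}

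I would compare each with an asymptotic. The poles nearest the origin are at $z=\pm\pi/2$. Their residues give the dominant growth $c_d/d! \sim C_\pm (2/\pi)^{d}$, with
\begin{gather*}
C_+ \propto \tfrac32\cos\tfrac\pi6+\tfrac32\sin\tfrac\pi3-2>0
\end{gather*}
for the combined numerator at $z=\pi/2$. An analogous positive constant arises at $z=-\pi/2$ for the relevant parity. The remaining singularities, at $\pm 3\pi/2$, contribute terms of order $(2/3\pi)^d$. An explicit remainder bound then makes the coefficients positive for $d\geq d_0$, and the small $d$ are checked against Table \ref{tab:lHK-A}.

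The main obstacle is this tail estimate: making the Darboux/partial-fraction error explicit enough that the cutoff $d_0$ is small. The plan is to expand $\sec$ and $\tan$ as Mittag-Leffler series over all poles $(k+\tfrac12)\pi$, which gives exact, absolutely convergent formulas for each coefficient.
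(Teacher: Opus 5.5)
As written, this does not prove the lemma. The route you finally adopt stops exactly at the step that carries all the content. The tail estimate is never made explicit and $d_0$ is never determined. Table~\ref{tab:lHK-A} only covers $d\le 6$, so positivity of the coefficients of $G-T$ is asserted, not shown.

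The route you discarded was dropped because of a computational error, not a real obstruction. The curve member of the $A_1$ family is $\psi_1=4K(\tfrac12,x)=2x-x^2$, not $\min(2x,1)$. The prefactor is $2n=4$, and $(\tfrac12,\tfrac12,x)\in T_0$ for $0<x<1$; this is just the $h$-function $1-(1-t)^2$ of a node. Hence $\psi_1(\tfrac12)=\tfrac34<\tfrac56=\chi_1(\tfrac12)$, and $\psi_2'(0^+)=\tfrac32<\tfrac53=\chi_2'(0^+)$. Your value $\psi_2'(0^+)=2$ contradicts $\limeHK(R_{0,2})=\tfrac32$, which you quote in the same line. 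Your heuristic is also reversed: $h_{x_0^3}=\min(3t,1)\ge\min(2t,1)=h_{x_0^2}$, because a deeper $f$ produces a larger colength.

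Your original induction goes through and gives a complete, self-contained proof. On $[0,1]$ one has
\begin{gather*}
\chi_1-\psi_1=\begin{cases} x^2 & x\in[0,\tfrac16],\\ \tfrac12 x(1-x)-\tfrac1{24} & x\in[\tfrac16,\tfrac56],\\ (1-x)^2 & x\in[\tfrac56,1],\end{cases}
\end{gather*}
which is positive on $(0,1)$; the middle piece is at least $\tfrac1{36}$. Suppose $\chi_k>\psi_k$ on $(0,1)$ for some $k\ge1$. Then $\chi_{k+1}(x)-\psi_{k+1}(x)=\int_{1/2-x}^{1/2+x}(\chi_k-\psi_k)\,dt>0$ for $x\in(0,\tfrac12]$. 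Both families consist of double points, so Corollary~\ref{cor:reflection_h_function_double_points} gives $h(x)=h(1-x)+2x-1$ for each. The difference is therefore invariant under $x\mapsto 1-x$ and positive on all of $(0,1)$. Consequently $\limeHK(S_{0,d})-\limeHK(R_{0,d})=2\bigl(\chi_{d-1}(\tfrac12)-\psi_{d-1}(\tfrac12)\bigr)>0$ for every $d\ge2$.

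This differs from the paper's proof. The paper gets the non-strict inequality by passing to the limit in the characteristic-$p$ comparison \cite[Theorem 2.14]{castilloreyStrong2025}. It then rules out equality arithmetically, using $N_d(2,3)\not\equiv 0\pmod 3$ against the integrality of the zig-zag numbers. The pointwise argument needs only the recursion and the reflection formula.

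Your analytic route can also be finished, though none of this is in the proposal. Write $N(z)$ for the numerator of $G-T$ over $\cos z$; it vanishes at $-\pi/2$. The residue expansion over $w_k=(k+\tfrac12)\pi$ gives $[z^d](G-T)=\sum_k N(w_k)/\bigl(w_k^{d+1}\sin w_k\bigr)$, with leading term $(\tfrac{3\sqrt3}{2}-2)(\tfrac2\pi)^{d+1}$. Since $|N|\le 5$ on the real line, the remaining terms are bounded by $10\sum_{j\ge1}((j+\tfrac12)\pi)^{-d-1}$, which is already smaller than the leading term for every $d\ge2$.
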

\begin{proof}
We already know that $\limeHK(S_{0,d})\geq \limeHK(R_{0,d})$ by taking the limit $p\rightarrow \infty$ in \cite[Theorem 2.14]{castilloreyStrong2025}. Hence, it now suffices to show that these two values are different at every $d\geq 2$. Due to \ref{cor:limit_eHK_A_n_singularities}, this equates to showing that every coefficient of degree $\geq 2$ in the Taylor expansion of $A(z) = (1+\sin z)\sec z$ is different from every coefficient of the Taylor expansion of $B(z) = \frac{3}{2}(\cos \frac{z}{3} + \sin \frac{2z}{3})\sec z$.

By Proposition \ref{pr:augmented_r_signed_permutations_generating_function}, every coefficient of $A(z)$ is of the form $E_d/d!$ where $E_d$ are the Euler numbers, which are integers, and in the case of $B(z)$, the coefficients are $\frac{N_d(2,3)}{2\cdot 3^{d-1}d!}$, where $N_d(3,2)$ are also integers (see Definition \ref{def:augmented_r_signed_permutations} for the definition of the numbers $N_d(p,r)$). Thus, we need to see that $E_d \neq \frac{N_d(2,3)}{2\cdot 3^{d-1}}$ for $d\geq 2$. Since $E_d\in \mathbb{Z}$, it suffices to show that $N_d(2,3)\not\equiv 0 \pmod{3}$, so that $\frac{N_d(2,3)}{2\cdot 3^{d-1}}\in \mathbb{Q}\setminus \mathbb{Z}$ for all $d\geq 2$.

Now, notice that $\sec(3z) = \sum E_{2k}3^{2k}\frac{z^{2k}}{(2k)!}$, and since the $E_{2k}$ are integers, all coefficients of $\sec(3z)$ starting from degree $2$ are multiples of $3$, i.e., $\sec(3z) = 1 + 3C(z)$ where $C(z) = \sum_{d\geq 2} \frac{c_d}{d!} z^d$ and $c_d$ are integers. Therefore, 
\begin{equation*}
\sum N_d(2,3)\dfrac{z^d}{d!} = (\cos z + \sin 2z)\sec 3z = (\cos z + \sin 2z) + 3(\cos z + \sin 2z)C(z),
\end{equation*}
that is, for $d\geq 0$,
\begin{gather*}
N_d(2,3) \equiv \begin{cases}
(-1)^k \pmod{3} & d = 2k,\\
(-1)^k2^{2k+1} \pmod{3} & d = 2k + 1.
\end{cases}
\end{gather*}
In particular, $N_d(2,3) \not\equiv 0\pmod{3}$ for $d\geq 2$, and therefore $\limeHK(R_{0,d})\neq \limeHK(S_{0,d})$ for $d\geq 2$.
\end{proof}

\begin{proof}[Proof of \ref{th:watanabe_yoshida_low_dimension}]
Let $(A,R_A)$ be descent data for $(K,R)$. The fibers $R_{k(\m)}$ are $d$-dimensional at every closed point. By \cite[Théorème 12.2.4]{EGA4_3}, since $R=R_K$ is not regular, and $R_K$ is an extension of the generic fiber of $A$, also $R_{k(\m)}$ is not regular for all $\m\in \MaxSpec{A}$. Furthermore, again by \cite[(2.3.9)]{hochsterTightClosureEqual1999}, after localising $A$ at an element we can assume that if $R$ is unmixed, or a complete intersection, the fibers $R_{k(\m)}$ are also unmixed, or a complete intersection, respectively, for all closed points $\m\in \MaxSpec{A}$.

Now, let $k = A/\m$ of characteristic $p$ for some $\m \in \MaxSpec{A}$. Then, if $2\leq d\leq 7$, or $R$ is a complete intersection, then $\eHK(R_{k(\m)}) \geq \eHK(R_{p,d})$ by \cite{WY05,enescushimomotoDenseUpperSemicontinuity2005,aberbachenescu2013,coxsteibaberbach2024}, and therefore $\eHKp(R) \geq \eHK(R_{p,d})$, which implies that $\eHK^-(R) \geq \limeHK(R_{0,d})$. If $\eHK^-(R) = \limeHK(R_{0,d})$, due to \ref{lm:A_1_and_A_2_different_eHK}, it follows that $\eHK(R_k) = \eHK(R_{p,d})$ for some closed fiber $k$, which under the hypotheses above, implies that $R_k$ is an $A_1$ singularity by \cite[Theorem 5.1]{castilloreyStrong2025}. Now, the result follows from \ref{lm:reduction_A_1_singularity}.
\end{proof}

\bibliographystyle{alpha}
\bibliography{refs_1}

\newcommand{\etalchar}[1]{$^{#1}$}
\begin{thebibliography}{BCPT25}

\bibitem[AE08]{aberbachenescu2008}
Ian~M Aberbach and Florian Enescu.
\newblock Lower bounds for {{Hilbert-Kunz}} multiplicities in local rings of
  fixed dimension.
\newblock {\em Michigan Mathematical Journal}, 57:1--16, 2008.

\bibitem[AE13]{aberbachenescu2013}
Ian~M Aberbach and Florian Enescu.
\newblock New estimates of {{Hilbert}}--{{Kunz}} multiplicities for local rings
  of fixed dimension.
\newblock {\em Nagoya Mathematical Journal}, 212:59--85, 2013.

\bibitem[Arn72]{arnoldNormalFormsFunctions1972}
Vladimir~Igorevich Arnol'd.
\newblock Normal forms for functions near degenerate critical points, the
  {{Weyl}} groups of {{Ak}}, {{Dk}}, {{Ek}} and {{Lagrangian}} singularities.
\newblock {\em Functional Analysis and its applications}, 6(4):254--272, 1972.

\bibitem[BCPT25]{brosowskyLimitFsignatureFunctions2025}
Anna Brosowsky, Izzet Coskun, Suchitra Pande, and Kevin Tucker.
\newblock Limit {{F-signature}} functions of two-variable binomial
  hypersurfaces, 2025.

\bibitem[BE04]{blickleRingsSmallHK2004}
Manuel Blickle and Florian Enescu.
\newblock On {{Rings}} with {{Small Hilbert-Kunz Multiplicity}}.
\newblock {\em Proceedings of the American Mathematical Society},
  132(9):2505--2509, 2004.

\bibitem[BH98]{brunsherzog1998}
Winfried Bruns and H~J{\"u}rgen Herzog.
\newblock {\em Cohen-{{Macaulay Rings}}}.
\newblock Cambridge University Press, 1998.

\bibitem[Bri17]{brinkmann2017}
Daniel Brinkmann.
\newblock The {{Hilbert}}--{{Kunz}} functions of two-dimensional rings of type
  {{ADE}}.
\newblock {\em Journal of Algebra}, 469:358--389, 2017.

\bibitem[BST12]{blickleFsignaturePairsAsymptotic2012}
Manuel Blickle, Karl Schwede, and Kevin Tucker.
\newblock F-signature of pairs and the asymptotic behavior of {{Frobenius}}
  splittings.
\newblock {\em Advances in Mathematics}, 231(6):3232--3258, 2012.

\bibitem[BST13]{blickleFsignaturePairsContinuity2013}
Manuel Blickle, Karl Schwede, and Kevin Tucker.
\newblock F-signature of pairs: Continuity, p-fractals and minimal log
  discrepancies.
\newblock {\em Journal of the London Mathematical Society}, 87(3):802--818,
  2013.

\bibitem[CA24]{coxsteibaberbach2024}
Nicholas~O {Cox-Steib} and Ian~M Aberbach.
\newblock Bounds for the {{Hilbert-Kunz Multiplicity}} of {{Singular Rings}}.
\newblock {\em Acta Mathematica Vietnamica}, 49(1):39--60, March 2024.

\bibitem[{Cas}25]{castilloreyStrong2025}
Joel {Castillo-Rey}.
\newblock Strong {{Watanabe-Yoshida}} conjecture for {{Complete
  Intersections}}, 2025.

\bibitem[CST21]{carvajal2021bertini}
Javier {Carvajal-Rojas}, Karl Schwede, and Kevin Tucker.
\newblock Bertini theorems for {{F-signature}} and {{Hilbert}}--{{Kunz}}
  multiplicity: {{J}}. {{Carvajal-Rojas}} et al.
\newblock {\em Mathematische Zeitschrift}, 299(1):1131--1153, 2021.

\bibitem[CSTZ24]{shidelerFsignatureFunctionsDiagonal2024}
Alessio Caminata, Samuel Shideler, Kevin Tucker, and Francesco Zerman.
\newblock F-signature functions of diagonal hypersurfaces, 2024.

\bibitem[ER95]{ehrenborg1995}
Richard Ehrenborg and Margareta Readdy.
\newblock Sheffer posets and r-signed permutations.
\newblock {\em Ann. Sci. Math. Qu\'ebec}, 19(2):173--196, 1995.

\bibitem[ES05]{enescushimomotoDenseUpperSemicontinuity2005}
Florian Enescu and Kazuma Shimomoto.
\newblock On the upper semi-continuity of the {{Hilbert}}--{{Kunz}}
  multiplicity.
\newblock {\em Journal of Algebra}, 285(1):222--237, 2005.

\bibitem[GK90]{greuelSimpleSingularitiesPositive1990}
Gert-Martin Greuel and H~Kr{\"o}ning.
\newblock Simple {{Singularities}} in {{Positive Characteristic}}.
\newblock {\em Mathematische Zeitschrift}, 203(2):339--354, 1990.

\bibitem[GM10]{gesselLimit2010}
Ira~M Gessel and Paul Monsky.
\newblock The limit as $p\rightarrow \infty$ of the {{Hilbert-Kunz}}
  multiplicity of $\sum_{i=1}^d x_i^{d_i}$, 2010.

\bibitem[GP26]{greuelSplittingLemmaAny2026}
Gert-Martin Greuel and Gerhard Pfister.
\newblock The {{Splitting Lemma}} in {{Any Characteristic}}.
\newblock {\em Journal of Algebra}, 689:610--628, March 2026.

\bibitem[Gro66]{EGA4_3}
Alexander Grothendieck.
\newblock \'el\'ements de g\'eom\'etrie alg\'ebrique: {{IV}}. {{\'Etude}}
  locale des sch\'emas et des morphismes de sch\'emas, {{Troisi\`eme}} partie.
\newblock {\em Publications Math\'ematiques de l'IH\'ES}, 28:5--255, 1966.

\bibitem[HH99]{hochsterTightClosureEqual1999}
Melvin Hochster and Craig Huneke.
\newblock Tight closure in equal characteristic zero, 1999.

\bibitem[HL04]{hunekeTwoTheoremsMaximal2004}
Craig Huneke and Graham Leuschke.
\newblock Two theorems about maximal {{Cohen}}--{{Macaulay}} modules.
\newblock {\em Mathematische Annalen}, 324, September 2004.

\bibitem[HM93]{hanmonskySurprising1993}
Chungsim Han and Paul Monsky.
\newblock Some surprising {{Hilbert-Kunz}} functions.
\newblock {\em Mathematische Zeitschrift}, 214(1):119--135, 1993.

\bibitem[Hof99]{hoffmanDerivativePolynomialsEuler1999}
Michael~E Hoffman.
\newblock Derivative polynomials, {{Euler}} polynomials, and associated integer
  sequences.
\newblock {\em The Electronic Journal of Combinatorics}, pages R21--R21, 1999.

\bibitem[JNS{\etalchar{+}}23]{jeffriesmaxFsign2023}
Jack Jeffries, Yusuke Nakajima, Ilya Smirnov, Kei-Ichi Watanabe, and Ken-Ichi
  Yoshida.
\newblock Lower bounds on {{Hilbert}}--{{Kunz}} multiplicities and maximal
  {{F-signatures}}.
\newblock {\em Mathematical Proceedings of the Cambridge Philosophical
  Society}, 174(2):247--271, 2023.

\bibitem[Kun69]{kunz1969}
Ernst Kunz.
\newblock Characterizations of {{Regular Local Rings}} of {{Characteristic}} p.
\newblock {\em American Journal of Mathematics}, 91(3):772--784, 1969.

\bibitem[LP26]{liuPositivity2026}
Yuchen Liu and Suchitra Pande.
\newblock On positivity of the limit {{F-signature}}, 2026.

\bibitem[Mat89]{matsumura1989commutative}
Hideyuki Matsumura.
\newblock {\em Commutative Ring Theory}.
\newblock Number~8. Cambridge university press, 1989.

\bibitem[Men26a]{mengQuadrics2026}
Cheng Meng.
\newblock Hilbert-{{Kunz}} multiplicity of quadrics decreases, 2026.

\bibitem[Men26b]{mengLimits2026}
Cheng Meng.
\newblock Limits of {{F-invariants}} and {{Riemann-Stieltjes}} integral, 2026.

\bibitem[MM25]{mengHfunction2025}
Cheng Meng and Alapan Mukhopadhyay.
\newblock H-function, {{Hilbert-Kunz}} density function and
  {{Frobenius-Poincar\'e}} function, 2025.

\bibitem[Mon83]{monskyHilbertKunzFunction1983}
Paul Monsky.
\newblock The {{Hilbert-Kunz Function}}.
\newblock {\em Mathematische Annalen}, 263:43--50, 1983.

\bibitem[Mon09]{monsky2009transcendence}
Paul Monsky.
\newblock Transcendence of some {{Hilbert-Kunz}} multiplicities (modulo a
  conjecture).
\newblock {\em arXiv:0908.0971}, 2009.

\bibitem[MT06]{monskyPFractals2006}
Paul Monsky and Pedro Teixeira.
\newblock P-{{Fractals}} and power series---{{II}}.: {{Some}} applications to
  {{Hilbert}}--{{Kunz}} theory.
\newblock {\em Journal of algebra}, 304(1):237--255, 2006.

\bibitem[NS20]{nunezbetancourtsmirnov2020}
Luis {N{\'u}{\~n}ez-Betancourt} and Ilya Smirnov.
\newblock Hilbert--{{Kunz}} multiplicities and {{F-thresholds}}: {{L}}.
  {{N\'u\~nez-Betancourt}}, {{I}}. {{Smirnov}}.
\newblock {\em Bolet\'in de la Sociedad Matem\'atica Mexicana}, 26(1):15--25,
  2020.

\bibitem[Pol18]{polstra2018uniform}
Thomas Polstra.
\newblock Uniform bounds in {{F-finite}} rings and lower semi-continuity of the
  {{F-signature}}.
\newblock {\em Transactions of the American Mathematical Society},
  370(5):3147--3169, 2018.

\bibitem[PS20]{polstraContinuity2020}
Thomas Polstra and Ilya Smirnov.
\newblock Continuity of {{Hilbert}}--{{Kunz}} multiplicity and {{F-signature}}.
\newblock {\em Nagoya Mathematical Journal}, 239:322--345, 2020.

\bibitem[PSSY26]{pakQuadricsEhrhart2026}
Igor Pak, Boris Shapiro, Ilya Smirnov, and Ken-ichi Yoshida.
\newblock Hilbert--{{Kunz}} multiplicity of quadrics via {{Ehrhart}} theory,
  2026.

\bibitem[Sei97]{seibertFCMtype1997}
Gerhard Seibert.
\newblock The {{Hilbert-Kunz}} function of rings of finite {{Cohen-Macaulay}}
  type.
\newblock {\em Archiv der Mathematik}, 69(4):286--296, 1997.

\bibitem[Smi20]{smirnov2020semicontinuity}
Ilya Smirnov.
\newblock On semicontinuity of multiplicities in families.
\newblock {\em Documenta Mathematica}, 25:381--399, 2020.

\bibitem[{Sta}26]{stacks-project}
The {Stacks project authors}.
\newblock The {{Stacks}} project, 2026.

\bibitem[Tuc12]{tuckerFsignatureExists2012}
Kevin Tucker.
\newblock F-signature exists.
\newblock {\em Inventiones mathematicae}, 190(3):743--765, March 2012.

\bibitem[WY00]{WY00}
Kei-ichi Watanabe and Ken-ichi Yoshida.
\newblock Hilbert--{{Kunz Multiplicity}} and an {{Inequality}} between
  {{Multiplicity}} and {{Colength}}.
\newblock {\em Journal of Algebra}, 230:295--317, April 2000.

\bibitem[WY05]{WY05}
Kei-ichi Watanabe and Ken-ichi Yoshida.
\newblock Hilbert-{{Kunz Multiplicity}} of {{Three-Dimensional Local Rings}}.
\newblock {\em Nagoya Mathematical Journal}, 177:47--75, 2005.

\bibitem[Yos09]{Yo09}
Ken-ichi Yoshida.
\newblock Small {{Hilbert-Kunz}} multiplicity and $({A}_1)$-type singularity.
\newblock {\em Proceedings of the 4th Japan-Vietnam Joint Seminar on
  Commutative Algebra by and for Young Mathematicians, Meiji University,
  Japan}, 2009.

\end{thebibliography}
\end{document}